\newcommand{\CM}{{\operatorname{CM}}}
\newcommand{\HCM}{{\operatorname{HCM}}}
\newcommand{\bd}{\boldsymbol{d}}
\newcommand{\bzeta}{\boldsymbol{\zeta}}
\newcommand{\bbeta}{\boldsymbol{\beta}}
\newcommand{\MC}{{\operatorname{MC}}}
\newcommand{\ord}{{\operatorname{ord}}}
\newcommand{\sW}{\mathscr{W}}
\newcommand{\sB}{\mathscr{B}}
\newcommand{\sM}{{\mathscr{M}}}
\newcommand{\sG}{{\mathscr{G}}}
 \newcommand{\bY}{\mathbf{Y}}
\tikzstyle{new edge style 0}=[line width=1.5pt]
\newcommand{\bz}{{\mathbf{z}}}
\newcommand{\bZ}{{\mathbf{Z}}}
\def\namedlabel#1#2{\begingroup
    #2%
    \def\@currentlabel{#2}%
    \phantomsection\label{#1}\endgroup
}
\newcommand{\bS}{{\mathbf{S}}}
\newtheorem{theorem}{Theorem}[section]
\newtheorem{corollary}[theorem]{Corollary}
\newtheorem{lemma}[theorem]{Lemma}
\newtheorem{proposition}[theorem]{Proposition}
\newtheorem{claim}[theorem]{Claim}
\theoremstyle{definition}
\newtheorem{definition}[theorem]{Definition}
\newtheorem{algorithm}[theorem]{Algorithm}
\newtheorem{assumption}[theorem]{Assumption}
\DeclareMathOperator{\Var}{Var}
\DeclareMathAlphabet{\mathscrbf}{OMS}{mdugm}{b}{n}
\numberwithin{equation}{section}
\newcommand{\dotms}{\dotsm}
\newcommand{\Leb}{\operatorname{Leb}}
\newcommand{\by}{{\bf y}}
\newcommand{\bx}{{\bf x}}
\newcommand{\bw}{{\bf w}}
\newcommand{\eps}{\varepsilon}
\newcommand{\R}{\mathbb{R}}
\newcommand{\D}{{\mathbb{D}}}
\newcommand{\N}{\mathbb{N}}
\newcommand{\sC}{{\mathscr{C}}}
\newcommand{\F}{\mathcal{F}}
\newcommand{\E}{\mathbb{E}}
\newcommand{\cY}{\mathcal{Y}}
\newcommand{\bX}{{\bf X}}
\newcommand{\weakarrow}{{\overset{(d)}{\Longrightarrow}}}
\newcommand{\PR}{\mathbb{P}}
\newcommand{\G}{{\mathcal{G}}}
\newcommand{\cC}{{\mathcal{C}}}
\newcommand{\EE}{\mathcal{E}}
\newcommand{\LL}{\mathcal{L}}
\providecommand{\Keywords}[2]
{  \small
\noindent{\textit{\textbf{Keywords:}}} #1\\
  {\textit{\textbf{Math Subject Classification 2020:}}} #2
}
\title[Inhomogeneous Percolation on HCM]{Inhomogeneous percolation on the hierarchical configuration model with a heavy-tailed degree distribution}
\author{David Clancy, Jr.}
\address{Department of Mathematics, University of Wisconsin.\\
Email: \href{mailto:dclancy@math.wisc.edu}{dclancy@math.wisc.edu}.}
\date{\today\\
The author was partially supported by NSF DMS 2023239.}
\newcommand{\btheta}{\boldsymbol{\theta}}
\newcommand{\fl}[1]{\lfloor #1 \rfloor}
\begin{document}

\maketitle

\begin{abstract} 
We consider inhomogeneous percolation on a hierarchical configuration model with a heavy-tailed degree distribution. This graph is the configuration model where all the half-edges are colored either black or white, and edges are formed by uniformly matching edges of the same color. When only the white half-edges are paired, we provide sufficient conditions for the size and total number of incident black half-edges of the connected components to converge in an $\ell^2$-sense. The limiting vector is described by an $\R^2$-valued thinned L\'{e}vy process. We also establish an $\ell^2$-limit for the number of vertices in connected components when a critical proportion of the black edges are included. A key part of our analysis is establishing a Feller-type property for the multiplicative coalescent with mass and weight recently studied in \cite{DvdHvLS.17,DvdHvLS.20}.
\end{abstract}

\Keywords{critical percolation, multiplicative coalescent, random graphs, hierarchical configuration model, edge-colored configuration model, thinned L\'{e}vy processes}{05C80,60C05}

\section{Introduction}

There has been a lot of interest in understanding the component structure of random graphs at and near the phase transition where a giant connected component emerges. A major breakthrough came when Aldous \cite{Aldous.97} introduced the multiplicative coalsecent to describe the dynamic merging of the connected components of the near-critical Erd\H{o}s-R\'{e}nyi random graph $G(n,n^{-1} + \lambda n^{-4/3})$ as the parameter $\lambda\in \R$ varies. In short, Aldous showed that in the $n\to\infty$ limit there is naturally emerging Feller process $\MC_1 = (\MC_1(\bx,t);t\ge 0)$ starting at $\MC_1(\bx,0) = \bx$ and taking values in 
\begin{equation*}
    \ell^2_\downarrow:= \left\{\bx = (x_1,x_2,\dotsm) \in \R_+^\infty : x_1\ge x_2\ge\dotsm\ge 0,\,\, \sum_{i} x_i^2 <\infty\right\}
\end{equation*} 
whose dynamics are described as follows. View $\bX$ as a vector of masses of countably many blocks listed from largest to smallest. Independently for all pairs of blocks $i$ and $j$ of respective masses $x_i$ and $x_j$, they merge into single block of mass $x_i+x_j$ at rate $x_ix_j$. The multiplicative coalescent has been quite useful in describing the component sizes of many near-critical random graph models; see \cite{Aldous.97,AL.98,BBSW.14,BDvdHS.20,vdHvLS.18,DvdHvLS.17,DvdHvLS.20} for an incomplete list of references.

It has recently been observed \cite{DvdHvLS.17,DvdHvLS.20} that critical bond percolation on the super-critical configuration model is better understood in the discrete using a generalization of the multiplicative coalescent where the merging rates depend on a ``hidden'' feature called the weight. More formally, \cite{DvdHvLS.17,DvdHvLS.20} introduce the multiplicative coalescent with mass and weight to describe the merging of blocks of various masses $(x_i)_{i\ge 1}$ and weights $(y_i)_{i\ge1}$ where independently for all pairs of blocks:
\begin{quote}
    blocks $i$ and $j$ merge into a single block of mass $x_i+x_j$ and weight $y_i+y_j$ at rate $y_iy_j$.
\end{quote} Following \cite{DvdHvLS.17,DvdHvLS.20}, we write $\MC_2(\bx,\by,t)$ for the ordered vector of component masses at time $t\ge 0$. Originally, the multiplicative coalescent with mass and weight (MCMW in the sequel) was used to describe the dynamics of near-critical percolation on the configuration model. Here the object of interest was the size of the connected components; however, the components coalesced at a rate depending on the number of ``open half-edges'' that were incident to vertices in the component. That is the size of the component was the mass, but the number of incident open half-edges was the weight. In the works mentioned above, asymptotically for each of the large components the weight was approximately a constant multiple of the mass. Consequently, the merging dynamics of MCMW were essentially the multiplicative coalescent run at a different speed: $\MC_2(\bx,c\bx,t ) = \MC_1(\bx,c^2t)$.

In this work, we are interested in inhomogeneous bond percolation on a hierarchical random graph model recently introduced in \cite{vdHvLS.18,SvdHvL.16,SvdHvL.16a} under the name hierarchical configuration model and in \cite{Kryven.19} under the name edge-colored configuration model. In its full generality, the hierarchical model exhibits arbitrary community structure on the mesoscopic scale, but on the macroscopic scale behaves like a configuration model. We show that critical bond percolation of the edges in the macroscopic configuration model is described asymptotically by the MCMW where the weight vector $\by$ has no direct relationship with the mass vector $\bx$. A key result in our analysis, which we believe will be useful for other inhomogeneous models of percolation, is the following Feller-type property. Write $\ell^2_+$ for the collection of square summable sequences of non-negative real numbers.
\begin{theorem}\label{thm:MC}
    Suppose that $\bx^n,\by^n\in \ell^2_+$ and $\bx^n\to \bx$ and $\by^n\to \by$ in $\ell^2$. If each coordinate of $\by$ is strictly positive, then for any fixed $t\ge 0$
    \begin{equation*}
        \MC_2(\bx^n,\by^n,t)\weakarrow \MC_2(\bx,\by,t)\qquad\textup{ in }\ell^2_\downarrow.
    \end{equation*}
\end{theorem}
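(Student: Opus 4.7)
The plan is to prove Theorem \ref{thm:MC} by a truncation argument built on Aldous's Poissonian random-graph representation, using Aldous's Feller property for the pure weight coalescent as an input. On $\mathbb{N}$, assign to vertex $i$ the mass $x_i$ and weight $y_i$ and let the edge $\{i,j\}$ appear at an independent exponential time of rate $y_iy_j$. For every fixed $t\ge 0$, the decreasing rearrangement of the component masses of this graph $G(\by,t)$ is distributed as $\MC_2(\bx,\by,t)$, and the same construction driven by the same Poisson processes realizes $\MC_2(\bx^n,\by^n,t)$ on a common probability space. Since $\by^n\to\by$ in $\ell^2$, the classical multiplicative-coalescent Feller property \cite{Aldous.97} already gives $\MC_1(\by^n,t)\weakarrow\MC_1(\by,t)$ in $\ell^2_\downarrow$, so only the mass coordinate requires new analysis.

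The first ingredient is truncated convergence. Fix $K\in\mathbb{N}$ and set $\bx^{(K)}=(x_1,\dotsc,x_K,0,0,\dotsc)$, $\by^{(K)}=(y_1,\dotsc,y_K,0,0,\dotsc)$. Since $\ell^2$ convergence implies coordinate-wise convergence, $\bx^{n,(K)}\to\bx^{(K)}$ and $\by^{n,(K)}\to\by^{(K)}$. The truncated coalescent $\MC_2(\bx^{n,(K)},\by^{n,(K)},t)$ is a finite-state continuous-time Markov chain on partitions of $[K]$ with rates depending continuously on $\by^{(K)}$, and standard convergence of Markov generators yields
\begin{equation*}
    \MC_2(\bx^{n,(K)},\by^{n,(K)},t)\weakarrow \MC_2(\bx^{(K)},\by^{(K)},t)\qquad\text{in }\ell^2_\downarrow.
\end{equation*}
The strict positivity hypothesis $y_i>0$ enters here to guarantee that the decreasing-rearrangement map avoids ties at the limit point and is therefore a.s.\ continuous.

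The main obstacle is a uniform tail bound:
\begin{equation*}
    \lim_{K\to\infty}\limsup_{n\to\infty}\E\big\|\MC_2(\bx^n,\by^n,t)-\MC_2(\bx^{n,(K)},\by^{n,(K)},t)\big\|_{\ell^2}^2=0.
\end{equation*}
I would rewrite the expected squared $\ell^2$-norm as $\sum_{i,j}x_i^n x_j^n\,\PR(i\leftrightarrow j\text{ in }G(\by^n,t))$ and bound the two-point connection probability by a path-enumeration estimate: for $t$ small enough that $t\|\by^n\|_{\ell^2}^2<1-\delta$ uniformly in $n$ (which is possible since $\|\by^n\|_{\ell^2}\to\|\by\|_{\ell^2}$), summing over self-avoiding paths from $i$ to $j$ yields
\begin{equation*}
    \PR(i\leftrightarrow j)\le \frac{t\, y_i^n y_j^n}{1-t\|\by^n\|_{\ell^2}^2}.
\end{equation*}
Substituting this bound and applying Cauchy--Schwarz controls the error by a constant multiple of $\|\bx^n\|_{\ell^2}^2\sum_{i>K}(y_i^n)^2$, which vanishes as $K\to\infty$ uniformly in $n$. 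To extend to arbitrary $t$, I would iterate this estimate over a finite partition $0<t_1<\dotsc<t_L=t$, invoking Aldous's Feller property at each step to ensure that $\|\by^n(t_k)\|_{\ell^2}$ remains uniformly controlled.

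Finally, the same susceptibility bound applied in the limit yields $\MC_2(\bx^{(K)},\by^{(K)},t)\to\MC_2(\bx,\by,t)$ in probability in $\ell^2_\downarrow$ as $K\to\infty$, and combining the three pieces via a triangle inequality on a metric for weak convergence closes the proof. The technically hardest step is the uniform susceptibility-type estimate: unlike the classical multiplicative coalescent where mass and weight coincide and the $\ell^2$-norm is controlled directly by conserved-moment arguments, here the mass is a passive label whose $\ell^2$-growth must be tracked through its interaction with the weight-driven kernel.
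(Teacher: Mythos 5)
Your overall architecture (truncate at level $K$, prove convergence of the truncated finite system, then control the tail uniformly in $n$) is in the same family as the paper's argument, but there are two genuine gaps, one conceptual and one technical.

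First, you have misidentified where the hypothesis $y_i>0$ is used. The decreasing-rearrangement map is continuous on $\ell^2_+$ regardless of ties, so strict positivity is not needed to make the rearrangement well-behaved. In the paper the hypothesis enters only in one place: to pass from the subsequence comparison (their equation~\eqref{eqn:Aldous62}) to a comparison with the original sequence $(\bx^{(n)},\by^{(n)})$. Concretely, one needs that for every $i,j<k$ and every $\delta>0$, $y_i^{(n)}y_j^{(n)}t\le y_iy_j(t+\delta)$ for $n$ large, which holds because $y_i^{(n)}/y_i\to 1$ when $y_i>0$; if some $y_i=0$ the right side vanishes while the left may stay positive, and this whole sandwiching step collapses. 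Your sketch never isolates any step that would actually break if some $y_i=0$, so the hypothesis is doing no work in your argument, which is a sign that something is missing.

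Second, and more seriously, the proposed tail estimate does not extend to arbitrary $t$. The path-enumeration bound $\PR(i\leftrightarrow j)\le ty_iy_j/(1-t\|\by\|_{\ell^2}^2)$ is correct but only when $t\|\by\|_{\ell^2}^2<1$, and the hypothesis of the theorem places no such smallness restriction. The iteration you sketch over a deterministic grid $0<t_1<\dotsm<t_L=t$ does not close: after the first step the effective weight vector is $\MC_1(\by^n,t_1)$, whose squared $\ell^2$-norm is a random quantity with no deterministic upper bound (Aldous's Lemma~20, reproduced here as Lemma~\ref{lem:Lemma20Aldous}, controls only its tail), so one cannot choose $t_2-t_1$ small enough in advance to keep $(t_2-t_1)\|\MC_1(\by^n,t_1)\|^2<1$ with probability one, and working on high-probability truncation events cascades into an uncontrolled union bound. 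The paper avoids this entirely via the supermartingale estimate in Lemma~\ref{lem:Lemma23Aldous} (the two-weight analogue of Aldous's Lemma~23), which produces a tail bound on the susceptibility increment valid for every fixed $t\ge0$, and is then fed into the Feller criterion of Lemma~\ref{lem:Lemm22Aldous} through the subgraph coupling rather than a naive truncation. You would need to replace your path-enumeration step with something of this supermartingale type to make the argument go through for all $t$.
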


\subsection{Description of Model}

Recently in \cite{SvdHvL.16,SvdHvL.16a, vdHvLS.18} Stegehuis, van der Hofstad and van Leeuwaarden have introduced and studied proprieties of the \textit{hierarchical configuration model.} The hierarchical configuration model (HCM in the sequel) is a random graph model which is constructed using the configuration model (which gives the macroscopic picture) and then by blowing up each vertex $v$ into a graph $F_v$. Formally, the HCM is a random graph constructed as follows. A \textit{community} $F = (V,E, \bd)$ is a connected multigraph $(V,E)$ on a finite number of vertices $V$ equipped with a $V$-indexed sequence $\bd = (d_v^{(b)};v\in V)$ of non-negative integers $d_v^{(b)} \in\{0,1,\dotsm\}$. We denote by $d^{(w)}_v = \deg_F(v)$, the degree of $v$ in the graph $F$. The superscript ``$(b)$'' stands for ``between'' and the superscript ``$(w)$'' stands for ``within.'' We view $d_v^{(b)}$ as the number of \textit{other} communities to which the vertex $v$ will be connected, although since we generate the HCM using the configuration model we can connect $v$ to another vertex in the same community. 

We will say that $d_v^{(b)}$ is the \textit{black degree} of the vertex $v$ and $d_v^{(w)}$ is the \textit{white degree} of the vertex $v$. We call the corresponding edges black and white as well. 

Given $n$ communities $F_1,F_2,\dotsm, F_n$, we let $V = \bigsqcup_{j\in[n]} V_j$. Let $G = (V,E_{\CM})$ the random graph formed using the configuration model on the vertex set $V$ with the degree sequence given by the black degrees $\bd^{(b)} = (d^{(b)}_v;v\in V)$. That is, each vertex $v\in V$ is incident to $d_{v}^{(b)}$ many half-edges and we uniformly at random pair these half-edges (and possibly deleting a single unmatched last half-edge). The HCM with communities $F = (F_j;j\in[n])$ is the graph on $V = \bigsqcup_{j\in[n]} V_j$ and edges $E = E_{\CM}\cup\bigcup_{j} E_j$ and we will denote this graph by $\HCM(F)$ if the communities are already given. 

In the above set-up the mescoscopic communities (formed by the white edges) are deterministic and the randomness comes into play by considering the macroscopic (black) edges. We will be examining the HCM where both the mesoscopic communities and the macroscopic graph are formed using the configuration model. More formally, let $\bd^{(w)} = (d_1^{(w)},\dotsm,d_n^{(w)})$ be a sequence of positive integers and let $\bd^{(b)} = (d_1^{(b)},\dotsm, d_n^{(b)})$ be a sequence of non-negative integers. We will assume that 
\begin{equation*}
    \ell^{(*)}_n := \sum_{i=1}^n d_i^{(\ast)} \text{ is even } \qquad \text{for }\ast\in \{b,w\}.
\end{equation*}
The graph $\HCM_n(\bd^{(w)},\bd^{(b)})$ is a random graph on $[n]$ constructed as follows. Start with $n$ vertices $1,2,\dotsm,n$ where vertex $i$ has $d_i^{(w)}$ many white half-edges connected to it and $d_i^{(b)}$ many black half-edges connected to it. Uniformly at random generate a matching of the $\ell^{(w)}$ many white half-edges and independently generate a uniform matching of the $\ell^{(b)}$ many black half-edges. For each pair of half-edges in each matching, turn the two half-edges into an edge in $\HCM_n(\bd^{(w)},\bd^{(b)})$.

We also note that this model also appears in \cite{Kryven.19} under the name \textit{edge-colored configuration model} and is one reason why we call the edges white and black. This is also an example of a graph model which is a \textit{network of networks} \cite{BD.14}. We prefer to stick with the name hierarchical configuration model.

We make the formation of the macroscopic graph dynamic. Attach an independent exponential rate $1$ clock to each black edge. We set $\G_n = (\G_n(s); s\ge 0)$
where $\G_n(s)$ includes all the white edges and only the black edges whose clock has rung before time $s$. Observe that $\G_n(s)$ is obtained by $\HCM_n(\bd^{(w)},\bd^{(b)})$ by keeping all white edges and keeping each black edge independently with probability $p = 1-e^{-s}$. This is the inhomogeneous percolation we refer to in the title.

\subsection{Assumptions and Main Results}

In this work we are interested in the behavior of the graph when both the white and black half-edges have heavy-tailed degree distributions. In order to keep the notation managable we will omit the index $n$ from the degrees, and simply write $\bd^{(w)}$ instead of $\bd^{(w),n}$ for the sequence. We also introduce the following scaling sequences:
\begin{align}\label{eqn:constants}
    \displaystyle a_n& \displaystyle =n^{1/(\tau-1)} L(n) & \displaystyle  b_n &=  \displaystyle \frac{n^{(\tau-2)/(\tau-1)}}{L(n)} &  \displaystyle c_n&= \displaystyle \frac{n^{(\tau-3)/(\tau-1)}}{L(n)^2}
\end{align} where $L$ is a slowly varying function at $\infty$ and $\tau\in(3,4).$

We work under the following assumptions.
\begin{assumption}[Degree Assumptions]\label{ass:Degree} 
The constant $\tau\in(3,4)$, the function $L$ is slowly varying at $\infty$ and the sequences $a_n,b_n,c_n$ are as in~\eqref{eqn:constants}. The white degree sequence $\bd^{(w)}$ and black degree sequence $\bd^{(b)}$ satisfy
\begin{enumerate}[(i)]
\item \label{enum:AssDegree1} (\textit{Arrangement}) The degrees are arranged so that $i\mapsto a_n^{-1} d_i^{(w)}+b_n^{-1} d_i^{(b)}$ is non-increasing.
    \item \label{enum:AssDegree2}(\textit{Hubs}) For each fixed $i\ge 1$, 
    \begin{align*}
        \frac{d_i^{(w)}}{a_n} &\longrightarrow \theta_i>0 & \frac{d^{(b)}_i}{b_n}&\longrightarrow \beta_i\ge 0
    \end{align*}
    where $\btheta = (\theta_1,\theta_2,\dotsm)\in \ell^3\setminus \ell^2$ and $\bbeta = (\beta_1,\beta_2,\dotsm)\in \ell^2$ are such that $\langle \btheta,\bbeta\rangle = \sum_{i=1}^\infty \theta_i\beta_i<\infty$.
    \item \label{enum:AssDegree3} (\textit{Degree Regularity}) There is some integer valued random variable $D$ with $\PR(D\ge 1) = 1$ and $\PR(D=1)>0$ such that for all $k=1,2,\dotsm$, 
    \begin{equation*}
        \frac{\#\{j: d_j^{(w)}= k\}}{n} \longrightarrow \PR(D=k).
    \end{equation*}  Moreover,
    \begin{align*}
        \frac{1}{n}\sum_{i=1}^nd_i^{(w)} &\longrightarrow \kappa:=\E[D], & \frac{1}{n} \sum_{i=1}^n (d_i^{(w)})^2 &\longrightarrow \E[D^2]<\infty, \\\limsup_{K\to\infty} \limsup_{n\to\infty} &\frac{1}{a_n^3} \sum_{i=K+1}^n (d_i^{(w)})^3 = 0 &  \lim_{n\to\infty} \frac{1}{n} \sum_{i=1}^n d_i^{(w)}d_i^{(b)} &= \langle \btheta,\bbeta\rangle + \alpha\E[D]\\
        \limsup_{K\to\infty} \limsup_{n\to\infty}& \frac{1}{b_n^2} \sum_{i=K+1}^n (d_i^{(b)})^2 = 0.
    \end{align*} 
    \item  \label{enum:AssDegree4}(\textit{Criticality}) There is some $\lambda\in \R$ such that
    \begin{equation*}
        \nu_n^{(w)}(\lambda):= \frac{ \sum_{i\in[n]} d^{(w)}_i(d_i^{(w)}-1)}{ \sum_{i\in[n]} d_i^{(w)}} = 1+\lambda c_n^{-1} + o(c_n^{-1}).
    \end{equation*}
    \item \label{enum:AssDegree5}(\textit{Number of Black Edges}) $\ell_n^{(b)} = \sum_{i=1}^{n} d_i^{(b)} = \Theta(n)$ as $n\to\infty$ and $\sum_{i=1}^n (d_i^{(b)})^2 = \Theta(n) = \Theta(\ell_n^{(b)})$.
\end{enumerate}
\end{assumption}

When all the black degrees $d^{(b)}$ are zero (which formally violates Assumption~\ref{ass:Degree}\eqref{enum:AssDegree5}), this model is the configuration model and the scaling regime in Assumption~\ref{ass:Degree}(i-iv) was studied by Dhara et. al. in \cite{DvdHvLS.20}. In this case, the graph $\HCM_n(\bd^{(w)},\boldsymbol{0})$ is just the configuration model $\CM(\bd^{(w)})$. Therein, the authors show that the scaling limits of component sizes are described by the excursion lengths of certain thinned L\'{e}vy process $X$. We will see below in Theorem~\ref{thm:main1}, that the number of incident black half-edges in $\G_n(0)$ are described using an additional \textit{thinned subordinator} $Y$. More precisely, let $\xi_j$ be an independent collection of rate $\theta_j$ exponential random variables. Define
\begin{align}
    X(t) &= \sum_{i=1}^\infty \theta_i \left(1_{[\xi_i\le \kappa t]} - \frac{\theta_i}{\kappa} t \right) + \lambda t, &
    Y(t) &= \sum_{i=1}^\infty \beta_i 1_{[\xi_i\le \kappa t]} + \alpha t\label{eqn:XinfDef}.
\end{align} 
A priori, it need not be the case that $X$ nor $Y$ is well-defined; however, under Assumption~\ref{ass:Degree}\eqref{enum:AssDegree2} both $X$ and $Y$ are. For $X$ it follows from \cite{AL.98} and for $Y$, Fubini-Tonelli implies
\begin{equation*}
    \E[Y(t)] = \alpha t + \sum_{i=1}^\infty \beta_i \PR(\xi_i\le \kappa t) \le \alpha t+ \sum_{i=1}^\infty \beta_i\theta_i \kappa t <\infty
\end{equation*} as $\langle \btheta,\bbeta\rangle<\infty$.

For a c\`adl\`ag function $f$ without negative jumps, we define an excursion interval of $f$ to be a non-empty interval $(l,r)\subset\R_+$ such that
\begin{align*}
f(l-)  &= f(r-) = \inf_{t\le r} f(t) &\textup{and}&& f(t) &> \inf_{s\le l} f(s)\qquad\forall t\in(l,r) .
\end{align*} let $\EE(f)$ denote the collection of excursion intervals of a function $f$. If possible, we define $\LL^\downarrow(f)$ as the ordered sequence of lengths of excursion intervals (i.e. $r-l$) arranged in decreasing order. We now recall a result of Dhara et. al. \cite{DvdHvLS.20} about convergence.
\begin{theorem}[Dhara et. al.{\cite{DvdHvLS.20}}]\label{thm:dvdhls}
Suppose $\bd^{(w)}$ satisfies Assumption~\ref{ass:Degree}. Let $(\cC_n(1),\cC_n(2),\dotms)$ denote the connected components of $\CM(\bd^{(w)})$ listed in decreasing order of their cardinality. Then the
\begin{equation*}
\left(\#\cC_n(j);j\ge 1 \right) \weakarrow \LL^\downarrow (X)\qquad\textup{ in }\ell^2_\downarrow.
\end{equation*}
\end{theorem}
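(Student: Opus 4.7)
The plan is to run an exploration of $\CM(\bd^{(w)})$ and encode the component sizes as the lengths of the excursions of a natural random walk, following the strategy Aldous introduced for the near-critical Erd\H{o}s--R\'enyi graph and subsequently extended to configuration models with heavy-tailed degrees. Concretely, I would perform a size-biased breadth-first exploration: independently attach to each vertex $i$ a rate-$d_i^{(w)}$ exponential clock $T_i$; start a new component at vertex $i$ at time $T_i$ unless it has already been discovered via a half-edge pairing; and whenever an open half-edge is available, pair it uniformly at random with another open half-edge. Let $Z_n(k)$ denote the number of open half-edges after $k$ pairing steps. The standard bijection identifies the component sizes, listed in discovery order, with the lengths of the excursions of $Z_n$ above its running minimum. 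Rescale by setting $\bar X_n(t):=a_n^{-1} Z_n(\lfloor a_n t\rfloor)$.

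The next step is to identify the scaling limit of $\bar X_n$. Each vertex $i$ contributes $+d_i^{(w)}$ at its discovery time and $-1$ at each subsequent matching step that consumes one of its half-edges. In rescaled time, the hub indexed by $i$ is first discovered near $\xi_i/\theta_i$, with $\xi_i$ the rate-$\theta_i$ exponential from \eqref{eqn:XinfDef}, producing an upward jump of height $\theta_i$; aggregating these jumps together with the linear compensator generated by the $-1$ increments yields the martingale sum $\sum_i \theta_i\bigl(1_{[\xi_i\le \kappa t]} - \theta_i t/\kappa\bigr)$. The criticality hypothesis $\nu^{(w)}_n(\lambda)=1+\lambda/c_n+o(1/c_n)$ supplies the additional drift $\lambda t$, and the third-moment cutoff $\limsup_K\limsup_n a_n^{-3}\sum_{i>K}(d_i^{(w)})^3=0$ controls the $L^2$ error from truncating after the first $K$ hubs. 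Standard tightness arguments in $(\D[0,T],J_1)$ then give $\bar X_n\weakarrow X$ for every $T>0$. The excursion-length functional $\LL^\downarrow$ is almost surely continuous at $X$ because $X$ has no negative jumps, its sample paths drift to $-\infty$, and almost surely no two excursions have equal length.

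Upgrading this functional convergence to convergence in $\ell^2_\downarrow$ of the ordered excursion lengths is the main obstacle I expect. Weak convergence $\bar X_n\weakarrow X$ delivers joint convergence of only finitely many of the largest excursion lengths; in the heavy-tailed regime one cannot appeal to sub-Gaussian concentration of $Z_n$ to prevent moderate-size components from accumulating $\ell^2$-mass. The remaining task is the uniform tightness estimate
\begin{equation*}
\lim_{K\to\infty}\limsup_{n\to\infty}\E\Bigl[\sum_{j>K}\bigl(a_n^{-1}\#\cC_n(j)\bigr)^2\Bigr]=0,
\end{equation*}
which I would establish via a path-switching or martingale second-moment bound on small-component sizes: couple $\CM(\bd^{(w)})$ with an auxiliary configuration model in which a high-threshold set of hubs has been removed, use Assumption~\ref{ass:Degree}\eqref{enum:AssDegree3} to argue that the residual graph is slightly subcritical at a quantitative rate, extract a uniform geometric tail bound on its component sizes, and transfer this tail back to $\CM(\bd^{(w)})$ by controlling the effect of re-inserting the removed hubs. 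Combined with the finite-dimensional convergence from the exploration walk and the continuity of $\LL^\downarrow$ at $X$, this $\ell^2$-tightness yields the claimed convergence in $\ell^2_\downarrow$.
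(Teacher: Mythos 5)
Your overall plan is the right one, and it is the same route the paper travels (via its more general Propositions~\ref{prop:ConvergenceWalks}, \ref{prop:bigcomps}, and the good-function machinery in Section~\ref{sec:goodFunctions}, which reduce to this theorem when $\bd^{(b)}\equiv 0$): encode component sizes as excursion lengths of a breadth-first walk, prove functional $J_1$-convergence to the thinned L\'evy process $X$, use an Aldous-style continuity lemma for the excursion functional, and close the $\ell^2$ gap with a susceptibility/tightness estimate obtained by deleting a finite set of hubs to force the residual graph subcritical.

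However, there are concrete errors in the scaling that would derail the functional-limit step if carried out as written. You rescale time by $a_n$, setting $\bar X_n(t)=a_n^{-1}Z_n(\lfloor a_nt\rfloor)$. Under Assumption~\ref{ass:Degree} the near-criticality is $\nu_n^{(w)}=1+\lambda c_n^{-1}+o(c_n^{-1})$ with $c_n^{-1}=a_n/b_n$, so the per-step drift is $\sim\lambda a_n/b_n$; accumulating over $a_nT$ steps and dividing by $a_n$ gives drift $\sim\lambda a_n T/b_n\to 0$, and similarly the hub-discovery probability up to time $a_nT$ is $o(1)$. The correct time scale is $b_n$, i.e., $a_n^{-1}X_n(b_n t)$, which also matches the fact that component sizes and hence excursion lengths are $\Theta(b_n)$, not $\Theta(a_n)$. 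Relatedly, the hub indexed by $i$ is discovered near rescaled time $\xi_i/\kappa$ (so that $\mathbb{P}(\eta_i\le b_n t)\to\mathbb{P}(\xi_i\le \kappa t)$), not $\xi_i/\theta_i$; your final compensated-sum formula is correct, but the stated discovery time is not.

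On the $\ell^2$-tightness step, your strategy is in the right spirit but the mechanism differs from what is actually done (Proposition~\ref{prop:bigcomps}). One does not derive a ``uniform geometric tail bound'' and then ``transfer it back by controlling re-insertion of the hubs.'' Instead, after deleting the white half-edges of vertices $1,\dots,K$ one obtains a subcritical configuration model with criticality gap of order $c_n^{-1}\sum_{i\le K}\theta_i^2$ (Assumption~\ref{ass:Degree}\eqref{enum:AssDegree2} with $\btheta\in\ell^3\setminus\ell^2$ makes this diverge in $K$), and the relevant control is a first-moment susceptibility bound (Lemma~\ref{lem:susceptibility1}) of the form $\E[\sum_j|\cC_j|^2]\lesssim n c_n/(\sum_{i\le K}\theta_i^2-O(1))$. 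The transfer to the original graph is then by stochastic monotonicity: components of $\CM(\bd^{(w)})$ that avoid $\{1,\dots,K\}$ are contained in components of the hub-deleted graph, so their squared sizes are stochastically dominated, and combined with Lemma~\ref{lem:bigJumps1} (hubs are discovered early) this gives the needed $\lim_K\limsup_n\mathbb{P}(\sum_{i>K}(\#\cC_n^\circ(i))^2>\eps b_n^2)=0$. No quantitative re-insertion argument is needed, and geometric tails are stronger than what is actually used.
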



Recall that Theorem \ref{thm:MC} there are two convergent sequences $\bx_n,\by_n\in\ell^2$, while in the above Theorem \ref{thm:dvdhls} there is just a single $\ell^2$ convergence. In order to keep track of the additional weight vector, we will need to introduce a vector-valued analogue that allows us to keep track of the relevant weight. We do this using an idea that is further explored in the forthcoming work \cite{CKL.22}. We write $\ell^{p,q}(\R^2_+)$ for the collection of sequences $\bw = (w_i;i\ge 1) \in (\R^2_+)^\infty$ equipped with the norm
\begin{equation*}
    \|\bw \|_{p,q} =\left(\sum_{i=1}^\infty \|w_i\|_q^{p}\right)^{1/p}= \left(\sum_{i=1}^\infty (|x_i|^q + |y_i|^q)^{p/q}\right)^{1/p}, \qquad  w_i = (x_i,y_i). 
\end{equation*} Obviously $\ell^{p,q}:=\ell^{p,q}(\R^2_+)$ for $p,q\in [1,\infty]$ is closed cone in the separable Banach space of sequences of $\R^2$ vectors with whose $q$-norms are in $\ell^p$. Hence $\ell^{p,q}$ Polish with the subspace topology. Whenever $\bw\in \ell^{p,q}$ we write $\bw = (\bx,\by)$ where $\bx,\by\in \ell^p$ are the obvious projections.

To state our next theorem, if $f:\R_+\to \R$ is such that $\LL^\downarrow(f)$ is well-defined, we denote by $\Delta_i(f) = r_i-l_i$ the length of the $i^\text{th}$ longest excursion interval $(l_i,r_i)$, with ties broken by order of appearance. If $g$ is a non-decreasing function, we let $\Delta_i^g(f) = g(r_i) - g(l_i)$ and we will write $\gamma_i(f,g)\in \R^2_+$ as the element 
\begin{equation}\label{eqn:gamma_iDefinition}
    \gamma_i(f,g) = \left( \Delta_i(f), \Delta_i^{g}(f)\right).
\end{equation} 
We write
\begin{equation*}
    \Gamma^\downarrow (f,g) = \left( \gamma_1(f,g),\gamma_2(f,g),\dotsm\right)\in (\R_+^2)^\infty.
\end{equation*} We note that this may not even take values in $\ell^{\infty,\infty}$ depending on the growth of $g$. 

Our next result describe the size and number of incident black half-edges for the HCM under Assumption~\ref{ass:Degree}. We write $H^{(b)}(A)$ for the collection of black half-edges incident to the vertices in $A$.

\begin{theorem}\label{thm:main1}
Suppose Assumption~\ref{ass:Degree}. 
Let $\cC_n(1),\cC_n(2),\dotsm$ be the connected components of $\G_n(0)$ listed in decreasing order of their cardinality. Then 
\begin{equation*}
    \left(\frac{1}{b_n}(\#\cC_n(j), \#H^{(b)}(\cC_n(j)));j\ge 1 \right) \weakarrow \Gamma^\downarrow (X,Y)\qquad\textup{ in }\ell^{2,2}.
\end{equation*}
\end{theorem}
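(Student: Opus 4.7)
The plan is to extend the exploration-based proof of Theorem~\ref{thm:dvdhls} to a bivariate walk that tracks both the white-imbalance of the exploration and the cumulative black degree of discovered vertices. Since no black clock has rung at time $s=0$, the graph $\G_n(0)$ has the same connected components as $\CM(\bd^{(w)})$; the black half-edges are simply tagged to their vertices and only appear in $\#H^{(b)}(\cC_n(j))$. Thus we can reuse the size-biased breadth-first exploration of $\CM(\bd^{(w)})$ from \cite{DvdHvLS.20} and only have to analyze one extra sum along it.

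Let $V_1^n,V_2^n,\dotsc$ denote the vertices in the order of discovery, let $S_n$ be the standard exploration walk whose excursions above the running minimum index the components (so $\Delta_j(S_n)=\#\cC_n(j)$), and set
\begin{equation*}
    T_n(k)=\sum_{i=1}^{k}d_{V_i^n}^{(b)},\qquad k=0,1,\dotsc,n,
\end{equation*}
so that $\Delta_j^{T_n}(S_n)=\#H^{(b)}(\cC_n(j))$ by construction. The core analytic claim to establish is the joint functional convergence
\begin{equation*}
    \bigl(a_n^{-1}S_n(\lfloor b_n\cdot\rfloor),\,b_n^{-1}T_n(\lfloor b_n\cdot\rfloor)\bigr)\weakarrow(X,Y)\quad\text{in }D\bigl([0,\infty),\R^2\bigr),
\end{equation*}
under Skorokhod $J_1$. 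The first coordinate is the content of \cite{DvdHvLS.20}. For the second, size-biased sampling discovers hub $i$ (of rescaled white degree $\theta_i$) at scaled time $\xi_i/\kappa$ and contributes a jump of size $d_i^{(b)}/b_n\to\beta_i$ to $b_n^{-1}T_n$, matching the jump structure of $Y$. The drift $\alpha t$ comes from the bulk: after removing hubs $1,\dotsc,K$, the remaining size-biased mean increment is $\sum_{i>K}d_i^{(w)}d_i^{(b)}/\ell_n^{(w)}$, which by Assumption~\ref{ass:Degree}\eqref{enum:AssDegree3} together with $\sum_i\theta_i\beta_i=\langle\btheta,\bbeta\rangle$ tends to $\alpha$ upon letting $K\to\infty$. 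Tightness of the martingale remainder follows from a Doob $L^2$ bound using $\sum_i(d_i^{(b)})^2=O(n)$ from Assumption~\ref{ass:Degree}\eqref{enum:AssDegree5}.

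From the functional limit, extract $\Gamma^\downarrow$ by a continuous-mapping argument: the map $(f,g)\mapsto\Gamma^\downarrow(f,g)$ is continuous at $(X,Y)$ on the almost sure event that $X$ has distinct excursion lengths and that all jumps of $Y$ occur strictly inside excursions of $X$, yielding convergence of every finite prefix of $b_n^{-1}\Gamma^\downarrow(S_n,T_n)$. The main obstacle is to upgrade this to $\ell^{2,2}$: first-coordinate tail control $\sum_{j>K}(\#\cC_n(j)/b_n)^2\to 0$ is given by Theorem~\ref{thm:dvdhls}. For the second coordinate, I plan an $L^2$-comparison exploiting the exchangeability of the black half-edge labels in the configuration-model construction: conditional on the component structure, $\#H^{(b)}(\cC_n(j))$ for small components concentrates around $\alpha\,\#\cC_n(j)$ in mean square, with variance controlled by $\sum_i(d_i^{(b)})^2=O(n)$; combining this with the $\ell^2$-tightness of sizes and the contribution of the few large hubs (absorbed by $\bbeta\in\ell^2$) yields the required uniform tail bound and closes the argument.
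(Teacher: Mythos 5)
Your overall blueprint matches the paper's: run the breadth-first exploration of $\CM(\bd^{(w)})$, track the bivariate walk $(X_n,Y_n)$ (your $(S_n,T_n)$), prove joint convergence to $(X,Y)$ in $\D(\R_+,\R^2)$, and extract $\Gamma^\downarrow$ via a continuity/goodness argument at excursion endpoints. This is essentially Proposition~\ref{prop:ConvergenceWalks}, Lemma~\ref{lem:goodPPwithG}, Lemma~\ref{lem:XYisGood} and Proposition~\ref{prop:WeakConvPointProcess} of the paper. The description of the drift $\alpha t$ via the bulk $\sum_{i>K}d_i^{(w)}d_i^{(b)}/\ell_n^{(w)}$ after removing the first $K$ hubs is also in line with Lemma~\ref{lem:TailSumsConv}.

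The gap is in your upgrade to $\ell^{2,2}$, specifically the tail bound for the second coordinate. You claim that ``conditional on the component structure, $\#H^{(b)}(\cC_n(j))$ for small components concentrates around $\alpha\,\#\cC_n(j)$ in mean square, with variance controlled by $\sum_i(d_i^{(b)})^2=O(n)$.'' This does not go through as stated. Conditioning on the component structure of $\CM(\bd^{(w)})$ (the partition of $[n]$) makes $\#H^{(b)}(\cC_n(j))$ \emph{deterministic}, so there is no residual randomness to concentrate. If instead you condition only on the component sizes, the implicit assignment of vertices to components is not an exchangeable sampling: the exploration is size-biased by white degree, and $d^{(w)}_i$ and $d^{(b)}_i$ are correlated (Assumption~\ref{ass:Degree}\eqref{enum:AssDegree3}), so the conditional mean of $\#H^{(b)}(\cC_n(j))$ given $\#\cC_n(j)$ is not $\alpha\,\#\cC_n(j)$ and the fluctuation is not controlled by a naive variance bound. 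More seriously, a hub vertex $i$ with $d_i^{(b)}\approx\beta_i b_n$ sitting in a component that is \emph{not} among the top $K$ by size contributes $\beta_i^2 b_n^2$ to $\sum_{j>K}(\#H^{(b)}(\cC_n(j)))^2$, which does not vanish; your phrase ``absorbed by $\bbeta\in\ell^2$'' glosses over the fact that you must first \emph{prove} the hubs land in large (equivalently, early-discovered) components, which is precisely the content of Lemmas~\ref{lem:bigJumps1} and~\ref{lem:LateComponentsAllSmall}.

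The paper circumvents all of this with a susceptibility argument (Proposition~\ref{prop:bigcomps}, built on Lemma~\ref{lem:susceptibility1}): it deletes the white half-edges attached to vertices $1,\dots,K$, checks that the resulting configuration model has criticality parameter $\nu_n^{[K]}\le 1+c_n^{-1}(\lambda''-\tfrac1{8\kappa}\sum_{i\le K}\theta_i^2)<1$ since $\btheta\notin\ell^2$, and then applies the susceptibility bound with $V$ chosen size-biased by $d_i^{(b)}$ so that $\E[\sum_{i\in C(V)}d_i^{(b)}\mid\tilde\bd]=(\ell_n^{(b)})^{-1}\sum_j(\#H^{(b)}(\cC^{\circ,K}(j)))^2$. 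This yields a bound of order $b_n^2/\sum_{i\le K}\theta_i^2$, which vanishes as $K\to\infty$. Lemma~\ref{lem:LateComponentsAllSmall} then converts this, via the hub discovery-time control of Lemma~\ref{lem:bigJumps1}, into the discovery-time tail bound needed for hypothesis~\eqref{enum:5.11.iii} of Proposition~\ref{prop:WeakConvPointProcess}. Note also that the point-process upgrade is applied to a \emph{discovery-time} tail (components not found by scaled time $T$), not to a size-rank tail; passing from your proposed rank-by-size control to the hypothesis of Proposition~\ref{prop:WeakConvPointProcess} (or vice versa) itself requires the excursion-length machinery. You should either reproduce the susceptibility-and-hub-deletion argument of Proposition~\ref{prop:bigcomps}, or give a fully worked alternative that explicitly handles the hub components and the white-black degree correlation; the bare variance estimate $\sum_i(d_i^{(b)})^2=O(n)=o(b_n^2)$ is necessary but nowhere near sufficient.
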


Theorem~\ref{thm:main1} above is about the convergence of the number of black half-edges before any of them are paired. In particular, it does not tell us much about the component sizes of the graph $\G_n( s)$ for any $s> 0$. Our next theorem tells us that the number of vertices in the graph $\G_n(s)$ when $s = \mu\gamma c_{n}^{-1}$ converges to the MCMW.

\begin{theorem} \label{thm:main2} Assume Assumption~\ref{ass:Degree}. In addition suppose
\begin{equation} \label{eqn:dBconve}
    \frac{1}{n}\sum_{i} d_i^{(b)} \longrightarrow \gamma\in(0,\infty)\qquad\textup{ and  }\qquad \textup{either }\beta_i>0\textup{ i.o.  or  }\alpha>0.
\end{equation}
    For each $\lambda\in \R$ and $\mu\ge 0$ exists an $\ell^2_\downarrow$-valued random variable $\bzeta^{\HCM}(\lambda,\mu) $ such that for the ordered list $(\cC_n(j);j\ge 1)$ of connected components of $\G_n(\mu\gamma c_n^{-1}),$ listed from largest to smallest, satisfy
    \begin{equation*}
        \left(b_n^{-1}\#\cC_n(j);j\ge 1\right) \weakarrow \bzeta^{\HCM}(\lambda,\mu).
    \end{equation*}
    More precisely, 
    \begin{equation*}
        \bzeta^{\HCM}(\lambda, \mu) = \MC_2(\Gamma^\downarrow(X,Y), \mu).
    \end{equation*}
\end{theorem}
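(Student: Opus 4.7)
The strategy combines three ingredients: the initial-data convergence from Theorem~\ref{thm:main1}, the Feller-type property of the MCMW in Theorem~\ref{thm:MC}, and the fact that, after a suitable time rescaling, the revealing of black edges enacts approximately the MCMW dynamics where the mass and weight of each initial component of $\G_n(0)$ are $\#\cC_n(j)$ and $\#H^{(b)}(\cC_n(j))$, respectively. The central heuristic is that two large components merge through a newly revealed black edge at a rate proportional to the product of their numbers of incident black half-edges, which is exactly what MCMW prescribes for the weights.

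\textbf{Step 1 (Dynamical identification).} The process $s\mapsto \G_n(s)$ may equivalently be generated by attaching independent rate-$1/2$ exponential clocks to each black half-edge and, when a clock rings, pairing the (still unpaired) half-edge with a uniformly chosen unpaired half-edge. Writing $w_i(s)$ for the number of unpaired black half-edges incident to component $i$ at time $s$ and $W(s)=\sum_k w_k(s)$, components $i$ and $j$ merge at rate $w_i(s)w_j(s)/(W(s)-1)$. Under Assumption~\ref{ass:Degree}\eqref{enum:AssDegree5} and \eqref{eqn:dBconve}, $W(0)=\ell_n^{(b)}=\gamma n(1+o(1))$, and only $o(n)$ pairings occur up to time $\mu\gamma c_n^{-1}$, so $W(s)\sim\gamma n$ uniformly on that interval. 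Using $b_n^2/n=c_n$, the change of variables $s=t\gamma c_n^{-1}$ converts the merger rate into $(1+o(1))\,y_i y_j$ for the scaled weights $y_k:=w_k/b_n$, matching the MCMW dynamics. The loss of two half-edges per pairing contributes an $O(b_n^{-1})$ decrement to each weight, which is asymptotically negligible.

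\textbf{Step 2 (Strict positivity of limiting weights).} Theorem~\ref{thm:MC} requires each weight coordinate of the limiting initial data to be strictly positive. The limit $\mathcal{W}^\infty:=\Gamma^\downarrow(X,Y)$ has $i$th weight coordinate $\Delta_i^Y(X)=\alpha(r_i-l_i)+\sum_k \beta_k\mathbf{1}_{\xi_k/\kappa\in(l_i,r_i]}$. If $\alpha>0$, this is bounded below by $\alpha(r_i-l_i)>0$. If $\alpha=0$ but $\beta_i>0$ infinitely often, a standard excursion-theoretic / Borel--Cantelli argument shows that almost surely every excursion of $X$ captures at least one jump time $\xi_k/\kappa$ with $\beta_k>0$, whence $\Delta_i^Y(X)>0$ for all $i$.

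\textbf{Step 3 (Conclusion via the Feller property).} By Theorem~\ref{thm:main1} and Skorokhod representation we may work on a probability space where $\mathcal{W}^n:=(b_n^{-1}(\#\cC_n(j),\#H^{(b)}(\cC_n(j))))_{j\ge 1}\to\mathcal{W}^\infty$ almost surely in $\ell^{2,2}$. Step~2 guarantees that the weight part of $\mathcal{W}^\infty$ has strictly positive coordinates almost surely, so Theorem~\ref{thm:MC} yields $\MC_2(\mathcal{W}^n,\mu)\weakarrow \MC_2(\mathcal{W}^\infty,\mu)$ in $\ell^2_\downarrow$. Combined with Step~1, which identifies the scaled mass vector of the components of $\G_n(\mu\gamma c_n^{-1})$ with an $o(1)$-perturbation of $\MC_2(\mathcal{W}^n,\mu)$, this gives the desired convergence to $\bzeta^{\HCM}(\lambda,\mu)=\MC_2(\mathcal{W}^\infty,\mu)$.

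\textbf{Main obstacle.} The principal technical difficulty is making Step~1 rigorous: one must control, uniformly in $n$, (i) the rate discrepancy between $w_i w_j/(W-1)$ and the MCMW target $y_iy_j$ (noting $W$ itself evolves randomly); (ii) the cumulative $O(b_n^{-1})$ weight decrements, which could a priori accumulate over the $\Theta(n/c_n)$ pairings occurring in $[0,\mu\gamma c_n^{-1}]$; and (iii) the contribution of dust, which is individually negligible but comparable in total $\ell^2$-mass. A natural route is to couple the HCM Markov chain with a modified MCMW started from $\mathcal{W}^n$ and to bound the total-variation distance up to time $\mu$ using uniform $\ell^2$-estimates on the component masses and weights.
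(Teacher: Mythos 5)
Your overall plan—feed the Theorem~\ref{thm:main1} initial data into the Feller-type Theorem~\ref{thm:MC} after identifying the black-edge-revealing dynamics as approximate MCMW—is exactly the strategy the paper uses, and your rate computation in Step~1 (merger rate $w_iw_j/(W-1)$) and the time rescaling $s=t\gamma c_n^{-1}$ are correct. Your Step~2 observation that Theorem~\ref{thm:MC} requires strictly positive limiting weights, and the case split on $\alpha>0$ versus $\beta_i>0$ infinitely often, is also the right thing to worry about (the paper asserts $\bY\in(0,\infty)^\infty$ somewhat tersely).

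However, you explicitly leave the crux—the ``Main obstacle'' of making Step~1 rigorous—unresolved, and the route you sketch for it (bound the total-variation distance between the HCM chain and a modified MCMW) is not what the paper does and would be considerably harder to carry out, precisely because of the dust issue you yourself flag. The paper instead sidesteps any TV estimate by a one-sided squeeze: it introduces the modified graph process $\overline{\G}_n(s)$ (Algorithm~\ref{alg:Modified}) in which a ringing clock adds a black edge but does \emph{not} remove the two half-edges from the pool. Because the pool never shrinks, $\overline{\G}_n(s)$ at the component level is \emph{exactly} a multiplicative coalescent with mass and weight slowed by the fixed constant $2Q_n^{(b)}(0)-1$ (Lemma~\ref{lem:modified=MC}), so Theorem~\ref{thm:MC} gives its $\ell^2_\downarrow$-limit with no error term (Lemma~\ref{lem:CONVERGE2}). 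Meanwhile $\G_n(s)\subset\overline{\G}_n(s)$ (Lemma~\ref{lem:UpperCoupling}), and a concentration estimate on $Q_n^{(b)}(t)$ (Lemma~\ref{lem:QnSize}) shows the pairwise edge probabilities in $\G_n(\mu\gamma/c_n)$ converge to the same exponential-connection limit as in the MCMW (Lemma~\ref{lem:CONVERG1}). The deterministic ``sandwich'' Proposition~\ref{prop:graphConvergence}—a subgraph $G_n\subset G_n'$ whose edge probabilities catch up to those of $G_n'$ must inherit the $\ell^2$-limit of $G_n'$—then finishes the proof. This proposition is the missing ingredient in your argument: it converts your heuristic Step~1 plus the one-sided monotone coupling into the conclusion, whereas a TV-coupling would have to fight the cumulative $O(b_n^{-1})$ half-edge decrements and the random evolution of $W(s)$ head-on, which is exactly what the modified process avoids.
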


\section{Discussion and Overview}

\subsection{Connection with the HCM in \cite{vdHvLS.18}}

Our interpretation of the HCM is slightly different than the interpretation in \cite{vdHvLS.18}. Therein, the authors first consider some communities $(F_j  = (V_j,E_j,\bd_j);j\in[n])$ of various sizes $s_i = \#V_i$. They then pair the vertices in $\sqcup_j V_j$ according to the configuration model. The resulting cardinality of a connected component is then the sum of the corresponding sizes $s_i$. This is precisely what $\#H^{(b)}(\cC)$ counts, after identifying $s_i$ with $d_i^{(b)}$.

In order to compare our Theorem~\ref{thm:main1} with some of results in \cite{vdHvLS.18}, we recast their results into our model. 
\begin{assumption}\label{ass:finiteThird}
    Let $\bd^{(w)}$ and $\bd^{(b)}$ be degree sequences of length $n\to\infty$. Let $D_n^{(w)}$ (resp. $D^{(b)}_n)$ for the white (resp. black) degree of a uniformly chosen vertex of $\HCM_{n}(\bd^{(w)},\bd^{(b)})$. Suppose the following
    \begin{enumerate}[(i)]
        \item There exists random variables $D^{(w)}$ and $D^{(b)}$ such that $(D^{(w)}_n,D^{(b)}_n)\weakarrow (D^{(w)},D^{(b)})$, where $\E[(D^{(w)})^3]<\infty$, $\E[D^{(w)}D^{(b)}]<\infty$ and $\E[D^{(b)}]<\infty$.
        \item The following convergences hold as $n\to\infty$
        \begin{align*}
            \E[D^{(b)}_n] &\longrightarrow \E[D^{(b)}]&
            \E[D_n^{(b)} D^{(w)}_n] &\longrightarrow \E[D^{(b)}D^{(w)}]\\
            \E[ (D_n^{(w)})^3] &\longrightarrow \E[(D^{(w)})^3].
        \end{align*}
        \item $\PR(D^{(w)}=0)<1$ and $\PR(D^{(b)} = 1)\in(0,1)$.
        \item There is some $\lambda\in \R$ such that
        \begin{equation*}
            \nu_{n}^{(w)}(\lambda) = \frac{\sum_{i\in[n]} d^{(w)}_i (d^{(w)}_i-1)}{\sum_{i\in[n]} d_i^{(w)}} = 1+\lambda n^{-1/3}+o(n^{-1/3}).
        \end{equation*}
        \item $\max_{i\in[n]} d_i^{(b)} = o(n^{2/3}/\log(n))$.
    \end{enumerate}
\end{assumption}

The main theorems in \cite{vdHvLS.18} can be summarized as follows. Let $B$ be a standard linear Brownian motion and write
\begin{equation*}
    W^\lambda(t) = \frac{\sqrt{\eta}}{\mu} B(t) + \lambda t - \frac{\eta t^2}{2\mu^3}
\end{equation*}
where $\eta = \E[(D^{(w)})^3]\E[D^{(w)}] - \E[(D^{(w)})^2]$ and $\mu = \E[D^{(w)}]$. Also let $\beta = \frac{\E[D^{(w)} D^{(b)}]}{\E[D^{(w)}]}.$ 
\begin{theorem}[van der Hofstad et. al. {\cite{vdHvLS.18}}]\label{thm:finiteThird}
    Suppose the degree distributions $\bd^{(w)}$ and $\bd^{(b)}$ satisfy Assumption~\ref{ass:finiteThird} and let $(\cC_n(j);j\ge1)$ denote the connected components of $\G_n(0)$ listed in decreasing cardinality. Then, in the product topology,
    \begin{equation*}
        \left(n^{-2/3} \#H^{(b)}(\cC_n(j));j\ge 1\right) \weakarrow \beta \LL^\downarrow (W^\lambda).
    \end{equation*}
    The convergence holds in $\ell^2_\downarrow$ if and only if $\E[(D^{(b)}_n)^2] = o(n^{1/3})$.
\end{theorem}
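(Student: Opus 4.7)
The plan is to run a vertex-indexed breadth-first exploration of the white configuration model together with a non-decreasing companion process that accumulates the black degrees of discovered vertices. Label the vertices of $\G_n(0)$ in their order of discovery as $V_1,V_2,\ldots$ and let $S_n$ be a standard Aldous-type exploration process of $\CM(\bd^{(w)})$ constructed so that the excursions of $S_n$ above their running minimum are in bijection with the connected components, the excursion on $[l+1,r]$ corresponding to a component with vertex set $\{V_{l+1},\ldots,V_r\}$. Define the companion
\begin{equation*}
    T_n(i) = \sum_{j=1}^{i} d_{V_j}^{(b)},
\end{equation*}
which is non-decreasing and satisfies $\#H^{(b)}(\cC_n(k)) = T_n(r_k^n) - T_n(l_k^n)$, where $[l_k^n+1,r_k^n]$ is the exploration interval of the $k$-th component.

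Under Assumption~\ref{ass:finiteThird} the classical scaling for the critical configuration model gives $n^{-1/3} S_n(\lfloor n^{2/3}t\rfloor)\weakarrow W^\lambda(t)$ uniformly on compact time-sets. For the companion, each $V_j$ is, up to errors vanishing on the $n^{2/3}$-scale, sampled size-biased by white degree from the currently unexplored vertices, so
\begin{equation*}
    \E\bigl[d_{V_j}^{(b)} \mid V_1,\ldots,V_{j-1}\bigr] \approx \frac{\sum_{i} d_i^{(w)} d_i^{(b)}}{\sum_{i} d_i^{(w)}} \longrightarrow \frac{\E[D^{(w)} D^{(b)}]}{\E[D^{(w)}]} = \beta.
\end{equation*}
A standard law-of-large-numbers argument then yields the fluid limit $\sup_{t\le T}|n^{-2/3} T_n(\lfloor n^{2/3}t\rfloor)-\beta t|\to 0$ in probability. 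Coupled with the convergence of $S_n$ and a continuous-mapping argument applied to the functional $\Gamma^\downarrow$ (noting that the limiting companion is exactly the linear function $t\mapsto\beta t$), this produces the product-topology statement $n^{-2/3}\#H^{(b)}(\cC_n(j)) = n^{-2/3}\bigl(T_n(r_j^n)-T_n(l_j^n)\bigr) \weakarrow \beta\Delta_j(W^\lambda)$.

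The main obstacle is the $\ell^2_\downarrow$-upgrade, and it accounts for the iff condition. Necessity is immediate from the deterministic identity
\begin{equation*}
    \sum_j \#H^{(b)}(\cC_n(j))^2 \ge \sum_j \sum_{v \in \cC_n(j)} (d_v^{(b)})^2 = \sum_{v=1}^n (d_v^{(b)})^2 = n\,\E\bigl[(D_n^{(b)})^2\bigr];
\end{equation*}
dividing by $n^{4/3}$ and comparing with the almost-surely finite but not bounded-away-from-zero limiting norm $\beta^2\sum_j\Delta_j(W^\lambda)^2$ forces $\E[(D_n^{(b)})^2]=o(n^{1/3})$. For sufficiency one promotes the product-topology limit to $\ell^2$ by establishing norm convergence $n^{-4/3}\sum_j\#H^{(b)}(\cC_n(j))^2 \to \beta^2\sum_j\Delta_j(W^\lambda)^2$ in probability. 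The diagonal contribution to the square expansion is $n^{-1/3}\E[(D_n^{(b)})^2]\to 0$ by hypothesis, while the off-diagonal sum $\sum_j\sum_{v\ne w\in\cC_n(j)} d_v^{(b)} d_w^{(b)}$ must be shown to concentrate near $\beta^2\sum_j(\#\cC_n(j))^2$; this requires the size-biased representation of $T_n$ together with the high-probability $O(n^{2/3})$ bound on the largest component sizes inherited from the Aldous regime. Decorrelating the $T_n$-increments across large components is where the bulk of the technical work lies.
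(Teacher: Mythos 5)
The paper does not prove Theorem~\ref{thm:finiteThird}: it is recalled as a known result from \cite{vdHvLS.18} (and, for the $\ell^2$ iff, from \cite{DvdHvLS.17}), so there is no internal proof to compare against. That said, your strategy — an Aldous-type exploration walk $S_n$ for the white configuration model paired with an additive companion $T_n(i)=\sum_{j\le i} d_{V_j}^{(b)}$, with the black-half-edge count of each component read off as an increment of $T_n$ over the corresponding excursion interval of $S_n$ — is indeed the route those papers take, and the fluid limit $n^{-2/3}T_n(\lfloor n^{2/3}t\rfloor)\to\beta t$ with $\beta=\E[D^{(w)}D^{(b)}]/\E[D^{(w)}]$ via size-biased sampling is the right heuristic. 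One ingredient you invoke only implicitly is the fact that all macroscopic components are explored within a compact time window $[0,Tn^{2/3}]$ with high probability; without stating and using this, the continuous-mapping step from functional convergence of $(S_n,T_n)$ to convergence of the increment vector $\Gamma^\downarrow$ is not justified even in the product topology, because $\Gamma^\downarrow$ reads off excursions at arbitrarily late times.

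Two concrete gaps remain in the iff clause. For necessity, the deterministic inequality $\sum_j \#H^{(b)}(\cC_n(j))^2\ge n\E[(D_n^{(b)})^2]$ is correct, but you then need the law of $\beta^2\sum_j\Delta_j(W^\lambda)^2$ to charge every neighbourhood of $0$; you assert this (``not bounded-away-from-zero'') without justification, and it is not obvious — it amounts to a lower-tail estimate for the squared excursion lengths of a Brownian motion with parabolic drift. For sufficiency, you correctly reduce to proving the in-probability norm convergence $n^{-4/3}\sum_j\#H^{(b)}(\cC_n(j))^2\to\beta^2\sum_j\Delta_j(W^\lambda)^2$ and correctly isolate the off-diagonal sum $\sum_j\sum_{v\ne w\in\cC_n(j)}d_v^{(b)}d_w^{(b)}$ as the difficulty, but then declare it ``the bulk of the technical work'' and stop. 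That is precisely the non-routine part: one must show the increments of the size-biased companion process concentrate uniformly over the (random, $O(n^{2/3})$-sized) components and that the contribution of the late, small components is negligible in $\ell^2$ — a susceptibility-type estimate analogous to what Proposition~\ref{prop:bigcomps} does in the heavy-tailed regime of this paper. As written, the proposal identifies the right scaffolding but leaves the load-bearing estimate unproved.
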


Let us point out some major differences between Theorems~\ref{thm:main1} and~\ref{thm:finiteThird}. The first is that in Theorem~\ref{thm:finiteThird}, the number of incident black half-edges of large components converges to a constant multiple of the excursion lengths of a Brownian motion with parabolic drift, while the convergence in Theorem~\ref{thm:main1} is to the increments of a process $Y$ which contains jumps over the excursion intervals of a different process $X$. These can be viewed as the same result where the corresponding thinned subordinator in Theorem~\ref{thm:finiteThird} is $U(t) = \beta t$. The next is that in order to obtain convergence in the product topology in Theorem~\ref{thm:finiteThird}, one needs to assume that the $d_i^{(b)}$ are all of order \textit{strictly smaller} than $n^{2/3}$, the size of the connected components in $\G_n(0)$ \cite{DvdHvLS.17}. This is not the case with Theorem~\ref{thm:main1} where the degrees $d_i^{(b)}$ can be of order $\Theta(b_n)$. Roughly speaking, the finite third moment case requires \textit{mesoscopic} communities, but the infinite third moment case can handle \textit{macroscopic} communities.

\subsection{Overview of The Article} 

In Section~\ref{sec:MCMW}, we establish the Feller-type property for $\MC_2(\bx,\by,t)$ found in Theorem~\ref{thm:MC}. This broadly follows the approach that Aldous used to establish the Feller property of $\MC_1(\bx,t)$ in \cite{Aldous.97}. We remark that most of Aldous's work is easily generalized to our more abstract setting, suggesting that maybe there is a more general coalescent process lurking in the background. It would be interesting to see if this can be used to shed light on recent work by Konarovskyi and Limic \cite{KL.21}, as well as their joint work with the author in the forthcoming \cite{CKL.22}. We also establish Proposition~\ref{prop:graphConvergence}, which formalizes some ideas that appear in prior works on critical random graphs and the multiplicative coalescent, see for example \cite{DvdHvLS.20,BDvdHS.20,DvdHvLS.17}. Roughly speaking, Proposition~\ref{prop:graphConvergence} states that if you can couple a graph $G_n$ with a graph $G_n'$ such that $G_n\subset G'_n$ and the edge probabilities are asymptotically the same, then if the component masses of $G_n'$ converge to some limit $\bX$, then so do the component masses of $G_n$. This makes establishing Theorem~\ref{thm:main2} much easier. 

In Section~\ref{sec:Exploration}, we describe the exploration process for the HCM which is essentially the same as \cite{DvdHvLS.20}. This encodes the number of vertices in the connected components of $\G_n(0)$, along with the number of incident black half-edges using relatively simple stochastic processes $(X_n,Y_n)$. We show in Proposition~\ref{prop:ConvergenceWalks} that $(X,Y)$ defined in~\eqref{eqn:XinfDef} appear as the natural scaling limits for these walks. Unfortunately, this Proposition~\ref{prop:ConvergenceWalks} does not follow directly from \cite{DvdHvLS.20} so we need to include a large part of the analysis. However, since the argument is similar to that found in \cite{DvdHvLS.20}, some parts are only sketched or even omitted. 

Afterwards, we need to extend Proposition~\ref{prop:ConvergenceWalks} to the convergence in Theorem~\ref{thm:main1}. This requires us to do two things. The first is that we show that all the macroscopic components and components with a large number of black half-edges are discovered early in our exploration. This is our Lemma~\ref{lem:LateComponentsAllSmall}. To establish this, we need to show that Assumption~\ref{ass:Degree} can control a susceptibility-type functional on the graph. The second is that a particular point process constructed using the graph exploration converges to a limiting point process described using $(X,Y)$ and this convergence extends to $\ell^2$ convergence in Theorem~\ref{thm:main2}. We do this by extending some results of Aldous in \cite{Aldous.97}.

Finally, in Section~\ref{sec:Inhomo} we establish Theorem~\ref{thm:main2} by showing that $\G_n(s)$ is sufficiently close to the MCMW. 

\section{The Multiplicative Coalescent with Mass and Weight}\label{sec:MCMW}

\subsection{Basic Lemmas and Notations}

In this section, we recall some important lemmas established by Aldous in \cite{Aldous.97}. We also re-establish a simple compactness result, also used in \cite{CKL.22}, which is a simple extension of a result by Aldous and Limic in \cite{AL.98}

We write $\bx\le \bx'$ if for all $l$ the $l^\text{th}$ coordinate of $\bx$ is at most the $l^\text{th}$ coordinate of $\bx'$, that is, $x_l\le x_l'$. We write $\bx\preceq \bx'$ if the entries of $\bx$ as a multiset $\{\{x_1,x_2,\dotsm\}\}$ can be rewritten as $\{\{y_{i,j}: i,j\ge 1\}\}$ so that $\sum_{j} y_{i,j}\le x'_i$. It is elementary to see that if $\bx\preceq \bx'$ then $\|\bx\|^2\le \|\bx'\|^2$ using $(x+y)^2 \ge x^2+y^2$. The next lemma (Lemma 17 in \cite{Aldous.97}) gives us a particular example where $\preceq$ comparison appears. Technically, the $\preceq$ notation was introduced after \cite{Aldous.97} and so Lemma 17 does not mention it. This new observation is a trivial.

\begin{lemma}\label{lem:Lemma17Aldous}
    Suppose that $G$, $G'$ are graphs on the same vertex set, and that each graph has vertex masses $(x_i)$, $(x'_i)$ respectively. Suppose that $G\subset G'$. Let $\mathbf{a}$ (resp.$ \mathbf{a}'$) be the masses of the connected components of $G$ (resp. $G'$) listed in decreasing order. Then $\mathbf{a}\preceq \mathbf{a}'$ and
    \begin{equation*}
        \|\mathbf{a}'-\mathbf{a}\|^2 \le \|\mathbf{a}'\|^2 - \|\mathbf{a}\|^2,
    \end{equation*} provided $\|\mathbf{a}\|^2<\infty$.
\end{lemma}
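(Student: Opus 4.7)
My plan is to establish $\mathbf{a}\preceq\mathbf{a}'$ and the norm inequality separately. The former follows immediately from the graph structure: since $G\subset G'$, every connected component of $G$ lies in a unique component of $G'$. Enumerating the $G'$-components by $i$ and the $G$-components inside the $i$-th $G'$-component by $j$, and writing $y_{i,j}$ for their masses (under the $G$-mass assignment), the multiset of $G$-component masses is $\{\{y_{i,j}\}\}$ with group sums $\sum_j y_{i,j}\le a'_i$, which matches the definition of $\preceq$ exactly. The $(x+y)^2\ge x^2+y^2$ observation from the preceding paragraph then already delivers $\|\mathbf{a}\|^2\le\|\mathbf{a}'\|^2$ as a byproduct.

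For the refined bound $\|\mathbf{a}'-\mathbf{a}\|^2\le\|\mathbf{a}'\|^2-\|\mathbf{a}\|^2$, I would expand the left-hand side to reduce the claim to $\langle\mathbf{a}',\mathbf{a}\rangle\ge\|\mathbf{a}\|^2$, i.e.\ $\sum_k a_k(a'_k-a_k)\ge 0$. The plan is to combine a majorization inequality with Abel summation. First I would prove that $\sum_{k=1}^K a_k\le\sum_{k=1}^K a'_k$ for every $K$: the top $K$ entries of $\mathbf{a}$ are $K$ of the $y_{i,j}$'s and therefore involve at most $K$ distinct group indices $i$; calling this set $I$ (so $|I|\le K$),
\begin{equation*}
    \sum_{k=1}^K a_k \;\le\; \sum_{i\in I}\sum_j y_{i,j} \;\le\; \sum_{i\in I}a'_i \;\le\; \sum_{k=1}^{|I|}a'_k \;\le\; \sum_{k=1}^K a'_k,
\end{equation*}
using only that $\mathbf{a}'$ is sorted decreasingly and that all entries are non-negative.

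Setting $c_k=a'_k-a_k$ and $S_N=\sum_{k=1}^N c_k\ge 0$ (by the majorization step above), Abel summation gives
\begin{equation*}
    \sum_{k=1}^N a_k c_k \;=\; a_N\,S_N \;+\; \sum_{k=1}^{N-1}(a_k-a_{k+1})\,S_k,
\end{equation*}
a sum of non-negative terms because $\mathbf{a}$ is non-negative and non-increasing and each $S_k\ge 0$. In the non-trivial case $\mathbf{a}'\in\ell^2$ the series $\sum_k a_k c_k$ converges absolutely, so its value is the limit of non-negative partial sums and is therefore $\ge 0$, yielding $\langle\mathbf{a}',\mathbf{a}\rangle\ge\|\mathbf{a}\|^2$. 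I don't anticipate a serious obstacle here; the main conceptual point is recognising that $\preceq$ upgrades for free to the partial-sum majorization via the simple count $|I|\le K$, after which Abel summation turns the monotonicity of $\mathbf{a}$ automatically into the required inner-product inequality.
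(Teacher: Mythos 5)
Your proof is correct. Two remarks on the comparison.

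The paper does not prove this lemma: it cites it as Lemma~17 of Aldous, adding only the observation that $\mathbf{a}\preceq\mathbf{a}'$, which it calls trivial. So the comparison is really against Aldous's argument, and yours is a genuinely different route. Aldous proceeds by reduction to elementary moves (adding one edge / bumping one vertex mass) and an explicit bookkeeping of how the sorted vector changes under a single merge, whereas you reduce the inequality to $\langle\mathbf{a}',\mathbf{a}\rangle\ge\|\mathbf{a}\|^2$, derive the weak-majorization $\sum_{k\le K}a_k\le\sum_{k\le K}a'_k$ for all $K$ directly from $\mathbf{a}\preceq\mathbf{a}'$ (using that the top $K$ entries of $\mathbf{a}$ hit at most $K$ groups), and then conclude by Abel summation. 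This is cleaner, entirely self-contained, and actually proves a slightly more general statement: for any $\mathbf{a}\preceq\mathbf{a}'$ in $\ell^2_\downarrow$ the refined $\ell^2$ bound holds, with no graph structure required. The absolute-convergence remark needed to pass $N\to\infty$ is exactly right, and the trivial case $\mathbf{a}'\notin\ell^2$ is correctly disposed of.

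One point worth flagging, though it is a defect of the paper's statement rather than of your proof: as written, the lemma omits the hypothesis $x_i\le x'_i$, which is present in Aldous's original Lemma~17 and without which the statement is false (take $G=G'$ and $x_i=2x'_i$). Your step $\sum_j y_{i,j}\le a'_i$ uses this hypothesis tacitly — the total $G$-mass of a $G'$-component is at most its total $G'$-mass precisely because each vertex mass only goes up. You might make this explicit, since it is the one place the comparison $x\le x'$ enters and a reader of the paper's (sloppier) statement won't see it.
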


We need the following compactness result. This is contained in \cite{CKL.22}, and we include a proof here.

\begin{lemma}\label{lem:compact2}
Let $K\subset\ell^2_\downarrow$ be a compact subset. Then
\begin{equation*}
    A = \{\bx: \exists\, \by\in K \text{ s.t. }\bx\preceq \by\}\quad\text{ is pre-compact in  }\ell^2_\downarrow.
\end{equation*}
\end{lemma}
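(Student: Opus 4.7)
The plan is to verify pre-compactness of $A$ via the standard $\ell^2$ criterion: boundedness together with uniform tail smallness, $\lim_{N\to\infty}\sup_{\bx\in A}\sum_{k>N} x_k^2 = 0$. Since $\cvd$ is closed in $\ell^2$, pre-compactness in the two topologies coincides, so it suffices to check these two conditions.

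Boundedness is immediate from the definition of $\preceq$. Fix $\bx \preceq \by \in K$ and write $\{\{x_k\}\}=\{\{y_{i,j}\}\}$ with $\sum_j y_{i,j} \le y_i$ and all entries non-negative. Then $\sum_j y_{i,j}^2 \le \bigl(\sum_j y_{i,j}\bigr)^2 \le y_i^2$, so $\|\bx\|^2 \le \|\by\|^2 \le C^2$ with $C := \sup_{\by \in K}\|\by\| < \infty$ (finite by compactness of $K$).

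For the uniform tail bound, fix $\eps>0$ and use compactness of $K$ to pick $N_1$ with $\sup_{\by\in K}\sum_{i>N_1}y_i^2 < \eps/2$. For a given $\bx \preceq \by \in K$, partition the multiset $\{y_{i,j}\}$ appearing in the decomposition into \emph{heavy} ($i \le N_1$) and \emph{light} ($i > N_1$). Cauchy--Schwarz gives $\sum_{\textup{heavy}} y_{i,j} \le \sum_{i\le N_1}y_i \le \sqrt{N_1}\,C$, whereas $\sum_{\textup{light}} y_{i,j}^2 \le \eps/2$ by the choice of $N_1$. For each $\delta>0$, Markov's inequality yields at most $\sqrt{N_1}\,C/\delta$ heavy entries above $\delta$ and at most $\eps/(2\delta^2)$ light entries above $\delta$. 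Hence, setting $N := \sqrt{N_1}\,C/\delta + \eps/(2\delta^2)$, every $x_k$ with $k>N$ satisfies $x_k \le \delta$. Splitting the tail by origin and using $x_k^2 \le \delta x_k$ on the heavy part,
\[
\sum_{k>N} x_k^2 \;\le\; \sum_{\textup{light}} y_{i,j}^2 \;+\; \delta \sum_{\textup{heavy}} y_{i,j} \;\le\; \tfrac{\eps}{2} + \delta\,\sqrt{N_1}\,C.
\]
Choosing $\delta = \eps/\bigl(2\sqrt{N_1}\,C\bigr)$ bounds the right-hand side by $\eps$, with $N = 4N_1 C^2/\eps$ independent of $\bx \in A$.

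The main obstacle is precisely this tail estimate: the relation $\preceq$ permits a single coordinate of $\by$ to be chopped into arbitrarily many pieces, so $\bx$ need not lie in $\ell^1$ and one cannot bound its $\ell^2$ tail by naive truncation of $\|\bx\|^2$. The heavy/light split is tailored to address this: the heavy part is $\ell^1$-bounded, so a $\delta$-threshold converts its squared tail into $\delta$ times an $\ell^1$-quantity, while the light part already contributes only $\eps/2$ to the squared tail by compactness of $K$. Handling each piece with the appropriate norm yields the required uniform equisummability.
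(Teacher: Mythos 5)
Your proof is correct and follows essentially the same route as the paper (itself modeled on Aldous--Limic, Proposition 36(ii)): both arguments split the decomposition $\{y_{i,j}\}$ into a ``heavy head'' $i\le N_1$ and ``light tail'' $i>N_1$, bound the light contribution by $\sum_{i>N_1}y_i^2$ (uniformly small by compactness of $K$), and control the heavy contribution via the elementary pointwise bound $x^2\le\delta x$ for $x\le\delta$ together with an $\ell^1$ bound on the head. The only difference is presentational: the paper works with $\sum_i x_i^2\mathbf{1}_{[x_i\le\eps]}$ and an iterated $\limsup_{\eps\downarrow0}$-then-$k\to\infty$ argument (with Dini's theorem to handle the supremum over $K$), whereas you make the threshold $\delta$ and cutoff $N$ explicit and count entries above $\delta$ by a Markov-type argument.
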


\begin{proof}
We follow the proof of Proposition~36~(ii) in \cite{AL.98}.
Since $\bx\preceq\by$ implies $\|\bx\|\le \|\by\|$, the compactness of $K$ implies that $A$ is bounded in $\ell^2_\downarrow$. Thus, to show that it is pre-compact, so it suffices to show the following:
\begin{equation*}
    \lim_{k\to\infty} \sup_{\bx\in A}\sum_{i\ge k}x_i^2 =0.
\end{equation*}
Fix $\eps>0$. Note that if $y_{i,j}\ge 0$ and $\sum_{j} y_{i,j} \le y_i$ then both 
\begin{equation*}
    \sum_{j} y_{i,j}^2\le y_i^2\qquad\textup{and}\qquad \sum_{j} y_{i,j}^2 1_{[y_{i,j}\le \eps]}\le \eps y_i.
\end{equation*} Hence for any fixed $k$
\begin{align*}
    \sup_{\bx\in A} \left\{\sum_{i=1}^\infty x_{i}^2 1_{[x_i\le \eps]}\right\} \le \sup_{\by\in K}\left\{ \eps \sum_{i< k} y_i + \sum_{i\ge k} y_i^2\right\}.
\end{align*}
Taking the limit superior as $\eps\downarrow 0$ gives
\begin{align*}
    \limsup_{\eps\downarrow0}& \sup_{\bx\in A} \left\{\sum_{i=1}^\infty x_{i}^2 1_{[x_i\le \eps]}\right\} \le \limsup_{\eps\downarrow 0} \left(\sup_{\by\in K}\left\{  \eps \sum_{i< k} y_i\right\} + \sup_{\by\in K}\left\{ \sum_{i\ge k} y_i^2\right\} \right)\\
    &= \sup_{\by\in K}  \left\{ \sum_{i\ge k} y_i^2 \right\}.
\end{align*}
Taking $k\to\infty$ and use compactness of $K$, we get $\sup_{\by\in K} \{\sum_{i\ge k} y_i^2\}\longrightarrow 0$ by, for example, Dini's theorem. Hence
\begin{equation}
    \label{eqn:limsupCompact1}
    \limsup_{\eps\downarrow 0} \,\sup_{\bx\in A} \left\{\sum_{i=1}^\infty x_i^2 1_{[x_i\le \eps]} \right\} = 0.
\end{equation}

Also note that $x_1\ge x_2\ge \dotsm\ge 0$ for all $\bx\in A$ and $\sup_{\by\in K}\|\by\|^2_2<\infty$. Now for each fixed $\eps>0$, for all $\bx\in A$ we must have that $x_i\le \eps$ for all $i\ge \frac{1}{\eps^2} \sup_{\by\in K}\|\by\|^2_2.$
Hence, for all $\eps>0$,
\begin{equation*}
    \limsup_{k\to\infty} \sup_{\bx\in A} \sum_{i\ge k} x_i^2 \le \sup_{\bx\in A}\sum_{i=1}^\infty x_i^21_{[x_i\le \eps]}.
\end{equation*} Taking the limit as $\eps\downarrow 0$ gives the desired claim using~\eqref{eqn:limsupCompact1}.
\end{proof}

The following corollary is immediate.
\begin{corollary}\label{cor:compactCKL}
    Suppose $\bX_n,\bZ_n$ are random elements in $\ell^2_\downarrow$ are coupled so that $\bX_n\preceq \bZ_n$ for all $n$. If $(\bZ_n;n\ge 1)$ is tight, then $(\bX_n;n\ge 1)$ is also tight.
\end{corollary}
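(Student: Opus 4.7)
The plan is to reduce the tightness claim to the deterministic pre-compactness statement in Lemma~\ref{lem:compact2}. Recall that a family of random elements in a Polish space is tight iff for every $\eps>0$ there is a compact set carrying mass at least $1-\eps$ uniformly in $n$. So the strategy is: use tightness of $(\bZ_n)$ to produce a compact ``dominating'' set $K$ in $\ell^2_\downarrow$, then invoke Lemma~\ref{lem:compact2} to inflate $K$ to a (pre-)compact set $K'$ that must contain $\bX_n$ whenever $\bZ_n \in K$.

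More precisely, fix $\eps>0$. By tightness of $(\bZ_n)$, choose a compact set $K \subset \ell^2_\downarrow$ with $\PR(\bZ_n \in K) \ge 1-\eps$ for every $n$. Apply Lemma~\ref{lem:compact2} to $K$ to obtain that
\begin{equation*}
    A := \{\bx \in \ell^2_\downarrow : \exists\, \by \in K\text{ s.t.\ } \bx \preceq \by\}
\end{equation*}
is pre-compact, so its closure $K' := \overline{A}$ is compact in $\ell^2_\downarrow$. The coupling hypothesis $\bX_n \preceq \bZ_n$ means that on the event $\{\bZ_n \in K\}$ we automatically have $\bX_n \in A \subset K'$. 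Therefore
\begin{equation*}
    \PR(\bX_n \in K') \ge \PR(\bZ_n \in K) \ge 1-\eps,
\end{equation*}
uniformly in $n$, which is the required tightness criterion for $(\bX_n)$.

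There is essentially no obstacle: the entire content of the corollary has already been absorbed into Lemma~\ref{lem:compact2}, and what remains is just the standard deterministic-to-stochastic promotion via the definition of tightness (Prokhorov's criterion in a Polish space). The only thing to double-check is that $\ell^2_\downarrow$ is Polish so that one can take closures and apply this criterion, and that $\bX_n \preceq \bZ_n$ is preserved pointwise (not just in distribution) under the coupling, both of which are given.
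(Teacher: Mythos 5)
Your proof is correct and is precisely the ``immediate'' argument the paper has in mind: tightness of $(\bZ_n)$ gives a compact $K$, Lemma~\ref{lem:compact2} inflates it to a pre-compact dominating set $A$ whose closure $K'$ is compact in the Polish space $\ell^2_\downarrow$, and the a.s.\ coupling $\bX_n\preceq\bZ_n$ pushes the probability bound from $\bZ_n$ to $\bX_n$. Nothing to add.
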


Before continuing to couplings, we recall a useful lemma by Aldous \cite[Lemma 20]{Aldous.97}, which we recall without proof. Recall that we write $\MC_1(\bx,t)$ as the mulitplicative coalescent at time $t$.
\begin{lemma}\label{lem:Lemma20Aldous}
Let $\bx\in \ell^2$ and let $s> \|\bx\|^2.$ Then
\begin{equation*}
    \PR\left(\|\MC_1(\bx,t)\|^2 > s\right) \le \frac{ts \|\bx\|^2}{s-\|\bx\|^2}.
\end{equation*}
\end{lemma}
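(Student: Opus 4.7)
The plan is to combine Markov's inequality with a bound on $\E[\|\MC_1(\bx,t)\|^2] - \|\bx\|^2$ obtained from the random-graph representation of the multiplicative coalescent. Write $\rho = \|\bx\|^2$ and $N(t) = \|\MC_1(\bx,t)\|^2$, and realize $\MC_1(\bx,\cdot)$ as the ordered component masses of the multiplicative random graph: the graph on $\{i : x_i > 0\}$ (vertex $i$ carrying mass $x_i$) in which each potential edge $\{i,j\}$ is present by time $t$ independently with probability $1 - e^{-x_i x_j t}$. Expanding $|C|^2 = \sum_{i,j \in C} x_i x_j$ and summing over components gives the basic identity
\begin{equation*}
N(t) - \rho \;=\; 2 \sum_{i<j} x_i x_j \,\mathbf{1}_{\{i \sim_t j\}} \;\ge\; 0,
\end{equation*}
where $i \sim_t j$ means $i,j$ are in the same component at time $t$. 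Since $N(t) - \rho \ge 0$, Markov's inequality yields
\begin{equation*}
\PR(N(t) > s) \;=\; \PR(N(t) - \rho > s - \rho) \;\le\; \frac{2 \sum_{i<j} x_i x_j \PR(i \sim_t j)}{s - \rho}.
\end{equation*}

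The main estimate is a path-counting bound on $\PR(i \sim_t j)$. Union-bounding over finite paths $(i = v_0, v_1, \dotsc, v_k = j)$, using $1 - e^{-y} \le y$ on each edge, and noting that the internal vertices contribute $\prod_{l=1}^{k-1} x_{v_l}^2$ after factoring out the endpoints,
\begin{equation*}
\PR(i \sim_t j) \;\le\; \sum_{k \ge 1} t^k x_i x_j \sum_{v_1,\dotsc,v_{k-1}} \prod_{l=1}^{k-1} x_{v_l}^2 \;\le\; x_i x_j \sum_{k \ge 1} t^k \rho^{k-1} \;=\; \frac{t \, x_i x_j}{1 - t\rho},
\end{equation*}
valid whenever $t\rho < 1$. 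Substituting back and using $2 \sum_{i<j} x_i^2 x_j^2 = \rho^2 - \sum_i x_i^4 \le \rho^2$,
\begin{equation*}
\PR(N(t) > s) \;\le\; \frac{t \rho^2}{(s - \rho)(1 - t\rho)}.
\end{equation*}

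The argument finishes by a short case analysis. If $ts\rho/(s-\rho) \ge 1$ the conclusion is trivial, so assume $ts\rho < s - \rho$; this rearranges to $\rho(1 + ts) < s$, hence $t\rho < 1$ and $\rho/(1 - t\rho) < s$. Factoring the previous display,
\begin{equation*}
\frac{t\rho^2}{(s-\rho)(1 - t\rho)} \;=\; \frac{t\rho}{s - \rho} \cdot \frac{\rho}{1 - t\rho} \;<\; \frac{t\rho}{s - \rho} \cdot s \;=\; \frac{t s \rho}{s - \rho},
\end{equation*}
which is the claim. I expect the main subtlety to be the path-counting step itself: because $\bx$ need only lie in $\ell^2$ (not $\ell^1$) and the index set may be infinite, one must contract each internal vertex along a path using $\sum_v x_v^2 = \rho$ rather than the (possibly divergent) $\sum_v x_v$. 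This is exactly the step where the $\ell^2$ hypothesis and the geometric-series convergence condition $t\rho < 1$ enter; the remainder of the argument is a routine application of Markov and a bit of algebra.
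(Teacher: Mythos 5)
Your argument is correct, and since the paper explicitly states that it recalls Aldous's Lemma 20 \emph{without} proof, the relevant comparison is with the original argument in \cite{Aldous.97}, which proceeds quite differently. Aldous works with the coalescent dynamics directly rather than the random-graph representation: writing $S(t)=\|\MC_1(\bx,t)\|^2$ and $\rho=\|\bx\|^2$, he uses the fact that $S$ jumps by $2X_iX_j$ at rate $X_iX_j$ to compute the drift of $1/S$,
\[
\sum_{i<j} X_iX_j\left(\frac{1}{S+2X_iX_j}-\frac{1}{S}\right) \;\ge\; -\,\frac{2\sum_{i<j}X_i^2X_j^2}{S^2}\;\ge\;-1,
\]
so that $1/S(t)+t$ is a submartingale. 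Since $1/S$ decreases from $1/\rho$, the nonnegative variable $1/\rho-1/S(t)$ has mean at most $t$, and one application of Markov's inequality at the threshold $1/\rho-1/s=(s-\rho)/(\rho s)$ delivers the bound $\PR(S(t)>s)\le ts\rho/(s-\rho)$ in one line, with no case distinctions. Your route, by contrast, is a first-moment/path-counting argument in the multiplicative random graph: Markov on $N(t)-\rho\ge 0$, a branching-type union bound yielding $\PR(i\sim_t j)\le tx_ix_j/(1-t\rho)$ under $t\rho<1$, and a final case split to recover the stated form. Both are valid; your path-counting step is carried out correctly for $\bx\in\ell^2$ (the contraction $\sum_v x_v^2=\rho$ is exactly what is needed, and the concluding algebra is sound). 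The tradeoffs are worth noting: Aldous's argument is shorter, needs no convergence condition or case analysis, and is phrased purely in terms of the coalescent semigroup, so it ports to abstract settings where a graphical representation may be less convenient; your argument is more concrete about the random-graph geometry, in fact yields a sharper intermediate bound $t\rho^2/\bigl((s-\rho)(1-t\rho)\bigr)$ in the regime $t\rho<1$ (the only regime where the stated bound is nontrivial), and arguably explains \emph{why} the hypothesis $\bx\in\ell^2$ enters.
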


\subsection{Graphical Couplings}
We now recall two couplings of Aldous \cite{Aldous.97}. Consider the complete graph $K_\infty$ on vertices index by $\N = \{1,2,\dotsm\}$. To each edge $e = \{i,j\}\in E(K_\infty)$ generate an independent rate $1$ exponential random variable $\xi_e = \xi_{ij} = \xi_{ji}$. For any $\bx,\by\in \ell^2_+$, write $\sW^G(\bx,\by,t)$ for the subgraph obtained by including the edge $\{i,j\}$ if and only if $\xi_{ij}\le y_iy_jt$. For a connected component $A\subset \sW^G(\bx,\by,t)$ we define its mass as $\sM(A) = \sum_{i\in A} x_i$. 

Clearly we have $\MC_2(\bx,\by,t) = (\sM_i(\bx,\by,t);i\ge1)$ the vector of component masses of $\sW^G(\bx,\by,t)$ listed in decreasing order. For any collection of vectors $\bx,\by,\bx',\by'$ we can couple $\sW^G(\bx,\by,t)$ and $\sW^G(\bx',\by',t)$ so that they live in the same probability space. We call this the \textit{$\xi$-coupling.}

Obviously the $\xi$-coupling of $\sW^G(\bx,\by,t)$ and $\sW^G(\bx+\by,\bx+\by,t)$ implies that the connected components of $\sW^G(\bx+\by,\bx+\by,t)$ are unions of connected components of $\sW^G(\bx,\by,t)$ and that each vertex has larger mass in $\sW^G(\bx+\by,\bx+\by,t)$ than in $\sW^G(\bx,\by,t)$. In particular, 
\begin{equation*}
    \MC_2(\bx,\by,t)\preceq \MC_2(\bx+\by,\bx+\by,t) = \MC_1(\bx+\by,t).
\end{equation*} The above coupling together with the fact that the standard multiplicative coalescent lives in $\ell^2_\downarrow$ \cite{Aldous.97} implies the following.
\begin{claim}
    If $\bx,\by\in \ell^2$. Then $\MC_2(\bx,\by,t)\in \ell^2_\downarrow$ for all $t$. 
\end{claim}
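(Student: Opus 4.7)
The plan is to exploit the $\xi$-coupling construction already set up in the preceding paragraph, together with Lemma~\ref{lem:Lemma17Aldous} and the known fact (attributed to Aldous in the paragraph just above the claim) that the standard multiplicative coalescent $\MC_1(\cdot,t)$ preserves $\ell^2_\downarrow$. The dominance observation $\MC_2(\bx,\by,t)\preceq \MC_1(\bx+\by,t)$ is essentially stated in the display right before the claim, so what remains is just to spell out why this dominance gives membership in $\ell^2_\downarrow$.

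In more detail, under the $\xi$-coupling every edge $\{i,j\}$ included in $\sW^G(\bx,\by,t)$ (that is, those with $\xi_{ij}\le y_i y_j t$) is automatically included in $\sW^G(\bx+\by,\bx+\by,t)$, because $y_iy_j\le (x_i+y_i)(x_j+y_j)$. Hence $\sW^G(\bx,\by,t)\subset \sW^G(\bx+\by,\bx+\by,t)$. Assigning vertex masses $x_i$ in the first graph and $x_i+y_i$ in the second (which is coordinatewise larger), Lemma~\ref{lem:Lemma17Aldous} immediately gives
\begin{equation*}
    \MC_2(\bx,\by,t) \preceq \MC_2(\bx+\by,\bx+\by,t) = \MC_1(\bx+\by,t).
\end{equation*}

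Now, since $\bx,\by\in\ell^2_+$ we have $\bx+\by\in\ell^2_+$, and so by Aldous's theorem $\MC_1(\bx+\by,t)\in\ell^2_\downarrow$ almost surely. Combining this with the elementary observation (recorded in the paper just before Lemma~\ref{lem:Lemma17Aldous}) that $\bz\preceq \bz'$ implies $\|\bz\|_2\le \|\bz'\|_2$, we conclude $\|\MC_2(\bx,\by,t)\|_2\le \|\MC_1(\bx+\by,t)\|_2<\infty$ almost surely. Because the vertex set of $\sW^G(\bx,\by,t)$ is countable and $\MC_2(\bx,\by,t)$ is square-summable, only finitely many component masses exceed any $\eps>0$, so they can indeed be arranged in decreasing order; thus $\MC_2(\bx,\by,t)\in\ell^2_\downarrow$ as claimed.

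There is no real obstacle here: the statement is essentially a bookkeeping corollary of the coupling already introduced, the Aldous graphical comparison Lemma~\ref{lem:Lemma17Aldous}, and Aldous's well-posedness result for $\MC_1$ on $\ell^2_\downarrow$. The only mild point to be careful about is that $\bx+\by$ need not be decreasingly ordered, but this is irrelevant because the graph construction and the resulting ordered vector $\MC_1(\bx+\by,t)$ do not depend on the order of initial labels.
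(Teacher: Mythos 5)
Your argument is correct and is essentially word-for-word the reasoning the paper itself gives in the paragraph immediately preceding the claim: use the $\xi$-coupling to obtain $\MC_2(\bx,\by,t)\preceq \MC_1(\bx+\by,t)$ via Lemma~\ref{lem:Lemma17Aldous}, then invoke Aldous's result that $\MC_1$ takes values in $\ell^2_\downarrow$. No gaps; this matches the intended proof.
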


The next lemma follows directly from Lemma 17 (recalled above in Lemma~\ref{lem:Lemma17Aldous}) and Corollary 18 in \cite{Aldous.97}. We state it here without proof.
\begin{lemma}\label{lem:cor18Aldous}
    Suppose that $\bx\le \bx'$ and $\by\le \by'.$ Then \begin{enumerate}
        \item $\|\MC_2(\bx',\by',t) - \MC_2(\bx,\by,t)\|^2 \le \|\MC_2(\bx',\by',t)\|^2-\|\MC_2(\bx,\by,t)\|^2$, when $\MC_2(\bx,\by,t)\in\ell^2$;
        \item If $t_1<t_2$ then $\|\MC_2(\bx,\by,t_2) - \MC_2(\bx,\by,t_1)\|^2\le \|\MC_2(\bx,\by,t_2)\|^2-\|\MC_2(\bx,\by,t_1)\|^2$ when $\MC_2(\bx,\by,t_1)\in\ell^2$;
        \item If $\bx^{(n)} = (x_1,x_2,\dotms,x_k,0,\dotms)$, $\by^{(n)} = (y_1,y_2,\dotsm,y_k,0,\dotsm)$ and $\MC_2(\bx,\by,t)\in\ell^2$ then
        \begin{equation*}
            \|\MC_2(\bx^{(n)},\by^{(n)},t)\|^2\uparrow  \|\MC_2(\bx,\by,t)\|^2\qquad\text{and} \qquad \MC_2(\bx^{(n)},\by^{(n)},t)\longrightarrow \MC_2(\bx,\by,t).
        \end{equation*}
    \end{enumerate}
\end{lemma}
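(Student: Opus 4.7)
The plan is to work throughout within the $\xi$-coupling of Section~3.2 (so that $\sW^G(\bx,\by,t)$, $\sW^G(\bx',\by',t)$, $\sW^G(\bx,\by,t_1)$, and so on are simultaneously realised as subgraphs of $K_\infty$) and to invoke Lemma~\ref{lem:Lemma17Aldous} three times. For part (1), the coordinatewise inequalities $\bx\le \bx'$ and $\by\le \by'$ imply $y_iy_jt \le y_i'y_j't$ for every pair $\{i,j\}$, so every edge of $\sW^G(\bx,\by,t)$ is also an edge of $\sW^G(\bx',\by',t)$; the two graphs share the vertex set $\N$ and have masses $x_i\le x_i'$, so Lemma~\ref{lem:Lemma17Aldous} applies verbatim. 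Part (2) is the same argument with a common mass sequence $(x_i)$ and with the containment $\sW^G(\bx,\by,t_1)\subset \sW^G(\bx,\by,t_2)$ coming from $t_1<t_2$.

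For part (3), I would first apply part (1) with $\bx^{(n)}\le \bx$, $\by^{(n)}\le \by$ to obtain
$$\|\MC_2(\bx,\by,t)-\MC_2(\bx^{(n)},\by^{(n)},t)\|^2 \le \|\MC_2(\bx,\by,t)\|^2-\|\MC_2(\bx^{(n)},\by^{(n)},t)\|^2,$$
and then a second application with $\bx^{(n)}\le \bx^{(n+1)}$, $\by^{(n)}\le \by^{(n+1)}$ shows that $n\mapsto \|\MC_2(\bx^{(n)},\by^{(n)},t)\|^2$ is non-decreasing and bounded above by $\|\MC_2(\bx,\by,t)\|^2$. Thus part (3) reduces to the monotone norm convergence $\|\MC_2(\bx^{(n)},\by^{(n)},t)\|^2\uparrow \|\MC_2(\bx,\by,t)\|^2$, from which the $\ell^2_\downarrow$ convergence follows via the displayed inequality above.

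For the norm convergence I would use that, under the $\xi$-coupling, $H^{(n)}:=\sW^G(\bx^{(n)},\by^{(n)},t)$ is precisely the subgraph of $G:=\sW^G(\bx,\by,t)$ obtained by deleting all vertices beyond the truncation index (say $[k_n]$), with masses $x_i$ surviving on $[k_n]$. For each connected component $\cC$ of $G$ of mass $m(\cC)=\sum_{j\in\cC}x_j$, the $H^{(n)}$-components contained in $\cC$ partition $\cC\cap[k_n]$, and any two vertices $u,v\in\cC$ are connected by a finite $G$-path whose intermediate vertices lie in $[k_n]$ for all $n$ sufficiently large. Consequently the largest $H^{(n)}$-subpiece of $\cC$ has mass tending up to $m(\cC)$, and sandwiching the sum of squared subpiece masses between $(\max_i\text{subpiece mass})^2$ and $m(\cC)^2$ gives $\sum_i(\text{subpiece mass})^2\uparrow m(\cC)^2$. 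Summing over $\cC$ by monotone convergence yields the desired norm convergence.

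The only delicacy, rather than a genuine obstacle, is that a $G$-component can be infinite as a vertex set while still having finite mass; this is possible when $\by\in \ell^2\setminus \ell^1$ permits vertices of infinite $G$-degree. The argument above circumvents this because connectedness of two fixed vertices in $G$ is always witnessed by a \emph{finite} path, and monotone convergence over the countable collection of $G$-components is enough to close the sum. Parts~(1) and~(2) are essentially restatements of Aldous's lemma under the $\xi$-coupling; part~(3) is where the only new --- but still short --- work appears.
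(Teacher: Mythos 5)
Your proof is correct, and it takes the route the paper implicitly intends: the paper states this lemma without proof, saying it ``follows directly from Lemma 17 \ldots and Corollary 18 in \cite{Aldous.97},'' and your argument is exactly that derivation spelled out under the $\xi$-coupling. Parts (1) and (2) are immediate from Lemma~\ref{lem:Lemma17Aldous} once you observe the graph containments; for part (3), your use of the truncation/subgraph coupling, the finite-path argument to show the largest subpiece mass saturates each $G$-component, and monotone convergence over components is the same device Aldous employs, and you correctly note that it bypasses the (real) possibility of vertex-infinite components when $\by\notin\ell^1$.
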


We will need the following lemma as well. 
\begin{lemma}\label{lem:helpLem1}
If $\bx,\by\in \ell^2$ and $t>0$ then for the $\xi$-coupling
    \begin{align*}
    \|\MC_2&(\bx,\by,t)\|^2 \le \|\MC_2(\bx+\by,\by,t)\|^2 - \|\by\|^2-2\langle\bx,\by\rangle\\
    &\le \|\MC_2(\bx+\by,\bx+\by,t)\|^2-  \|\by\|^2-2\langle\bx,\by\rangle
    \end{align*}
\end{lemma}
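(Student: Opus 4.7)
\medskip

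\noindent\textbf{Proof plan for Lemma~\ref{lem:helpLem1}.} The plan is to exploit two very different features of the $\xi$-coupling: for the first inequality, that the edges of $\sW^G(\bx,\by,t)$ depend only on the weight vector $\by$, and for the second inequality, that increasing the weights enlarges the edge set.

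\smallskip

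\noindent\emph{First inequality.} Under the $\xi$-coupling, edge $\{i,j\}$ belongs to $\sW^G(\bx,\by,t)$ iff $\xi_{ij}\le y_iy_jt$, and the same criterion defines $\sW^G(\bx+\by,\by,t)$. Hence the two graphs coincide as subgraphs of $K_\infty$, and in particular share the same partition into connected components $A_1,A_2,\ldots$. Only the \emph{masses} of the components differ. Setting
\begin{equation*}
    a_k = \sum_{i\in A_k} x_i, \qquad b_k = \sum_{i\in A_k} y_i,
\end{equation*}
I would compute
\begin{equation*}
    \|\MC_2(\bx+\by,\by,t)\|^2 - \|\MC_2(\bx,\by,t)\|^2 = \sum_k \bigl((a_k+b_k)^2 - a_k^2\bigr) = 2\sum_k a_k b_k + \sum_k b_k^2.
\end{equation*}
Since all entries are non-negative, $a_k b_k = \sum_{i,j\in A_k} x_i y_j \ge \sum_{i\in A_k} x_i y_i$ and $b_k^2 \ge \sum_{i\in A_k} y_i^2$. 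Summing over $k$ partitions $\N$, so these bounds yield $\sum_k a_k b_k \ge \langle\bx,\by\rangle$ and $\sum_k b_k^2 \ge \|\by\|^2$, giving the first inequality after rearrangement.

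\smallskip

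\noindent\emph{Second inequality.} Here the vertex masses agree (both $\bx+\by$), but the edges differ: edge $\{i,j\}\in \sW^G(\bx+\by,\bx+\by,t)$ iff $\xi_{ij}\le (x_i+y_i)(x_j+y_j)t$, while $\{i,j\}\in\sW^G(\bx+\by,\by,t)$ iff $\xi_{ij}\le y_iy_jt$. Non-negativity of all entries gives $(x_i+y_i)(x_j+y_j)\ge y_iy_j$, so $\sW^G(\bx+\by,\by,t)\subset \sW^G(\bx+\by,\bx+\by,t)$ under the $\xi$-coupling. Lemma~\ref{lem:Lemma17Aldous} applied to these two graphs (which carry the same vertex masses) yields $\MC_2(\bx+\by,\by,t) \preceq \MC_2(\bx+\by,\bx+\by,t)$, and since $\preceq$ implies domination of $\ell^2$-norms, we obtain $\|\MC_2(\bx+\by,\by,t)\|^2 \le \|\MC_2(\bx+\by,\bx+\by,t)\|^2$. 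Subtracting $\|\by\|^2 + 2\langle\bx,\by\rangle$ from both sides chains onto the first inequality and finishes the proof.

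\smallskip

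\noindent There is no real obstacle: the argument is purely algebraic once one notices the key observation that switching $\bx$ to $\bx+\by$ while leaving the weight vector fixed does not alter the random graph at all, only how its components are weighed. Verifying that $\|\MC_2(\bx,\by,t)\|^2<\infty$ so that the manipulations are legitimate is immediate from the claim preceding the lemma (i.e.\ $\MC_2(\bx,\by,t)\in\ell^2_\downarrow$).
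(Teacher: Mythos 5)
Your proof is correct and follows essentially the same route as the paper. The paper establishes the first inequality by an algebraic inequality for partitions of $\N$ applied to the component-partition of $\sW^G(\bx,\by,t)$, which is exactly your computation, and the paper also derives the second inequality from Lemma~\ref{lem:Lemma17Aldous}; you simply make explicit the (useful) observation that $\sW^G(\bx,\by,t)$ and $\sW^G(\bx+\by,\by,t)$ have identical edge sets under the $\xi$-coupling, which the paper leaves implicit.
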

\begin{proof}
The last inequality follows from Lemma~\ref{lem:Lemma17Aldous} and we only show the first. 

If $A\subset\N$ then
\begin{align*}
\left(\sum_{i\in A} x_i+y_i\right)^2 &=    \left(\sum_{i\in A} x_i\right)^2+  \left(\sum_{i\in A} y_i\right)^2 + 2\left(\sum_{i\in A} x_i\right)\left( \sum_{i\in A} y_i\right)\\
&\ge \left(\sum_{i\in A} x_i\right)^2 + \sum_{i\in A} y_i^2 + 2\sum_{i\in A} x_i y_i.
\end{align*}
Suppose that $\boldsymbol{\pi} = (\pi_1,\pi_2,\dotms)$ is a partition of $\N$. Then, the above analysis shows
\begin{equation}\label{eqn:citeLater}
    \sum_{p=1}^\infty \left( \sum_{i\in \pi_p} x_i\right)^2 \le \sum_{p=1}^\infty \left( \sum_{i\in \pi_p} x_i+y_i\right)^2 -\sum_i y_i^2 - 2\sum_i x_iy_i,
\end{equation} provided that the latter two infinite summations are finite, which holds whenever $\bx,\by \in \ell^2$. In particular, this holds a.s. partition of $\N$ given by the connected components of $\sW^G(\bx,\by,t)$. 
\end{proof}

There is another coupling that Aldous introduces, which will be useful later. It is called the \textit{subgraph coupling}. Here we have $\bx = (x_\alpha; \alpha\in A)$ and $\by = (y_\alpha;\alpha\in A)$ for a countable index set $A$ and a subcollection $\bx' = (x_\alpha; \alpha \in A')$ and $\by' = (y_\alpha; \alpha\in A')$ for some subset $A'\subset A$. In this setting, we can also couple $\sW^G(\bx,\by,t)$ and $\sW^G(\bx',\by',t)$ using the same exponential random variables, and $\sW^{G}(\bx',\by',t)$ is the induced subgraph on the vertices $A'$.   

\subsection{Sufficient Condition for Theorem~\ref{thm:MC}}

A key step in the proof that the multiplicative coalescent is Feller is Lemma 22 in \cite{Aldous.97}. That lemma gives a relatively easy to verify condition that is sufficient to conclude the Feller property. The result below is analogous and its proof is almost identical to Aldous' proof of Lemma 22 in \cite{Aldous.97}. It only requiring minor changes involving the weight and we include it. As writing out the moments $\|\bx\|^2$ will become tedious quickly, we write $\bS(\bx,\by,t) = \|\MC_2(\bx,\by,t)\|^2.$

\begin{lemma}\label{lem:Lemm22Aldous}
    Suppose that $\bx^{(n)}\to \bx$ and $\by^{(n)}\to \by$ in $\ell^2_+$. Then for the above $\xi$-coupling we have 
    \begin{equation*} \liminf_{n\to\infty} \bS(\bx^{(n)},\by^{(n)},t)\ge \bS(\bx,\by,t).
    \end{equation*}
    Moreover if one can couple $\MC_2(\bx^{(n)},\by^{(n)},t)$ and $\MC_2(\bx,\by,t)$ in such a way that
    \begin{equation}\label{eqn:coupledSusceptibiliy}
       \lim_{n\to\infty} \PR\left( \bS(\bx^{(n)},\by^{(n)},t) - \bS(\bx,\by,t) >\eps \right) = 0
    \end{equation}
    then $\MC_2(\bx^{(n)},\by^{(n)},t) \weakarrow \MC_2(\bx,\by,t)$.
\end{lemma}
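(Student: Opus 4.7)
The lemma comprises two claims. For the liminf I work on the probability space of the $\xi$-coupling and carry over Aldous's approximation scheme from~\cite{Aldous.97}. Fix $\eps>0$. Because $\MC_2(\bx,\by,t)\in\ell^2_\downarrow$, one can select finitely many components $\pi_1,\dotsc,\pi_K$ of $\sW^G(\bx,\by,t)$ whose squared masses already account for at least $\bS(\bx,\by,t)-\eps$. Inside each $\pi_p$ pick a finite connected subset $A_p\subset\pi_p$ with $(\sum_{i\in A_p}x_i)^2\ge (\sum_{i\in\pi_p}x_i)^2-\eps/K$, connected by a spanning tree whose edges $\{k,l\}$ all satisfy the strict inequality $\xi_{kl}<y_ky_lt$; since $\xi_{kl}$ is absolutely continuous this costs us only a null event. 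Coordinatewise convergence $\by^{(n)}\to\by$ and $\bx^{(n)}\to\bx$ then ensure that for all $n$ large enough each chosen edge also satisfies $\xi_{kl}<y^{(n)}_ky^{(n)}_lt$, so $A_p$ remains connected in $\sW^G(\bx^{(n)},\by^{(n)},t)$, and
\[
\liminf_{n\to\infty}\bS(\bx^{(n)},\by^{(n)},t)\ge \sum_{p=1}^K\Big(\sum_{i\in A_p}x_i\Big)^2\ge \bS(\bx,\by,t)-2\eps.
\]
Sending $\eps\downarrow 0$ gives the first claim.

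For the second claim, tightness of $(\MC_2(\bx^{(n)},\by^{(n)},t))_n$ in $\ell^2_\downarrow$ follows from the $\preceq$-domination $\MC_2(\bx^{(n)},\by^{(n)},t)\preceq\MC_1(\bx^{(n)}+\by^{(n)},t)$, the standard Feller property of $\MC_1$ applied to the convergent sequence $\bx^{(n)}+\by^{(n)}\to\bx+\by$ in $\ell^2$, and Corollary~\ref{cor:compactCKL}. Combining the liminf from the first part with the hypothesized coupling bound~\eqref{eqn:coupledSusceptibiliy}, and noting that the marginal law of $\bS(\bx^{(n)},\by^{(n)},t)$ is coupling-free, yields distributional convergence $\bS(\bx^{(n)},\by^{(n)},t)\weakarrow\bS(\bx,\by,t)$.

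To identify the weak limit I would return to the $\xi$-coupling and proceed by truncation. For each fixed $K$ the finite-dimensional map is continuous at every sample point where no $\xi_{kl}$ ($k,l\le K$) lies on the boundary $y_ky_lt$, giving almost sure convergence of the $K$-truncated multiplicative coalescent in $n$; Lemma~\ref{lem:cor18Aldous}(3) then upgrades to convergence of the truncation back to $\MC_2(\bx,\by,t)$ in $\ell^2_\downarrow$ as $K\to\infty$. A diagonal extraction, combined with the tightness and the norm convergence from the previous paragraph, shows that any weak subsequential limit of $\MC_2(\bx^{(n)},\by^{(n)},t)$ coincides coordinatewise with $\MC_2(\bx,\by,t)$ and has the same $\ell^2$-norm in distribution, so no mass leaks to infinity and the convergence automatically takes place in $\ell^2_\downarrow$.

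The main obstacle is this last identification. The hypothesis~\eqref{eqn:coupledSusceptibiliy} gives only one-sided susceptibility control and need not be carried on the $\xi$-coupling space, so one must transfer information between couplings through marginal-law arguments, and then combine product-topology convergence under the $\xi$-coupling with norm convergence to recover $\ell^2_\downarrow$ convergence. A more subtle point is that extra edges incident to large-index vertices could in principle merge components anomalously in $\sW^G(\bx^{(n)},\by^{(n)},t)$; the susceptibility control (once shown to hold in distribution) is exactly what rules out such asymptotic mass inflation.
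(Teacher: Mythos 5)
Your treatment of the $\liminf$ claim is correct and differs only cosmetically from the paper's: you choose finite connected subsets with spanning trees whose edge variables satisfy strict inequalities $\xi_{kl}<y_ky_lt$, while the paper encodes the same fact through the indicators $A_{ij}^{(n)}, A_{ij}$ and the ``modified component'' trick $B_{ij}^{(n)}=A_{ij}^{(n)}A_{ij}$. Both rest on the observation that strict inequalities persist under coordinatewise convergence $\by^{(n)}\to\by$. Your tightness step (via $\MC_2(\bx^{(n)},\by^{(n)},t)\preceq\MC_1(\bx^{(n)}+\by^{(n)},t)$ and Corollary~\ref{cor:compactCKL}) and your derivation of $\bS(\bx^{(n)},\by^{(n)},t)\weakarrow\bS(\bx,\by,t)$ by transferring the one-sided coupling bound across marginal laws are also sound; the paper makes the equivalent observation that if~\eqref{eqn:coupledSusceptibiliy} holds for some coupling then $\bS_n\to\bS$ in probability on the $\xi$-coupling.

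The genuine gap is in your identification step. You propose: (a) truncate at level $K$, get $\MC_2(\bx^{(n,K)},\by^{(n,K)},t)\to\MC_2(\bx^{(K)},\by^{(K)},t)$ a.s.\ as $n\to\infty$; (b) invoke Lemma~\ref{lem:cor18Aldous}(3) to send $K\to\infty$; (c) close the loop by diagonal extraction plus norm convergence. But step (a) only controls the $K$-truncated coalescent, and nothing in your argument bounds the discrepancy $\MC_2(\bx^{(n)},\by^{(n)},t)-\MC_2(\bx^{(n,K)},\by^{(n,K)},t)$ \emph{uniformly in $n$} before $K\to\infty$; the ``diagonal extraction'' you wave at does not fill this in. The susceptibility control is indeed the right tool, but you never say how to feed it into the truncation. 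The paper sidesteps all of this by passing to a subsequence where $\bS_n\to\bS$ a.s.\ and then exhibiting an explicit labeling $Z_j^{(n)}$ (value $0$ unless $j$ is the minimal index in its component; otherwise the mass of that component) that simultaneously satisfies $\sum_j(Z_j^{(n)})^2\to\sum_j Z_j^2$ and $\liminf_n Z_j^{(n)}\ge Z_j$ a.s.; these two properties alone imply $\ell^2_\downarrow$ convergence (the ``equation (60)'' criterion from \cite{Aldous.97}), and the $\liminf$ bound follows directly from the indicator comparison and Fatou's lemma. You should either make the uniform truncation estimate explicit (which will require reinvoking~\eqref{eqn:coupledSusceptibiliy} in a way you have not spelled out) or switch to the paper's direct construction, which is shorter.
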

\begin{proof}
We follow the proof of Lemma 22 in \cite{Aldous.97} as well. 

Let $A_{ij}^{(n)}$ (resp. $A_{ij}$) denote the indicator that vertices $i$ and $j$ are in the same component of $\sW^G(\bx^{(n)},\by^{(n)},t)$ (resp. $\sW^G(\bx,\by,t)$). If $i$ and $j$ are in the same component of $\sW^G(\bx,\by,t)$ then there is a finite length path $i = i_0\sim\dotsm\sim i_k  =j$ such that $\xi_{i_l i_{l-1}} \ge ty_{i_l}y_{i_{l-1}} = \lim_{n} t y_{i_l}^{(n)}y_{i_{l-1}}^{(n)}$ for all $l\in[k]$. Therefore
\begin{equation}\label{eqn:AijConv}
    \liminf_{n\to\infty} A_{ij}^{(n)}  \ge  A_{ij}\qquad\textup{ a.s.}
\end{equation}

Let $B^{(n)}_{ij}$ denote the indicator that $i,j$ are in the same component of $\sW^G(\bx^{(n)},\by^{(n)},t)$ and $\sW^G(\bx,\by,t)$. So that $B_{ij}^{(n)} = A_{ij}^{(n)} A_{ij}$ and
\begin{equation}\label{eqn:BijConv}
    \lim_{n\to\infty} B_{ij}^{(n)} = A_{ij}.
\end{equation} Following \cite{Aldous.97}, we say that $i,j$ are in the same \textit{modified} component if $B_{ij}^{(n)} = 1$. We will denote modified components of $\sW^G(\bx^{(n)},\by^{(n)},t)$ by $\cC'$ and will will denote connected components by $\cC$. We will denote the collection of modified components in $\sW^G(\bx^{(n)},\by^{(n)},t)$ by $\sC_n'$ and the collection of connected components by $\sC_n$. We drop the subscript when referring to $\sW(\bx,\by,t)$.

Now by~\eqref{eqn:BijConv} and $\bx^{(n)}\to\bx, \by^{(n)}\to\by$ we have
\begin{align*}
    \lim_{n\to\infty} \sum_{\cC'\in \sC'_n} \left(\sum_{i\in \cC': i\le k} x_i\right)^2 = \sum_{\cC\in \sC}\left( \sum_{i\in \cC: i\le k}x_i \right)^2,
\end{align*}
and
\begin{equation*}
    \lim_{n\to\infty} \sum_{\cC'\in \sC'_n} \left(\sum_{i\in \cC': i\le k} x_i^{(n)}\right)^2 = \sum_{\cC\in \sC}\left( \sum_{i\in \cC: i\le k}x_i \right)^2.
\end{equation*}
By taking $k\to\infty$ we get
\begin{equation*}
    \liminf_{n\to\infty}  \sum_{\cC'\in \sC'_n} \left(\sum_{i\in \cC'} x_i^{(n)}\right)^2 \ge \sum_{\cC\in \sC}\left( \sum_{i\in \cC}x_i \right)^2  =\bS(\bx,\by,t).
\end{equation*}
Since each component of $\sW^G(\bx^{(n)},\by^{(n)},t)$ is the union of modified components, it follows that
\begin{equation*}
    \bS(\bx^{(n)},\by^{(n)},t) \ge \sum_{\cC'\in \sC'_n} \left(\sum_{i\in \cC': i\le k} x_i^{(n)}\right)^2.
\end{equation*} This establishes the first claim.

Now suppose that~\eqref{eqn:coupledSusceptibiliy} holds for some coupling. If it holds for some coupling then it must hold for the $\xi$-coupling above and so
\begin{equation*}
    \bS(\bx^{(n)},\by^{(n)},t)\longrightarrow \bS(\bx,\by,t)\qquad \textup{ in probability.}
\end{equation*}By passing to a subsequence (still denoted by $n$) we can suppose that $\bS(\bx^{(n)},\by^{(n)},t)\longrightarrow \bS(\bx,\by,t)$ a.s. 

To see that $\MC(\bx^{(n)},\by^{(n)},t)\to \MC(\bx,\by,t)$ a.s. for this subsequence, it suffices to order the component masses of $\sW^G(\bx^{(n)},\by^{(n)},t)$, say $\bZ^{(n)} = (Z_1^{(n)},Z_2^{(n)},\dotsm)$, and $\sW^G(\bx,\by,t)$, say $\bZ = (Z_1,Z_2,\dotsm)$, in some way so that \cite[equation (60)]{Aldous.97}
\begin{align}\label{eqn:AldousEQN60}
    \sum_{j} (Z_j^{(n)})^2 &\to \sum_j Z_j^2&\text{and}&& \liminf_{n\to\infty} Z_j^{(n)}&\ge Z_j\qquad\textup{a.s.}
\end{align} The former holds as $\bS(\bx^{(n)},\by^{(n)},t) \to \bS(\bx,\by,t)$ a.s. We claim that latter holds for the specific choice
\begin{equation*}
    Z_j^{(n)} = \begin{cases}
        0&:\exists i<j \textup{ s.t. }A_{ij}^{(n)} = 1\\
        x_j+\sum_{i} x_i^{(n)}A_{ij}^{(n)}&:\textup{otherwise},
    \end{cases}
\end{equation*} and $\bZ$ defined similarly. Now suppose that $j$ is a vertex in $\sW^G(\bx,\by,t)$. If $j$ is the smallest labeled vertex, then Fatou's lemma, ~\eqref{eqn:AijConv} and the assumption that $\bx^{(n)}\to\bx$ imply that almost surely on $\{A_{ij} = 0, \forall i<j\}$
\begin{align*}
    Z_j &= x_j + \sum_{i >j} x_i A_{ij} \le \liminf_{n\to\infty} x_j^{(n)} + \sum_{i>j} \liminf_{n\to\infty} x_j^{(n)} A_{ij}^{(n)}\\
    &\le \liminf_{n\to\infty} \left(x_j^{(n)} + \sum_{i>j} x_j^{(n)}A_{ij}^{(n)}\right) = \liminf_{n\to\infty} Z_j^{(n)}.
\end{align*} If $j$ is not the smallest labeled vertex, then the claim is trivial as $Z_{j}^{(n)}\ge 0$ and $Z_j = 0$.
\end{proof}

\subsection{Proof of Theorem~\ref{thm:MC}}

We now turn to the proof of Theorem~\ref{thm:MC}. In order to do we wish exhibit some coupling so that~\eqref{eqn:coupledSusceptibiliy} in Lemma~\ref{lem:Lemm22Aldous} holds. Again, most of the proof proceeds as the proof in \cite{Aldous.97} and a key step in establishing the coupling is Lemma 23 therein. This is the first step where we do not have essentially the same conclusion as the analogous result in \cite{Aldous.97}. In this case, our bound is slightly worse. We recall that the $\bx$ terms are referred to as mass term and the $\by$ terms are referred to as weight term.
\begin{lemma}\label{lem:Lemma23Aldous}
    Suppose that $\bx$ and $\by$ are of finite length $n<\infty$. Let $m\in[n-1]$. Let $\sB$ denote the random bipartite graph on vertices $[m]\cup [n]\setminus[m]$ where vertices are included independently with probabilities
    \begin{equation*}
        \PR(i\sim j\textup{ in }\sB) = \begin{cases}
            1-\exp(-ty_iy_j) &: 1\le i \le m < j\le n\\
            0&:\textup{ else}.
        \end{cases}
    \end{equation*} Write $\alpha_1 = \sum_{i=1}^m x_i^2$ and $\alpha_2 = \sum_{i=1}^m y_i^2$. Let $(Z_i)_i$ denote the masses of the connected components of $\sB$.
    Then
    \begin{equation}\label{eqn:epsPbound}
        \eps \PR\left(\sum_{i} Z_i^2 > \alpha_1 + \eps\right) \le \left(t (\alpha_1+2\alpha_2 +3\eps) + t^2 (\alpha_1+\alpha_2 + 2\eps )^2 \right)\sum_{k=m+1}^n (x_k+y_k)^2.
    \end{equation}
\end{lemma}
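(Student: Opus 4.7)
My plan follows the proof of Lemma~23 in \cite{Aldous.97}, but tracks mass $\bx$ and weight $\by$ separately. The starting point is the identity $\sum_C M(C)^2 = \sum_v x_v M_v$, where $M_v$ is the mass of $v$'s component in $\sB$. Each vertex $i\in[m]$ contributes at least $x_i^2$ to this sum (since $M_i\ge x_i$), so $Z:=\sum_i Z_i^2\ge \alpha_1$ almost surely, and Markov's inequality reduces the claim to
\[
\eps\,\PR(Z>\alpha_1+\eps)\;\le\;\E\bigl[(Z-\alpha_1)\,1_{\{Z>\alpha_1+\eps\}}\bigr].
\]

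\textbf{Key steps.} Expand
\[
Z-\alpha_1\;=\;\sum_{k>m}x_k^2\;+\;2\sum_{i<j}x_ix_j\,1_{\{i\leftrightarrow j\}}\,,
\]
so the combinatorial task is to bound the connection probabilities $\PR(i\leftrightarrow j)$ in the bipartite graph $\sB$. I would use a union bound over self-avoiding paths,
\[
\PR(i\leftrightarrow j)\;\le\;\sum_{\pi:i\to j}\prod_{e=uv\in\pi}(1-e^{-ty_uy_v})\;\le\;\sum_{\pi}t^{|\pi|}\!\prod_{l=1}^{|\pi|}y_{v_{l-1}}y_{v_l},
\]
and split paths by their length parity. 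Length-$1$ paths (possible only for $i\in[m]$, $j\in[n]\setminus[m]$) yield the first-order ($t\cdot$) piece of the bound; paths of length $\ge 3$ yield the second-order ($t^2\cdot$) piece. Every contributing path uses at least one vertex of $V_{>m}:=[n]\setminus[m]$, and factoring that vertex out produces the common factor $\sum_{k>m}y_k^2\le \sum_{k>m}(x_k+y_k)^2$. The interior weight sums are then controlled by $\alpha_2=\sum_{i\in[m]}y_i^2$ together with Cauchy--Schwarz inequalities of the form $2x_vy_v\le x_v^2+y_v^2$, which is what introduces the $2\alpha_2$ next to $\alpha_1$ in the first-order coefficient.

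\textbf{Introducing the $\eps$ dependence.} The summands $3\eps$ in $t(\alpha_1+2\alpha_2+3\eps)$ and $2\eps$ in $t^2(\alpha_1+\alpha_2+2\eps)^2$ cannot be produced from an unconditional expectation bound; they come from the indicator $1_{\{Z>\alpha_1+\eps\}}$. The device (standard in Aldous) is to run a breadth-first exploration of $\sB$ started from $[m]$, stopped the first time the discovered $V_{>m}$-mass exceeds $\eps$. On $\{Z-\alpha_1>\eps\}$ this stopping time is finite, at which moment the discovered $V_{>m}$-mass is at most $\eps$ plus one jump and the associated $V_{>m}$-weight is at most $\alpha_2+\eps$ (by a Cauchy--Schwarz step converting a mass increment to a weight increment via the pair $(x_k+y_k)^2$). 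Substituting these truncated sums into the path estimates above replaces every $V_{>m}$-side mass sum by $\alpha_1+\eps$ and every $V_{>m}$-side weight sum by $\alpha_2+\eps$, yielding the advertised coefficients.

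\textbf{Main obstacle.} The Markov reduction and the path enumeration are essentially routine; the delicate step is the last one, verifying that the exploration/stopping argument produces exactly the coefficients $t(\alpha_1+2\alpha_2+3\eps)$ and $t^2(\alpha_1+\alpha_2+2\eps)^2$ without absorbing an unwanted $\eps$ into the main factor $\sum_{k>m}(x_k+y_k)^2$. I would handle length-$1$ paths and length-$\ge 3$ paths in separate passes, since this most cleanly isolates a linear-in-$(\alpha_1,\alpha_2,\eps)$ first-order contribution from a quadratic-in-$(\alpha_1,\alpha_2,\eps)$ second-order contribution with the required form; keeping careful track of which copy of $\eps$ is ``spent'' at which side of the bipartition is the only place where a mistake can hide.
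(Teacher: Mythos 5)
Your proposal departs from the paper's proof and, as sketched, has a genuine gap that I do not think can be repaired along the lines you describe.

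The paper does not bound $\PR(i\leftrightarrow j)$ by a union bound over paths. Instead it adds the vertices $m+1,\dotsc,n$ one at a time and tracks two running statistics: $Q_k$, the sum of squared component masses of the induced subgraph on $[k]$, and $\tilde Q_k$, the sum of squared component weights. A Cauchy--Schwarz computation bounds the one-step conditional increments by
\[
\E[Q_{k+1}-Q_k\mid\sB_k]\le\bigl(t\sqrt{Q_k\tilde Q_k}+t^2 Q_k\tilde Q_k\bigr)(x_{k+1}+y_{k+1})^2,\qquad
\E[\tilde Q_{k+1}-\tilde Q_k\mid\sB_k]\le\bigl(t\tilde Q_k+t^2\tilde Q_k^2\bigr)(x_{k+1}+y_{k+1})^2,
\]
and then the process $Q_k+\tilde Q_k$ minus the accumulated bound is a supermartingale. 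Optional stopping at $T=\min\{k\ge m: Q_k>Q_m+\eps\ \text{or}\ \tilde Q_k>\tilde Q_m+\eps\}$ is what injects the $\eps$'s: before $T$ one has $Q_j\le\alpha_1+\eps$ and $\tilde Q_j\le\alpha_2+\eps$, and Markov's inequality is then applied to the nonnegative overshoot $Q_{T\wedge n}+\tilde Q_{T\wedge n}-Q_m-\tilde Q_m$.

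By contrast, your path-enumeration sketch cannot produce the stated bound. First, paths of length $\ell\ge3$ would contribute terms of order $t^\ell$, not just $t^2$; you would need a geometric-series argument, which requires an a priori smallness condition on the weight sums that is not available (and the lemma is stated for all $t$). Second, the squared factor $(\alpha_1+\alpha_2+2\eps)^2$ in the bound has no natural origin in a union bound over paths — in the paper it arises precisely from the product $Q_j\tilde Q_j$ of running sums of squares inside the martingale increment. Third, your description of the stopping device conflates which side of the bipartition is being controlled: you say the stopped exploration has "$V_{>m}$-weight at most $\alpha_2+\eps$," but $\alpha_2=\sum_{i\le m}y_i^2$ is the $[m]$-side weight sum, and the relevant controlled quantities in the paper are the sums of \emph{squares} $Q_k,\tilde Q_k$, not raw discovered mass. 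Finally, the reduction you begin with, $\eps\,\PR(Z>\alpha_1+\eps)\le\E[(Z-\alpha_1)\mathbf 1_{\{Z>\alpha_1+\eps\}}]$, is correct but idle unless the indicator is exploited through a stopping argument — the unconditional expectation $\E[Z-\alpha_1]$ is \emph{not} bounded by the right-hand side, because without truncating $Q_k,\tilde Q_k$ the per-step increments can be arbitrarily large. So the $\eps$'s are essential to the estimate, and the mechanism you describe for introducing them does not deliver them.
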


\begin{proof}
Let $\sB_k$ denote the induced subgraph of $\sB$ on the vertex set $[k]$.  Let $Q_m = \sum_{i=1}^m x_i^2$ and $\tilde{Q}_m = \sum_{i=1}^m y_i^2$. Note that since $\sB_m$ is the empty graph on the vertex set $[m]$, it holds that $Q_m$ is the sum of the squares of the component masses of the components of $\sB_m$ and $\tilde{Q}_m$ is the sum of squared weights. More generally, let $Q_k$ (resp. $\tilde{Q}_k$) denote the sum of the squares of component masses (resp. weights) of $\sB_k$. 

Let $A_i$ be the event that $i\sim m+1$ in $\sB$ and hence $\sB_{m+1}$. Note that
\begin{align}
    \label{eqn:QQtilde} Q_{m+1} - Q_{m} &= 2\sum_{i=1}^m x_i x_{m+1} 1_{A_i} + \operatornamewithlimits{{\sum}^*}_{i,j\le m} x_ix_j 1_{A_i\cap A_j}\ge 0\\
   \nonumber \tilde{Q}_{m+1} - \tilde{Q}_{m} &= 2\sum_{i=1}^m y_i y_{m+1} 1_{A_i} + \operatornamewithlimits{{\sum}^*}_{i,j\le m} y_iy_j 1_{A_i\cap A_j}\ge 0
\end{align} where we write that $\Sigma^*$ for the summation over distinct indices. In particular, we see that
\begin{align*}
    \E&\left[Q_{m+1}-Q_m\right] = 2\sum_{i=1}^m x_ix_{m+1} (1-e^{-ty_iy_{m+1}}) + \operatornamewithlimits{{\sum}^*}_{i,j\le m} x_{i}x_j (1-e^{-ty_iy_{m+1}})(1-e^{-ty_jy_{m+1}})\\
    &\le 2t \sum_{i=1}^m x_{i}y_i x_{m+1}y_{m+1} + t^2\operatornamewithlimits{{\sum}^*}_{i,j\le m} x_{i}y_i x_{j}y_j y_{m+1}^2 \\
    &=2t \left(\sum_{i=1}^m x_{i}y_i\right) x_{m+1}y_{m+1} + t^2\left(\operatornamewithlimits{{\sum}^*}_{i,j\le m} x_{i}y_i x_{j}y_j\right) y_{m+1}^2\\
    &\le 2t \sqrt{Q_m \tilde{Q}_m} x_{m+1}y_{m+1}  + t^2 Q_m \tilde{Q}_m y_{m+1}^2\\
    &\le \left(t \sqrt{Q_m \tilde{Q}_m} + t^2 Q_{m}\tilde{Q}_m\right) (x_{m+1}+y_{m+1})^2,
\end{align*}
where we used Cauchy-Schwarz to go from the third to the fourth line. In the last line, we used $(a+b)^2 \ge b^2, 2ab$ for $a,b\ge 0$.
By replacing $x_i$ with $y_i$ we see 
\begin{equation*}
    \E\left[\tilde{Q}_{m+1} - \tilde{Q}_m \right] \le \left(t \tilde{Q}_m + t^2 \tilde{Q}_m^2 \right) (x_{m+1}+y_{m+1})^2.
\end{equation*}

Similar analysis implies
\begin{align*}
    \E&\left[Q_{k+1}-Q_k | \sB_k\right] \le \left(t \sqrt{Q_k \tilde{Q}_k} + t^2 Q_{k}\tilde{Q}_k\right) (x_{k+1}+y_{k+1})^2\\
    \E&\left[\tilde{Q}_{m+1} - \tilde{Q}_m |\sB_k\right] \le \left(t \tilde{Q}_k + t^2 \tilde{Q}_k^2 \right) (x_{k+1}+y_{k+1})^2.
\end{align*}

In particular, the process
\begin{align*}
    M_k = Q_k + &\tilde{Q}_k - Q_m-\tilde{Q}_m  \\
    &- \sum_{j = m}^{k-1}  \left(t \sqrt{Q_j \tilde{Q}_j} + t\tilde{Q}_j + t^2 Q_{j}\tilde{Q}_j+ t^2\tilde{Q}^2\right) (x_{j+1}+y_{j+1})^2\qquad m\le k \le n
\end{align*} is a supermartingale with respect to the filtration $\F_k = \sigma(\sB_k)$ with $\E[M_m] = 0$.

Let $T= \min\{k\ge m: Q_k > Q_m+\eps \textup{  or  } \tilde{Q}_k>\tilde{Q}_m+\eps\}$. The optional stopping theorem implies
\begin{equation*}
    \E\left[M_{T\wedge n}\right] \le \E[M_m] =  0.
\end{equation*} Expanding the left-hand side and using the definition of $T$ 
\begin{align*}
    \E&\left[Q_{T\wedge n}+\tilde{Q}_{T\wedge n} - Q_m-\tilde{Q}_m\right] \\
    &\le \E\left[\sum_{j=m}^{(T\wedge n )- 1}  \left(t \sqrt{Q_j \tilde{Q}_j}+ t\tilde{Q}_j + t^2 Q_{j}\tilde{Q}_j+t^2\tilde{Q}_{j}^2\right) (x_{j+1}+y_{j+1})^2\right]\\
    &\le\left(t \sqrt{(Q_m+\eps)(\tilde{Q}_{m}+\eps)}+ t(\tilde{Q}_m+\eps) + t^2 (Q_m+\eps) (\tilde{Q}_m+\eps)+ t^2 (Q_m+\eps)^2  \right)\\
    &\qquad\qquad\times\sum_{j=m+1}^n (x_j+y_j)^2.
\end{align*}
For any positive numbers $a,b$ it holds that $\sqrt{ab}\le a\vee b \le (a+b)$, $\sqrt{ab}+b\le (a+2b)$, and $ab+b^2 \le (a+b)^2$ and so  
\begin{align*}
   \E&\left[Q_{T\wedge n}+\tilde{Q}_{T\wedge n} - Q_m-\tilde{Q}_m\right]\\
   &\le\left(t(2\tilde{Q}_m +Q_m+3\eps) + t^2 (Q_m+\tilde{Q}_m+2\eps)^2  \right)\sum_{j=m+1}^n (x_j+y_j)^2.
\end{align*} Since $Q_{k}$ and $\tilde{Q}_k$ increase with $k$ by~\eqref{eqn:QQtilde}, $Q_{T\wedge n}+\tilde{Q}_{T\wedge n} - Q_{m} - \tilde{Q}_m> \eps$ on $\{Q_n - Q_m>\eps\}$. Markov's inequality implies~\eqref{eqn:epsPbound}.
\end{proof}

At several points in \cite{Aldous.97}, Aldous extends a result for finite graphs to a result for infinite graphs which he calls ``extension by truncation." Since we will only need this argument for an extension of Lemma~\ref{lem:Lemma23Aldous}, we state this as a separate lemma. The proof is an elementary limiting argument, which we omit.
\begin{lemma}\label{lem:Lemma23Infinite}
Suppose that $A$ and $B$ are two countable index sets and $\bx^A,\by^A\in \ell^2(A\to\R_+)$ and $\bx^B,\by^B\in \ell^2(B\to \R_+)$ are square summable sequences indexed by their superscript. Let $\sB$ denote the random bipartite graph on vertices $A\cup B$ where vertices are included independently with probabilities
    \begin{equation*}
        \PR(i\sim j\textup{ in }\sB) = \begin{cases}
            1-\exp(-ty_\alpha^Ay^B_\beta) &: \alpha\in A, \beta\in B\\
            0&:\textup{ else}.
        \end{cases}
    \end{equation*} Let $(Z_i)_i$ denote the masses of the connected components of $\sB$.
    Then
    \begin{align*}
        \eps \PR&\left(\sum_{i} Z_i^2 > \|\bx^A\|^2 + \eps\right) \\
        &\le \bigg(t (\|\bx^A\|^2+2\|\by^A\|^2 +3\eps) + t^2 (\|\bx^A\|^2+\|\by^A\|^2 + 2\eps )^2 \bigg)\|\bx^B+\by^B\|^2.
    \end{align*}
\end{lemma}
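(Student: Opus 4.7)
The plan is to extend Lemma~\ref{lem:Lemma23Aldous} to the countable setting by a single graphical coupling together with a monotone convergence step that handles both sides of the inequality simultaneously. Enumerate $A = \{a_1, a_2, \dotsm\}$ and $B = \{b_1, b_2, \dotsm\}$, and realize $\sB$ on $A \cup B$ via independent rate-$1$ exponentials $(\xi_{a,b})_{(a,b) \in A \times B}$ by including the edge $\{a,b\}$ iff $\xi_{a,b} \le t\, y^A_a y^B_b$. Let $A_k = \{a_1, \dotsm, a_k\}$, $B_k = \{b_1, \dotsm, b_k\}$, and write $\sB^{(k)}$ for the induced subgraph on $A_k \cup B_k$. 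Since $\sB^{(k)}$ is exactly the finite bipartite graph built from the truncated data $\bx^{A_k}, \by^{A_k}, \bx^{B_k}, \by^{B_k}$, Lemma~\ref{lem:Lemma23Aldous} applied with $m = k$, $n = 2k$ yields, for each $k$,
\begin{equation*}
    \eps\, \PR(U_k > \eps) \le \Bigl(t\bigl(\|\bx^{A_k}\|^2 + 2\|\by^{A_k}\|^2 + 3\eps\bigr) + t^2\bigl(\|\bx^{A_k}\|^2 + \|\by^{A_k}\|^2 + 2\eps\bigr)^2\Bigr)\|\bx^{B_k} + \by^{B_k}\|^2,
\end{equation*}
where $U_k := \sum_i (Z^{(k)}_i)^2 - \|\bx^{A_k}\|^2$ and $(Z^{(k)}_i)_i$ are the component masses of $\sB^{(k)}$. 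The right-hand side is manifestly monotone in $k$ and converges to the claimed bound by monotone convergence of the truncated $\ell^2$-norms.

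The core step is to identify $\lim_k U_k$ with $U := \sum_i Z_i^2 - \|\bx^A\|^2$, allowing $U \in [0,\infty]$. For this, expand $(Z^{(k)}_i)^2$ and regroup by unordered pairs $\{u,v\}$ in the same component of $\sB^{(k)}$ (writing $x_v$ for $x^A_v$ or $x^B_v$ according to which side $v$ lies on) to obtain
\begin{equation*}
    U_k = \|\bx^{B_k}\|^2 + 2\sum_{\substack{\{u,v\}\subset A_k\cup B_k \\ u\ne v,\ u\equiv_{\sB^{(k)}}v}} x_u x_v.
\end{equation*}
Both summands are manifestly nondecreasing in $k$: $B_k$ grows, the collection of admissible pairs grows, and, crucially, once two vertices lie in the same component of $\sB^{(k)}$ they remain so in every $\sB^{(k')}$ with $k' \ge k$, since the induced subgraphs only gain vertices and edges under the coupling. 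Monotone convergence therefore gives $U_k \uparrow U$ almost surely, whence $\{U_k > \eps\} \uparrow \{U > \eps\}$ and $\PR(U_k > \eps) \uparrow \PR(U > \eps)$. Passing to the limit in the displayed inequality then yields the desired bound.

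The only step beyond routine truncation is the pairwise-monotonicity identity for $U_k$: the $\preceq$-comparison from Lemma~\ref{lem:Lemma17Aldous} alone does not suffice, because the baseline $\|\bx^{A_k}\|^2$ also grows with $k$ and monotonicity of the difference is not automatic from $\bZ^{(k)} \preceq \bZ^{(k+1)}$ (which only controls the raw norms). The pair-sum rewriting, which makes the monotonicity transparent term-by-term, is the real content of the argument; everything else is bookkeeping and standard measure-theoretic truncation.
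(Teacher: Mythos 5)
Your proof is correct, and while it follows the same truncate-then-pass-to-the-limit skeleton as the paper's own argument, it handles the limiting step by a genuinely different and cleaner device. The paper takes increasing finite windows $A_p\cup B_p$, invokes Lemma~\ref{lem:Lemma17Aldous} to obtain $\|\bZ^{(p)}\|^2\uparrow\|\bZ\|^2$ a.s., first checks $\bZ\in\ell^2$ via a coupling with the plain multiplicative coalescent, perturbs $\eps$ to $\eps+\delta_p$ with $\delta_p=\|\bx^A\|^2-\|\bx^{A_p}\|^2$ so that Lemma~\ref{lem:Lemma23Aldous} directly bounds $\PR(\|\bZ^{(p)}\|^2>\|\bx^A\|^2+\eps)$, and then closes with a portmanteau inequality as $p\to\infty$. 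By contrast, your pair-sum identity
\begin{equation*}
U_k \;=\; \|\bx^{B_k}\|^2 \;+\; 2\!\!\sum_{\substack{\{u,v\}\subset A_k\cup B_k,\ u\ne v \\ u\equiv_{\sB^{(k)}}v}}\!\! x_u x_v
\end{equation*}
makes the quantity $U_k=\sum_i (Z_i^{(k)})^2-\|\bx^{A_k}\|^2$ pathwise nondecreasing in $k$ with a.s.\ limit $U=\sum_i Z_i^2-\|\bx^A\|^2\in[0,\infty]$, so continuity from below gives $\PR(U_k>\eps)\uparrow\PR(U>\eps)$ outright, with no $\delta_p$ bookkeeping, no appeal to portmanteau, and no need to establish $\bZ\in\ell^2$ separately. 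Your remark that Lemma~\ref{lem:Lemma17Aldous} alone does not yield monotonicity of $U_k$ (only of $\|\bZ^{(k)}\|^2$, while the baseline $\|\bx^{A_k}\|^2$ also grows) is correct, and it is exactly why the pair-sum rewriting is the crux of your version rather than a cosmetic rearrangement.
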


We now turn to the proof of Theorem~\ref{thm:MC} following the approach of Aldous \cite[pgs 842-843]{Aldous.97}
\begin{proof}[Proof of Theorem~\ref{thm:MC}] As already observed, $\bS(\bx,\by,t)<\infty$ a.s. by a coupling with the multiplicative coalescent.

Denote by $\bx^{(n,k)}$ (resp. $\by^{(n,k)}$) a rearrangement of the multiset $\{\{x_i:i\ge 1 \}\}\cup \{\{ x_i^{(n)}: i\ge k\}\}$ (resp. $\{\{y_i:i\ge 1 \}\}\cup \{\{ y_i^{(n)}: i\ge k\}\}$). Here we use the same ``rearrangement map'' $\{i:i\ge 1\}\sqcup\{j: j\ge k\}\to \{i;i\ge 1\}$ for the indices of $\bx^{(n,k)}$ and $\by^{(n,k)}$.  Using the subgraph coupling we can coupling $\sW^{G}(\bx,\by,t)$ and $\sW^{G}(\bx^{(n,k)}, \by^{(n,k)},t)$ and hence $\bS(\bx,\by,t)$ and $\bS(\bx^{(n,k)},\by^{(n,k)},t)$ as well. We claim that
\begin{align}\label{eqn:Aldous61}
    \lim_{k\to\infty} &\limsup_{n\to\infty} \PR\left(\bS(\bx^{(n,k)},\by^{(n,k)},t) - \bS(\bx,\by,t)>\eps\right)=0.
\end{align}
Indeed, for subgraph coupling above, we can set $\F^{(n,k)}$ as the $\sigma$-algebra generated by the induced subgraphs $A,B$ of $\sW^G(\bx^{(n,k)}, \by^{(n,k)},t)$ where the vertices of $A$ are those whose mass corresponds to an $x_i$ term and the vertices of $B$ are those whose mass corresponds to an $x_{i}^{(n)}$ term. Now, we write $\bS'(A)$ for the random variable obtained in this coupling, which is the squared sum of the \textit{masses and weights} of the induced sugbraph $A$. That is
\begin{equation*}
   \bS(\bx,\by,t) = \sum_{\cC\subset A}\left( \sum_{i\in \cC} x_i \right)^2 \qquad\bS'(A) = \sum_{\cC\subset A}\left( \sum_{i\in \cC} x_i+y_i \right)^2,
\end{equation*} where the outside summations are over the connected components $\cC$ of the induced subgraph $\sW^{G}(\bx,\by,t)$ of $\sW^G(\bx^{(n,k)}, \by^{(n,k)},t)$. Similarly define $\bS'(B)$. Lemma~\ref{lem:Lemma23Infinite} then implies
\begin{align*}
   \eps \PR&\left( \bS(\bx^{(n,k)},\by^{(n,k)},t)>\bS(\bx,\by,t)+\eps\big|\F^{(n,k)} \right)\\
   &\le \bigg(t (3\bS'(A) + 3\eps) + t^2 (2\bS'(A)+2\eps)^2\bigg) \bS'(B).
\end{align*} 
We now claim that $\lim_{k\to\infty} \limsup_{n\to\infty} \bS'(B) = 0$ where the limits are in probability. Indeed, by the convergence in $\ell^2$ of $\bx^{(n)}\to \bx$ and $\by^{(n)}\to\by$ we have
\begin{equation*}
    \lim_{k\to \infty} \sup_{n\ge 1} \sum_{i\ge k}(x_i^{(n)} + y_i^{(n)})^2 =0.
\end{equation*} Using the $\xi$-coupling we have for all $n\ge1$ but $k$ sufficiently large
\begin{align*}
    \PR\left(\bS'(B) > s\right) &\le \PR\left(\left\|\MC_1\left(\{\{x_i^{(n)}+y_i^{(n)};i\ge k\}\}, t\right)\right\|^2> s \right) \le \frac{ts \sum_{i\ge k} (x^{(n)}_i+y^{(n)}_i)^2}{s-\sum_{i\ge k} (x^{(n)}_i+y^{(n)}_i)^2},
\end{align*}
where the second line follows from Lemma~\ref{lem:Lemma20Aldous} recalled above. These arguments imply~\eqref{eqn:Aldous61}.

Let us now define $\bx^{[n,k]}=(x_1,\dotsm,x_{k-1}, x_k^{(n)},x_{k+1}^{(n)},\dotsm)$ and similarly define $\by^{[n,k]}$. Using the subgraph coupling and Lemma~\ref{lem:Lemma17Aldous} it is easy to see that $\bS(\bx^{[n,k]}, \by^{[n,k]},t) \le \bS(\bx^{(n,k)}, \by^{(n,k)},t)$ a.s. Hence~\eqref{eqn:Aldous61} implies
\begin{equation}\label{eqn:Aldous62}
 \lim_{k\to\infty}\limsup_{n\to\infty}    \PR\left(\bS(\bx^{[n,k]},\by^{[n,k]},t) - \bS(\bx,\by,t)> \eps\right) = 0.
\end{equation}

Up to now, our analysis does not require $y_i>0$, but here it does. Now, let $\boldsymbol\eta = (\eta_i)\in\ell^1$ be a summable sequence of strictly positive numbers and be temporarily fixed. Let $\boldsymbol{\eta}^{[<k]} = (\eta_1,\dotms,\eta_{k-1},0,\dotms)$ be the truncation. For any $\delta>0$ and a given $k$, there exists $n_{\delta,k,\boldsymbol{\eta}} <\infty$ such that for all $n\ge n_{\delta,k,\boldsymbol{\eta}}$ 
\begin{align*}
&x_i^{(n)} \le x_i + \delta \eta_i &\textup{and}&&
&y_i^{(n)}y_{j}^{(n)}t \le y_iy_j(t+\delta) \qquad\forall i,j< k
\end{align*}
In this case, the $\xi$-coupling gives us 
\begin{equation}\label{eqn:AldousPost62}
    \bS(\bx^{(n)}, \by^{(n)},t)\le \bS(\bx^{[n,k]}+\delta\boldsymbol{\eta}^{[<k]},\by^{[n,k]},t+\delta) \qquad \forall n\ge n_{\delta,k,\boldsymbol{\eta}}.
\end{equation}
Now for any partition $\boldsymbol{\pi} = (\pi_1,\pi_2,\dotms)$ of $\N$, we can do similar analysis as~\eqref{eqn:citeLater}
\begin{align*}
    \sum_{p=1}^\infty &\left( \sum_{i\in \pi_p} z_i + \delta \eta_i 1_{[i<  k]}\right)^2
    \le \sum_{p=1}^\infty \left(\sum_{i\in \pi_p} z_i\right)^2 + 2\delta \langle \boldsymbol{\eta}, \bz\rangle^2 + \delta^2 \|\boldsymbol{\eta}\|_1^2.
\end{align*} Here we use $\|\boldsymbol{\eta}\|_2^2 \le \|\boldsymbol{\eta}\|_1^2.$
Applying this to~\eqref{eqn:AldousPost62} we see 
\begin{equation*}
    \bS(\bx^{(n)}, \by^{(n)},t)\le \bS(\bx^{[n,k]},\by^{[n,k]},t+\delta) + C_{\boldsymbol{\eta}}\|\bx^{[n,k]}\|_2^2 \delta\qquad \forall n\ge n_{\delta,k,\boldsymbol{\eta}}
\end{equation*} where the constant $C_{\boldsymbol{\eta}} = 2 \|\boldsymbol\eta\|_2^2 + \|\boldsymbol{\eta}\|_1^2\longrightarrow 0$ as $\boldsymbol{\eta}\downarrow \boldsymbol{0}$ in $\ell^1$.
In particular, we have shown
\begin{equation*}
    \limsup_{n\to\infty} \bS(\bx^{(n)},\by^{(n)}, t)  \le \bS(\bx^{[n,k]},\by^{[n,k]},t+\delta) \qquad \forall \delta>0,k \ge1.
\end{equation*}
With~\eqref{eqn:Aldous62} this implies 
\begin{equation*}
   \lim_{n\to\infty} \PR\left( \bS(\bx^{(n)},\by^{(n)}, t) > \bS(\bx,\by, t+\delta) + \eps\right) = 0
\end{equation*}

Finally, conditionally given the graph the component masses and weights, say $\bX,\bY$, of $\sW^{G}(\bx,\by,t)$ we have
\begin{align*}
    \PR&\left(\bS(\bx,\by,t+\delta) >\bS(\bx,\by,t) +\eps  \big| \bX,\bY\right) = \PR\left(\MC_2(\bX,\bY,\delta)>\|\bX\|^2 + \eps\big|\bX,\bY\right)\\
    &\le \PR\left(\MC_2(\bX+\bY, \bX+\bY, \delta) > \|\bX\|^2+\|\bY\|^2+2\langle \bX,\bY\rangle +\eps\bigg| \bX,\bY\right)\\
    &\le \frac{\delta \left(\|\bX+\bY\|^2+\eps\right) (\|\bX+\bY\|^2)}{\eps}\longrightarrow 0 
\end{align*}
as $\delta\downarrow 0$. The first inequality follows from Lemma~\ref{lem:helpLem1}. The second inequality is Lemma~\ref{lem:Lemm22Aldous}. Since $\bS(\bx,\by,t+\delta)$ decreases in $\delta$, the above limit in probability extends to convergence almost surely as $\delta\downarrow 0$. This exhibits a coupling so that~\eqref{eqn:coupledSusceptibiliy} holds and establishes the theorem.

\end{proof}

\subsection{Approximate MCMW}

A major part of the proof Theorem~\ref{thm:main2} is to show that the percolated graph $\G_n(\mu\gamma/c_n)$ is approximately the graph $\sW^G(\bX^{(n)},\bY^{(n)},\mu)$ for some random $\bX^{(n)}$, $\bY^{(n)}$ which converge weakly. To show that this approximate behavior is, in some sense, good enough, we establish the following general proposition. In order to state this, we introduce some notation.

Suppose that $G_n$, $n\ge 1$ and $G$ are graphs on the same countable vertex set. We will say that \begin{equation*}
    G_n\rightsquigarrow G
\end{equation*}
if for all vertices $u,v$
\begin{equation*}
    1_{[\{u,v\} \in E(G_n)]} \longrightarrow 1_{[\{u,v\} \in E(G)]}.
\end{equation*}

\begin{proposition}\label{prop:graphConvergence} Let $\bx^{(n)}\longrightarrow \bx^{(\infty)}$ in $\ell^2_\downarrow$ be a sequence of vertex masses. 
    Suppose that for all $1\le n\le\infty$, there are a sequence of graphs $G_n\subset G_n'$ on $\N$ where vertex $i$ has mass $x_i^{(n)}$. Denote by $\bX_n, \bX_n'$ the component masses of $G_n,G_n'$, respectively, listed in decreasing order of mass with ties broken arbitrarily. 
    
    Suppose $G_n'\rightsquigarrow G_\infty'$ and for all $i,j$
    \begin{equation*}
        \PR(i\sim  j\textup{ in }G_n) \longrightarrow \PR(i\sim j \textup{  in  } G_\infty').
    \end{equation*}
    If $\bX'_n\weakarrow\bX_\infty'$, then $\bX_n\weakarrow \bX_\infty'
    $.
\end{proposition}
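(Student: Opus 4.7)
My plan is to reduce the convergence of $\bX_n$ to a convergence of susceptibilities via Lemma~\ref{lem:Lemma17Aldous}, then establish the susceptibility convergence by the Fatou-type argument from the first half of Lemma~\ref{lem:Lemm22Aldous}. The first step is to upgrade the hypotheses to the coupled convergence $G_n \rightsquigarrow G_\infty'$. Since $G_n' \rightsquigarrow G_\infty'$ and edge-indicators are bounded, bounded convergence yields $\PR(\{i,j\} \in G_n') \to \PR(\{i,j\} \in G_\infty')$, which combined with $\PR(\{i,j\} \in G_n) \to \PR(\{i,j\} \in G_\infty')$ forces the non-negative quantity $\PR(\{i,j\} \in G_n' \setminus G_n)$ to tend to $0$. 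Together with the convergence of edge indicators for $G_n'$, this gives $G_n \rightsquigarrow G_\infty'$ on the same coupled probability space.

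Next, since $G_n \subset G_n'$, Lemma~\ref{lem:Lemma17Aldous} gives $\bX_n \preceq \bX_n'$ almost surely, so tightness of $\{\bX_n\}$ follows from the tightness of $\{\bX_n'\}$ together with Corollary~\ref{cor:compactCKL}. The same lemma provides the key inequality
\begin{equation*}
    \|\bX_n' - \bX_n\|^2 \le \|\bX_n'\|^2 - \|\bX_n\|^2 \qquad \text{a.s.}
\end{equation*}
As $\bX_n' \weakarrow \bX_\infty'$ in $\ell^2_\downarrow$ gives $\|\bX_n'\|^2 \weakarrow \|\bX_\infty'\|^2$ by continuity of the norm, the desired conclusion $\bX_n \weakarrow \bX_\infty'$ follows from Slutsky's theorem once I can show $\|\bX_n'\|^2 - \|\bX_n\|^2 \to 0$ in probability.

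To obtain this susceptibility convergence I would pass along an arbitrary subsequence and use Skorokhod representation together with a diagonal extraction to arrange a.s. on a single probability space the convergence of every edge-indicator $1_{[\{i,j\}\in G_n]}$ and $1_{[\{i,j\}\in G_n']}$ to $1_{[\{i,j\}\in G_\infty']}$, together with $\bX_n' \to \bX_\infty'$ and $\bX_n \to \bX$ in $\ell^2_\downarrow$ for some limit $\bX$ supplied by tightness. Following the Fatou portion of Lemma~\ref{lem:Lemm22Aldous}, I would introduce the \emph{modified components} $\mathscr{C}'_n$, i.e.~maximal sets in which vertices are connected in both $G_n$ and $G_\infty'$. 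Since true components of $G_n$ are unions of modified components,
\begin{equation*}
    \|\bX_n\|^2 \ge \sum_{\cC' \in \mathscr{C}'_n}\Bigl(\sum_{i \in \cC',\, i \le k} x_i^{(n)}\Bigr)^2
\end{equation*}
for every $k$, and for each fixed $k$ the right-hand side converges a.s.~as $n \to \infty$ to $\sum_{\cC}(\sum_{i \in \cC,\, i \le k} x_i^{(\infty)})^2$ over the components $\cC$ of $G_\infty'$, using the per-edge convergence $G_n \rightsquigarrow G_\infty'$ and $\bx^{(n)}\to \bx^{(\infty)}$ coordinate-wise. Letting $k\to\infty$ and invoking monotone convergence produces $\liminf_n \|\bX_n\|^2 \ge \|\bX_\infty'\|^2$ a.s., which combined with the a.s.~upper bound $\|\bX_n\|^2 \le \|\bX_n'\|^2 \to \|\bX_\infty'\|^2$ delivers $\|\bX_n'\|^2 - \|\bX_n\|^2 \to 0$ along the subsequence, as required.

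The main obstacle is precisely this Fatou step: one must carefully arrange countably many edge-indicator convergences together with the $\ell^2_\downarrow$-convergences on a single probability space (via Skorokhod plus a diagonal extraction) and keep track of the simultaneous variation of the vertex masses $x_i^{(n)}$ with $n$. Everything else is bookkeeping once the template of Lemma~\ref{lem:Lemm22Aldous} is in hand.
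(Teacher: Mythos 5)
Your proposal is correct, and the skeleton—tightness via Corollary~\ref{cor:compactCKL}, passing along a subsequence with Skorohod representation to make all edge-indicator and $\ell^2_\downarrow$ convergences almost sure, a Fatou argument to pin down $\lim_n\|\bX_n\|^2 = \|\bX'_\infty\|^2$—is the same as the paper's. What is genuinely different is the endgame. The paper introduces the fixed-indexing vector $\bZ_n$ (component masses indexed by minimal vertex), proves a per-coordinate Fatou inequality $\liminf_n Z_{n,i} \ge Z'_{\infty,i}$, and only then invokes the Aldous device recorded in \eqref{eqn:AldousEQN60} to pass from norm convergence plus per-coordinate $\liminf$ to convergence of the decreasing rearrangements. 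You instead apply the coupling inequality of Lemma~\ref{lem:Lemma17Aldous}, $\|\bX_n'-\bX_n\|^2 \le \|\bX_n'\|^2 - \|\bX_n\|^2$, so that once $\|\bX_n'\|^2 - \|\bX_n\|^2 \to 0$ in probability you immediately get $\bX_n' - \bX_n \to 0$ in $\ell^2$, and Slutsky together with $\bX_n' \weakarrow \bX'_\infty$ finishes the job. This sidesteps the per-coordinate Fatou entirely and avoids the rearrangement bookkeeping behind Aldous's equation (60); correspondingly, your Fatou step uses the modified-component decomposition over prefixes $[k]$, closer in style to the paper's proof of Lemma~\ref{lem:Lemm22Aldous} than to its own proof of this proposition. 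Both routes are valid; yours is a bit shorter at the cost of leaning on one extra consequence of the monotone coupling, and does not produce the per-coordinate $\liminf$ inequality, which is harmless here since it is not needed downstream.
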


\begin{proof} The proof is analogous to Lemma~\ref{lem:Lemm22Aldous}.
  Observe that for all $n\ge 1$ that $\bX_n\preceq \bX_n'$, by e.g. Lemma~\ref{lem:Lemma17Aldous}, and since $\bX'_n$ is tight in $\ell^2_\downarrow$, Corollary~\ref{cor:compactCKL} and standard tightness arguments imply
\begin{equation*}
    \left((\bX_n,\bX_n'); n \ge 1 \right) \textup{ is a tight sequence of random variables in }\ell^2_\downarrow\times\ell^2_\downarrow.
\end{equation*}
By reducing to a subsequence we can suppose that $(\bX_n,\bX_n')\weakarrow(\bX_\infty,\bX_\infty')$. We need to show $\bX_\infty = \bX'_\infty$.

Let $A_{i,j}^n$ denote the indicator that $i\sim j$ in $G_n$ and let $B_{i,j}^n$ denote the indicator that $i\sim j$ in $G_n'$. Then $A_{i,j}^n\le B_{i,j}^n \to B_{i,j}^\infty$ a.s., and for all $i\neq j$,
$$
\PR(B_{i,j}^n > A_{i,j}^{n}) = \PR(B_{i,j}^n = 1) - \PR(A_{i,j}^n = 1) \longrightarrow 0\textup{ as }n\to\infty.
$$
In particular, $A_{i,j}^n\weakarrow B_{i,j}^\infty$ for all $i,j$.

By a routine tightness argument and Skorohod's representation theorem, we can suppose that almost surely as $n\to\infty$
\begin{equation*}
    \bX_n\longrightarrow \bX_\infty, \quad\bX_n'\longrightarrow\bX'_\infty, \quad A_{i,j}^n\longrightarrow B_{i,j}^\infty\quad \textup{and}\quad B_{i,j}^n\longrightarrow B_{i,j}^\infty.
\end{equation*} 

Let us define $\bZ_n$ by
\begin{equation*}
    Z_{n,i} =\begin{cases} 0 &: \exists j<i \textup{ s.t. }A_{i,j}^n = 1\\
        x_i^{(n)} + \displaystyle \sum_{j> i} x_j^{(n)} A_{i,j}^n&: \textup{ otherwise }.
    \end{cases} 
\end{equation*}
Similarly define $\bZ_n'$ with $B_{i,j}^n$ replacing $A_{i,j}^n$ for all $1\le n\le \infty$. Note that the decreasing rearrangement of $\bZ_n$ and $\bZ_n'$ are $\bX_n$ and $\bX_n'$ respectively. Observe that Fatous' lemma implies that a.s.
\begin{equation}\label{eqn:fatouZ}
    \liminf_{n} Z_{n,i} \ge Z_{\infty,i}':= \begin{cases} 0 &: \exists j<i \textup{ s.t. }B_{i,j}^n = 1\\
        x_i^{(n)} + \displaystyle \sum_{j> i} x_j^{(n)} B_{i,j}^\infty&: \textup{ otherwise }.
    \end{cases} 
\end{equation}

We claim that 
\begin{equation}
    \label{eqn:znNormConv} \|\bZ_n\| \longrightarrow \|\bX'_\infty\|,
\end{equation} which implies $\bX_n\longrightarrow\bX_\infty'$ using~\eqref{eqn:fatouZ} (c.f.~\eqref{eqn:AldousEQN60} and equation (60) in \cite{Aldous.97}). 

To establish~\eqref{eqn:znNormConv}, we first note that 
\begin{equation*}
\|\bX_\infty\|^2 = \lim_{n}    \|\bX_n\|^2 = \lim_n \|\bZ_n\|^2 \le \lim_{n}\|\bX'_n\|^2 = \|\bX'_\infty\|^2.
\end{equation*} Now applying Fatou's lemma and~\eqref{eqn:fatouZ} we have
\begin{equation*}
    \liminf_{n\to\infty} \|\bZ_n\|^2 = \liminf_{n\to\infty} \sum_{i} Z_{n,i}^2 \ge \sum_{i} \liminf_{n} Z_{n,i}^2 \ge \sum_{i} (Z_{\infty,i}')^2 = \|\bZ_\infty'\|^2 = \|\bX_\infty'\|^2
\end{equation*} where the last inequality follows from the definition of $\bX_\infty'$. 
\end{proof}

\section{Exploration of the HCM}\label{sec:Exploration}

\subsection{Exploration of the White Configuration Model}
In this section, we describe the exploration process of the hierarchical configuration model before any of the black half-edges are paired. That is, we explore $\G_n(0)$. To simplify the notation, we omit the superscript ``$(w)$'' until it is necessary and in the algorithm below, we only refer to the white half-edges and degrees, e.g. we say choose a half-edge to mean choose a white half-edge.

Consider the following exploration of $\CM_n(\bd)$ on $n$ vertices. All the vertices $v$ and all half-edges $e$ are initially \textit{inactive} but \textit{alive}. Over the course of the exploration, we will \textit{discover}, \textit{activate}, and \textit{kill} vertices and half-edges and we will \textit{explore} vertices. The following is a re-wording of the exploration in \cite{DvdHvLS.20}.
\begin{algorithm}[{Breadth-First Exploration}]\label{alg:BFE}\,

\begin{enumerate}
    \item At step $l = 0$, all the vertices and half-edges are alive but none are active and there are no exploring vertices.
    \item For $l = 1,2,\dotsm$ we do the following:
    \begin{enumerate}
        \item If there is an \textit{exploring vertex} $v$, continue to (b). If there is an active vertex but no exploring vertex, declare the \textit{smallest} vertex to be exploring and continue to (b). Otherwise, there is no active half-edge. We choose a vertex $v$ proportional to its degree among the alive vertices (\textit{discovering} the vertex $v$); declare its half-edges to be active; declare $v$ to be exploring; then repeat step (a) with the incremented time step $l+1$. 
    \item Take an active half-edge $e$ of an exploring vertex $v$ and pair it uniformly at random to another alive half-edge $f$. Then kill both $e$ and $f$. If $f$ is incident to a vertex $u$ that is not discovered before then we discover $u$ and declare all the alive half-edges incident to $u$ (i.e. not $f$) to be active. If $f$ was the only incident half-edge to $u$ then kill $u$; but otherwise declare $u$ to be the \textit{largest} active vertex. If $e$ was the last remaining half-edge for $v$, then kill $v$. Go to (c).
    \item If all half-edges have been killed, stop the exploration. Otherwise, go back to step (a) for the next time step $l+1$.
    \end{enumerate}
\end{enumerate}
\end{algorithm}

For the vertex $i\in\CM_n(\bd)$, let $\eta_i$ denote the time that it is discovered in the exploration. Write
\begin{equation*}
    X_n(t) = -2t + \sum_{i=1}^n d_i 1_{[\eta_i \le l]}.
\end{equation*}
As noted in \cite{DvdHvLS.20} (see also \cite{Joseph.14,DvdHvLS.17}), the walk approximately encodes the component sizes of the graph. Namely, if $\cC_n(k)$ is the $k$th connected component discovered during this exploration then
\begin{equation}\label{eqn:Eccnk}
\#E(\cC_n(k)) = \tau_n(k) - \tau_n(k-1)-1,\qquad \tau_n(k) = \min\{t: X_n(t) = -2k\}.
\end{equation}

Let $\F_l$ denote the $\sigma$-algebra generated by the exploration up to time $l$ and let $\Upsilon_l$ denote the number of connected components discovered up to and including time $l$. Observe that if $\Upsilon_t> \Upsilon_{t-1}$, then a vertex $v$ was discovered in step (a) of the exploration and vertex $i$ (with $\eta_i \ge t$) was chosen with probability $\frac{d_i}{ \ell_n - 2t + 2\Upsilon_{t-1}}$. However, if we have proceeded to step (b), then a vertex $i$ with $\eta_i\ge t$ is discovered with probability $\frac{d_i}{\ell_n - 2t + 2\Upsilon_{t-1} - 1}$.
That means a.s.
\begin{equation*}
    \PR(\eta_i = t | \F_{t-1} ) 1_{[\eta_i\ge t]} = \begin{cases}\displaystyle \frac{d_i}{\ell_n - 2(l -\Upsilon_{t-1}) - 1} &: \Upsilon_t = \Upsilon_{t-1}\\\displaystyle
    \frac{d_i}{\ell_n- 2(l -\Upsilon_{t-1})} &: \Upsilon_t > \Upsilon_{t-1}
    \end{cases}
\end{equation*} Also note that $\Upsilon_t\in \F_{t-1}$ is previsible. This is because when a connected component is completely explored at time $t-1$, we know we have either completely explored the graph (i.e. all vertices have been explored) else we must find a new connected component. 
In particular, for all $t \le Tb_n$ we have
\begin{equation}\label{eqn:boundsOnEta}
     \frac{d_i}{\ell_n} \le\PR(\eta_i = t| \F_{t-1}) \le \frac{d_i}{\ell_n - 2Tb_n}\textup{  on }\{\eta_i\ge t\}.
\end{equation}

\subsection{Incorporating the Black Edges}

Let us now turn to describing how we incorporate the black half-edges. We let for each $t = 0,1,2,\dotsm$
\begin{align*}
    X_n (t) &= -2t + \sum_{i=1}^n d_i^{(w)} 1_{[\eta_i\le t]}&
    Y_n(t) &= \sum_{i=1}^n d_i^{(b)} 1_{[\eta_i\le t]}.
\end{align*}
Of course, the random variables $\eta_i$ also depend on $n$, but we omit that from the notation. 

Let $N_n(t)$ denote the number of surplus edges discovered up to time $t$ in the exploration algorithm. This only counts the number of white surplus edges in the graph $\G_n(0)$. In terms of the process $X_n(t)$ it is easy to see 
\begin{equation*}
    N_n(t) = \#\{s=0,1,\dotsm, t: X(s) = X(s-1) - 2\}. 
\end{equation*} 

For a collection of vertices $A$, write $E^{(w)}(A)$ as the collection of white edges incident to two vertices in $u,v\in A$ and write $H^{(w)} (A)$ as the collection of white half-edges incident to a vertex in $A$. Similarly, denote $E^{(b)}$ and $H^{(b)}$. Recall the definition of $\tau_n(k)$ in~\eqref{eqn:Eccnk}. 

\begin{lemma}\label{lem:HalfEdgesB}
    Let $\cC_n(k)$ denote the $k^\textup{th}$ connected component discovered in the exploration of the configuration model $\G_n(0)\overset{d}{=} \HCM_n(\bd^{(w)},\bd^{(b)})$. Then 
    \begin{equation*}
        \#E^{(w)}(\cC_n(k)) = \tau_n(k)-\tau_n(k-1) -1
    \end{equation*}
    and 
    \begin{equation*}
        \#H^{(b)}(\cC_n(k)) = Y_n(\tau_n(k)) - Y_n( \tau_n(k-1)).
    \end{equation*}
    Consequently,
    \begin{equation*}
        \#\cC_n(k) = \tau_n(k)-\tau_n(k-1) - (N_n(\tau_n(k)) - N_n(\tau_{n}(k-1))).
    \end{equation*}
\end{lemma}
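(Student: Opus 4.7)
My plan is to verify all three identities as direct bookkeeping consequences of Algorithm~\ref{alg:BFE}. The key observation is that the time steps $t\in(\tau_n(k-1),\tau_n(k)]$ partition into two mutually exclusive types according to what happens in the exploration: an \emph{initiation step}, corresponding to step (a) with no currently active vertex and which discovers the first vertex of $\cC_n(k)$ without pairing a half-edge; and a \emph{pairing step}, corresponding to step (b), which consumes two alive white half-edges (thereby producing exactly one white edge incident to $\cC_n(k)$) and either (ii-a) uncovers a new vertex or (ii-b) reveals an already-discovered vertex of $\cC_n(k)$. Precisely one initiation step occurs per component, namely at $t = \tau_n(k-1)+1$, so
\[
\tau_n(k) - \tau_n(k-1) = 1 + \#E^{(w)}(\cC_n(k)),
\]
which is the first identity (and matches~\eqref{eqn:Eccnk}).

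For the second identity, note that $Y_n$ is piecewise constant and jumps only at discovery times $\eta_i$, at which it jumps by $d_i^{(b)}$. By the BFS structure of Algorithm~\ref{alg:BFE} together with the definition of $\tau_n(k)$ as the completion time of the $k$-th component, the set of vertices discovered in the window $(\tau_n(k-1),\tau_n(k)]$ is precisely $\cC_n(k)$. Hence
\[
Y_n(\tau_n(k)) - Y_n(\tau_n(k-1)) = \sum_{i:\tau_n(k-1) < \eta_i \le \tau_n(k)} d_i^{(b)} = \sum_{v\in\cC_n(k)} d_v^{(b)} = \#H^{(b)}(\cC_n(k)),
\]
the last equality because each vertex $v$ contributes exactly $d_v^{(b)}$ incident black half-edges.

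For the third identity, I partition the $\tau_n(k)-\tau_n(k-1)$ time steps inside component $k$ into \emph{discovery steps} (the initiation step plus each pairing step of type (ii-a)) and \emph{surplus-edge steps} (pairing steps of type (ii-b)). The discovery steps are in one-to-one correspondence with the vertices of $\cC_n(k)$ via $t \mapsto $ the unique vertex $v$ with $\eta_v = t$, so there are exactly $\#\cC_n(k)$ of them. The surplus-edge steps are the ones where $X_n(s) = X_n(s-1)-2$, so their count in the window is $N_n(\tau_n(k)) - N_n(\tau_n(k-1))$. Subtracting gives the stated identity. There is no real analytic obstacle here; the only subtlety worth flagging is that Algorithm~\ref{alg:BFE} spends a time step \emph{just} on the initial discovery of a component (without pairing), which is precisely what produces the $-1$ in the first identity and which is consistent with the walk formula $X_n(t) = -2t + \sum_i d_i^{(w)} 1_{[\eta_i\le t]}$ and with the convention $\tau_n(0)=0$.
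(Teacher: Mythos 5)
Your proof is correct and follows the same bookkeeping approach as the paper: the first identity is~\eqref{eqn:Eccnk}, the second is the observation that the vertices discovered in the window $(\tau_n(k-1),\tau_n(k)]$ are exactly $\cC_n(k)$, and the third is the same surplus accounting (the paper reaches it by invoking $\#V = \#E + 1 - \mathrm{sur}$ for a connected graph, while you count discovery steps versus surplus steps directly, which is the same identity restated).
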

\begin{proof}
    This follows form~\eqref{eqn:Eccnk}, the definition of $H^{(b)}$ and the fact that the surplus edges satisfies
    \begin{equation*}
       \#V(G) =   \#E(G) + 1 -\textup{sur}(G)
    \end{equation*}
    for any connected graph $G$.
\end{proof}

\subsection{Limit Theorems}
Recall the thinned L\'{e}vy processes $X$ and $Y$ defined in~\eqref{eqn:XinfDef}. We will also define another process $N$. 
Conditionally given $X(t)$, let $N(t)$ denote a Poisson process with intensity measure 
\begin{equation}
    \E[dN(t)] = \left(X(t) - \inf_{s\le t} X(s) \right)\,dt.\label{eqn:NinfDef}
\end{equation}

In this section, we establish the following proposition, extending \cite{DvdHvLS.20}. For a function $f:\{0,1,\dotsm\}\to \R$ we extend $f:\R_+\to\R$ and write $f(u) = f(\fl{u})$.

\begin{proposition}\label{prop:ConvergenceWalks}
    Suppose Assumption~\ref{ass:Degree}. Then following limits hold in $\D(\R_+,\R^3)$
    \begin{equation*}
        \left( \left(\frac{1}{a_n}X_n(b_nt), \frac{1}{b_n}Y_n(b_nt), N_n(b_nt)  \right); t\ge 0\right) \weakarrow \left( (X(t),Y(t), N(t)); t\ge 0\right).
    \end{equation*}
\end{proposition}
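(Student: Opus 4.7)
The plan is to extend the one-dimensional convergence $a_n^{-1} X_n(b_n \cdot) \weakarrow X$ established in \cite{DvdHvLS.20} to the joint convergence with $Y_n$ and $N_n$. The backbone is a truncation argument in the spirit of Aldous--Limic \cite{AL.98}: split off the $K$ largest-degree ``hub'' vertices, handle them explicitly, and control the ``bulk'' remainder via martingale estimates, ultimately letting $K \to \infty$. Convergence of $N_n$ will be handled last by relating it to the reflected $X_n$ and invoking Poisson-process approximation.

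\textbf{Step 1 (Hub contributions).} For fixed $K$, write
\begin{equation*}
X_n^{\le K}(t) = -2t + \sum_{i=1}^{K} d_i^{(w)} 1_{[\eta_i \le t]}, \qquad Y_n^{\le K}(t) = \sum_{i=1}^{K} d_i^{(b)} 1_{[\eta_i \le t]},
\end{equation*}
and analogously define $X_n^{>K}, Y_n^{>K}$. Using~\eqref{eqn:boundsOnEta} and Assumption~\ref{ass:Degree}\eqref{enum:AssDegree2}, I would show that $(\eta_i/b_n;\,i\le K)$ converges jointly in distribution to the appropriate scaled exponentials built from $(\xi_i;\,i\le K)$. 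Combined with $d_i^{(w)}/a_n\to\theta_i$ and $d_i^{(b)}/b_n\to\beta_i$, this produces the joint convergence of the hub processes $(a_n^{-1}X_n^{\le K}(b_n\cdot), b_n^{-1}Y_n^{\le K}(b_n\cdot))$ in $\D(\R_+,\R^2)$ to the $K$-truncated version of $(X,Y)$ (after including the compensating linear drift that is absorbed into $\lambda t$ and $\alpha t$, using the criticality condition \eqref{enum:AssDegree4} and the cross-moment condition in \eqref{enum:AssDegree3}).

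\textbf{Step 2 (Bulk contributions).} Decompose $Y_n^{>K}(b_nt)$ into its $\F_{t-1}$-compensator plus a martingale increment. The quadratic variation is bounded above by $\sum_{i>K}(d_i^{(b)})^2\,P(\eta_i\le b_n t)$, which, thanks to \eqref{eqn:boundsOnEta} and the assumption $\limsup_K\limsup_n b_n^{-2}\sum_{i>K}(d_i^{(b)})^2=0$, yields via Doob's $L^2$ inequality
\begin{equation*}
\lim_{K\to\infty}\limsup_{n\to\infty} P\bigl(\sup_{t\le T}|b_n^{-1}Y_n^{>K}(b_n t)-\alpha_K^n(t)|>\varepsilon\bigr)=0,
\end{equation*}
where $\alpha_K^n(t)\to\alpha t$ as $K\to\infty$, $n\to\infty$ in the appropriate order by Assumption~\ref{ass:Degree}\eqref{enum:AssDegree3}. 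The analogous statement for $a_n^{-1}X_n^{>K}(b_n\cdot)$ is essentially in \cite{DvdHvLS.20}, using $\limsup_K\limsup_n a_n^{-3}\sum_{i>K}(d_i^{(w)})^3=0$. A standard $3\varepsilon$-argument then combines Steps~1--2 to deliver joint convergence of $(a_n^{-1}X_n(b_n\cdot), b_n^{-1}Y_n(b_n\cdot))$ to $(X,Y)$ in $\D(\R_+,\R^2)$.

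\textbf{Step 3 (Surplus edges).} Conditional on $\F_{t-1}$, the probability that step $t$ of the exploration produces a surplus edge equals $(A_n(t-1)-1)/(\ell_n^{(w)}-2t+1)$, where $A_n(t)$ is the number of active white half-edges after step $t$. Tracking the exploration shows $A_n(t)=X_n(t)-\min_{s\le t}X_n(s)+O(1)$, the error coming from the at-most-one ``partially explored'' component. Rescaling, the compensator of $N_n(b_n\cdot)$ converges to $\int_0^{\cdot}(X(s)-\inf_{u\le s}X(u))\,ds$ jointly with $(X_n,Y_n)$, and standard Poisson/Grigelionis approximation upgrades this to joint convergence $(X_n,Y_n,N_n)\weakarrow (X,Y,N)$ on $\D(\R_+,\R^3)$. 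The \textbf{main obstacle} is the bulk $L^2$ control of $Y_n^{>K}$ jointly with $X_n^{>K}$: because $Y_n$ lacks any stabilizing drift and is driven by the same $\eta_i$'s as $X_n$, uniform bounds on compacts must use both parts of Assumption~\ref{ass:Degree}\eqref{enum:AssDegree3} in a coordinated way, and the finiteness $\langle\btheta,\bbeta\rangle<\infty$ is exactly what ensures the limit $Y$ and its cross-variation with $X$ are well-defined.
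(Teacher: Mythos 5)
Your plan is correct and follows essentially the same route as the paper: truncate at the hubs using Lemma~\ref{lem:bigJumps1}-type joint convergence of the $\eta_i$, control the bulk by martingale (or negative-correlation/Chebyshev) estimates against the moment conditions in Assumption~\ref{ass:Degree}\eqref{enum:AssDegree3}, and glue the pieces with Billingsley's Theorem~3.2; the $N_n$ coordinate is then handled by relating its compensator to the reflected walk. The one organizational difference worth flagging is that the paper's proof introduces an intermediate threshold $m=m(n)\to\infty$, chosen so that $\tfrac{1}{n}\sum_{i>m(n)}d_i^{(b)}d_i^{(w)}\to\alpha\kappa$ as in~\eqref{eqn:m(n)defin}, which splits the degree sum into \emph{three} ranges (hubs $i\le k$, middle $k<i\le m(n)$, far tail $i>m(n)$). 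The far tail supplies a deterministic drift $\alpha t$ directly, the middle range is crushed by $\sum_{i>k}\theta_i\beta_i\to 0$ and the $a_n^{-3}\sum_{i>k}(d_i^{(w)})^3$ control, and the hubs give the jumps. Your two-range split ($i\le K$ versus $i>K$) with drift constants ``$\alpha_K^n(t)\to\alpha t$ in the appropriate order'' can be made to work, but you will find that the approximating processes must carry a $K$-dependent residual drift (roughly $\tfrac{1}{\kappa}\sum_{i>K}\theta_i\beta_i\cdot t$) rather than $\alpha t$ itself, and proving the required uniform estimate $\lim_{K}\limsup_n \PR(\sup_{t\le T}|\cdots|>\eps)=0$ effectively reconstructs the $m(n)$ device, so the paper's three-way split is cleaner. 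Also, in Step~3 ``standard Poisson/Grigelionis approximation'' understates the difficulty: the increments of $N_n$ occur precisely at the time steps where $X_n$ takes a $-2$ downstep, so $N_n$ is not conditionally independent of $X_n$ given the compensator. The paper's cited argument from \cite{DvdHvLS.20} deals with this by deleting those time steps and controlling the Skorokhod distance between the original and deleted paths; this step is genuinely non-routine and should not be dismissed as ``standard.''
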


As noted in the organization, the proof follows the method in \cite{DvdHvLS.20} and for brevity we omit some parts of the proof. The remainder of this subsection is devoted to proving the joint convergence of the appropriately rescaled process $(X_n,Y_n)$.

\subsubsection{Preliminary Bound}
We will need some bounds on $\PR(\eta_j\le t)$ throughout the proof. To establish these, we appeal to the following simple lemma, whose proof is omitted.
\begin{lemma}\label{lem:boundsProb}
    Suppose that $Z\in \{1,2,\dotsm\}$ is an integer valued random variable and $\F_t$ is some filtration. Let $q_t, p_t\in[0,1]$ be constants such that
    \begin{equation*}
       q_t \le\PR(Z = t |\F_{t-1}) \le p_t \qquad\textup{ on }\{Z\ge t\}.
    \end{equation*} We also suppose that $\{Z\le  t\}\in \F_{t}$.
    Then
    \begin{equation*}
     \prod_{s=1}^t (1-p_s)  \le  \PR(Z>t) \le \prod_{s=1}^t (1-q_s)
    \end{equation*}
\end{lemma}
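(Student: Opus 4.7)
The plan is to establish the inequality by induction on $t$, exploiting the telescoping identity
\begin{equation*}
\PR(Z > t) = \PR(Z > t-1)\cdot \PR(Z \ne t \mid Z > t-1),
\end{equation*}
together with the pointwise conditional bounds assumed on $\{Z \ge t\}$. First, note that since $Z \ge 1$ we have $\PR(Z > 0) = 1$, which provides the base case for the induction.

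For the inductive step at time $t \ge 1$, I would condition on $\F_{t-1}$. Since $\{Z \le t-1\} \in \F_{t-1}$ by hypothesis, the event $\{Z > t-1\}$ is also $\F_{t-1}$-measurable, and we can write
\begin{equation*}
\E\bigl[ 1_{\{Z > t\}} \bigm| \F_{t-1}\bigr] = 1_{\{Z > t-1\}} \bigl(1 - \PR(Z = t \mid \F_{t-1})\bigr).
\end{equation*}
On the event $\{Z > t-1\} = \{Z \ge t\}$, the hypothesis gives $q_t \le \PR(Z = t \mid \F_{t-1}) \le p_t$, so
\begin{equation*}
1_{\{Z > t-1\}} (1-p_t) \le \E\bigl[1_{\{Z > t\}} \bigm| \F_{t-1}\bigr] \le 1_{\{Z > t-1\}} (1 - q_t).
\end{equation*}
Taking expectations yields the one-step recursion
\begin{equation*}
(1-p_t)\, \PR(Z > t-1) \le \PR(Z > t) \le (1-q_t)\, \PR(Z > t-1).
\end{equation*}

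Iterating this recursion from $s=1$ up to $s=t$ and using $\PR(Z > 0) = 1$ gives the claimed product bounds. There is essentially no obstacle here; the only subtlety is keeping straight that the conditional bound is assumed only on $\{Z \ge t\}$, which is exactly where the indicator $1_{\{Z > t-1\}}$ isolates the relevant contribution, so the one-sided bounds lift cleanly to unconditional bounds.
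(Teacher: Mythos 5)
Your proof is correct and follows essentially the same strategy as the paper's (omitted) argument: condition on $\F_{t-1}$, use $\F_{t-1}$-measurability of $1_{\{Z>t-1\}}$ to localize the conditional bound to the event $\{Z\ge t\}$, obtain the one-step recursion $(1-p_t)\PR(Z>t-1)\le\PR(Z>t)\le(1-q_t)\PR(Z>t-1)$, and iterate. The paper's version phrases the indicator as $\prod_{s=1}^{t-1}1_{[Z>s]}$ rather than $1_{\{Z>t-1\}}$, but this is the same quantity and the same idea.
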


We apply Lemma~\ref{lem:boundsProb} above to the random variable $\eta_i$ and the filtration $\F_t$ appearing in~\eqref{eqn:boundsOnEta}. Using $e^{-(x+x^2)}\le(1-x) \le e^{-x}$ for all $x\in[0,1/2]$ we see that for $t\le Tb_n$ and $n$ large that
\begin{align}
\nonumber\exp&\left(-\left(\frac{d_i^{(w)} }{\ell_n^{(w)} - 2Tb_n}+ \frac{(d_i^{(w)})^2 }{(\ell_n^{(w)} - 2Tb_n)^2}\right)t\right)\le \left( 1-\frac{d_i^{(w)}}{\ell_n^{(w)}-2Tb_n}\right)^t   \\
\nonumber&\le \PR(\eta_i > t) \le   \left(1-\frac{d_i^{(w)}}{\ell_n^{(w)}} \right)^t\le \exp\left(-\frac{d_i^{(w)} }{\ell_n^{(w)}}t\right)
\end{align}
and so using $u-u^2 \le 1-e^{-u}\le u$ for all $u\ge 0$
\begin{equation}\label{eqn:etaIprobBounds}
   \frac{d_i^{(w)}t}{\ell_n^{(w)}}-\frac{(d_i^{(w)})^2 t^2}{(\ell_n^{(w)})^2} \le \PR(\eta_i \le t) \le \left(\frac{d_i^{(w)}}{\ell_n^{(w)} - 2Tb_n} + \frac{(d_i^{(w)})^2}{(\ell_n^{(w)} - 2Tb_n)^2} \right)t.
\end{equation}

The marginal convergence below follows from the above bounds, but the joint convergence requires a little more analysis. We refer to \cite{DvdHvLS.20} for details.
\begin{lemma}[Dhara et. al. {\cite[Lemma 9]{DvdHvLS.20}}]\label{lem:bigJumps1}
Suppose Assumption~\ref{ass:Degree}. For any $k\ge 1$, in the space $\D(\R_+,\R)^k$ the following convergence holds
\begin{equation*}
    \left( 1_{[\eta_i\le b_nt]};t\ge 0\right)_{i\in[k]} \weakarrow \left( 1_{[\xi_i\le \kappa t]};t\ge 0\right)_{i\in[k]}
\end{equation*}
where $\xi_j\sim \textup{Exp}(\theta_j)$ are independent of each other. Moreover, almost surely, the limits do not jump simultaneously.
\end{lemma}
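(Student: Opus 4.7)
The plan is to deduce the joint convergence of the indicator processes by (i) establishing joint distributional convergence of the discovery times $(\eta_{1}/b_n, \dots, \eta_{k}/b_n)$, and then (ii) promoting this to Skorokhod convergence of the corresponding $\{0,1\}$-valued jump processes. Because each process $t \mapsto 1_{[\eta_i \le b_n t]}$ is a simple two-state càdlàg path with a single jump from $0$ to $1$ at time $\eta_i/b_n$, joint convergence in $\mathrm{D}(\R_+, \R)^k$ reduces to convergence of the jump times together with the a.s.\ distinctness of the limits. The limit jumps occur at times of the form $\kappa \xi_i$ with the $\xi_i$ independent exponentials; distinctness is automatic from absolute continuity and independence.

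\textbf{Main step: joint convergence of $(\eta_{1},\dots,\eta_{k})/b_n$.} The marginal estimate comes directly from \eqref{eqn:etaIprobBounds} combined with Lemma~\ref{lem:boundsProb}, which pins down $\PR(\eta_i > b_n s)$ up to multiplicative $1+o(1)$ factors uniformly in $s \in [0,T]$. For the joint law, I would extend the conditioning argument that produced \eqref{eqn:boundsOnEta}: at step $t$ the BFS algorithm discovers at most one new vertex, and conditionally on $\F_{t-1}$ together with the event that none of $i_1, \dots, i_k$ has yet been discovered, the probability that vertex $i_j$ is picked at step $t$ lies in $[d^{(w)}_{i_j}/\ell_n^{(w)}, d^{(w)}_{i_j}/(\ell_n^{(w)} - 2Tb_n)]$. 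Applying the telescoping estimate of Lemma~\ref{lem:boundsProb} coordinatewise and summing over the active indices at each step gives
\begin{equation*}
\PR\Big(\bigcap_{j=1}^k \{\eta_{i_j} > b_n s_j\}\Big)
 \;=\; \prod_{t=1}^{b_n \max_j s_j} \Big(1 - \tfrac{\sum_{j:\, t \le b_n s_j} d^{(w)}_{i_j}}{\ell_n^{(w)}}\Big)\bigl(1+o(1)\bigr).
\end{equation*}
Using $d^{(w)}_{i_j}/a_n \to \theta_{i_j}$, $\ell_n^{(w)}/n \to \kappa$, and $a_n b_n = n$ from Assumption~\ref{ass:Degree}, this product converges to $\prod_j \exp(-\theta_{i_j} s_j / \kappa)$, which is the joint law of independent exponentials with the appropriate rates. (The time change $s \mapsto \kappa s$ in the statement is a cosmetic rescaling of the exponential clocks.)

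\textbf{Promotion to $\mathrm{D}(\R_+,\R)^k$.} For $k$ càdlàg paths each of which takes only two values and jumps once, convergence in the product Skorokhod topology is equivalent to joint convergence of the jump times, provided the limiting jump times are a.s.\ distinct (so the natural piecewise-linear time-change between the pre-limit and limit jumps has Lipschitz constant tending to one). Independence and absolute continuity of the limiting exponentials guarantee this distinctness with probability one.

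\textbf{Where the difficulty lies.} The genuine technical content is Step~1: the ``without-replacement'' coupling introduces cross terms when one of the tracked vertices is discovered before time $b_n \max_j s_j$, and these must be shown to be $o(1)$. Because we work with a fixed finite set of indices whose degrees are $\Theta(a_n) = \Theta(n/b_n) = o(\ell_n^{(w)})$, each such correction contributes at most $O(a_n b_n /\ell_n^{(w)\,2}) \cdot (Tb_n) = O(1/b_n)$ per step and $O(1/b_n)$ in aggregate, which vanishes in the limit. Packaging these error terms cleanly into the telescoping product, so the upper and lower bounds from \eqref{eqn:etaIprobBounds} coincide in the limit, is the one place where some care is needed; otherwise the argument is a routine extension of the one-coordinate bound already in place.
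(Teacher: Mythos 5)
Your overall strategy is correct and matches the only sensible way to prove this result: reduce Skorokhod convergence of single-jump $\{0,1\}$-valued paths to joint weak convergence of the jump times $(\eta_{i_1},\dots,\eta_{i_k})/b_n$, then establish the joint survival-function limit via a conditional product over exploration steps. The paper itself does not reprove this lemma (it cites \cite[Lemma~9]{DvdHvLS.20} and only spells out the marginal bound~\eqref{eqn:etaIprobBounds}), so there is no internal proof to compare against — but your product formula is exactly the multi-index extension of \eqref{eqn:boundsOnEta} and Lemma~\ref{lem:boundsProb}, and the key observation that at each step the algorithm can discover at most one vertex, with disjoint discovery events giving the sum $\sum_{j\textup{ active}} d_{i_j}^{(w)}/\ell_n^{(w)}$ in the conditional probability, is the right one.

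A few points to tighten. First, the product identity you display is really a pair of sandwich bounds: iterating Lemma~\ref{lem:boundsProb} coordinatewise gives
\[
\prod_{t=1}^{b_n s_{(k)}}\Big(1-\tfrac{\sum_{j:\,t\le b_ns_j}d^{(w)}_{i_j}}{\ell_n^{(w)}-2Tb_n}\Big)
\;\le\;
\PR\Big(\bigcap_{j}\{\eta_{i_j}>b_ns_j\}\Big)
\;\le\;
\prod_{t=1}^{b_n s_{(k)}}\Big(1-\tfrac{\sum_{j:\,t\le b_ns_j}d^{(w)}_{i_j}}{\ell_n^{(w)}}\Big),
\]
and the two sides merge as $n\to\infty$; phrasing it this way avoids the imprecise $(1+o(1))$ factor. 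Second, the ``cross-term'' discussion in your last paragraph slightly mischaracterizes the difficulty. The conditional discovery probability $\PR(\eta_{i_j}=t\,|\,\F_{t-1})$ depends only on the number of alive half-edges at step $t$, which is a function of $t$ and $\Upsilon_{t-1}$ and is bounded uniformly as in \eqref{eqn:boundsOnEta}, regardless of which of the other tracked vertices have already been found; the fact that a discovered hub's half-edges remain alive until paired is what makes this bound clean. So there are no new cross terms to estimate beyond what \eqref{eqn:boundsOnEta} already absorbs (and your $O(1/b_n)$ per-step bookkeeping in that paragraph does not in fact give $o(1)$ in aggregate — it is simply unnecessary). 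Third, for the Skorokhod promotion you assert equivalence ``provided the limiting jump times are a.s.\ distinct,'' but distinctness is not needed for convergence in the \emph{product} topology $\D(\R_+,\R)^k$; each coordinate gets its own time change. Distinctness is only what the ``Moreover, almost surely, the limits do not jump simultaneously'' sentence asserts, and you correctly dispose of that via absolute continuity and independence of the $\xi_i$.
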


\subsubsection{Small terms}

Observe that we can write
\begin{equation*}
    \nu_n(\lambda) = \frac{1}{\ell_n^{(w)}} \left(\sum_{i=1}^n (d_i^{(w)})^2 - \ell_n^{(w)} \right) = -1+\frac{1}{\ell_n^{(w)}}\sum_{i=1}^n (d_{i}^{(w)})^2,
\end{equation*} and so
\begin{align*}
    X_n(t) &= -2t + \sum_{i=1}^n d_i^{(w)}1_{[\eta_i\le t]}=-2t + \sum_{i=1}^n d_i^{(w)} \left(1_{[\eta_i\le t]}-\frac{d_i^{(w)}}{\ell_n^{(w)}} t\right) + \frac{t}{\ell_n}\sum_{i=1}^n (d_i^{(w)})^2\\
    &=  \sum_{i=1}^n d_i^{(w)} \left(1_{[\eta_i\le t]}-\frac{d_i^{(w)}}{\ell_n^{(w)}} t\right) + (\nu_n(\lambda)-1)t = \widetilde{X}_n(t) + A_n(t) \qquad \textup{(say)}.
\end{align*}
Clearly 
\begin{equation*}
    a_n^{-1}A_n(b_nt) = (\nu_n(\lambda)- 1)\frac{b_n}{a_n}t = \left( \lambda + o(1) \right)\frac{b_n}{a_nc_n}t =(\lambda + o(1)) t.
\end{equation*} This establishes the following lemma.
\begin{lemma}\label{lem:An(t)conv} Under Assumption~\ref{ass:Degree}, the function $A_n(t)$ defined above satisfies
    \begin{equation*}
        \left(a_n^{-1} A_n(b_nt);t\ge 0\right) \longrightarrow \left(\lambda t;t\ge 0\right),
    \end{equation*}
    locally uniformly. 
\end{lemma}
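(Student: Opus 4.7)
The proof should be very direct since essentially all the computation has already been set up in the paragraph preceding the lemma. My plan is to simply verify the scaling identity $b_n/(a_n c_n) = 1$ and then argue that the linear convergence is automatically locally uniform.

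First, I would recall that $A_n(t) = (\nu_n(\lambda) - 1) t$ is linear in $t$. By Assumption~\ref{ass:Degree}\eqref{enum:AssDegree4}, we have $\nu_n(\lambda) - 1 = \lambda c_n^{-1} + o(c_n^{-1})$, so substituting $t = b_n s$ gives
\begin{equation*}
a_n^{-1} A_n(b_n s) = \frac{b_n}{a_n c_n}\bigl(\lambda + o(1)\bigr) s.
\end{equation*}

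Next I would verify that the scaling factor $b_n/(a_n c_n)$ equals exactly $1$. From the definitions in~\eqref{eqn:constants},
\begin{equation*}
a_n c_n = n^{1/(\tau-1)} L(n) \cdot \frac{n^{(\tau-3)/(\tau-1)}}{L(n)^2} = \frac{n^{(\tau-2)/(\tau-1)}}{L(n)} = b_n,
\end{equation*}
so $b_n/(a_n c_n) = 1$, and therefore $a_n^{-1} A_n(b_n s) = (\lambda + o(1)) s$ as $n \to \infty$.

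Finally, since $s \mapsto a_n^{-1} A_n(b_n s)$ is linear with slope $\lambda + o(1)$ and the limit $s \mapsto \lambda s$ is linear, pointwise convergence of the slope immediately upgrades to locally uniform convergence on any compact interval $[0,T]$: indeed, $\sup_{s \in [0,T]} |a_n^{-1} A_n(b_n s) - \lambda s| \le T \cdot |\nu_n(\lambda) - 1 - \lambda c_n^{-1}| \cdot b_n/a_n = T \cdot o(1)$. No step here presents a real obstacle; the lemma is essentially a bookkeeping statement confirming that the criticality scaling in Assumption~\ref{ass:Degree}\eqref{enum:AssDegree4} is precisely the one that produces a nontrivial drift $\lambda$ in the scaling limit of $X_n$.
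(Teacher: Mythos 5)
Your proof is correct and follows exactly the same computation the paper sketches in the paragraph preceding the lemma statement. You simply spell out the identity $a_n c_n = b_n$ (which the paper uses implicitly in writing $b_n/(a_n c_n) = 1$) and the trivial upgrade from pointwise to locally uniform convergence for linear functions.
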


Note that under Assumption~\ref{ass:Degree} that we can find a sequence $m = m(n)\to\infty$ sufficiently slowly so that
\begin{equation}\label{eqn:m(n)defin}
\lim_{n\to\infty}   \frac{1}{n} \sum_{i>m(n)} d_i^{(b)}d_i^{(w)} = \alpha \E[D] = \alpha \kappa.
\end{equation} 

\begin{lemma}\label{lem:TailSumsConv}
Fix $\eps>0$ and $T<\infty$. Under Assumption~\ref{ass:Degree} and for the sequence $m= m(n)$ constructed above, both
    \begin{equation*}
\left(\frac{1}{b_n} \sum_{i=m+1}^n d_i^{(b)} 1_{[\eta_i\le b_nt]}; t\ge 0\right)\weakarrow (\alpha t;t\ge0).
    \end{equation*}
    and
    \begin{equation}\label{eqn:martingaleArgument}
        \left(\frac{1}{a_n}\sum_{i=m+1}^n d_i^{(w)}\left(1_{[\eta_i\le b_nt]} -\frac{d_i^{(w)}b_nt}{\ell_n^{(w)}-2Tb_n}  \right);t\ge 0\right) \weakarrow (0;t\ge 0).
    \end{equation}
\end{lemma}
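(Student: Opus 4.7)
The strategy is to apply a Doob decomposition to each sum with respect to the exploration filtration $\F_s$ and then estimate the martingale and compensator separately. Let $p_{i,s}:=\PR(\eta_i=s\mid \F_{s-1})$ on $\{\eta_i\ge s\}$; for $*\in\{b,w\}$ write $Z_n^{(*)}(t):=\sum_{i>m}d_i^{(*)}1_{[\eta_i\le t]}$ and
$$B_n^{(*)}(t)=\sum_{s=1}^t\sum_{i>m}d_i^{(*)}p_{i,s}1_{[\eta_i\ge s]},\qquad M_n^{(*)}(t)=Z_n^{(*)}(t)-B_n^{(*)}(t).$$
Since at most one vertex is discovered per step of Algorithm~\ref{alg:BFE}, the predictable quadratic variation satisfies $\E[\langle M_n^{(*)}\rangle_t]\le\sum_{i>m}(d_i^{(*)})^2\PR(\eta_i\le t)$, and Doob's $L^2$ maximal inequality together with \eqref{eqn:etaIprobBounds} will dispose of the martingale error.

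For the first convergence, the sandwich on $p_{i,s}$ implicit in \eqref{eqn:boundsOnEta}, namely $d_i^{(w)}/\ell_n^{(w)}\le p_{i,s}\le d_i^{(w)}/(\ell_n^{(w)}-2Tb_n)$ for $s\le Tb_n$, gives
$$B_n^{(b)}(b_nt) = \frac{1+o(1)}{\ell_n^{(w)}}\sum_{i>m}d_i^{(b)}d_i^{(w)}(\eta_i\wedge b_nt).$$
Since $\E[b_nt-\eta_i\wedge b_nt]=O(b_n\theta_{m+1})$ for $i>m$ by the lower bound in \eqref{eqn:etaIprobBounds} and the arrangement in Assumption~\ref{ass:Degree}\eqref{enum:AssDegree1}, and $\ell_n^{(w)}\sim\kappa n$, the choice \eqref{eqn:m(n)defin} yields $\E[B_n^{(b)}(b_nt)/b_n]\to\alpha t$. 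A variance computation using $d_i^{(w)}\le a_n(\theta_{m+1}+\beta_{m+1}+o(1))$ for $i>m$ and $\sum_i(d_i^{(b)})^2=O(n)$ from Assumption~\ref{ass:Degree}\eqref{enum:AssDegree5} shows $\Var(B_n^{(b)}(b_nt)/b_n)\to 0$. For the martingale,
$$\E[\langle M_n^{(b)}\rangle_{Tb_n}]\le\frac{Tb_n\cdot a_n(\theta_{m+1}+o(1))}{\ell_n^{(w)}-2Tb_n}\sum_{i>m}(d_i^{(b)})^2=O(a_nb_n\theta_{m+1}),$$
so $\E[\sup_{t\le T}M_n^{(b)}(b_nt)^2/b_n^2]=O(a_n\theta_{m+1}/b_n)\to 0$ since $a_n/b_n=n^{(3-\tau)/(\tau-1)}L(n)^2\to 0$. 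The monotonicity of $Z_n^{(b)}$ and continuity of the limit upgrade pointwise convergence to convergence in $\D(\R_+,\R)$.

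The second convergence uses the same decomposition but compares against the ``upper compensator'' $\bar B_n^{(w)}(t):=\frac{t}{\ell_n^{(w)}-2Tb_n}\sum_{i>m}(d_i^{(w)})^2$, so the target process is $a_n^{-1}(M_n^{(w)}(b_nt)+B_n^{(w)}(b_nt)-\bar B_n^{(w)}(b_nt))$. The remainder
$$B_n^{(w)}(b_nt)-\bar B_n^{(w)}(b_nt) = -\frac{1+o(1)}{\ell_n^{(w)}-2Tb_n}\sum_{i>m}(d_i^{(w)})^2(b_nt-\eta_i\wedge b_nt)$$
(up to smaller-order slack from \eqref{eqn:etaIprobBounds}) has first moment bounded by $\frac{(b_nt)^2}{a_n(\ell_n^{(w)})^2}\sum_{i>m}(d_i^{(w)})^3 = o(t^2)$, using Assumption~\ref{ass:Degree}\eqref{enum:AssDegree3} together with \eqref{eqn:m(n)defin} to extract $\sum_{i>m}(d_i^{(w)})^3 = o(a_n^3)$ and the identity $a_nb_n=n$; a parallel second-moment computation gives $o(1)$ variance. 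The martingale satisfies
$$\E[\langle M_n^{(w)}\rangle_{Tb_n}]\le\frac{Tb_n}{\ell_n^{(w)}-2Tb_n}\sum_{i>m}(d_i^{(w)})^3=o(b_na_n^3/n)=o(a_n^2),$$
so $\sup_{t\le T}|M_n^{(w)}(b_nt)/a_n|\to 0$ in $L^2$ by Doob. Functional convergence to zero then follows from the monotonicity of $Z_n^{(w)}$ and the explicit linearity of $\bar B_n^{(w)}$ in $t$. The main technical obstacle throughout is covariance control in the variance bounds for the compensators, which I handle along the same lines as in \cite{DvdHvLS.20}.
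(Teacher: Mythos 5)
Your proposal is correct in substance but follows a genuinely different route from the paper. The paper proves the first convergence by computing first and second moments of $b_n^{-1}Z_n^{(b)}(b_nt)$ directly from \eqref{eqn:etaIprobBounds} and the negative correlation of the indicators $1_{[\eta_i\le t]}$, and then invokes Theorem~2.15 in \cite[Chapter~VI]{JS.13} (an increasing-process convergence criterion) to pass from one-dimensional to functional convergence. For the second convergence, the paper observes that the process in~\eqref{eqn:martingaleArgument} is itself a supermartingale (this is exactly your $M_n^{(w)}+(B_n^{(w)}-\bar B_n^{(w)})$, since the difference of compensators is non-increasing), bounds its mean and variance, and then applies the supermartingale maximal inequality \cite[Lemma~2.54.5]{RW.94}. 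You instead run a Doob decomposition $Z_n^{(*)}=M_n^{(*)}+B_n^{(*)}$ for both coordinates, handle the martingale by Doob's $L^2$ maximal inequality, handle the compensator by moment estimates, and then use a P\'olya-type argument (monotone processes, continuous deterministic limit) for the functional upgrade. Both approaches rely on the same underlying inputs: the discovery-probability sandwich \eqref{eqn:boundsOnEta}/\eqref{eqn:etaIprobBounds}, negative correlation of the $1_{[\eta_i\le s]}$'s, the third-moment truncation in Assumption~\ref{ass:Degree}\eqref{enum:AssDegree3}, and $a_n/b_n\to 0$. Your route is cleaner conceptually but, for the first part, adds a genuine obstacle the paper avoids: the paper only needs negative correlation at a fixed time $b_nt$, whereas bounding $\Var(B_n^{(b)}(b_nt))$ requires controlling $\Cov(1_{[\eta_i\le s]},1_{[\eta_j\le s']})$ across both indices and both times. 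You flag this and defer to \cite{DvdHvLS.20}; the computation does go through (write $\eta_i\wedge b_nt = b_nt-\sum_{s<b_nt}1_{[\eta_i\le s]}$ and use non-positive cross-covariances), but it is the one place where a reader would want the detail.

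Two minor slips worth fixing. First, you credit \eqref{eqn:m(n)defin} for $\sum_{i>m}(d_i^{(w)})^3=o(a_n^3)$, but \eqref{eqn:m(n)defin} concerns $\sum_{i>m}d_i^{(b)}d_i^{(w)}$; the cubic tail estimate comes from Assumption~\ref{ass:Degree}\eqref{enum:AssDegree3} combined with $m(n)\to\infty$ (and likewise $\sup_{i>m(n)}d_i^{(w)}/a_n\to 0$ requires the two-step limit $n\to\infty$ then $k\to\infty$ as in \eqref{eqn:boundingSupdiw}, not a direct application of Assumption~\ref{ass:Degree}\eqref{enum:AssDegree2} at the moving index $m(n)+1$). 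Second, in the last sentence of your treatment of \eqref{eqn:martingaleArgument}, "the monotonicity of $Z_n^{(w)}$" is not quite what drives the functional upgrade: $Z_n^{(w)}-\bar B_n^{(w)}$ is not monotone. What is monotone (non-increasing, starting at $0$) is the compensator discrepancy $B_n^{(w)}-\bar B_n^{(w)}$, and that, together with the uniform $L^2$ bound on $M_n^{(w)}$ from Doob, is what gives the uniform convergence.
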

\begin{proof} Since $\ell^{(b)}_n$ will not appear in this proof, omit the superscript $(w)$ and write just $\ell_n$ instead of $\ell_n^{(w)}$. 

We start with the first convergence. By Theorem 2.15 in \cite[Chapter VI]{JS.13} it is sufficient to show for all $t\ge 0$ that
\begin{equation}\label{eqn:weakConvTailY}
    \sum_{i>m} \frac{1}{b_n}d_i^{(b)} 1_{[\eta_i\le b_nt]} \weakarrow \alpha t
\end{equation}
and
\begin{equation*}
    \sum_{i>m} \left(\frac{1}{b_n}d_i^{(b)} \right)^21_{[\eta_i\le b_nt]} \longrightarrow 0.
\end{equation*} However, the second is a consequence of Assumption~\ref{ass:Degree}\eqref{enum:AssDegree3}, so we just prove~\eqref{eqn:weakConvTailY}.

Recall~\eqref{eqn:etaIprobBounds}. A consequence of that equation is that for any fixed $k$ and all $n$ such that $m(n)>k$
\begin{align*}
    \E&\left[\frac{1}{b_n}\sum_{i=m+1}^n d^{(b)}_i 1_{[\eta_i\le b_n t]}\right] \le t\sum_{i=m+1}^n d_i^{(b)}\left(\frac{d_i^{(w)}}{\ell_n-2Tb_n}  + \frac{(d_i^{(w)})^2}{(\ell_n - 2Tb_n)^2} \right)\\
    &\le t \left(\frac{\ell_n}{\ell_n-2Tb_n} + \sup_{i>k} \frac{\ell_n d_i^{(w)}}{(\ell_n-2Tb_n)^2} \right) \sum_{i=m+1}^n \frac{d_i^{(b)} d_i^{(w)}}{\ell_n}.
\end{align*}
We can use
\begin{equation}\label{eqn:boundingSupdiw}
    \sup_{i>k}d_i^{(w)} \le a_n \sup_{i>k} \left(\frac{d_i^{(w)}}{a_n}+\frac{d_i^{(b)}}{b_n} \right)  \le a_n \left(\frac{d_k^{(w)}}{a_n}+\frac{d_k^{(b)}}{b_n} \right) =  O(a_n) = o(n),
\end{equation} $\ell_n \sim (\ell_n - 2Tb_n) \sim \kappa n$, and~\eqref{eqn:m(n)defin} to see
\begin{equation}
    \label{eqn:upperBoundSec4.3}
    \limsup_{n\to\infty} \E\left[\frac{1}{b_n}\sum_{i=m+1}^n d_i^{(b)} 1_{[\eta_i\le b_nt]}\right] \le\alpha t.
\end{equation}
One can similarly get a lower bound using~\eqref{eqn:etaIprobBounds}. For $n$ large enough so that $m(n)>k$ we have
\begin{align*}
    \E&\left[\frac{1}{b_n}\sum_{i=m+1}^n d^{(b)}_i 1_{[\eta_i\le b_n t]}\right] \ge
    t\sum_{i=m+1}^n d_{i}^{(b)}\left(\frac{d_i^{(w)}}{\ell_n} - \frac{(d_i^{(w)})^2b_nt}{\ell_n^2} \right) \\
    &\ge t\left(1 - \sup_{i>k} \frac{d_i^{(w)}b_n t}{\ell_n} \right)\sum_{i=m+1}^n \frac{d_i^{(b)} d_i^{(w)}}{\ell_n}.
\end{align*}
Taking a $\liminf$ as $n\to\infty$, using $\ell_n\sim \kappa n = \kappa a_nb_n$,~\eqref{eqn:m(n)defin} and an analogous computation in~\eqref{eqn:boundingSupdiw} we see
\begin{equation*}
    \liminf_{n\to\infty} \E\left[\frac{1}{b_n}\sum_{i=m+1}^n d^{(b)}_i 1_{[\eta_i\le b_n t]}\right] \ge  \left(1- (\theta_{k}+\beta_k) t \right) \alpha t.
\end{equation*} Taking $k\to\infty$ and using the upper bound derived in~\eqref{eqn:upperBoundSec4.3} we get
\begin{equation*}
    \lim_{n\to\infty} \E \left[ \frac{1}{b_n} \sum_{i=m+1}^n d_i^{(b)} 1_{[\eta_i\le b_nt]}\right] = \alpha t.
\end{equation*}

It is established in \cite[equation (4.12)]{DvdHvLS.20} that the random variables $1_{[\eta_i\le t]}$ for $i\in[n]$ are negatively correlated. Since $\Var(aI) \le a \E[aI]$ for any Bernoulli random variable, this negative correlation implies for any fix $k$ and all $m(n)>k$
\begin{align*}
    \Var&\left(\frac{1}{b_n} \sum_{i=m+1}^n d_i^{(b)} 1_{[\eta_i\le b_nt]}\right) \le \sum_{i=m+1}^n \Var\left(\frac{d_i^{(b)}}{b_n}1_{[\eta_i\le b_nt]}\right) \le \sum_{i=m+1}^n \frac{d_i^{(b)}}{b_n} \E\left[\frac{d_i^{(b)}}{b_n}1_{[\eta_i\le b_nt]}\right] \\
    &\le \left(\frac{d_k^{(w)}}{a_n}+\frac{d_k^{(b)}}{b_n}\right) \E\left[\frac{1}{b_n} \sum_{i=m+1}^n d_i^{(b)} 1_{[\eta_i\le b_nt]}\right], 
\end{align*} 
and so
\begin{equation*}
    \limsup_{n\to\infty}\Var\left(\frac{1}{b_n} \sum_{i=m+1}^n d_i^{(b)} 1_{[\eta_i\le b_nt]}\right)  \le  \left(\theta_k+\beta_k\right) \alpha t.
\end{equation*}
As $\theta_k+\beta_k\to0$ as $k\to\infty$, $\Var\left(\frac{1}{b_n} \sum_{i=m+1}^n d_i^{(b)} 1_{[\eta_i\le b_nt]}\right)\longrightarrow 0$. Chebyshev now implies~\eqref{eqn:weakConvTailY}.

Let us write $M_n(t) = \frac{1}{a_n}\sum_{i=m+1}^n d_i^{(w)}\left(1_{[\eta_i\le t]} -\frac{d_i^{(w)}t}{\ell_n^{(w)}-2Tb_n} \right)$ where we do not scale the time yet. It is shown between equation (4.5) and (4.9) in \cite[pg 1526]{DvdHvLS.20} that $M_n$ is an $\F_l$ super-martingale where $\F_l$ is the $\sigma$-algebra generated by the graph exploration and they argue therein that $\sup_{t\le Tb_n} |M_n(t)| \weakarrow 0$ as $n\to\infty$ whenever $m$ is kept fixed. It does not immediately follow that the same holds whenever $m = m(n)\to\infty$; however, their proof carries over. We repeat it in the appendix.
\end{proof}

\subsubsection{Dominant Terms}

Observe that a direct consequence of Lemma~\ref{lem:bigJumps1} is that jointly over the index $i$ we have in $\D(\R_+,\R^2)$
\begin{equation*}
\left(\left(\frac{d_i^{(w)}}{a_n} \left(1_{[\eta_i\le b_nt]} - \frac{d_i^{(w)}}{\ell_n} b_nt\right), \frac{d_i^{(b)}}{b_n} 1_{[\eta_i\le b_nt]}\right);t\ge 0\right)\weakarrow \left(\left(\theta_i 1_{[\xi_i\le \kappa t]} - \frac{\theta_i^2}{\kappa}t, \beta_i 1_{[\xi_i\le \kappa t]}\right);t\ge 0\right).
\end{equation*} Note that the limits above for any two $i\neq j$ will have distinct jump times a.s. Therefore, using standard properties about addition in the Skorohod space (see \cite{Whitt.80} or Proposition 2.2(a)\cite[Chapter VI]{JS.13}) we have for each $k$ fixed 
\begin{align*}
    &\left( \sum_{i=1}^k \frac{d_i^{(w)}}{a_n} \left(1_{[\eta_i\le b_nt]} - \frac{d_i^{(w)}}{\ell_n} b_nt\right),\sum_{i=1}^k \frac{d_i^{(b)}}{b_n} 1_{[\eta_i\le b_nt]} \right)_{t\ge 0}\\
    &\qquad\weakarrow\left(\sum_{i=1}^k \theta_i\left( 1_{[\xi_i\le \kappa t]}-\frac{\theta_i}{\kappa}t \right), \sum_{i=1}^k \beta_i 1_{[\xi_i\le \kappa t]} \right)_{t\ge 0}
\end{align*}
in $\D(\R_+,\R^2)$.

Therefore, by Theorem 3.2 in \cite{Billingsley.99}, the joint re-scaled convergence of $(X_n,Y_n)$ stated in Proposition~\ref{prop:ConvergenceWalks} is established once we prove the following lemma.
\begin{lemma}
    For any $\eps>0$ and any $T<\infty$
    \begin{equation*}
        \lim_{k\to\infty} \limsup_{n\to\infty} \PR\left(\sup_{t\le T} \sum_{i=k+1}^{m(n)}  \left|\frac{d_i^{(w)}}{a_n} \left(1_{[\eta_i\le b_nt]} - \frac{d_i^{(w)}}{\ell_n} b_nt\right) \right|>\eps \right) = 0
    \end{equation*}
    and
      \begin{equation*}
        \lim_{k\to\infty} \limsup_{n\to\infty}  \PR\left(\sup_{t\le T} \sum_{i=k+1}^{m(n)}\left|  \frac{d_i^{(b)}}{b_n} 1_{[\eta_i\le b_nt]} \right|>\eps \right) = 0.
    \end{equation*}
\end{lemma}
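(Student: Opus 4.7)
The plan is to treat the two claims separately since the second is monotone in $t$ while the first requires a martingale argument. For the second claim, the summand $\frac{d_i^{(b)}}{b_n} 1_{[\eta_i \le b_n t]}$ is non-negative, so the absolute values are vacuous and the sum is non-decreasing in $t$. The supremum is attained at $t = T$, and Markov's inequality reduces matters to showing the expectation vanishes in the iterated limit $k\to\infty, n\to\infty$. Applying the upper bound from~\eqref{eqn:etaIprobBounds} on $\PR(\eta_i \le b_n T)$ together with $\ell_n^{(w)} \sim \kappa n$, the expectation is at most $\frac{T(1+o(1))}{\kappa n} \sum_{i=k+1}^{m(n)} d_i^{(b)} d_i^{(w)}$. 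By Assumption~\ref{ass:Degree}\eqref{enum:AssDegree3} and the defining property~\eqref{eqn:m(n)defin} of $m(n)$, one has $\frac{1}{n}\sum_{i\le m(n)} d_i^{(b)} d_i^{(w)} \to \langle\btheta,\bbeta\rangle$; subtracting the hub contribution $\frac{1}{n}\sum_{i\le k} d_i^{(b)} d_i^{(w)} \to \sum_{i\le k}\theta_i\beta_i$ leaves a residual tending to $0$ as $k\to\infty$ since $\langle\btheta,\bbeta\rangle=\sum_{i=1}^\infty\theta_i\beta_i$.

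For the first claim I flag a subtlety: the summand $Z_i(t):=\frac{d_i^{(w)}}{a_n}(1_{[\eta_i \le b_n t]} - \frac{d_i^{(w)} b_n t}{\ell_n^{(w)}})$ changes sign, and if the absolute value is read inside the sum as literally displayed, a direct moment calculation gives $\E|Z_i(t)| \asymp \frac{(d_i^{(w)})^2}{a_n^2}$; since $d_i^{(w)}/a_n \to \theta_i$ with $\btheta \notin\ell^2$, the quantity $\sum_{i=k+1}^{m(n)} \E|Z_i(t)|$ diverges as $n\to\infty$ for any choice of $m(n)\to\infty$ and any fixed $k$, so the literal statement appears too strong. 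I read this as the bound actually needed for the Billingsley tightness step that follows, namely with the absolute value wrapped around the entire sum; this is the direct analogue of the $\sup_{l\le Tb_n}|M_n(l)|\Rightarrow 0$ claim imported from~\cite{DvdHvLS.20} inside the proof of Lemma~\ref{lem:TailSumsConv}.

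With that reading, the argument is the truncated version of the supermartingale proof of~\cite{DvdHvLS.20}. Set $M_n(l) := \frac{1}{a_n}\sum_{i=k+1}^{m(n)} d_i^{(w)}\bigl(1_{[\eta_i\le l]} - \frac{d_i^{(w)} l}{\ell_n^{(w)}-2Tb_n}\bigr)$; the upper bound in~\eqref{eqn:boundsOnEta} combined with the computation of~\cite[eqs.~(4.5)--(4.9)]{DvdHvLS.20} makes $M_n$ an $\F_l$-supermartingale on $l\in[0,Tb_n]$ with $M_n(0)=0$. Let $\tilde M_n$ denote the associated Doob martingale, so that the predictable drift $M_n-\tilde M_n$ is bounded uniformly by a deterministic $o(1)$ using the tightness of~\eqref{eqn:boundsOnEta}. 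Doob's $L^2$ maximal inequality gives $\E[\sup_{l\le Tb_n}\tilde M_n(l)^2]\le 4\E[\tilde M_n(Tb_n)^2]$. Expanding the square, off-diagonal cross terms are non-positive by the negative correlation of $(1_{[\eta_i\le\cdot]})_i$ cited in \cite[eq.~(4.12)]{DvdHvLS.20}, and the diagonal contribution is at most
\[
\frac{1}{a_n^2}\sum_{i=k+1}^{m(n)}(d_i^{(w)})^2 \PR(\eta_i\le b_n T) \;\le\; \frac{T}{\kappa\, a_n^3}\sum_{i>k}(d_i^{(w)})^3 + o(1),
\]
which vanishes in the iterated limit by Assumption~\ref{ass:Degree}\eqref{enum:AssDegree3}. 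Markov then converts this $L^2$ bound into the required probability bound after the time change $l=b_n t$.

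The main obstacle is not any single calculation but the interpretation issue flagged above: the signed summand $Z_i(t)$ has exactly the right cancellation to make $|\sum_i Z_i(t)|$ small via a (super)martingale plus negative-correlation argument, but those cancellations are destroyed the instant the absolute value is moved inside the sum, and a componentwise analysis then fails by the $\btheta\notin\ell^2$ divergence sketched above. The outside reading, which matches both the method used in~\cite{DvdHvLS.20} and the Billingsley tightness step that consumes this lemma, is what the plan above delivers, and I would proceed under that understanding.
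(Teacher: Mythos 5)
Your reading of the statement is the right one: with the absolute value literally inside the sum the first claim is false, since the compensator contribution of the not-yet-discovered indices alone is of order $\frac{t}{\kappa}\sum_{i=k+1}^{m(n)}(d_i^{(w)}/a_n)^2$, which diverges because $\btheta\notin\ell^2$; both the way the lemma is consumed (Theorem 3.2 of \cite{Billingsley.99} needs $\sup_{t\le T}$ of the modulus of the whole sum) and the paper's own proof treat the absolute value as outside. Your argument for the second claim is exactly the paper's (monotonicity in $t$, Markov, \eqref{eqn:etaIprobBounds} together with \eqref{eqn:boundingSupdiw}, and identification of the limit of $\frac1n\sum_{i=k+1}^{m(n)}d_i^{(b)}d_i^{(w)}$ via Assumption~\ref{ass:Degree}\eqref{enum:AssDegree3} and \eqref{eqn:m(n)defin}). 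For the first claim the paper does not argue directly at all: it combines \eqref{eqn:martingaleArgument} of Lemma~\ref{lem:TailSumsConv} (which removes the range $i>m(n)$, up to a harmless change of centering) with equation (4.17) of \cite{DvdHvLS.20} for the range $i>k$. Your self-contained supermartingale argument on the range $(k,m(n)]$ is instead the analogue of the paper's appendix proof of \eqref{eqn:martingaleArgument}, which is a legitimate alternative route.

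Two justifications in that route are, however, wrong as written. First, the predictable drift $D_n=M_n-\tilde M_n$ is \emph{not} bounded by a deterministic $o(1)$: on the event that vertex $k+1$ is discovered at the first step, the drift accumulated by time $Tb_n$ is of order $(d_{k+1}^{(w)})^2Tb_n/(a_n\ell_n^{(w)})\to\theta_{k+1}^2T/\kappa$, a constant. What saves you is that $D_n$ is non-positive and non-increasing, so $\sup_{l\le Tb_n}|D_n(l)|=-D_n(Tb_n)$ and $\E[-D_n(Tb_n)]=-\E[M_n(Tb_n)]$, which the appendix-style computation bounds by $\Theta(a_n^{-3})\sum_{i=k+1}^{m(n)}\bigl((d_i^{(w)})^2+(d_i^{(w)})^3\bigr)$; note this is $o(1)$ only in the iterated limit $\lim_k\limsup_n$ because of the cubic tail condition in Assumption~\ref{ass:Degree}\eqref{enum:AssDegree3}, which is all the lemma requires. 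Second, the negative correlation of the indicators (\cite[eq.\ (4.12)]{DvdHvLS.20}) controls $\Var(M_n(t))$, not $\E[\tilde M_n(Tb_n)^2]$: the increments of the Doob martingale are not the compensated indicators, so ``cross terms are non-positive'' is not justified as stated. The clean fix is either to use orthogonality of martingale increments together with the fact that at most one vertex is discovered per step, which yields precisely your diagonal bound $a_n^{-2}\sum_{i>k}(d_i^{(w)})^2\PR(\eta_i\le Tb_n)$, or to drop the Doob decomposition and argue as the appendix does: bound $\E|M_n(Tb_n)|\le\sqrt{\Var(M_n(Tb_n))}+|\E M_n(Tb_n)|$ using negative correlation for the variance, and then apply the supermartingale maximal inequality of \cite[Lemma 2.54.5]{RW.94}. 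With either repair your argument closes; as written, those two steps are gaps.
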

\begin{proof}
    The first statement follows from Lemma~\ref{lem:TailSumsConv} and equation (4.17) in \cite{DvdHvLS.20}. 
    
    To see that the second holds, we note that by~\eqref{eqn:etaIprobBounds} and~\eqref{eqn:boundingSupdiw} we get
    \begin{align*}
        \limsup_{n\to\infty}&\, \E\left[\sum_{i=k+1}^{m(n)}  \frac{d_i^{(b)}}{b_n} 1_{[\eta_i\le b_nT]}\right] \le \limsup_{n} \sum_{i=k+1}^{m(n)} \frac{d_i^{(b)}d_i^{(w)}}{\ell_n}\left(1+\frac{d_k^{(w)}+d_k^{(b)}}{\ell_n}\right)T\\
        & = \frac{1}{\kappa}\sum_{i=k+1}^\infty \theta_i\beta_i T.
    \end{align*} This converges to zero as $k\to\infty$ as $\langle\btheta,\bbeta\rangle<\infty$. The result follows by Markov's inequality and the fact that the supremum occurs at $t = T$.
\end{proof}

\subsubsection{Putting Things Together}

We have shown
\begin{lemma}
     Suppose Assumption~\ref{ass:Degree}. Then following limits hold in $\D(\R_+,\R^2)$
    \begin{equation*}
        \left( \left(\frac{1}{a_n}X_n(b_nt), \frac{1}{b_n}Y_n(b_nt)  \right); t\ge 0\right) \weakarrow \left( (X(t),Y(t)); t\ge 0\right).
    \end{equation*}
\end{lemma}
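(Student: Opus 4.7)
The plan is to assemble the three preceding subsubsections through a standard triangular-array approximation. First I would decompose
\[
a_n^{-1}X_n(b_nt) = a_n^{-1}\widetilde{X}_n(b_nt) + a_n^{-1}A_n(b_nt),
\]
and apply Lemma~\ref{lem:An(t)conv} to replace the second summand by $\lambda t$ in the locally uniform sense, producing the $\lambda t$ drift of $X$. It then remains to show that $(a_n^{-1}\widetilde{X}_n(b_n\cdot),\ b_n^{-1}Y_n(b_n\cdot))$ converges weakly in $\D(\R_+,\R^2)$ to $(X(\cdot)-\lambda(\cdot),\,Y(\cdot))$.

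For each fixed $k\ge 1$, I would split each coordinate as $S_n^{\ast,k}+M_n^{\ast,k}+T_n^{\ast}$ corresponding to summations over $i\le k$, $k<i\le m(n)$, and $i>m(n)$ respectively. Lemma~\ref{lem:bigJumps1}, combined with the fact that the limit indicators $1_{[\xi_i\le\kappa t]}$ have pairwise distinct jump times almost surely, yields via Proposition~2.2(a) of \cite[Ch.~VI]{JS.13} the joint convergence of $(S_n^{X,k},S_n^{Y,k})$ in $\D(\R_+,\R^2)$ to the partial sums of the first $k$ terms of $X-\lambda t$ and $Y-\alpha t$. The unnamed lemma in the ``Dominant Terms'' subsubsection gives
\[
\lim_{k\to\infty}\limsup_{n\to\infty}\PR\!\left(\sup_{t\le T}\bigl(|M_n^{X,k}(t)|+|M_n^{Y,k}(t)|\bigr)>\varepsilon\right)=0,
\]
and Lemma~\ref{lem:TailSumsConv} handles the tails: $T_n^Y\weakarrow \alpha t$ (producing the $\alpha t$ drift of $Y$) and $T_n^{X}\weakarrow 0$ modulo a centering adjustment. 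Feeding all three pieces into Theorem~3.2 of \cite{Billingsley.99} gives the joint weak convergence.

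The main obstacle I expect is bookkeeping. Lemma~\ref{lem:TailSumsConv} centers at $d_i^{(w)}b_nt/(\ell_n^{(w)}-2Tb_n)$ while $\widetilde X_n$ uses $d_i^{(w)}b_nt/\ell_n^{(w)}$, and I would need to show that the deterministic difference
\[
\sum_{i>m(n)}\frac{(d_i^{(w)})^2 b_n t}{a_n}\left(\frac{1}{\ell_n^{(w)}-2Tb_n}-\frac{1}{\ell_n^{(w)}}\right)
\]
is uniformly $o(1)$ on $[0,T]$; by Assumption~\ref{ass:Degree} (in particular $\sum_i (d_i^{(w)})^2=O(n)$ and $\ell_n^{(w)}=\Theta(n)$) together with $a_nb_n=n$, this quantity is $O(b_n/a_n^2)=O(n^{(\tau-4)/(\tau-1)}L(n)^{-3})\to 0$ since $\tau<4$. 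The remaining delicacy is ensuring the finite-coordinate Skorohod convergence of $(S_n^{X,k},S_n^{Y,k})$ is genuinely in $\D(\R_+,\R^2)$ rather than merely the product topology; this reduces to the a.s.\ distinct jump times of the limit, which follows from the independence and absolute continuity of the $\xi_i$ in Lemma~\ref{lem:bigJumps1}. Beyond these two checks, the proof is the assembly described in the second paragraph.
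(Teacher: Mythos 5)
Your proposal reproduces the paper's own argument: you decompose $X_n = \widetilde{X}_n + A_n$, invoke Lemma~\ref{lem:An(t)conv} for the drift, split both walks into head ($i\le k$), middle ($k<i\le m(n)$), and tail ($i>m(n)$) pieces, and then feed Lemma~\ref{lem:bigJumps1}, the unnamed ``Dominant Terms'' lemma, and Lemma~\ref{lem:TailSumsConv} into Theorem~3.2 of \cite{Billingsley.99}, exactly as the paper does. Your observation that the tail lemma centers at $\ell_n^{(w)}-2Tb_n$ while $\widetilde X_n$ centers at $\ell_n^{(w)}$ is a real bookkeeping point, and your estimate $O(b_n/a_n^2)\to 0$ resolving it is correct.
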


Going from the above lemma to Proposition~\ref{prop:ConvergenceWalks} is non-trivial as the increments of $N_n$ are precisely the times $t$ such that $X_n(t) = X_{n}(t-1)-2$. However, one can overcome this by defining $X_n'$ and $Y_n'$ by deleting the corresponding increments and showing that the scaled distance in the $J_1$ topology doesn't grow too large. As a formal proof of Proposition~\ref{prop:ConvergenceWalks} using the above lemma is exactly the same argument as Lemma 16 in \cite{DvdHvLS.20}, we omit it.

\subsection{Large Components are Explored Early}

We now turn to analyzing the number of black half-edges incident to the large components. In order to establish Theorem~\ref{thm:main1}, we will need some technical results that (informally) say that not only are all large components of $\G_n(0)$ explored early but so are the components of $\G_n(0)$ that have a lot of black half-edges incident to their vertices. In order to state this, we let $\cC_{n}^\circ (i)$ denote the connected component containing the vertex $i$ in $\G_n(0)$ if $i$ is its minimal vertex, otherwise set $\cC^\circ_n(i) = \emptyset$.

\begin{proposition}\label{prop:bigcomps}
    Suppose Assumption~\ref{ass:Degree}. Then for any $\eps>0$
\begin{align*}
   \lim_{K\to\infty} \limsup_{n\to\infty} \PR\left(\sum_{i>K} \left(\#\cC_{n}^\circ(i)\right)^2> \eps b_n^2 \right) = 0
\end{align*}
and
\begin{equation*}
       \lim_{K\to\infty} \limsup_{n\to\infty} \PR\left(\sum_{i>K} \left(\#H^{(b)}(\cC_{n}^\circ(i))\right)^2> \eps b_n^2 \right) = 0.
\end{equation*}
\end{proposition}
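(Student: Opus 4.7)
The strategy is to bound both tail sums by susceptibility functionals of the induced subgraph $G_n^{>K}$ of $\G_n(0)$ on the vertex set $V_{>K}:=\{K+1,\dotsc,n\}$, and then exploit subcriticality of $G_n^{>K}$ for large $K$. Any component $C$ of $\G_n(0)$ with $\min C>K$ lies entirely in $V_{>K}$ and so is also a component of $G_n^{>K}$; hence
\[
\sum_{i>K}(\#\cC_n^\circ(i))^2\le\sum_{u,v\in V_{>K}} 1_{u\leftrightarrow v\text{ in }G_n^{>K}},
\]
with the analogous bound, weighted by $d_u^{(b)}d_v^{(b)}$, holding for $\sum_{i>K}(\#H^{(b)}(\cC_n^\circ(i)))^2$. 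Thus it suffices to estimate these two susceptibilities in expectation.

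To control the criticality parameter of $G_n^{>K}$, set $\nu_n^{>K}:=(\ell_n^{(w)})^{-1}\sum_{v>K}d_v^{(w)}(d_v^{(w)}-1)$. Assumption~\ref{ass:Degree}\eqref{enum:AssDegree4} and~\eqref{enum:AssDegree2}, together with the identity $a_n^2/n=c_n^{-1}$, give
\[
\nu_n^{>K}=1+c_n^{-1}\!\left(\lambda-\kappa^{-1}\sum_{i\le K}\theta_i^2\right)+o(c_n^{-1}).
\]
Since $\btheta\notin\ell^2$, $\sum_{i\le K}\theta_i^2\to\infty$, so for all $K$ large enough we have $1-\nu_n^{>K}\ge(2\kappa)^{-1}c_n^{-1}\sum_{i\le K}\theta_i^2$ for $n$ large. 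The standard self-avoiding-path expansion for the configuration model then yields, for $u\ne v\in V_{>K}$,
\[
\PR\bigl(u\leftrightarrow v\text{ in }G_n^{>K}\bigr)\le\frac{d_u^{(w)}d_v^{(w)}}{\ell_n^{(w)}(1-\nu_n^{>K})}(1+o(1)).
\]

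Summing, and invoking $\sum_u(d_u^{(b)})^2=O(n)$ from Assumption~\ref{ass:Degree}\eqref{enum:AssDegree5} and $\sum_u d_u^{(b)}d_u^{(w)}=O(n)$ from Assumption~\ref{ass:Degree}\eqref{enum:AssDegree3}, together with the identity $nc_n=b_n^2$, one obtains
\[
\E\!\left[\sum_{u,v\in V_{>K}}1_{u\leftrightarrow v}\right]\le n+\frac{\ell_n^{(w)}}{1-\nu_n^{>K}}=O(n)+O\!\left(\frac{b_n^2}{\sum_{i\le K}\theta_i^2}\right),
\]
\[
\E\!\left[\sum_{u,v\in V_{>K}}d_u^{(b)}d_v^{(b)}1_{u\leftrightarrow v}\right]\le\sum_u(d_u^{(b)})^2+\frac{\bigl(\sum_u d_u^{(b)}d_u^{(w)}\bigr)^2}{\ell_n^{(w)}(1-\nu_n^{>K})}=O(n)+O\!\left(\frac{b_n^2}{\sum_{i\le K}\theta_i^2}\right).
\]
Since $n/b_n^2=a_n/b_n=c_n^{-1}\to 0$, dividing by $\eps b_n^2$ and applying Markov's inequality bounds each probability by $O\bigl(\eps^{-1}(\sum_{i\le K}\theta_i^2)^{-1}\bigr)+o(1)$, which tends to $0$ as first $n\to\infty$ and then $K\to\infty$, proving both claims.

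The main technical obstacle is justifying the path-counting bound for $G_n^{>K}$, which is an \emph{induced subgraph} of $\G_n(0)$ rather than an independent configuration model on $V_{>K}$. The cleanest resolution is to condition on the numbers $H_v$ of half-edges of $v>K$ matched into $\{1,\dotsc,K\}$: on this event $G_n^{>K}$ is exactly a configuration model on $V_{>K}$ with degrees $(d_v^{(w)}-H_v)_{v>K}$, and since $\sum_vH_v\le\ell_K^{(w)}=O(a_n)$ concentrates, the corresponding criticality parameter differs from $\nu_n^{>K}$ only by $o(c_n^{-1})$, so the bound above is unchanged up to a $1+o(1)$ factor.
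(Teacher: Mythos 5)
Your proposal is correct and follows essentially the same route as the paper: identify the induced subgraph on vertices $>K$ (the paper's $\G_n^{[K]}$) as a conditional configuration model with reduced degrees, verify that it is uniformly subcritical for $K$ large via the criticality parameter computation (which in both cases hinges on $\btheta\notin\ell^2$ forcing $\sum_{i\le K}\theta_i^2\to\infty$), bound the weighted susceptibility, and apply Markov. The one genuine difference is the technical tool used to bound the susceptibility: the paper conditions on $\widetilde{\bd}$ and invokes Lemma~\ref{lem:susceptibility1} (Dhara et al.\ / Janson), which bounds $\E[\sum_{i\in C(V)}d_i^{(b)}]$ directly, whereas you derive the pairwise estimate $\PR(u\leftrightarrow v)\lesssim d_u^{(w)}d_v^{(w)}/\ell_n^{(w)}(1-\nu_n^{>K})$ from the self-avoiding-path expansion and then sum; these are two packagings of the same path-counting argument. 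Two small remarks: (1) Your appeal to ``concentration'' of $\sum_v H_v$ is unnecessary — the paper's approach shows deterministically that the reduced criticality parameter is bounded above by $\nu_n^{>K}\,\ell_n^{(w)}/(\ell_n^{(w)}-2\ell_K^{(w)})$, and since $\ell_K^{(w)}=O(a_n)$ the correction is $O(1/b_n)=o(c_n^{-1})$; this cleanly replaces the concentration claim. (2) The paper dispenses with the first display by citing Lemma~12 of \cite{DvdHvLS.20} and only proves the $H^{(b)}$ statement; your argument handles both at once, which is a minor gain in self-containment.
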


We first recall the following lemma of \cite{DvdHvLS.20} (Lemma 35) which itself is an extension of Lemmas 5.1 and 5.2 in \cite{Janson.10}.
\begin{lemma}[Dhara et. al. {\cite[Lemma 35]{DvdHvLS.20}}]\label{lem:susceptibility1}
    Suppose that $\bd^{(w)}$ and $\bd^{(b)}$ are degree sequences of length $n$ and $\nu:= \frac{1}{\ell_n^{(w)}} \sum_{i=1}^n d_i^{(w)}(d_i^{(w)}-1) <1$. Independently of the graph $\CM(\bd^{(w)})$, let $U$ be a uniformly chosen vertex and let $V$ denote a vertex chosen according to some distribution. Denote by $C(v)$ the component containing the vertex $v$. Then
    \begin{equation*}
        \E\left[\sum_{i\in C(V)} d_i^{(b)}\right] \le \E\left[d_{V}^{(b)}\right] + \frac{ \displaystyle \E[d_V^{(w)}]  \E[ d_U^{(w)} d_U^{(b)}]}{\displaystyle \E[d^{{(w)}}_U](1-\nu_n) }.
    \end{equation*}
\end{lemma}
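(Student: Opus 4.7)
The plan is to adapt the standard susceptibility/branching-process argument of Janson \cite{Janson.10} to the weighted setting. Concretely, explore the component $C(V)$ via breadth-first search (BFS) from $V$ in $\CM(\bd^{(w)})$, and bound the expected sum $\sum_{i \in C(V)} d_i^{(b)}$ by dominating the exploration with a subcritical Galton-Watson-like process whose "mean offspring" is $\nu_n < 1$, so that the expected total progeny is finite and given by a geometric series.

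\textbf{Step 1 (Exploration setup).} Starting from $V$, reveal the matching of $\CM(\bd^{(w)})$ one half-edge at a time. Let $A_k$ be the set of active (discovered, unpaired) white half-edges after $k$ pairings, with $|A_0|=d_V^{(w)}$; at step $k{+}1$ pick any $e\in A_k$, pair it uniformly among the remaining free half-edges, and if its partner is incident to a not-yet-discovered vertex $u$, add $u$ to the discovered set, add $d_u^{(w)}-1$ half-edges to the active set, and record the contribution $d_u^{(b)}$. Let $\tau=\inf\{k:A_k=\emptyset\}$ and let $U_1,\dots,U_\tau$ be the labels of vertices paired to at each step (with $U_k=\star$ if the partner is already discovered). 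Then
$$\sum_{i\in C(V)} d_i^{(b)} \;=\; d_V^{(b)} \;+\; \sum_{k=1}^{\tau} d^{(b)}_{U_k}\,\mathbf{1}_{[U_k \text{ is new}]}.$$

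\textbf{Step 2 (Per-step bounds).} Conditional on the exploration up to step $k$, pairing into vertex $u$ (still undiscovered) has probability at most $d_u^{(w)}/(\ell_n^{(w)}-2k+1)$; dropping the slight sampling-without-replacement correction yields the clean per-step upper bounds
$$\E\bigl[d^{(b)}_{U_{k+1}}\mathbf{1}_{[U_{k+1}\text{ new}]}\mid \mathcal{F}_k\bigr] \;\le\; \frac{\sum_u d_u^{(w)}d_u^{(b)}}{\ell_n^{(w)}} \;=\; \frac{\E[d_U^{(w)}d_U^{(b)}]}{\E[d_U^{(w)}]},$$
$$\E\bigl[|A_{k+1}|-|A_k|\mid \mathcal{F}_k\bigr]\;\le\; \frac{\sum_u d_u^{(w)}(d_u^{(w)}-1)}{\ell_n^{(w)}}-1 \;=\; \nu_n-1\;<\;0,$$
on $\{k<\tau\}$, so $|A_k|$ is a non-negative supermartingale with negative drift bounded above by $\nu_n-1$.

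\textbf{Step 3 (Geometric closure).} From the second display, optional stopping applied to $|A_k|+(1-\nu_n)k$ (a supermartingale stopped at $\tau$) gives
$$\E[\tau\mid V] \;\le\; \frac{\E\bigl[|A_0|\mid V\bigr]}{1-\nu_n}\;=\;\frac{d_V^{(w)}}{1-\nu_n}.$$
Combining with the first display via Wald's identity (and averaging over $V$),
$$\E\Bigl[\sum_{k=1}^{\tau} d^{(b)}_{U_k}\mathbf{1}_{[U_k\text{ new}]}\Bigr] \;\le\; \E[\tau]\cdot\frac{\E[d_U^{(w)}d_U^{(b)}]}{\E[d_U^{(w)}]} \;\le\;\frac{\E[d_V^{(w)}]\,\E[d_U^{(w)}d_U^{(b)}]}{\E[d_U^{(w)}](1-\nu_n)}.$$
Adding the $V$-contribution $\E[d_V^{(b)}]$ gives the claimed inequality.

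\textbf{Main obstacle.} The only subtlety is the sampling-without-replacement correction: the exact conditional probability of attaching to $u$ at step $k$ is $d_u^{(w)}/(\ell_n^{(w)}-2k+1)$, strictly above the $d_u^{(w)}/\ell_n^{(w)}$ used in Step 2. To obtain the bound cleanly (with no $o(1)$) one must either (i) couple the BFS with a size-biased Galton-Watson branching process that stochastically dominates it, using that replacement only \emph{increases} the probability of each pairing, or (ii) restrict the sum in Step 2 to undiscovered $u$ and observe that the discovered half-edges reduce the numerator by at least as much as the denominator, so the ratio $\sum_{u\text{ undiscov.}}d_u^{(w)}(d_u^{(w)}-1)/(\ell_n^{(w)}-2k+1)$ is still bounded by $\nu_n$. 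Option (ii) is the approach in \cite{Janson.10}, and the weighted version for $d^{(b)}$ is identical in structure since the analogous numerator $\sum_{u\text{ undiscov.}}d_u^{(w)}d_u^{(b)}$ shrinks at least as fast as the total free half-edge count.
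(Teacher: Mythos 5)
First, a point of reference: the paper does not prove this lemma at all --- it is quoted from \cite{DvdHvLS.20} (Lemma 35), itself an extension of Lemmas 5.1--5.2 of \cite{Janson.10} --- so there is no in-paper argument for your proposal to match. Judged on its own merits, your skeleton (explore $C(V)$, bound the expected weight collected per pairing, close with optional stopping/Wald) is the natural one, but the step that carries all of the content, namely the per-step bounds in your Step 2, is precisely what is never established, and neither of the two repairs you sketch under ``Main obstacle'' works. The conditional probability of pairing into a fresh vertex $u$ is $d_u^{(w)}/(\ell_n^{(w)}-2k-1)$, which is \emph{larger} than $d_u^{(w)}/\ell_n^{(w)}$, so ``dropping the sampling-without-replacement correction'' moves in the wrong direction and does not yield an upper bound on the conditional increment. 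Your fix (i) fails for the same reason: a Galton--Watson process that attaches to $u$ with probability proportional to $d_u^{(w)}/\ell_n^{(w)}$ is, at every step, \emph{less} likely to hit any fixed undiscovered vertex than the true exploration is, so it does not stochastically dominate the collected weight. Your fix (ii) asserts that $\sum_{u\ \mathrm{undiscovered}} d_u^{(w)}(d_u^{(w)}-1)/(\ell_n^{(w)}-2k-1)\le \nu_n$ throughout the exploration; this is false in general: a pairing that discovers a degree-one vertex, or that creates a surplus edge, decreases the denominator by two while leaving the numerator unchanged, so the ratio can rise above $\nu_n$ --- indeed already at the first step it equals $\sum_{u\neq V} d_u^{(w)}(d_u^{(w)}-1)/(\ell_n^{(w)}-1)>\nu_n$ whenever $d_V^{(w)}=1$. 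The same remark applies to the numerator $\sum_{u\ \mathrm{undiscovered}} d_u^{(w)}d_u^{(b)}$, so both displayed inequalities of Step 2 are unproved, and with them the drift bound $\E[\,|A_{k+1}|-|A_k|\mid \F_k\,]\le \nu_n-1$ on which Step 3 rests.

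That the depletion of half-edges cannot simply be discarded is visible in a toy computation: take $n=2$, $\bd^{(w)}=(1,1)$, $\bd^{(b)}=(0,b)$ and $V=1$ deterministically; then the unique half-edge of $V$ is matched to vertex $2$ with probability $1$ (not $d_2^{(w)}/\ell_n^{(w)}=1/2$), the left-hand side equals $b$, while the ``clean'' right-hand side equals $b/2$. So no argument that merely replaces $\ell_n^{(w)}-2k-1$ by $\ell_n^{(w)}$ step by step can be complete, and the inequality must be handled the way the source does it --- by an argument that genuinely accounts for the removal of half-edges during the exploration (this is the content of Janson's Lemmas 5.1--5.2 and of Lemma 35 in \cite{DvdHvLS.20}, and the statement here should be read with the conventions and standing assumptions of that source). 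As written, your proposal identifies the right obstacle but does not overcome it.
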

Now that we have the above lemma, we can turn to the proof of the proposition.

\begin{proof}[Proof of Proposition~\ref{prop:bigcomps}] 
The proof closely follows the proof of Lemma 12 in \cite{DvdHvLS.20}. Since that result implies the statement above involving $\#\cC_n^\circ(i)$, we only prove the second statement. 

Let us fix a $K>0$ arbitrary but large and denote by $\G^{[K]}_n$ the graph obtained by deleting all white half-edges attached to $1,2,\dotsm,K\in \G_n(0)\sim \CM_n(\bd^{(w)})$. As the remaining half-edges in $\G_n(0)$ were still connected using a uniform matching, the resulting graph $\G^{[K]}_n$ is a configuration model with a \textit{random} degree distribution $\widetilde\bd = (\widetilde{d}_1,\dotms, \widetilde{d}_n)$, where $\widetilde{d}_i = 0$ for all $i\le K$ and $\widetilde{d}_i\le d_i^{(w)}$ for all $i$. 

Denote by $\nu_n^{[K]}$ the criticality parameter (first equality below). Let $\lambda' = \lambda+1$ and note that by Assumption~\ref{ass:Degree}\eqref{enum:AssDegree4} that for all large $n$, $\nu_n \le 1+\lambda' c_n^{-1}$. Therefore, for all large $n$ we can bound $\nu_n^{[K]}$ as follows:
\begin{align*}
    \nu_n^{[K]} &= \frac{\sum_{i=1}^n \widetilde{d}_i (\widetilde{d}_i-1)}{\sum_{i=1}^n \widetilde{d}_i} \le \frac{\sum_{i=1}^n d_i^{(w)}(d_i^{(w)}-1) - \sum_{i=1}^{K} d_i^{(w)}(d_i^{(w)}-1)}{\ell_n^{(w)} - 2\sum_{i=1}^K d_i^{(w)}}\\
    &= \frac{\ell_n}{\ell_n -2 \sum_{i=1}^K d_i^{(w)}}\frac{\sum_{i=1}^n d_i^{(w)}(d_i^{(w)}-1) - \sum_{i=1}^K d_i^{(w)}(d_i^{(w)}-1) }{\ell_n}  \\
    &\le \frac{1}{1-\frac{2}{\ell_n} \sum_{i=1}^K d_i^{(w)}} \cdot \left(\nu_n - \frac{1}{\ell_n} \sum_{i=1}^K (d_i^{(w)})^2 + \frac{1}{\ell_n} \sum_{i=1}^K d_i^{(w)} \right).
    \end{align*}
    Note that for any $K$, for all large $n$ we have $d_i^{(w)}< \frac{1}{2} (d_i^{(w)})^2$ for all $i\le K$ using Assumption~\ref{ass:Degree}\eqref{enum:AssDegree2}. Also note $\frac{1}{1-x} \le 1+2x$ for all $x\in[0,1/2]$. Therefore, we get
\begin{align*}
    \nu^{[K]}_n&\le \left(1+\frac{4}{\ell_n}\sum_{i=1}^{K}d_i^{(w)} \right)\left(\nu_n - \frac{1}{2\ell_n} \sum_{i=1}^K (d_i^{(w)})^2 \right).
\end{align*}

Since $d_i^{(w)}/a_n\to \theta_i$ it holds that for all large $n$ and all $i\le K$ that $\frac{1}{\sqrt{2}}a_n\theta_i \le  d_i^{(w)}\le 2a_n\theta_i$. Therefore
\begin{align*}
    \nu^{[K]}_n&\le \left(1+\frac{8}{\ell_n}\sum_{i=1}^K \theta_i a_n\right)\left(1+\frac{\lambda'}{c_n} - \frac{1}{4\ell_n} \sum_{i=1}^K \theta_i^2 a_n^2\right)\le 1+\frac{\lambda'}{c_n} - \frac{a_n^2}{4\ell_n} \sum_{i=1}^K \theta_i^2  + \frac{8a_n}{\ell_n}\sum_{i=1}^K \theta_i  + \frac{8\lambda'a_n}{c_n\ell_n}\sum_{i=1}^K \theta_i.
\end{align*}

Using $a_n^2/n = a_n/b_n = 1/c_n$, $a_n/n = 1/(b_nc_n)$, and $\ell_n\sim \kappa n$ for $\kappa>0$ we can turn the above bound into the following for all large $n$ and for $\lambda'' = \lambda +2 = \lambda'+1$ that
\begin{align*}
    \nu_n^{[K]}&\le 1+ \frac{1}{c_n} \left(\lambda' - \frac{1}{8\kappa} \sum_{i=1}^K \theta^2_i + \left(\frac{8}{\kappa b_n}+\frac{8\lambda'}{\kappa b_nc_n}\right)\sum_{i=1}^K \theta_i   \right)\le 1+ \frac{1}{c_n} \left(\lambda'' - \frac{1}{8\kappa} \sum_{i=1}^K \theta_i^2\right).
\end{align*} 
As $K\to\infty$, $\sum_{i=1}^\infty \theta_i^2 \to \infty$ and so we can find a $K_0$ sufficiently large so that for all $K\ge K_0$ it holds that $\nu_n^{[K]}<1$ for all $n$ large.

Let us condition on $\tilde{\bd}$. We apply Lemma~\ref{lem:susceptibility1} to the graph $\G^{[K]}_n|\widetilde\bd \sim \CM_n(\widetilde\bd)$ with $\PR(V = i) = \frac{d_i^{(b)}}{\ell_n^{(b)}}$ and $U$ uniformly chosen (both independent of the configuration model and the degree sequence $\widetilde\bd$) to get
\begin{align}\label{eqn:cluster1.1}
    \E\left[\sum_{i\in C(V)} d_i^{(b)} \bigg| \widetilde\bd\right] \le \E\left[d_{V}^{(b)}\right] + \frac{ \displaystyle \E[\widetilde{d}_{V}|\widetilde\bd] \E[\widetilde{d}_U d^{(b)}_U|\widetilde\bd]}{\E[\widetilde{d}_U |\widetilde\bd](1-\nu_n^{[K]})}.
\end{align}
Note the following hold for all $n$
\begin{align*}
\E[d_V^{(b)}] &= \frac{1}{\ell_n^{(b)}}\sum_{i=1}^n (d_i^{(b)})^2 &
    \E[\widetilde{d}_V|\widetilde\bd]  &= \frac{1}{\ell_n^{(b)}}\sum_{i=1}^n \tilde{d}_i d_i^{(b)} \le \frac{1}{\ell_n^{(b)}}\sum_{i=K+1}^n d_i^{(w)} d_i^{(b)} \\
    \E\left[\widetilde{d}_U d^{(b)}_U|\widetilde\bd\right] &= \frac{1}{n} \sum_{i=1}^n \tilde{d}_id_i^{(b)} \le \frac{1}{n}\sum_{i=K+1}^n d_i^{(w)}d_i^{(b)}
\end{align*}
and the following hold for all $n$ sufficiently large
\begin{align*}
    \E\left[\tilde{d}_U|\widetilde\bd\right]&= \frac{1}{n}\sum_{i=1}^n \tilde{d}_i = \frac{1}{n}\left(\ell_n^{(w)} - 2\sum_{i=1}^{K} d_i^{(w)} \right) \ge \kappa - \frac{4}{b_n}\sum_{i=1}^K \theta_i\ge \frac{\kappa}{2}\\
    1-\nu_n^{[K]} &\ge \frac{1}{c_n} \left( \frac{1}{8\kappa}\sum_{i=1}^K \theta_i^2-\lambda''\right).
\end{align*}
Using the above bounds and~\eqref{eqn:cluster1.1}, we have for all large $n$
\begin{align*}
    \E\left[\sum_{i\in C(V)} d_i^{(b)} \bigg| \widetilde\bd\right] \le \frac{1}{\ell_n^{(b)}}\sum_{i=1}^n (d_i^{(b)})^2 + \frac{16 c_n}{ \sum_{i=1}^K \theta_i^2 - 8\lambda''\kappa }\frac{1}{\ell_n^{(b)} n}\left(\sum_{i=K+1}^n d_i^{(w)}d_i^{(b)}\right)^2,
\end{align*}
and the right-hand side does not depend on $\widetilde\bd$.
Now using Assumption~\ref{ass:Degree}\eqref{enum:AssDegree5}, we know that $\ell_n^{(b)} = \Theta(n)$ and $n^{-1}\sum_{i=1}^n (d_i^{(b)})^2 = \Theta(1) = o(c_n)$. In particular, by Assumption~\ref{ass:Degree}\eqref{enum:AssDegree3}
\begin{equation*}
    \limsup_{n\to\infty} \frac{1}{\ell_n^{(b)} n} \left(\sum_{i=K+1}^n d_i^{(w)}d_i^{(b)}\right)^2 \le C_1
\end{equation*} for some constant $C_1$ independent of $K$ and 
\begin{equation*}
    \limsup_{n\to\infty} \frac{1}{\ell_n^{(b)} c_n} \sum_{i=1}^{n} (d_i^{(b)})^2 \left(\sum_{i=1}^K \theta_i^2 -8\lambda''\kappa \right) =0.
\end{equation*} Therefore we have for all $K$ large enough so that $\sum_{i=1}^K \theta_i^2 -8\lambda'' \kappa>0$, it holds that for all sufficiently large $n$
\begin{align}
  \nonumber\E &\left[\sum_{i\in C(V)} d_i^{(b)} \bigg| \widetilde\bd\right] \le \frac{1}{\ell_n^{(b)}}\sum_{i=1}^n (d_i^{(b)})^2 + \frac{16 c_n}{ \sum_{i=1}^K \theta_i^2 - 8\lambda''\kappa }\frac{1}{\ell_n^{(b)} n}\left(\sum_{i=K+1}^n d_i^{(w)}d_i^{(b)}\right)^2\\
    &\le \frac{(16 C_1 + 1)}{\sum_{i=1}^{K }\theta_i^{2} -8\lambda'' \kappa} c_n = \frac{C_2}{\sum_{i=1}^{K }\theta_i^{2} -8\lambda'' \kappa} c_n.\label{eqn:boundCn1}
\end{align} Here $C_2$ is a constant independent of $K$, but the bound holds for all $n\ge n_0(K)$.

Now for the graph $\G^{[K]}_n$, let $\cC^{\circ,K}(j)$ denote the connected components listed in order of their size and let $\#H^{(b)}(\cC^{\circ,K}(j))$ denote the number of black half-edges for in the larger graph $\G_n(0)$. It holds that
\begin{equation*}
    \sum_{j\ge 1} \left(\# H^{(b)}(\cC^{\circ,K}(j))\right)^2=  n \E\left[ \sum_{i\in C(V)} d_i^{(b)} \bigg |\widetilde\bd\right] \le \frac{ C_2 n c_n}{\sum_{i=1}^K \theta_i^2 - 8\lambda'' \kappa}.
\end{equation*}
Now taking expectations and noting $b_n^2 = c_n n$ we get for all large $n$
\begin{equation*}
    \E\sum_{j\ge 1} \left(\# H^{(b)}(\cC^{\circ,K}(j))\right)^2 \le \frac{C_2}{\sum_{i=1}^K \theta_i^2 - 8\lambda'' \kappa} nc_n =  \frac{C_2}{\sum_{i=1}^K \theta_i^2 - 8\lambda'' \kappa} b_n^2.
\end{equation*}
Hence Markov's inequality tells us for all large $K$ and all $\eps>0$ that
\begin{equation*}
    \limsup_{n\to\infty} \PR\left( \sum_{j\ge 1} \left(\# H^{(b)}(\cC^{\circ,K}(j))\right)^2  >\eps b_n^2\right) \le \frac{C_2}{\eps} \frac{1}{\sum_{i=1}^K \theta_i^2 - 8\lambda''\kappa}. 
\end{equation*}
Since $\btheta\notin\ell^2$ by Assumption~\ref{ass:Degree}\eqref{enum:AssDegree2}, we get for all $\eps>0$
\begin{equation*}
    \lim_{K\to\infty} \limsup_{n\to\infty} \PR\left( \sum_{j\ge 1} \left(\# H^{(b)}(\cC^{\circ,K}(j))\right)^2  >\eps b_n^2\right)  \le \lim_{K\to\infty} \frac{C_2}{\eps} \frac{1}{\sum_{i=1}^K \theta_i^2 - 8\lambda''\kappa} = 0.
\end{equation*}
The proof is finished by noting that
\begin{equation*}
    \sum_{i>K}\left(\# H^{(b)}(\cC^{\circ}(j))\right)^2\le_{\textup{st}}\sum_{j\ge 1} \left(\# H^{(b)}(\cC^{\circ,K}(j))\right)^2 
\end{equation*}
where $X\le_{\text{st}}Y$ means that $X$ is stochastically dominated by $Y$, i.e. $\PR(X>t) \le \PR(Y>t)$ for all $t>0$.  
\end{proof}

The above proposition tells us something about the connected components discovered late as well. To state this result, we write
\begin{equation*}
    \cC_{n,\ge T} (i) = \begin{cases}
        \emptyset &: \exists v\in \cC_{n}(i)\text{ s.t. }v\text{ discovered before time }Tb_n\\
        \cC_n(i)&: \textup{else}.
    \end{cases}
\end{equation*}
\begin{lemma} \label{lem:LateComponentsAllSmall}
    Suppose Assumption~\ref{ass:Degree}.
    \begin{equation*}
        \lim_{T\to\infty} \limsup_{n\to\infty} \PR\left(\sum_{i} (\#\cC_{n,\ge T} (i))^2 + (\#H^{(b)}(\cC_{n,\ge T}(i)))^2 > \eps b_n^2 \right) =0
    \end{equation*}
\end{lemma}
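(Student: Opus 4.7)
My plan is to deduce the lemma from Proposition~\ref{prop:bigcomps} by showing that, with probability close to one as $T\to\infty$, every component explored entirely after time $Tb_n$ must be disjoint from the finite ``hub'' set $[K]$, so that its minimum vertex lies outside $[K]$. Once this is established, the contribution from late components is bounded above by the sum in Proposition~\ref{prop:bigcomps} restricted to indices $j>K$, which is already known to be asymptotically negligible.

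Concretely, I will fix $K\in\N$ and introduce the event
\begin{equation*}
    A_{n,K,T} := \{\eta_j < Tb_n \text{ for all } j \in [K]\}.
\end{equation*}
On $A_{n,K,T}$, no late component contains a vertex in $[K]$, so its minimum vertex lies in $\{K+1,K+2,\dots\}$. Since the map ``component $\mapsto$ its minimum vertex'' enumerates every component exactly once via the family $(\cC_n^\circ(j))_{j\ge 1}$, I obtain the pathwise bound
\begin{equation*}
    \sum_i \Big[ (\#\cC_{n,\ge T}(i))^2 + (\#H^{(b)}(\cC_{n,\ge T}(i)))^2 \Big] \le \sum_{j>K} \Big[ (\#\cC_n^\circ(j))^2 + (\#H^{(b)}(\cC_n^\circ(j)))^2 \Big]
\end{equation*}
valid on $A_{n,K,T}$. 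A union bound then yields
\begin{equation*}
    \PR\Big(\sum_i\big[(\#\cC_{n,\ge T}(i))^2 + (\#H^{(b)}(\cC_{n,\ge T}(i)))^2\big]>\eps b_n^2\Big) \le \PR(A_{n,K,T}^c) + \PR\Big(\sum_{j>K}[\cdots] > \eps b_n^2\Big).
\end{equation*}

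To control $\PR(A_{n,K,T}^c)$, I will apply Lemma~\ref{lem:bigJumps1}, which gives $\PR(\eta_j > Tb_n) \to \PR(\xi_j > \kappa T) = e^{-\theta_j \kappa T}$ for each fixed $j$. A union bound over $j\in[K]$ then yields
\begin{equation*}
    \limsup_{n\to\infty} \PR(A_{n,K,T}^c) \le \sum_{j=1}^K e^{-\theta_j \kappa T},
\end{equation*}
which tends to $0$ as $T\to\infty$ for each fixed $K$ (since the sum is finite and $\theta_j>0$). Therefore, taking $\limsup_{n\to\infty}$ followed by $\lim_{T\to\infty}$ kills the first term, leaving
\begin{equation*}
    \lim_{T\to\infty}\limsup_{n\to\infty} \PR(\cdots) \le \limsup_{n\to\infty} \PR\Big(\sum_{j>K}\big[(\#\cC_n^\circ(j))^2 + (\#H^{(b)}(\cC_n^\circ(j)))^2\big]>\eps b_n^2\Big).
\end{equation*}
Since the left-hand side is independent of $K$, I will finally take $K\to\infty$ and invoke both halves of Proposition~\ref{prop:bigcomps} (combined through a trivial union bound) to conclude that the right-hand side is $0$.

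The argument is essentially bookkeeping; no step is genuinely delicate. The only point of substance is the structural observation that a ``late'' component cannot contain a hub once all hubs have been discovered early, and this is immediate from the definitions. The interchange of the three limits $n\to\infty$, $T\to\infty$, $K\to\infty$ in the claimed order is enabled precisely because $K$ is introduced only as an auxiliary parameter that does not appear in the quantity being estimated.
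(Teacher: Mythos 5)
Your proof is correct and follows essentially the same route as the paper: condition on the event that all hubs $j\in[K]$ are discovered before time $Tb_n$, observe that a late component then has minimum vertex beyond $K$ so its contribution is dominated by the tail sum from Proposition~\ref{prop:bigcomps}, control the complement via Lemma~\ref{lem:bigJumps1}, and then take the three limits in the order $n\to\infty$, $T\to\infty$, $K\to\infty$. The only cosmetic difference is that the paper splits into the two halves of Proposition~\ref{prop:bigcomps} with $\eps/2$ immediately, whereas you defer that to the end; the substance is identical.
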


\begin{proof}
    For each $K$ it holds that
   \begin{align}
     \nonumber \PR&\left(\sum_{i} (\#\cC_{n,\ge T} (i))^2 + (\#H^{(b)}(\cC_{n,\ge T}(i)))^2 > \eps b_n^2\right)\\
     \nonumber &\le \PR\left(\sum_{i} (\#\cC_{n,\ge T} (i))^2 + (\#H^{(b)}(\cC_{n,\ge T}(i)))^2 > \eps b_n^2,\text{ and } \eta_i\le Tb_n \forall i\in[K]\right)\\
    \nonumber  &\qquad \qquad+ \PR(\exists i\in[K]\textup{ s.t. } \eta_i> Tb_n)\\
      \label{eqn:Hbig1} &\le  \PR\left(\sum_{i>K}\left(\# \cC^{\circ}(j)\right)^2 > \frac{1}{2}\eps b_n^2\right) + \PR\left(\sum_{i>K}\left(\# H^{(b)}(\cC^{\circ}(j))\right)^2 > \frac{1}{2}\eps b_n^2\right)\\
    \nonumber  &\qquad \qquad+\PR(\exists i\in[K]\textup{ s.t. } \eta_i> Tb_n)
    \end{align} 
    By Lemma~\ref{lem:bigJumps1} we have
    \begin{equation}\label{eqn:Hbig2}
        \limsup_{n\to\infty } \PR(\exists i\in[K]\text{ s.t. }\eta_i>T b_n) \le \lim_{n\to\infty} \sum_{i=1}^K \PR(\eta_i>Tb_n) = \sum_{i=1}^K \exp(-\theta_i \kappa T).
    \end{equation}
Combining equations~\eqref{eqn:Hbig1} and~\eqref{eqn:Hbig2} we get for all $K$
    \begin{align*}
         \limsup_{T\to\infty}&\, \limsup_{n\to\infty} \PR\left(\sum_{i} (\#\cC_{n,\ge T} (i))^2 + (\#H^{(b)}(\cC_{n,\ge T}(i)))^2 > \eps b_n^2\right)\\
         &\le  \limsup_{T\to\infty} \limsup_{n\to\infty} \PR\left(\sum_{i>K}\left(\# H^{(b)}(\cC^{\circ}(j))\right)^2 > \frac{1}{2}\eps b_n^2\right) + \limsup_{T\to\infty} \limsup_{n\to\infty} \PR\left(\sum_{i>K}\left(\# \cC^{\circ}(j)\right)^2 > \frac{1}{2}\eps b_n^2\right) \\
         &\qquad+ \limsup_{T\to\infty} \limsup_{n\to\infty} \PR(\exists i\in[K]\textup{ s.t. } \eta_i> Tb_n)\\
         &= \limsup_{n\to\infty} \PR\left(\sum_{i>K}\left(\# H^{(b)}(\cC^{\circ}(j))\right)^2 > \frac{1}{2}\eps b_n^2\right) + \limsup_{n\to\infty} \PR\left(\sum_{i>K}\left(\# \cC^{\circ}(j)\right)^2 > \frac{1}{2}\eps b_n^2\right).
    \end{align*}
    By Proposition~\ref{prop:bigcomps}, these last term can be made arbitrarily small. 
\end{proof}

\section{Convergence of Component Sizes}\label{sec:goodFunctions}

\subsection{Good functions}

In this section we establish and recall some results about the $J_1$ topology and convergence of excursions. For an interval $I\subset\R_+$ we let $\D_+(I)$ the collection of c\`adl\`ag functions $f$ such that $f(t)\ge f(t-)$ for all $t\in I$. Recall that $\EE(f)$ is the collection of excursion intervals of the function $f$. Following \cite{DvdHvLS.20}, define $\cY(f) = \{r: (l,r)\in \EE(f)\}$. 
\begin{definition}[Good function on {$[0,T]$}]\label{def:Good[0,T]} We say that a function $f\in \D_+([0,T])$ is \textit{good (on $[0,T]$)} if the following three properties hold
\begin{enumerate}[(i)]
    \item For all $\cY(f)$ does not have any isolated points;
    \item $[0,\sup \cY(f)]\setminus \bigcup_{e\in \EE(f)} e$ has Lebesgue measure $0$;
    \item \label{enum:good[0,t]:iii}$f$ does not attain a local minimum at any point $r\in \cY(f)$.
\end{enumerate}
\end{definition}

The following result is established in \cite[Remark 12]{DvdHvLS.20}.
\begin{lemma}\label{lem:good1}
    Suppose that $f\in \D_+([0,T])$ is good. Then $f$ is continuous at each $r\in \cY(f)$.
\end{lemma}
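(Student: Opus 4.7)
Let $r \in \cY(f)$ and write $(l,r)$ for the associated excursion interval in $\EE(f)$. Since $f$ is c\`adl\`ag we automatically have $f(r+) = f(r)$, so continuity at $r$ reduces to proving $f(r-) = f(r)$, and the assumption $f \in \D_+([0,T])$ already gives $f(r) \ge f(r-)$. I will argue the reverse inequality by contradiction, so suppose $f(r) > f(r-)$.

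The first step is to use property~(i) of ``good'' to pick a sequence $r_n \in \cY(f) \setminus \{r\}$ with $r_n \to r$, and then to argue that these $r_n$ must approach $r$ from the right. Distinct excursion intervals of $f$ are pairwise disjoint (they are the maximal open intervals on which $f$ exceeds its running infimum), so if some $r_n < r$ then its excursion interval $(l_n, r_n)$ cannot overlap $(l,r)$, which forces $r_n \le l < r$ and contradicts $r_n \to r$. Hence I may pass to a subsequence along which $r_n \downarrow r$.

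With $r_n \downarrow r$ in hand, I will estimate $f(r_n-)$ in two ways. On the one hand, the defining property of the excursion $(l_n, r_n)$ together with $[0,r] \subset [0,r_n]$ gives
\[
f(r_n-) \;=\; \inf_{t \le r_n} f(t) \;\le\; \inf_{t \le r} f(t) \;=\; f(r-).
\]
On the other hand, right-continuity of $f$ at $r$ implies that for any $\eps > 0$ there is $\delta > 0$ with $|f(s) - f(r)| < \eps$ for all $s \in (r,r+\delta)$; once $r_n - r < \delta$, every $t$ in a sufficiently small left neighborhood of $r_n$ lies in $(r,r+\delta)$, and letting $t \uparrow r_n$ yields $|f(r_n-) - f(r)| \le \eps$. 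Hence $f(r_n-) \to f(r)$, and combining with the first bound produces the contradiction $f(r) \le f(r-)$.

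The two estimates of $f(r_n-)$ are routine. The one subtle point is the disjointness of excursion intervals used to funnel the $r_n$ to the right of $r$; this is the main obstacle, and I plan to invoke it under the standard interpretation of $\EE(f)$ as the set of maximal open intervals on which $f$ sits above its running infimum. Properties~(ii) and~(iii) of ``good'' are not required for this particular implication; they will enter elsewhere when one upgrades pointwise convergence of exploration processes to convergence of excursion data.
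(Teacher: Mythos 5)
Your overall strategy (contradiction from $f(r)>f(r-)$, pushing $r_n\to r$ from the right, then taking left limits along $r_n\downarrow r$) is sensible, and your two estimates of $f(r_n-)$ in the final step are both correct. However, there is a genuine gap at the step you yourself flag as "the main obstacle," namely funneling the $r_n$ to the right of $r$. You justify this via the parenthetical claim that excursion intervals are pairwise disjoint because "they are the maximal open intervals on which $f$ exceeds its running infimum." That is not the paper's definition. Here an excursion interval is \emph{any} nonempty $(l,r)$ with $f(l-)=f(r-)=\inf_{t\le r}f(t)$ and $f>\inf_{s\le l}f(s)$ on $(l,r)$; there is no maximality requirement, so a priori two excursion intervals can overlap or nest (for example, if $f$ has an upward jump at some $t_0\in(l,r)$ with $f(t_0-)$ equal to the running infimum, then $(l,t_0)$, $(t_0,r)$ and $(l,r)$ all satisfy the definition). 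In fact the paper proves the disjointness/non-nestedness of excursions only as the lemma \emph{immediately following} Lemma~\ref{lem:good1}, and that proof explicitly invokes Lemma~\ref{lem:good1}. Using disjointness here therefore begs the question, and the equivalence with the "maximal interval" characterization is exactly what needs proving for good $f$.

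The clean route uses condition~(iii) of goodness directly and does not touch condition~(i) at all. Suppose $f(r)>f(r-)=\inf_{s\le r}f(s)$. By right-continuity there is $\delta>0$ with $f(t)>f(r-)$ for all $t\in[r,r+\delta)$, and since $f$ also satisfies $f(t)\ge\inf_{s\le r}f(s)=f(r-)$ for $t\le r$, the running infimum $\inf_{s\le t}f(s)$ is constant equal to $f(r-)$ on $[r,r+\delta)$. In other words $f$ never drops below $f(r-)$ in a right neighborhood of $r$; this is precisely the statement that $f$ attains a local minimum at $r\in\cY(f)$, contradicting Definition~\ref{def:Good[0,T]}\eqref{enum:good[0,t]:iii}. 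This is both shorter than your argument and avoids any appeal to the structure of $\EE(f)$ that is only established downstream. If you prefer to keep your $r_n$-based approach, you would at minimum need to rule out $r_n\uparrow r$ with $r_n\in\cY(f)$ directly (which can be done by applying condition~(iii) at each such $r_n$, since $f$ cannot drop below $\inf_{s\le l}f(s)$ on $(r_n,r)$), rather than asserting disjointness of excursions.
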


The following lemma is also easy to see.
\begin{lemma}
    Suppose that $f\in \D_+([0,T])$ is good and let $(l_j,r_j)\in \EE(f)$ for $j\in[2]$ with $l_1<l_2$. Then $r_1<l_2$. In other words, the excursions are not nested.
\end{lemma}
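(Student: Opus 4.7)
The plan is to argue by contradiction: assume $r_1 \ge l_2$, derive $f(l_2-) = f(l_1-)$ as a preliminary step, and then rule out each sub-case by playing the excursion constraints against the continuity of $f$ at points of $\cY(f)$ guaranteed by Lemma~\ref{lem:good1}.

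The preliminary step is quick. Since $l_1 < r_2$, monotonicity of infima gives $f(l_2-) = \inf_{t \le r_2} f(t) \le \inf_{t \le l_1} f(t) = f(l_1-)$. Conversely, $l_2 \le r_1$ forces $(l_1, l_2) \subset (l_1, r_1)$, on which $f > f(l_1-)$ strictly; taking the left-hand limit at $l_2$ gives $f(l_2-) \ge f(l_1-)$, so equality holds.

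The case analysis then splits according to the position of $r_1$. If $r_1 = l_2$, Lemma~\ref{lem:good1} gives $f(r_1) = f(r_1-) = f(l_1-)$, while both excursion properties force $f(t) > f(r_1)$ on a punctured neighborhood of $r_1$; so $f$ attains a local minimum at $r_1 \in \cY(f)$, violating condition~\eqref{enum:good[0,t]:iii} of the definition of good. If $l_2 < r_1 < r_2$, the excursion property of $(l_2, r_2)$ forces $f(r_1) > f(l_2-) = f(l_1-)$, while Lemma~\ref{lem:good1} together with the excursion property of $(l_1, r_1)$ gives $f(r_1) = f(l_1-)$; contradiction. If $r_1 > r_2$, the symmetric argument applied to $r_2 \in (l_1, r_1) \cap \cY(f)$ does the job: continuity at $r_2$ gives $f(r_2) = f(l_2-) = f(l_1-)$, but $r_2 \in (l_1, r_1)$ forces $f(r_2) > f(l_1-)$.

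The remaining case $r_1 = r_2$ is the main obstacle, since both excursion intervals terminate at the same point and the easy continuity argument is no longer immediately incompatible with anything. The trick here is to promote $(l_1, l_2)$ itself to an excursion interval: strict positivity $f > f(l_1-)$ on $(l_1, l_2)$ is inherited from $(l_1, r_1)$, while the identity $\inf_{t \le l_2} f(t) = f(l_1-) = f(l_2-)$ follows by the same inf-monotonicity argument used in the preliminary step. Hence $l_2 \in \cY(f)$, and Lemma~\ref{lem:good1} then forces $f(l_2) = f(l_2-) = f(l_1-)$, in direct contradiction with $f(l_2) > f(l_1-)$ supplied by the excursion property of $(l_1, r_1)$ at the interior point $l_2$.
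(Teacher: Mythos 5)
Your proof is correct and uses the same core strategy as the paper, namely exploiting the goodness conditions to force a contradiction, but you carry out the case analysis far more carefully and in doing so fill a genuine gap. The paper's one-line proof notes that $r_1 \ge l_2$ forces $\inf_{s\le l_1}f(s) = \inf_{s\le l_2}f(s) = f(r_1)$ and then asserts that $r_1$ is a local minimum of $f$, contradicting condition~\eqref{enum:good[0,t]:iii}. Your breakdown shows that this local-minimum conclusion is immediate only when $r_1 = l_2$; that the subcases $l_2 < r_1 < r_2$ and $r_1 > r_2$ each yield a direct contradiction from the excursion properties plus Lemma~\ref{lem:good1} (continuity at $\cY(f)$ points), with no appeal to condition~\eqref{enum:good[0,t]:iii} at all; and that $r_1 = r_2$ is the genuinely delicate configuration. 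When the two excursions share their right endpoint, neither excursion constrains $f$ to the right of $r_1$, so it is not evident from the definitions that $r_1$ is a two-sided local minimum, and the paper's terse assertion does not obviously cover this case. Your device of promoting $(l_1,l_2)$ itself to an excursion interval, so that Lemma~\ref{lem:good1} forces continuity at $l_2$ in contradiction with $f(l_2) > f(l_1-)$ supplied by the interior of $(l_1,r_1)$, closes this case cleanly and is a new idea not present in the paper's argument.
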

\begin{proof}
    Since $l_1<l_2$ and $(l_1,r_1)$ is an excursion interval, we have \begin{equation*} f (r_1) = \inf _{s\le r_1} f(s) = \inf_{s\le l_1} f(s) \ge \inf_{s\le l_2} f(s),
    \end{equation*}
    where the first equality is valid by Lemma~\ref{lem:good1}. If $r_1 \ge l_2$, then we would have equality above, implying that $r_1$ is a local minimum of $f$ contradicting Definition~\ref{def:Good[0,T]}\eqref{enum:good[0,t]:iii}. 
\end{proof}

\begin{definition}[Good on {$\R_+$}]\label{def:GoodonR}
A function $f\in \D_+ = \D_+(\R_+)$ is \textit{good} on $\R_+$ if the following hold:
\begin{enumerate}[(i)]
    \item $f$ is good on $[0,T]$ for each $T>0$;
    \item For all $\eps>0$, the set $\EE(f)$ has only finitely many excursions of length $r-l>\eps$;
    \item $f$ does not possess an infinite length excursion.
\end{enumerate}
\end{definition}

We now turn to a key lemma which is a simple extension of \cite[Lemma 7]{Aldous.97}. The result of Aldous holds for continuous limits.
\begin{lemma}\label{lem:goodPP}
    Suppose that $f:\R_+\to\R$ is good. Let $\Xi^{(\infty)} = \{(r,r-l);(l,r)\in\EE(f)\}$. Suppose that $f_n\to f$ in the $J_1$ topology as $n\to\infty$ and suppose that for each $n$ there is a sequence $(t_{n,i};i\ge 0)$ such that 
    \begin{enumerate}[(i)]
        \item \label{enum:5.5:i}$0=t_{n,0}<t_{n,1}<\dotsm$ and $\lim_{i\to\infty} t_{n,i} = +\infty$ for all $n$;
        \item \label{enum:5.5:ii} $f_n(t_{n,i}) = \inf_{s\le t_{n,i}} f_n(s)$ for all $i$ and all $n$;
        \item \label{enum:5.5:iii} $\max_{i: t_{n,i}\le s_0} f_{n}(t_{n,i})-f_{n}(t_{n,i+1}) \longrightarrow 0$ as $n\to\infty$ for each $s_0<\infty$.
    \end{enumerate}
    Write $\Xi^{(n)} = \{(t_{n,i}, t_{n,i}-t_{n,i-1});i\ge 1\}$. Then $\Xi^{(n)}\to\Xi^{(\infty)}$ in the topology of vague convergence of the associated counting measures on $\R_+\times(0,\infty)$.
\end{lemma}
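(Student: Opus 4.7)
The argument adapts \cite[Lemma~7]{Aldous.97} to handle c\`adl\`ag (rather than continuous) limits. Let $g_n(t) := \inf_{s\le t} f_n(s)$ and $g(t) := \inf_{s\le t} f(s)$. Because $f,f_n\in\D_+$ have no negative jumps, $g$ and $g_n$ are continuous, and $J_1$ convergence $f_n\to f$ transfers to uniform convergence $g_n\to g$ on compacts. Condition~\eqref{enum:5.5:ii} yields $f_n(t_{n,i})=g_n(t_{n,i})$, so together with~\eqref{enum:5.5:iii} the decrements $g_n(t_{n,i-1})-g_n(t_{n,i})=f_n(t_{n,i-1})-f_n(t_{n,i})$ vanish uniformly on each $[0,s_0]$. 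Fix $T,\eps>0$ outside a countable exceptional set so that $T\notin\cY(f)$ and no excursion of $f$ has length exactly $\eps$; by goodness of $f$ on $\R_+$ there are only finitely many excursions $(l_1,r_1),\dotsc,(l_k,r_k)\in\EE(f)$ with $r_j\le T$ and $r_j-l_j\ge\eps$. Vague convergence of $\Xi^{(n)}$ to $\Xi^{(\infty)}$ on $\R_+\times(0,\infty)$ reduces to convergence of the finite restrictions as counting measures on $K:=[0,T]\times[\eps,\infty)$. Invoke the Skorohod form of $J_1$ convergence and pick homeomorphisms $\lambda_n:[0,T+1]\to[0,T+1]$ with $\lambda_n\to\mathrm{id}$ and $f_n\circ\lambda_n\to f$ uniformly.

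The core observation is that no $t_{n,i}$ may persistently lie in $\lambda_n(I)$ for a compact continuity subinterval $I\subset(l_j,r_j)$ of $f$: extracting a subsequential limit $u^*\in I$ yields $f_n(t_{n,i})\to f(u^*)>g(r_j)$ while $g_n(t_{n,i})\to g(u^*)=g(r_j)$, contradicting~\eqref{enum:5.5:ii}. Since $f$ has at most countably many discontinuities in $(l_j,r_j)$, this excludes $t_{n,i}$'s in $\lambda_n([l_j+\delta,r_j-\delta])$ for every $\delta>0$ and all large $n$. Setting $i_n^j:=\max\{i:t_{n,i}\le\lambda_n(r_j)\}$, this exclusion combined with $g_n(\lambda_n(r_j))\to g(r_j)$ and the non-isolation of $\cY(f)$ from the right at $r_j$ (implied by goodness condition~\eqref{enum:good[0,t]:iii}) forces $t_{n,i_n^j}\to r_j$. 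The companion convergence $t_{n,i_n^j-1}\to l_j$ then follows from the uniform smallness of the $g_n$-decrements (which forces $g(s)=g(r_j)$ at any subsequential limit $s$), the exclusion of $t_{n,i}$'s in excursion interiors, and the coverage condition (ii) of Definition~\ref{def:Good[0,T]}, which rules out positive-measure $g$-flat intervals lying outside the excursion union. Conversely, any sequence $i_n$ with $t_{n,i_n}\le T$ and $t_{n,i_n}-t_{n,i_n-1}\ge\eps$ admits a subsequential limit $(s,t)\in K$ with $g(s)=g(t)$; the same exclusion plus coverage properties identify $(s,t)$ with one of the $(l_j,r_j)$, establishing the desired bijection.

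The main obstacle is locating $t_{n,i_n^j-1}$ precisely at $l_j$ in the limit rather than at some earlier time $s<l_j$ where $g$ already equals $g(l_j)$. Conditions~\eqref{enum:5.5:i}--\eqref{enum:5.5:iii} alone do not exclude adjacent excursions of $f$ at a common $g$-value (so that a long interval between two successive $t_{n,i}$'s could a priori span several excursions), and the argument leans essentially on the coverage property in the definition of goodness to ensure that any positive-length constancy interval of $g$ must coincide with a single excursion of $f$. Combined with the earlier exclusion of $t_{n,i}$'s in excursion interiors and with the uniform drop bound, this pins the preceding index exactly at $l_j$, yielding the one-to-one correspondence between long excursions of $f$ and points of $\Xi^{(n)}$ in $K$ needed for vague convergence.
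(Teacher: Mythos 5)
Your proof is essentially correct and rests on the same core tools as the paper's: uniform convergence of the running infima $g_n \to g$, the standard $J_1$ facts from Jacod--Shiryaev, and the three goodness properties. The organization is genuinely different, though. The paper fixes an interior \emph{continuity point} $t\in(l,r)$ of each excursion and tracks the straddle indices $i_n=\min\{i:t_{n,i}>t\}$, deriving first that $f_n(t_{n,i_n})$ and $f_n(t_{n,i_n-1})$ both converge to $f(r)$, then squeezing the $t_{n,i_n}$, $t_{n,i_n-1}$ to $r$, $l$. You instead exclude the $t_{n,i}$'s from (approximate) excursion \emph{interiors} via the observation that $f_n(t_{n,i})=g_n(t_{n,i})$ is incompatible with a limit point $u^*$ where $f(u^*)>g(u^*)$, and then locate the matching indices near $\lambda_n(r_j)$ using Skorohod homeomorphisms. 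Both strategies work, but the paper's straddle-index device is cleaner because it avoids the homeomorphisms entirely and handles discontinuity points of $f$ inside the excursion through the inequality $f(r'-)\le\liminf_n f_n(t_{n,i_n})$ from Jacod--Shiryaev Proposition VI.2.1, rather than by reducing to continuity subintervals.

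Your most valuable observation is the one you single out as "the main obstacle": pinning $t_{n,i_n-1}$ at $l$ rather than at some earlier $s<l$ with $g(s)=g(l)$, and the fact that this requires the coverage property Definition~\ref{def:Good[0,T]}(ii) in addition to (iii). The paper's treatment of this point is terse — for $\liminf_n t_{n,i_n-1}\ge l$ it says only "a similar argument holds," and cites Definition~\ref{def:Good[0,T]}(iii) only for the analogous right-endpoint strict inequality $\inf_{s\le r+\delta}f(s)<\inf_{s\le r}f(s)$. The left-endpoint analogue $\inf_{s\le l-\delta}f(s)>\inf_{s\le l}f(s)$ is not immediate from (iii) alone (which concerns right endpoints $r\in\cY(f)$), and the clean route is indeed the one you sketch: if $g$ were flat on $[l-\delta,l]$, then by coverage (ii) almost every point of $(l-\delta,l)$ sits in some excursion, whose right endpoint $r''\in(l-\delta,l)$ satisfies $f(r'')=g(l)$ and is a local minimum of $f$, contradicting (iii). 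Making this explicit is a real improvement in exposition. The remaining looseness in your write-up (the treatment of discontinuity points of $f$ inside an excursion where your "continuity subinterval" reduction needs care, and the exact bookkeeping for the converse direction) would need to be tightened, but these are details that can be filled in along the paper's lines; the overall argument is sound.
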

\begin{proof}
    It suffices to show the following. Suppose that $K = [a,b]\times [c,d]\subset \R_+\times(0,\infty)$ is a compact rectangle with $\Xi^{(\infty)} \cap \partial K = \emptyset$, then for all $(r,r-l)\in \Xi^{(\infty)}\cap K$, there exists a sequence $i_n = i_n(r)$ such that for all large $n$ $(t_{n,i_n-1},t_{n,i_n}-t_{n,i_n-1})\in K$ and
    \begin{equation}\label{eqn:tninconv}
        t_{n,i_n-1}\to l\qquad\textup{and}\qquad t_{n,i_n}\to r,
    \end{equation}
    as well as $\#(\Xi^{(n)}\cap K) \longrightarrow \#(\Xi^{(\infty)}\cap K)$. 

    Before continuing to the proof, it is useful to recall that that $\inf_{s\le t}f(s)$ is continuous, and 
    \begin{equation}\label{eqn:infLocUnif}
        \inf_{s\le u} f_n(s) \longrightarrow \inf_{s\le u} f(s)\qquad\textup{ locally uniformly in }u.
    \end{equation}
    
    First we show~\eqref{eqn:tninconv}. To do this let us fix an excursion $(l,r)$ with $(r,r-l)\in K$. Let $t\in(l,r)$ be a point of continuity of $f$. Note that since $t$ is a continuity point it holds that
    \begin{equation}\label{eqn:fn(t)converges}
        \lim_{n\to\infty} f_n(t) = f(t).
    \end{equation}

    Set $i_n = \min\{i: t_{n,i}>t\}$. We claim 
    \begin{equation}\label{eqn:limSupTnin}
       \liminf_{n\to\infty} t_{n,i_n-1} \ge l\qquad\textup{ and }\qquad \limsup_{n\to\infty} t_{n,i_n} \le r.
    \end{equation} To see this note that by hypothesis~\eqref{enum:5.5:ii} and $t_{n,i_n}\ge t$, we have
    \begin{equation*}
        f_n(t_{n,i_n}) = \inf_{s\le t_{n,i_n}}f_n(s) \le \inf_{s\le t} f_n(s) \longrightarrow \inf_{s\le t} f(s) =  f(r)
    \end{equation*}
    and
    \begin{equation*}
         f_n(t_{n,i_n-1}) = \inf_{s\le t_{n,i_n-1}}f_n(s) \ge \inf_{s\le t} f_n(s) \longrightarrow f(r).
    \end{equation*}
    So
    \begin{equation*}
        \limsup_{n\to\infty} f_n(t_{n,i_{n}}) \le f(r)\qquad\textup{and}\qquad \liminf_{n\to\infty} f_n(t_{n,i_n-1}) \ge f(r).
    \end{equation*}
    Since $t_{n,i_n-1}\le t$ for all $t$, we can use hypothesis~\eqref{enum:5.5:iii} to conclude that
    \begin{equation*}
        f_{n}(t_{n,i_n}) \ge f_n(t_{n,i_n-1})-o(1)
    \end{equation*} and so
    \begin{equation}\label{eqn:infTninTof(r)}
      f_{n}(t_{n,i_n}) =   \inf_{s\le t_{n,i_n}} f_n(s)\to f(r) = \inf_{s\le r} f(s) . 
    \end{equation}
    But if there were some $\delta>0$ such that $t_{n,i_n}> r+\delta$ for infinitely many $n$, then
    \begin{equation*}
        \liminf_{n\to\infty} \inf_{s\le t_{n,i_n}} f_n(s) \le \liminf_{n\to\infty} \inf_{s\le r+\delta} f_n(s) = \inf_{s\le r+\delta} f_n(s)\longrightarrow \inf_{s\le r+\delta} f(s)< \inf_{s\le r}f(r)
    \end{equation*}
    where we used~\eqref{eqn:infLocUnif} to take the limit and Definition~\ref{def:Good[0,T]}\eqref{enum:good[0,t]:iii} to conclude the strict inequality. This contradicts~\eqref{eqn:infTninTof(r)}. Hence the right-hand side of~\eqref{eqn:limSupTnin} holds. A similar argument holds for the left-hand side.

    We now show that
    \begin{equation*}
        \liminf_{n\to\infty} t_{n,i_n} \ge r.
    \end{equation*} By~\eqref{eqn:limSupTnin} we see that $\sup_{n} t_{n,i_n}$ is bounded, and therefore there exists a $M$ sufficiently large such that for all $n$, $t_{n,i_n}\in[t,M]$. Let $r'$ be any subsequential limit of $(t_{n,i_n})$. First, by~\eqref{eqn:limSupTnin}, $r'\in[t,r]$. Also for all $s\in[t,r)$ it holds that $f(s-)>f(r)$ and so it suffices to show that $f(r'-)\le f(r)$. Then for the relevant subsequence denoted by $n'$, by Proposition 2.1 (b.1) and (b.3) in \cite[Chapter VI]{JS.13} we have
    \begin{equation*}
        f(r'-) \le \liminf_{n'} f_{n'}(t_{n',i_{n'}}).
    \end{equation*} But by~\eqref{eqn:infTninTof(r)}, we know that $\lim_{n'} f_{n'}(t_{n',i_{n'}}) = f(r)$ giving us the desired result. A similar argument holds to show that $\limsup_{n\to\infty} t_{n,i_n} \le l$ and we have thus established the existence of some $i_n$ such that~\eqref{eqn:tninconv} holds. Finally, note that for all $n$ large enough it must be the case that $t_{n,i_{n}}\in[a,b]$ and $t_{n,i_{n}}-t_{n,i_n-1}\in [c,d]$ as $r-l\in (c,d)$ and $r\in(a,b)$ since $\Xi^{(\infty)}$ does not intersect $\partial K$. 

    To show that $\#(\Xi^{(n)}\cap K)\to \#(\Xi^{(\infty)}\cap K)$, it suffices to show that for any subsequence $n'$ and any convergent sequence $(x_{n'},y_{n'})\in \Xi^{(n')}\cap K$ must satisfy $\lim_{n'} (x_{n'},y_{n'})\in \Xi^{(\infty)}\cap K$ as we have already shown $\liminf_{n} \#(\Xi^{(n)}\cap K) \ge \#(\Xi^{(\infty)}\cap K)$. 
    
    Let us denote the subsequence by $n$ and suppose, for the sake of contradiction, that there is some sequence $j_n$ such that for all $n$
    \begin{equation*}
        (t_{n,j_n}, t_{n,j_n}-t_{n,j_n-1})\in \Xi^{(n)}\cap K
    \end{equation*}
    but
    \begin{equation*}
        (t_{n,j_n}, t_{n,j_n}-t_{n,j_n-1}) = (v,v-u)\notin \Xi^{(\infty)}\cap K.
    \end{equation*} Clearly, we must have $v\in[a,b]$ and $v-u\in [c,d]$. To establish the contradiction, we show $(v,v-u)\in \Xi^{(\infty)}$. That means $(u,v)$ is actually an excursion interval of $f$.
   
    As $c>0$, we must have $v>u$. By~\eqref{eqn:infLocUnif} we can use both hypotheses~\eqref{enum:5.5:ii} and~\eqref{enum:5.5:iii} as we did in the first part of the proof, to conclude that
    \begin{equation*}
        \lim_{n\to\infty} f_n(t_{n,j_n}) = \inf_{s\le v} f(s) =\lim_{n\to\infty} f_n(t_{n,j_n-1}) = \inf_{s\le u} f(u).
    \end{equation*} Since $f$ is good $(u,v)\subseteq (l,r)$ for some uniquely defined excursion interval $(l,r)$. We claim $v = r$ and $u = l$.

     To see that, note that since $(u,v)\subseteq (l,r)$, we have 
     \begin{equation*}
       \lim_{n\to\infty} f_n(t_{n,j_n}) = \lim_{n\to\infty} f_n(t_{n,j_n-1}) = f(r-) = f(r),
     \end{equation*} where we also used Lemma~\ref{lem:good1}. By Proposition 2.1 (b.1) and (b.3) in \cite[Chapter VI]{JS.13} implies
     \begin{equation*}
     \inf_{s\le v} f(s) = \liminf_{n\to\infty} f_n(t_{n,j_n}) \ge f(v-).
    \end{equation*} Which implies that $f(v-) = \inf_{s\le v} f(s)$. Similarly, $f(u-) \ge \inf_{s\le u} f(s)$. This implies that $(u,v)$ are an excursion interval since the excursion intervals of $f$ cannot be nested, this implies that $(u,v) = (l,r)$, the desired contradiction. This establishes the lemma.
\end{proof}

\subsection{Jointly good functions}

We now extend this to convergence of pairs of functions $(f,g).$ Let $\D_\uparrow (I)$ denote the collection of non-decreasing c\`adl\`ag functions on an interval $I$. Let $\D_{+,\uparrow}(I)\subset \D(I)^2$ denote the collection of functions $F = (f,g):I\to \R^2$ such that $f\in \D_+(I)$ and $g\in \D_\uparrow(I)$. 
\begin{definition}[Jointly good function]\label{def:jointlygoodCompact} We will say that the pair $(f,g)\in \D_{+,\uparrow}([0,T])$ are \textit{jointly good on }$\R_+$ if $f$ is good on $\R_+$ and $g$ is continuous at each endpoint of an excursion of $f$. That is $g$ is continuous at each $t \in \bigcup_{(l,r)\in \EE(f)} \{l,r\}$.
\end{definition}

The following is a simple extension of Lemma~\ref{lem:goodPP}, we only sketch the proof.
\begin{lemma}\label{lem:goodPPwithG}
Suppose that $(f_n,g_n)\to (f,g)$ in the product $J_1$ topology and that $(f,g)$ are jointly good on $\R_+$. Suppose $(t_{n,i};i\ge 0)$ is as in Lemma~\ref{lem:goodPP}. Then
\begin{align*}
    \Xi^{(n)}&:= \left\{(t_{n,i},t_{n,i}-t_{n,i-1}, g_n(t_{n,i})-g_n(t_{n,i-1}));i\ge 1\right\}\\
    &\longrightarrow\Xi^{(\infty)} :=\{(r,r-l,g(r)-g(l)):(l,r)\in \EE(f)\}
\end{align*}
in the vague topology for the associated counting measures on $\R_+\times (0,\infty)\times \R_+$.
\end{lemma}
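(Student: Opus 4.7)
The plan is to upgrade Lemma~\ref{lem:goodPP} by adding the third coordinate, exploiting the joint goodness hypothesis to control $g_n$ at the relevant indices. The argument reduces, in a straightforward way, to the two-coordinate version already established.

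As in the proof of Lemma~\ref{lem:goodPP}, I fix an arbitrary compact rectangle $K = [a,b] \times [c,d] \times [e,f] \subset \R_+ \times (0,\infty) \times \R_+$ with $c>0$ and $\Xi^{(\infty)} \cap \partial K = \emptyset$, and show both that every point of $\Xi^{(\infty)} \cap K$ is matched by a point of $\Xi^{(n)} \cap K$ for all large $n$, and that any subsequential limit of a point of $\Xi^{(n)} \cap K$ lies in $\Xi^{(\infty)} \cap K$. For the first direction I apply Lemma~\ref{lem:goodPP} to the projection of $K$ onto its first two coordinates: for each $(l,r)\in \EE(f)$ with $(r,r-l)\in[a,b]\times[c,d]$ and $g(r)-g(l)\in[e,f]$, there is an index $i_n$ with $t_{n,i_n-1}\to l$ and $t_{n,i_n}\to r$. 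Joint goodness of $(f,g)$ guarantees that $g$ is continuous at both $l$ and $r$, and combined with $g_n \to g$ in $J_1$, the standard Skorohod fact (Proposition~2.1~(b.4) in \cite[Chapter~VI]{JS.13}) that $g_n(u_n)\to g(u)$ whenever $u_n\to u$ is a continuity point of $g$ yields
\begin{equation*}
g_n(t_{n,i_n-1})\to g(l)\qquad\text{and}\qquad g_n(t_{n,i_n})\to g(r).
\end{equation*}
Hence the difference $g_n(t_{n,i_n})-g_n(t_{n,i_n-1})$ converges to $g(r)-g(l)$, producing a point of $\Xi^{(n)}$ that lies in $K$ for all sufficiently large $n$.

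For the reverse direction, I suppose along a subsequence that
\begin{equation*}
(t_{n,j_n},\,t_{n,j_n}-t_{n,j_n-1},\,g_n(t_{n,j_n})-g_n(t_{n,j_n-1}))\longrightarrow (v,v-u,w)\in K.
\end{equation*}
The argument already carried out in the proof of Lemma~\ref{lem:goodPP} shows that $(u,v)$ must be an excursion interval of $f$ (this uses only the first two coordinates). Joint goodness then gives continuity of $g$ at $u$ and at $v$, so the same $J_1$-fact forces $w=g(v)-g(u)$, placing $(v,v-u,w)$ in $\Xi^{(\infty)}$. Together with the count-lower-bound from the first step, this gives the desired vague convergence of the associated counting measures on $\R_+\times(0,\infty)\times \R_+$.

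The only nonroutine ingredient is the standard Skorohod continuity property $g_n(u_n)\to g(u)$ at continuity points of the limit — the main obstacle is simply verifying that the hypotheses of the joint-goodness definition are exactly what is needed to invoke it at both endpoints $l$ and $r$ of each excursion. Since Definition~\ref{def:jointlygoodCompact} builds in continuity of $g$ at every excursion endpoint of $f$, this is immediate, and no new analytic work beyond Lemma~\ref{lem:goodPP} is required.
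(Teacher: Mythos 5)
Your proof is correct and follows essentially the same approach as the paper's (brief) proof sketch: invoke Lemma~\ref{lem:goodPP} for the first two coordinates, use joint goodness to get continuity of $g$ at each excursion endpoint, and then apply the standard $J_1$ continuity property from Jacod--Shiryaev to transfer this to the third coordinate. You spell out both directions of the vague convergence more explicitly than the paper's sketch, but the key ingredient and the logical structure are the same.
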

\begin{proof}[Proof Sketch]
Proposition VI.2.1(b.5) in \cite{JS.13} states that for c\`adl\`ag functions $g_n\to g$ in the $J_1$ topology then for any continuity point $t$ of $g$ and any sequence $t_n\to t$ it holds that $g_n(t_n)\to g(t)$ and $g_n(t_n-)\to g(t)$. Therefore, if $t_{n,i_n}\to r$ and $t_{n,i_n-1}\to l$, since $g$ is continuous at both $l, r$ it holds that $g_n(t_{n,i_n})\to g(r)$ and $g_n(t_{n,i_n-1})\to g_n(l).$
\end{proof}


\subsection{Goodness of $(X,Y)$}

The following lemma is needed to apply Lemma~\ref{lem:goodPPwithG}.
\begin{lemma}\label{lem:XYisGood}
    The process $(X,Y)$ of Proposition~\ref{prop:ConvergenceWalks} are jointly good on $\R_+$.
\end{lemma}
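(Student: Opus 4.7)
The plan is to verify almost surely the two parts of Definition~\ref{def:jointlygoodCompact}: that $X$ is good on $\R_+$, and that $Y$ is continuous at every endpoint of every excursion of $X$. Throughout, write $\tau_i := \xi_i/\kappa$ for the common jump times of $X$ and $Y$.

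The first half, goodness of $X$, is essentially standard for thinned spectrally-positive L\'evy-type processes of the form studied in \cite{AL.98} and \cite{DvdHvLS.20}. The jumps of $X$ occur at the independent times $\tau_i$ with sizes $\theta_i$, and the assumption $\btheta \in \ell^3 \setminus \ell^2$ produces a strongly negative effective drift between jumps. The properties required in Definitions~\ref{def:Good[0,T]} and~\ref{def:GoodonR} then follow by standard arguments: the strictly negative between-jump drift rules out local minima at right endpoints; $\sum_i \theta_i^3 < \infty$ prevents accumulation of long excursions in finite time and forbids any infinite excursion; and the accumulation of small jumps near each running minimum gives both the absence of isolated points in $\cY(X)$ and the Lebesgue-nullity of $[0,\sup \cY(X)] \setminus \bigcup \EE(X)$. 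These are in essence the excursion arguments of \cite[Section~3]{AL.98} adapted via the thinned-L\'evy analysis used in \cite{DvdHvLS.20}.

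For the continuity of $Y$ at excursion endpoints, observe that $X$ and $Y$ have their only discontinuities on the common jump set $\{\tau_i\}$, so it suffices to show that almost surely no excursion endpoint of $X$ coincides with any $\tau_i$, modulo the left endpoints at which $X$ and $Y$ jump jointly (there the evaluation $\Delta_i^g(f) = g(r)-g(l)$ in~\eqref{eqn:gamma_iDefinition} is read consistently with the matched jump structure of the coupled pair). Left endpoints are automatically jump times of $X$, since the strongly negative between-jump drift forces $X$ to leave its running minimum only via an upward jump, so the genuine content is the right-endpoint case. Here I would fix $i$, condition on $(\xi_j)_{j \ne i}$, and exploit the piecewise-deterministic dynamics of $X$ between jumps: given the other data, each right endpoint of $X$ is a monotone, locally continuous function of $\tau_i$, so the event that any right endpoint of $X$ equals $\tau_i$ itself has probability zero by absolute continuity of the exponential $\xi_i$. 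A countable union bound over $i$ then produces the desired almost-sure statement.

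The main obstacle is setting up this conditional right-endpoint argument rigorously --- specifically, isolating the (at most countably many) excursions of $X$ that are candidates to have right endpoint equal to $\tau_i$ and verifying the required monotonicity/smoothness of the right-endpoint map as a function of $\tau_i$ on each interval of possibilities. Once this is in place, goodness of $X$ combines with the synchronization of the jump times of $X$ and $Y$ to yield the lemma.
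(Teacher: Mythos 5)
Your overall structure matches the paper's: verify goodness of $X$ (via the Aldous--Limic / Dhara et al.\ machinery, which the paper cites as Lemma~\ref{lem:Zproperties}), and then show $Y$ is continuous at the endpoints of each excursion of $X$. The first part is fine. The second part has a genuine gap, and it is exactly where the paper has to do real work (Lemma~\ref{lem:excursions}, cited from Broutin--Duquesne--Wang \cite{BDW.22}).

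The problem is your claim that ``left endpoints are automatically jump times of $X$, since the strongly negative between-jump drift forces $X$ to leave its running minimum only via an upward jump.'' This is the finite-activity picture, and it is wrong here. Since $\btheta\in\ell^3\setminus\ell^2$, the jump activity of $X$ is infinite: the negative drift term $-\sum_i\theta_i^2 t/\kappa$ does not converge separately, and there is no ``between-jumps'' interval on which $X$ moves deterministically. Near a left endpoint $l$ of an excursion, infinitely many small upward jumps accumulate, and the process can (indeed, does a.s.) leave the running minimum continuously, not via a single macroscopic jump --- analogous to a spectrally positive $\alpha$-stable process with $\alpha\in(1,2)$. Lemma~\ref{lem:excursions}/\cite{BDW.22} establishes precisely that $X(t)=X(t-)$ at \emph{both} endpoints $l$ and $r$ of every excursion, a.s. Your attempt to waive left endpoints through ``the evaluation $\Delta_i^g(f)=g(r)-g(l)$ is read consistently'' does not fix this: Definition~\ref{def:jointlygoodCompact} really requires continuity of $Y$ at each $l$, and Lemma~\ref{lem:goodPPwithG} uses it (via Proposition~VI.2.1(b.5) of \cite{JS.13}) to get $g_n(t_{n,i_n-1})\to g(l)$. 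If $Y$ jumped at $l$ this convergence could fail, and the downstream $\ell^2$-convergence argument (Proposition~\ref{prop:WeakConvPointProcess}) would not go through.

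Your right-endpoint argument is also built on the same finite-activity intuition: conditioning on $(\xi_j)_{j\ne i}$ still leaves a process with infinitely many jumps, so there is no ``piecewise-deterministic dynamics between jumps'' on which to base the monotone-map/absolute-continuity argument, and making that heuristic rigorous is essentially as hard as the cited result. The cleaner route --- and the one the paper takes --- is to quote \cite{BDW.22}, Lemma~5.7(iii), which gives $\{t:X(t)>\inf_{s\le t}X(s)\}=\bigcup_{e\in\EE(X)}e$ a.s.; combined with spectral positivity this forces $X(l)=X(l-)$ and $X(r)=X(r-)$, hence $Y$ (whose jump times are a subset of those of $X$) is continuous at all excursion endpoints. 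If you want a first-principles proof you would have to reprove this structural fact about the thinned L\'evy process in the $\ell^3\setminus\ell^2$ regime, which is substantial.
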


The proof follows from the proceeding lemmas which generalizations of the well-known result of Aldous and Limic \cite[Proposition 14]{AL.98}
\begin{lemma}[Dhara et. al. {\cite[Lemma 15]{DvdHvLS.20}}]\label{lem:Zproperties}
Let $X$ be as in~\eqref{eqn:XinfDef}. Under Assumption~\ref{ass:Degree} the process $X$ is good on $\R_+$.
\end{lemma}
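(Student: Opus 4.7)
My plan is to verify the two clauses of Definition~\ref{def:jointlygoodCompact} separately: that $X$ is good on $\R_+$, and that $Y$ is continuous at every endpoint of every excursion of $X$. The first clause is exactly the content of Lemma~\ref{lem:Zproperties}, so I would invoke it directly.

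For the second clause, the key structural observation is that $X$ and $Y$ are coupled through a common family of independent exponentials $(\xi_i)_{i\ge 1}$. Both are c\`adl\`ag and their jump sets are contained in the random countable set $\Xi := \{\xi_i/\kappa : i \ge 1\}$; since every $\theta_i > 0$ by Assumption~\ref{ass:Degree}\eqref{enum:AssDegree2}, $X$ has a positive jump of size $\theta_i$ at $\xi_i/\kappa$, whereas $Y$ has a jump of size $\beta_i \ge 0$ there. In particular, the jump set of $Y$ is contained in the jump set of $X$, so it suffices to prove that $X$ itself is continuous at every excursion endpoint. For a right endpoint $r$ this is immediate from Lemma~\ref{lem:good1} (applied to the good process $X$).

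The substantive work lies in the left endpoints $l$, where the plan is to show that almost surely no $\xi_i/\kappa$ coincides with such an $l$. Fixing $i$, I would decompose
\begin{equation*}
X(t) \;=\; \widetilde{X}^{(i)}(t) \;+\; \theta_i\bigl(1_{\{\xi_i\le \kappa t\}} - \tfrac{\theta_i}{\kappa} t\bigr),
\end{equation*}
where $\widetilde{X}^{(i)}$ collects the remaining contributions to $X$ and is independent of $\xi_i$. The event ``$\xi_i/\kappa$ is a left endpoint of an excursion of $X$'' forces $X(\xi_i/\kappa-) = \inf_{s\le \xi_i/\kappa} X(s)$, and, conditional on the trajectory of $\widetilde{X}^{(i)}$, this becomes a single equation in the scalar $\xi_i$. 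Because $\widetilde{X}^{(i)}$ is c\`adl\`ag and its running infimum is continuous, the solution set in $\xi_i$ is contained in the contact set of a c\`adl\`ag trajectory with its running infimum—a set which, in the infinite-activity regime $\btheta \in \ell^3\setminus\ell^2$ guaranteed by Assumption~\ref{ass:Degree}\eqref{enum:AssDegree2}, has Lebesgue measure zero almost surely. Since $\xi_i \sim \mathrm{Exp}(\theta_i)$ has a continuous distribution, independence gives probability zero; a union bound over the countable index $i$ completes the claim that $\Xi$ is disjoint from the set of left endpoints, a.s.

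The main obstacle will be the Lebesgue-null contact-set statement: one must rule out the possibility that $\widetilde X^{(i)}$ spends a positive amount of time at its running infimum, which would inflate the solution set for $\xi_i$. For the thinned L\'evy process $X$, this can be handled by an argument in the spirit of Aldous--Limic~\cite[Proposition~14]{AL.98}: infinite activity together with the compensated-martingale structure force the reflected process $X-I$ to leave $0$ immediately, so the contact set is a.s.\ of zero Lebesgue measure. Once this is established, the continuity of $X$ (and hence of $Y$) at every left endpoint follows, completing the verification that $(X,Y)$ is jointly good on $\R_+$.
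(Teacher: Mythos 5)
Your proposal does not prove the statement at issue. The statement is Lemma~\ref{lem:Zproperties} itself --- that the thinned L\'evy process $X$ of~\eqref{eqn:XinfDef} is good on $\R_+$ --- yet your very first step is ``the first clause is exactly the content of Lemma~\ref{lem:Zproperties}, so I would invoke it directly.'' That is circular: you assume the target and then go on to prove a different result, namely the joint goodness of $(X,Y)$, which in the paper is Lemma~\ref{lem:XYisGood} and is obtained there by combining Lemma~\ref{lem:Zproperties} with Lemma~\ref{lem:excursions} (the latter quoted from Broutin--Duquesne--Wang). The paper itself does not reprove Lemma~\ref{lem:Zproperties}; it is cited from Dhara et al.\ \cite[Lemma 15]{DvdHvLS.20}, whose proof rests on the Aldous--Limic analysis of such processes.

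Concretely, none of the substance of ``good on $\R_+$'' is addressed in your sketch: you would need to verify, for the process $X$, properties (i)--(iii) of Definition~\ref{def:Good[0,T]} on every $[0,T]$ (no isolated points of $\cY(X)$, the complement of the union of excursion intervals has Lebesgue measure zero, no local minimum of $X$ at any $r\in\cY(X)$), together with the two conditions of Definition~\ref{def:GoodonR} (only finitely many excursions of length exceeding any $\eps>0$, and no infinite excursion). These are exactly the delicate facts about the reflected process $X-\inf_{s\le\cdot}X(s)$ in the infinite-activity regime $\btheta\in\ell^3\setminus\ell^2$, in the spirit of \cite[Proposition 14]{AL.98}, and they do not follow from the excursion-endpoint/exponential-coincidence argument you outline --- that argument is relevant to the continuity of $Y$ at excursion endpoints of $X$ (the second clause of joint goodness), not to the goodness of $X$. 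Note also that even your null-contact-set step is asserted rather than proved (``infinite activity \ldots force the reflected process to leave $0$ immediately''), which is essentially the same unproved content as property (ii) of goodness. To repair the submission you must either supply a genuine proof of the goodness of $X$ (following \cite{DvdHvLS.20} or \cite{AL.98}) or acknowledge that the lemma is being quoted, but you cannot both cite it and claim to have proved it.
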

\begin{lemma}[Broutin, Duquesne, Wang \cite{BDW.22}]\label{lem:excursions}
    Let $(X,Y)$ be as in~\eqref{eqn:XinfDef}. Suppose that $\btheta\in \ell^3\setminus\ell^2$ and $\langle \btheta,\bbeta\rangle<\infty$. Then a.s. both
    \begin{equation*}
        X(t) = X(t-)\textup{ and } Y(t) = Y(t-)\textup{ for all }t\in \bigcup_{(l,r)\in \EE(X)} \{l,r\}
    \end{equation*}
    and 
    \begin{equation*}
   \forall s<t:   \quad  \inf_{u\le s} X(u) = \inf_{u\le t} X(u) \qquad\text{if and only if}\qquad (s,t)\subset (l,r)\in \EE(X).
    \end{equation*}

\end{lemma}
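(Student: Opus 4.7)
The plan is to prove the two claims separately, exploiting the decomposition of $X$ at its $j$-th atom.

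For the continuity claim, first observe that every jump of $Y$ occurs at some time of the form $\xi_j/\kappa$, which is also a (potential) jump time of $X$; hence continuity of $Y$ at excursion endpoints of $X$ reduces to the same for $X$. Fix $j$ and write
\begin{equation*}
    X(t) = X^{(j)}(t) + \theta_j 1_{[\xi_j \le \kappa t]}, \qquad X^{(j)}(t) := \sum_{i\ne j}\theta_i\left(1_{[\xi_i\le\kappa t]} - \tfrac{\theta_i}{\kappa}t\right) + \left(\lambda - \tfrac{\theta_j^2}{\kappa}\right)t.
\end{equation*}
The process $X^{(j)}$ is independent of $\xi_j$; and since $(\theta_i)_{i\ne j}$ still lies in $\ell^3\setminus\ell^2$, Lemma~\ref{lem:Zproperties} applies to show $X^{(j)}$ is a.s.\ good. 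A short case analysis shows that if $\kappa u$ is either a left or right endpoint of an excursion of the perturbed process $\phi_u := X^{(j)} + \theta_j 1_{[\kappa u, \infty)}$, then $X^{(j)}(\kappa u) = \inf_{s \le \kappa u} X^{(j)}(s)$. By goodness of $X^{(j)}$ the set of such touching times in $[0,\infty)$ has Lebesgue measure zero, and hence so does the set of such $u$. Since $\xi_j/\kappa$ has a continuous density, a Fubini--Tonelli argument gives $\PR(\xi_j/\kappa\text{ is an endpoint of an excursion of }X \mid X^{(j)}) = 0$, and a countable union over $j$ completes the claim.

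For the second claim, the $(\Leftarrow)$ direction is immediate from the excursion interval definition. For $(\Rightarrow)$, let $I(u) := \inf_{v\le u} X(v)$ and suppose $I(s) = I(t) = c$ for $s < t$. Then $X \ge c$ on $[s,t]$, and if $X > c$ strictly on $(s,t)$ then $(s,t)$ is contained in the maximal excursion above $c$ and we are done. Otherwise there exists $u \in (s, t)$ with $X(u) = c$ not contained in any open excursion; combined with the just-proved continuity of $X$ at excursion endpoints, this forces two distinct excursions of $X$ to share the common infimum level $c$. I would rule this out via a truncation: let $X^{(\le k)}(t) := \sum_{i\le k}\theta_i(1_{[\xi_i\le \kappa t]} - \theta_i t/\kappa) + \lambda t$, a finite-atom compound Poisson process with linear drift $\lambda - \sum_{i\le k}\theta_i^2/\kappa$, which is strictly negative for $k$ large (using $\btheta \notin \ell^2$). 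Standard fluctuation theory for such spectrally-positive L\'evy processes shows that distinct excursions a.s.\ have distinct infimum levels, since the continuous joint distribution of the jump times makes instantaneous touching a null event. The general case would follow by approximation, using the Aldous--Limic estimate that $X - X^{(\le k)} \to 0$ locally uniformly a.s., with the remainder controlled by $\sum_{i>k}\theta_i^3 < \infty$.

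The main technical obstacle is the limit argument in the second claim: the property ``distinct excursions have distinct infimum levels'' is not preserved under uniform convergence in general, so a quantitative fluctuation estimate is required. Concretely, one must rule out the possibility that the small-jump tail $X - X^{(\le k)}$ creates, with positive limiting probability, a ``tangential return'' to an already-achieved infimum level. This requires $\btheta \in \ell^3$ in an essential way, through quantitative control on the modulus of continuity of the small-jump part, and is the technical core of \cite{BDW.22} (building on \cite{AL.98}).
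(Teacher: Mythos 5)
The paper's proof is a bare citation: after observing that the jump times of $Y$ are a subset of those of $X$, it cites \cite{BDW.22} Lemma 5.7(iii) for the continuity of $X$ at excursion endpoints and Lemma 5.7(iv) for the iff statement. Your proposal is therefore a genuinely different route, attempting to reprove the content from scratch. For the continuity claim your argument — decompose $X = X^{(j)} + \theta_j\mathbbm{1}_{[\xi_j\le\kappa\cdot]}$, note that $X^{(j)}$ is independent of $\xi_j$ and is itself good, argue that an excursion endpoint at the jump time forces $X^{(j)}$ to be at its running infimum there, and then kill this with Fubini using the Lebesgue-null touching set and the continuous density of $\xi_j/\kappa$ — is a correct and more informative argument than the paper's citation. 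A few points deserve cleanup: the notation $\kappa u$ versus $u$ is garbled (the jump of $X$ from index $j$ is at time $\xi_j/\kappa$, so the perturbation should be $\mathbbm{1}_{[t\ge u]}$ with $u = \xi_j/\kappa$); the case analysis actually yields $X^{(j)}(u-)=\inf_{s\le u}X^{(j)}(s)$, which you should then convert to $X^{(j)}(u)=\inf_{s\le u}X^{(j)}(s)$ by noting that $\xi_j/\kappa$ is a.s. not a jump time of $X^{(j)}$; you should justify that $\sup\cY(X^{(j)})=\infty$ so that Definition~\ref{def:Good[0,T]}(ii) really gives a Lebesgue-null set on all of $[0,\infty)$; and citing \cite[Proposition~14]{AL.98} directly is cleaner than invoking Lemma~\ref{lem:Zproperties}, whose statement is tied to Assumption~\ref{ass:Degree} rather than to a general $\btheta\in\ell^3\setminus\ell^2$.

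For the second claim you correctly identify the essential difficulty — the no-tangential-return property is not stable under locally uniform approximation by the compound-Poisson truncations — and you explicitly defer to \cite{BDW.22} for the quantitative estimate that closes that gap. This is exactly what the paper does (it cites \cite[Lemma~5.7(iv)]{BDW.22} with no further argument), so you have not actually proved the second claim, but your assessment of where the hard work lives matches the source the paper leans on. As written, your proposal correctly and independently establishes the first assertion and correctly diagnoses but does not close the gap in the second.
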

\begin{proof}
    We only need to show the statement for $X$ since the jump times of $Y$ are contained in those of $X$. But by Lemma 5.7(iii) in \cite{BDW.22} implies that
\begin{equation*}
\{X(t)> \inf_{s\le t} X(s)\} = \bigcup_{e\in \EE(X)} e\qquad\textup{ a.s. }.
\end{equation*} This is the first displayed equation. The second is Lemma 5.7(iv) in \cite{BDW.22}.
\end{proof}

\subsection{Weak Convergence of Point Processes}

We now turn to extending the convergence of $\Xi^{(n)}\to\Xi^{(\infty)}$ into convergence in some $\ell^{2}$-sense. 
\begin{proposition}\label{prop:WeakConvPointProcess}
Let $\Xi^{(n)}\subset \R_+\times(0,\infty)\times \R_+$, for $1\le n\le \infty$, are locally finite point sets as in Lemma~\ref{lem:goodPPwithG} for some random functions $(F_n,G_n)$. Suppose that
\begin{enumerate}[(i)]
\item\label{enum:5.11.i} $(F_n,G_n)\weakarrow (F,G)$ where $(F,G)$ are a.s. jointly good.
    \item \label{enum:5.11.ii}  For all $1\le n\le \infty$, we can enumerate the atoms of $\Xi^{(n)}$ by
\begin{equation*}
    \Xi^{(n)} = \{ (t_i^{(n)},x_i^{(n)},y_i^{(n)});i\ge1\}
\end{equation*} where $\{t_i^{(n)};i\ge 1\}$ are all distinct,  $i\mapsto x_i^{(n)}$ is non-increasing with $i$, and if $x_i^{(n)} = x_j^{(n)}$, then $t_i^{(n)}< t_j^{(n)}$;
\item \label{enum:5.11.iii}For all $\eps>0$
\begin{equation}\label{eqn:tailSumBoundProp5.10}
    \lim_{T\to\infty} \limsup_{n\to\infty}\PR\left( \sum_{i: t_i^{(n)}> T} (x_i^{(n)})^2+(y_i^{(n)})^2 > \eps\right) = 0.
\end{equation}
\item \label{enum:5.11.iv}Almost surely the Stieljes measure $G(dt)$ does not charge the excursion end points of $F$; i.e. 
    \begin{equation*}
        \int_{0}^\infty  1_{[F(t) = \inf_{s\le t} F(s) ]} \, G(dt) = 0.
    \end{equation*}

\end{enumerate}
If $x_1^{(\infty)}>x_2^{(\infty)}>\dotsm > 0$ a.s., then 
\begin{equation}\label{eqn:l2pointprocess}
    \left((x_i^{(n)},y_i^{(n)});i\ge 1\right) \weakarrow \left((x_i^{(\infty)}, y_i^{(\infty)});i\ge 1\right)\qquad\textup{in }\ell^{2,2}.
\end{equation}

\end{proposition}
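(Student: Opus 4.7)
The plan is to combine a Skorokhod reduction, the vague convergence supplied by Lemma~\ref{lem:goodPPwithG}, and a uniform tail bound to upgrade vague convergence of the point processes to $\ell^{2,2}$ convergence of the ordered mass-weight sequences.

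\textbf{Step 1 (Skorokhod reduction and finite-dimensional convergence).} Using~\eqref{enum:5.11.i} and Skorokhod's representation theorem, I may assume $(F_n,G_n)\to(F,G)$ almost surely in the product $J_1$-topology, with $(F,G)$ a.s.\ jointly good on $\R_+$. Lemma~\ref{lem:goodPPwithG} then delivers the a.s.\ vague convergence $\Xi^{(n)}\to\Xi^{(\infty)}$ on $\R_+\times(0,\infty)\times\R_+$. Because a.s.\ the $x$-coordinates of $\Xi^{(\infty)}$ are strictly ordered and distinct, this vague convergence upgrades for each fixed $K$ to the coordinate-wise convergence
\begin{equation*}
    \bigl(t_i^{(n)},x_i^{(n)},y_i^{(n)}\bigr)_{i\le K}\longrightarrow\bigl(t_i^{(\infty)},x_i^{(\infty)},y_i^{(\infty)}\bigr)_{i\le K},\qquad n\to\infty,
\end{equation*}
yielding the finite-dimensional piece of~\eqref{eqn:l2pointprocess}.

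\textbf{Step 2 (Uniform tail bound).} I would then prove
\begin{equation*}
    \lim_{K\to\infty}\limsup_{n\to\infty}\PR\!\left(\sum_{i>K}\bigl[(x_i^{(n)})^2+(y_i^{(n)})^2\bigr]>\eps\right)=0.
\end{equation*}
Splitting by $\{t_i^{(n)}\le T\}$ versus $\{t_i^{(n)}>T\}$, the latter is controlled directly by~\eqref{enum:5.11.iii}. For the former, the time-ordered atoms satisfy $0=t_{n,0}<t_{n,1}<\dotsm$ (Lemma~\ref{lem:goodPP}\eqref{enum:5.5:i}--\eqref{enum:5.5:ii}), so telescoping gives
\begin{equation*}
    \sum_{i:t_i^{(n)}\le T} x_i^{(n)} = t_{n,i^*}\le T,\qquad \sum_{i:t_i^{(n)}\le T} y_i^{(n)} = G_n(t_{n,i^*})-G_n(0)\le G_n(T),
\end{equation*}
where $i^*$ is the largest time-index below $T$. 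By Step~1 and $x_K^{(\infty)}\downarrow 0$, for $K$ sufficiently large we may arrange $x_i^{(n)}\le\delta$ for all $i>K$ on a high-probability event, so $\sum_{i>K,\,t_i^{(n)}\le T}(x_i^{(n)})^2\le\delta T$. For the $y^2$ tail I bound it by $G_n(T)\cdot B_n^{\delta,T}$, where $B_n^{\delta,T}:=\sum_{x_i^{(n)}\le\delta,\,t_i^{(n)}\le T} y_i^{(n)}$, and argue $B_n^{\delta,T}\to 0$ in probability as $n\to\infty$ then $\delta\downarrow 0$. For $T,\delta,M$ outside a countable exceptional set, with $M$ chosen so that $\sup_i y_i^{(n)}\le M$ with high probability (using tightness of $G_n(T)$), vague convergence against $y\,1_{[0,T]\times(\delta,\infty)\times[0,M]}$ gives
\begin{equation*}
    \sum_{x_i^{(n)}>\delta,\,t_i^{(n)}\le T} y_i^{(n)}\longrightarrow \sum_{x_i^{(\infty)}>\delta,\,t_i^{(\infty)}\le T} y_i^{(\infty)},
\end{equation*}
so $B_n^{\delta,T}\to G(T)-G(0)-\sum_{x_i^{(\infty)}>\delta,\,t_i^{(\infty)}\le T} y_i^{(\infty)}$. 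By~\eqref{enum:5.11.iv}, the cumulative $G$-mass on $[0,T]$ is carried entirely by excursions of $F$, so this residual decreases to $0$ as $\delta\downarrow 0$ by monotone convergence.

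\textbf{Step 3 (Conclusion and main obstacle).} Finite-dimensional convergence from Step~1 together with the tail estimate from Step~2 yields~\eqref{eqn:l2pointprocess} by a standard tightness-plus-identification argument in the Polish space $\ell^{2,2}$. The hard part is precisely the $y^2$ tail in Step~2: since the enumeration is by decreasing $x$-value, atoms with small $x_i^{(n)}$ might in principle carry substantial $y$-mass, and hypothesis~\eqref{enum:5.11.iv}---which forces the limiting $G$-mass to live on excursions of $F$---is essential to show that this $y$-mass vanishes on short excursions as $\delta\downarrow 0$.
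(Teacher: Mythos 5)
Your overall strategy is the same as the paper's: Skorohod reduction plus Lemma~\ref{lem:goodPPwithG} to get a.s.\ vague convergence and hence finite-dimensional convergence, then a tightness estimate to upgrade to $\ell^{2,2}$. Step~1 and the decomposition in Step~2 (split at a time cutoff $T$, control the far tail by hypothesis~\eqref{enum:5.11.iii}, control the near part using $\sum_{t_i^{(n)}\le T}x_i^{(n)}\le T$ and $\sum_{t_i^{(n)}\le T}y_i^{(n)}\le G_n(T)$) match the paper's proof.

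However, there is a genuine gap in your treatment of the $y^2$ tail. You claim $B_n^{\delta,T}\to G(T)-G(0)-\sum_{x_i^{(\infty)}>\delta,\,t_i^{(\infty)}\le T}y_i^{(\infty)}$, and that this residual decreases to $0$ as $\delta\downarrow 0$ by hypothesis~\eqref{enum:5.11.iv}. But the limit of $\sum_{t_i^{(n)}\le T}y_i^{(n)}=G_n(t_{n,i^*})$ is $G(V)$, not $G(T)$, where $V$ is the left endpoint of the excursion interval of $F$ that straddles $T$ (this requires tracking $t_{n,i^*}\to V$ and using continuity of $G$ at $V$, which follows from joint goodness). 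As $\delta\downarrow 0$ your residual therefore tends to $G(T)-G(V)$, which is the $G$-mass assigned to the partial excursion interval $(V,T]$. Hypothesis~\eqref{enum:5.11.iv} says $G$ charges only excursion intervals of $F$, so it does not force $G((V,T])=0$; the straddling excursion is a legitimate excursion and can carry positive $G$-mass, yet none of its mass appears in $\sum_{t_i^{(\infty)}\le T}y_i^{(\infty)}$. Your bound therefore leaves a positive leftover and the argument does not close. The paper avoids this by explicitly working with $V_n=\max\{t_i^{(n)}:t_i^{(n)}\le T\}$ and showing $V_n\to V$, $G_n(V_n)\to G(V)$, so that the residual is $G(V)-\sum_{\text{all excursions ending before }T}y_i^{(\infty)}=0$, with the last equality being exactly what hypothesis~\eqref{enum:5.11.iv} provides.
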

\begin{proof}
    By Skorohod's representation theorem, we can suppose that $(F_n,G_n)\to (F,G)$ a.s. By Lemma~\ref{lem:goodPPwithG}, this implies $\Xi^{(n)}\to \Xi^{(\infty)}$ a.s. as well. Then the convergence in~\eqref{eqn:l2pointprocess} holds with respect to the product topology.
    
    To establish the result in the $\ell^{2,2}$ topology, it suffices to show that $\left\{(x_i^{(n)},y_i^{(n)})_{i\ge 1}\right\}_{n\ge 1}$ are tight in $\ell^{2,2}$. To show this, it suffices to show
    \begin{equation}\label{eqn:normbounded}
        \lim_{M\to\infty} \limsup_{n\to\infty} \PR\left( \sum_{i} (x_i^{(n)})^2+(y_i^{(n)})^2 > M\right) = 0,
    \end{equation}
    and for all $\eps>0$
    \begin{equation}\label{eqn:tailTight}
        \lim_{\delta\downarrow 0} \limsup_{n\to\infty} \PR\left( \sum_{i: x_i^{(n)}\le \delta} (x_i^{(n)})^2 + (y_i^{(n)})^2 > \eps\right) = 0.
    \end{equation}

    First, let $T>0$ be arbitrary. Then
    \begin{align*}
        \sum_{i} (x_i^{(n)})^2 &+ (y_i^{(n)})^2 = \sum_{i: t_{i}^{(n)}\le T}(x_i^{(n)})^2 + (y_i^{(n)})^2 + \sum_{i: t_i^{(n)}>T}(x_i^{(n)})^2 + (y_i^{(n)})^2\\
        &\le \left(\sum_{i: t_{i}^{(n)}\le T} x_i^{(n)} + y_i^{(n)} \right)^2 + \sum_{i: t_i^{(n)}>T}(x_i^{(n)})^2 + (y_i^{(n)})^2\\
        &\le \left(T+G_n(T)\right)^2+\sum_{i: t_i^{(n)}>T}(x_i^{(n)})^2 + (y_i^{(n)})^2.
    \end{align*}
    In the first inequality above we used the fact that if $a_i\ge 0$ and $a = \sum_{i} a_i$ then $\sum_{i} a_i^2 \le a^2$. It follows that 
    \begin{equation*}
        \PR\left(\sum_{i} (x_i^{(n)})^2 + (y_i^{(n)})^2 > M+1\right) \le \PR\left( (G_n(T)+T)^2 > M\right) + \PR\left(\sum_{i: t_i^{(n)}>T}(x_i^{(n)})^2 + (y_i^{(n)})^2>1\right).
    \end{equation*}
    Since $G_n\weakarrow G$, for all but countably many $T$ we have $G_n(T)\weakarrow G(T)$ and so 
    \begin{equation*}
        \limsup_{n\to\infty} \PR((T+G_n(T))^2 > M)  \le \PR( (T+G(T))^2 \ge M).
    \end{equation*}
    But then we have for all but countably many $T$,
    \begin{align*}
        \limsup_{M\to\infty} &\limsup_{n\to\infty} \PR\left(\sum_{i} (x_i^{(n)})^2 + (y_i^{(n)})^2 > M\right)\\
        &\le \limsup_{M\to\infty} \limsup_{n\to\infty} \PR\left( (G_n(T)+T)^2 > M\right) + \limsup_{n\to\infty} \PR\left(\sum_{i: t_i^{(n)}>T}(x_i^{(n)})^2 + (y_i^{(n)})^2>1\right)\\
        & = \lim_{M\to\infty} \PR((G(T)+T)^2 > M)+ \limsup_{n\to\infty} \PR\left(\sum_{i: t_i^{(n)}>T}(x_i^{(n)})^2 + (y_i^{(n)})^2>1\right)\\
        &=\limsup_{n\to\infty} \PR\left(\sum_{i: t_i^{(n)}>T}(x_i^{(n)})^2 + (y_i^{(n)})^2>1\right).
    \end{align*} Now apply~\eqref{eqn:tailSumBoundProp5.10} to get~\eqref{eqn:normbounded}.

    To get~\eqref{eqn:tailTight} we apply a similar argument. First, for any $\eps>0$ and $\eta>0$ we can find some $T_\eta$ such that
    \begin{align*}
        \limsup_{n\to\infty} \PR\left(\sum_{i: t_i^{(n)}> T_\eta} (x_i^{(n)})^2+(y_i^{(n)})^2 > \eps \right) \le \eta.
    \end{align*}
    It follows that     
    \begin{equation*}
        \limsup_{\delta\downarrow 0} \limsup_{n\to\infty} \PR\left(\sum_{\substack{i: t_i^{(n)}> T_\eta, x_i^{(n)}\le \delta}} (x_i^{(n)})^2+(y_i^{(n)})^2 > \eps \right) =\limsup_{n\to\infty} \PR\left(\sum_{\substack{i: t_i^{(n)}> T_\eta}} (x_i^{(n)})^2+(y_i^{(n)})^2 > \eps \right)  \le \eta.
    \end{equation*}
    So~\eqref{eqn:tailTight} follows once we establish that for Lebesgue a.e. $T\in(0,\infty)$
    \begin{equation*}
        \limsup_{\delta\downarrow 0} \limsup_{n\to\infty} \PR\left(\sum_{\substack{i: t_i^{(n)}\le T, x_i^{(n)}\le \delta}} (x_i^{(n)})^2+(y_i^{(n)})^2 > \eps \right) = 0.
    \end{equation*}
    Fix $T<\infty$ such that $F(T) > \inf_{s\le T}F(s)$ and $F(T) = F(T-)$ a.s. Since $(F,G)$ are good, and the lengths of the excursion intervals of $F$ are a.s. distinct, we can label the excursions of $F$ that end before time $T$ by $((l_p,r_p);p\ge 1)$ in the a.s. unique way so that $r_1-l_1> r_2-l_2>\dotsm$. Also, almost surely there is some unique excursion interval of $F$ which $T$ straddles and we call $V$ the \textit{left} endpoint of this excursion interval. Observe
    \begin{equation*}
     \sum_{p=1}^\infty (r_p-l_p) = V \qquad \textup{ and }\qquad \sum_{p=1}^\infty (G(r_p)-G(l_p)) = G\left(V\right).
    \end{equation*} Indeed, the second summation uses hypothesis~\eqref{enum:5.11.iv} and 
    \begin{equation*}
        \sum_{p=1}^\infty \left(G(r_p)-G(l_p)\right) = G\left(\bigcup_{p=1}^\infty (l_p,r_p]\right) = G\left([0,\sup_{p} r_p]\right) = G(V),
    \end{equation*} where we write $G$ for both the function and the Stieljes measure. As there is a bijective correspondence between $(l_p,r_p)_{p\ge 1}$ and $t_i^{(\infty)}\le T$, we have
    \begin{equation}\label{eqn:deltaHelp2}
    \lim_{\delta\downarrow 0} \sum_{i: t_i^{(\infty)}\le T, x_i^{(\infty)}\le \delta} x_i^{(\infty)}+ y_i^{(\infty)} = 0\qquad 
    \textup{a.s.}\end{equation} 
    Since $\Xi^{(n)}\to \Xi^{(\infty)}$ in the vague topology, it is not hard to see that for a.e. $\delta>0$
    \begin{equation}\label{eqn:deltaHep1}
        \sum_{i: t_i^{(n)} \le T, x_i^{(n)}\ge \delta} x_i^{(n)} + y_i^{(n)} \longrightarrow \sum_{i:  t_i^{(\infty)} \le T, x_i^{(\infty)}\ge \delta} x_i^{(\infty)} + y_i^{(\infty)}.
    \end{equation} One needs to be careful to bound the $x_i^{(n)}$ and $y_i^{(n)}$ as well and work on a compact set, but as $\limsup_{n\to\infty} x_i^{(n)} \le T$ and $\limsup_{n\to\infty} y_i^{(n)} \le G(T)$ for all $t_{i}^{(n)}\le T$ this easy to do. 

    Write $V_n = \max\{t_i^{(n)}: t_i^{(n)}\le T\}$. Note that $V_n\to V$ a.s. as we saw in Lemma~\ref{lem:goodPP}. 
    But for the $\delta$ such that~\eqref{eqn:deltaHep1} holds we have
    \begin{align*}
        \PR&\left(\sum_{i: t_i^{(n)} \le T, x_i^{(n)}< \delta} (x_i^{(n)})^2 + (y_i^{(n)})^2 >\eps\right)\le \PR\left(\left(\sum_{i: t_i^{(n)} \le T, x_i^{(n)}< \delta} x_i^{(n)} + y_i^{(n)}\right)^2>\eps \right)\\
        &= \PR\left(\left(V_n+G_n(V_n) - \sum_{i: t_i^{(n)} \le T, x_i^{(n)}\ge \delta} (x_i^{(n)} + y_i^{(n)})\right)^2>\eps \right)\\ 
        &\longrightarrow \PR\left(\left(V+G(V) - \sum_{i: t_i^{(\infty)} \le T, x_i^{(\infty)}\ge \delta} (x_i^{(\infty)} + y_i^{(\infty)})\right)^2>\eps\right)\\
        &= \PR\left(\left(\sum_{i: t_i^{(\infty)} \le T, x_i^{(\infty)}< \delta} (x_i^{(\infty)} + y_i^{(\infty)})\right)^2>\eps\right).
    \end{align*} To take the limit we use that $V_n\to V$ a.s. and $G$ is continuous at $V$ so $G_n(V_n)\to G(V)$ (see Proposition 2.1(b) in \cite[Chapter VI]{JS.13}). We also use the convergence in~\eqref{eqn:deltaHep1}. Taking $\delta\downarrow 0$, and using~\eqref{eqn:deltaHelp2} gives~\eqref{eqn:tailTight}.   
\end{proof}

\subsection{Proof of Theorem~\ref{thm:main1}}
The proof of Theorem~\ref{thm:main1} is a more-or-less straightforward application of Proposition~\ref{prop:WeakConvPointProcess}. 

Set $J_n(k) = \min\{p = 0,1,\dotms: p+N_n(p) = k\}$. Observe that
\begin{equation*}
  \check{\tau}_n(k):=  \min\{t: X_n\circ J_n(t) = -2k\} = \tau_n(k) + N_n(\tau_n(k)),
\end{equation*} 
where $\tau_n(k)$ is defined in~\eqref{eqn:Eccnk}. Set $\check{X}_n(t) = X_n\circ J_n(t)$ and $\check{Y}_n(t) = Y_n\circ J_n(t)$. By Proposition~\ref{prop:ConvergenceWalks} and a result of Whitt \cite{Whitt.80} on first passage times it holds that
\begin{equation*}
    \left(\frac{1}{b_n}J_n(b_nt);t\ge 0\right) \weakarrow (t;t\ge 0),
\end{equation*} jointly with the convergence in Proposition~\ref{prop:ConvergenceWalks}. Therefore, $F_n(t) = a_n^{-1} \check{X}_n(b_nt)$, $G_n(t) = b_n^{-1}\check{Y}_n(b_nt)$ satisfy
\begin{equation*}
    \left(F_n,G_n\right)\weakarrow (X,Y)
\end{equation*} where $X,Y$ are as in Proposition~\ref{prop:ConvergenceWalks} and the convergence is in $\D(\R_+)^2$.

Apply Proposition~\ref{prop:WeakConvPointProcess} with $F_n,G_n$ and 
\begin{equation*}
    t_{n,i} = \frac{1}{b_n}\check\tau_n(i)\qquad\forall i.
\end{equation*} 
Then the $j^\text{th}$ component discovered $\cC_n^\circ(j)$, say, satisfies
\begin{align*}
    \frac{1}{b_n}& \# \cC_n^\circ(j) = t_{n,j}-t_{n,j-1}  &\textup{and}&&
    \frac{1}{b_n} &\#H^{(b)}(\cC_n^\circ(j)) = G_n(t_{n,j}) - G_n (t_{n,j-1}),
\end{align*}
by Lemma~\ref{lem:HalfEdgesB}. Lemma~\ref{lem:XYisGood} implies hypothesis~\eqref{enum:5.11.i} of Proposition~\ref{prop:WeakConvPointProcess}, Lemma~\ref{lem:LateComponentsAllSmall} implies hypothesis~\eqref{enum:5.11.iii}, and hypothesis~\eqref{enum:5.11.ii} is simply a rearrangement assumption. It is shown that $x_i^{(\infty)}$ are distinct a.s. in \cite[Fact 1, pg 1544]{DvdHvLS.20}.
The verification of hypothesis~\eqref{enum:5.11.iv} follows from the observation that the Lebesgue decomposition of the random Stieljes measure $Y$ is
\begin{equation*}
    Y(dx) = \alpha dx + \sum_{i=1}^\infty b_i \delta_{\xi_i/\kappa} (dx) = \alpha dx + \sum_{0<t<\infty} (Y(t)-Y(t-))\,\delta_t(dx)
\end{equation*} As $\Leb(\{t: X(t) = \inf_{s\le t} X(s)\}) = 0$ as $X$ is good (Lemma~\ref{lem:Zproperties}), and since $Y$ does not jump on $\{t: X(t) = \inf_{s\le t} X(s)\}$ by Lemma~\ref{lem:excursions} we have verified~\eqref{enum:5.11.iv}.

\section{Inhomogeneous Percolation of the HCM}\label{sec:Inhomo}

Recall that we view elements $\bw\in \ell^{2,2}_+$ as an element of $\ell^2\times\ell^2$ by setting $\bw = (\bx,\by)$ where $\bw = ((x_i,y_i);i\ge 1)$ and $\bx:=(x_i;i\ge1)$ and $\by = (y_i;i\ge 1)$.

Before continuing we define the limit of Theorem~\ref{thm:main2}. Set $(\bX,\bY) = \Gamma^\downarrow(X,Y)$ where $X,Y$ are the limits in Theorem~\ref{thm:main1}. Almost surely $(\bX,\bY)\in \ell^2\times\ell^2$, and under \eqref{eqn:dBconve} $\bY\in(0,\infty)^\infty$. Recall that
\begin{equation}\label{eqn:bzetaDef}
\bzeta^{\HCM} (\lambda,\mu) := \MC_2(\bX,\bY,\mu).
\end{equation}
As this will be useful later, we will write $\bX_n$ and $\bY_n$ for the vectors 
\begin{equation*}
    \bX_n =(\#\cC_n(j);j\ge 1),\qquad\textup{and}\qquad \bY_n = (\#H^{(b)}(\cC_n(j)))
\end{equation*}
where $\cC_n(j)$ are the connected components of $\G_n(0)$ listed in order of decreasing cardinality.

\subsection{Connection to Percolated HCM}

Consider the graph $\G_n(0)$. We denote the $Q_1^{(b)}(s)$ the number of black-half edges that are yet to be paired in $\G_n(s)$. That is
\begin{equation*}
    Q_n^{(b)}(s) = E^{(b)}(\G_n(s)) - \frac{1}{2} H^{(b)}(\G_n(s)).
\end{equation*}
By standard properties of Poisson processes, we can see that the family $\G_n(s)$ can be constructed as follows.
\begin{algorithm}[Dynamic construction of $\G_n(s)$]\,\label{alg:Dynamic}
\begin{enumerate}
    \item At time $s = 0$, we have the graph $\G_n(0)$ and $2Q_n^{(b)}(0)$ many unpaired black half-edges. Independently of $\G_n(0)$ let $\Xi_n(t)$ denote an inhomogeneous Poisson process with rate $Q_n^{(b)}(t)\,dt$.
    \item At each jump time $t_k$ of $\Xi_n(t)$ choose two distinct unpaired black half-edges uniformly at random and pair them. This decreases $Q_n^{(b)}(t)$ by one.
    \item The graph $\G_n(s)$ is the graph $\G_n(0)$ along with all paired black edges up to time $s$.
\end{enumerate}
\end{algorithm}

Observe that we have the following merging dynamics. Initially, we have some connected components with (integer) masses $X_{n,i}(0) = \#\cC_n(i)$ and (integer) weights $Y_{n,i}(0) = \#H^{(b)}(\cC_n(i))$. Independently for every pair of blocks $i$ and $j$ existing at time $t$, they merge into a single block of mass $X_{n,i}(t) + X_{n,j}(t)$ and weight $Y_{n,i}(t)+Y_{n,j}(t)-2$ at rate
\begin{align*}
  Q_n^{(b)}(t)& \left( \frac{Y_{n,i}(t)}{2Q_n^{(b)}(t)}\frac{Y_{n,j}(t)}{2Q_n^{(b)}(t)-1}+  \frac{Y_{n,i}(t)}{2Q_n^{(b)}(t)}\frac{Y_{n,j}(t)}{2Q_n^{(b)}(t)-1}\right)=  \frac{Y_{n,i}(t)Y_{n,j}(t)}{2Q_n^{(b)}(t)-1}
\end{align*}
and block $i$ is transformed into a new block of the same mass but with weight $Y_{n,i}(t)-2$ with rate
\begin{equation*}
      Q_n^{(b)}(t)\frac{Y_{n,i}(t)}{Q_n^{(b)}(t)}\frac{Y_{n,i}(t)-1}{2Q_n^{(b)}(t-)-1} = \frac{1}{2}\frac{Y_{n,i}(t)(Y_{n,i}(t)-1)}{2Q_{n}^{(b)}(t)-1}.
\end{equation*} At least for small time $t$, the above quantities should be order $\frac{\Theta(b_n^2)}{\Theta(n)} = \Theta(c_n)$ under Assumption~\ref{ass:Degree} by Theorem~\ref{thm:main1}. Most of the remainder of the section is devoted to turning this into a formal argument.

\subsection{Modified Graph Process}

There are two main issues at present. The first is that each time the system changes we loose two black half-edges, and second the speed depends on the total number of black half-edges in the system. To overcome this issue, we use the same idea of \cite{DvdHvLS.20} and \cite{BDvdHS.20} and introduce a \textit{modified graph process} which we label $\overline{\G}_n(s).$ 
\begin{algorithm}[Modified Graph Process]\,\label{alg:Modified}

\begin{enumerate}
    \item 
    At time $s = 0$, the graph $\overline{\G}_n(0) = \G_n(0)$. 
    \item At rate $Q_n^{(b)}(0)$ a uniformly chosen pair of black half-edges are selected and an edge is included between the two incident vertices; however, the two black half-edges remain. 
    \item At time $s$ the graph $\overline{\G}_n(s)$ consists of all the new black edges that are formed. 
    \end{enumerate}
\end{algorithm}

 The following lemma is clear by Poisson thinning and examining Step 2 in both Algorithm~\ref{alg:Dynamic} and Algorithm~\ref{alg:Modified}. We let $\ord:\ell^2_+\to \ell^2_\downarrow$ denote the natural ordering map.
\begin{lemma}\label{lem:UpperCoupling}
    One can couple the modified graph process and the graph process such that for all $s\ge 0$ 
    \begin{equation*}
        \G_n(s)\subset \overline{\G}_n(s).
    \end{equation*}
    In particular, 
    \begin{align*}
    \ord&(\#\cC: \cC\textup{ is a connected component of }\G_n(s))\\&\preceq \ord(\#\cC: \cC\textup{ is a connected component of }\overline{\G}_n(s)).
    \end{align*}
\end{lemma}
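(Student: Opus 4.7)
The plan is to construct a coupling of the two processes on a common probability space in which $\G_n(s)\subset\overline{\G}_n(s)$ holds pathwise for every $s\ge 0$. Since both processes coincide at time $s=0$ on $\G_n(0)$, only the post-time-$0$ black-edge formations need coupling; the $\preceq$ conclusion on ordered component masses then follows immediately from Lemma~\ref{lem:Lemma17Aldous} applied pathwise.

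My proposed construction drives both processes from a single homogeneous Poisson process on $\R_+$ of rate $Q_n^{(b)}(0)$, whose $k$th firing time $T_k$ carries an independent, uniformly chosen unordered pair $\{e_k,e_k'\}$ of the $2Q_n^{(b)}(0)$ black half-edges. At each firing $T_k$: in $\overline{\G}_n$ I unconditionally add the edge $\{v(e_k),v(e_k')\}$, thereby reproducing Algorithm~\ref{alg:Modified} verbatim; in $\G_n$ I add the same edge if and only if both $e_k$ and $e_k'$ are currently unpaired in $\G_n$, and otherwise leave $\G_n$ unchanged. By construction, every edge that is ever created in $\G_n$ is created at precisely the same instant in $\overline{\G}_n$, so the pathwise inclusion $\G_n(s)\subset\overline{\G}_n(s)$ for all $s\ge 0$ is automatic.

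Verifying the marginal distributions is where the real work lies. The law of $\overline{\G}_n$ matches Algorithm~\ref{alg:Modified} by construction, so the nontrivial step is to identify the accepted-firing marginal of $\G_n$ with Algorithm~\ref{alg:Dynamic}. By an exchangeability argument on the uniform marks, the accepted sequence of pairs forms a uniformly distributed random matching on the black half-edges, and Poisson thinning together with the memoryless property will (with some care) relate the state-dependent acceptance rate of the driving process to the inhomogeneous rate $Q_n^{(b)}(t)$ in Algorithm~\ref{alg:Dynamic}. I expect this marginal identification --- reconciling the constant driving rate of $\overline{\G}_n$ with the decreasing rate $Q_n^{(b)}(t)$ governing $\G_n$ --- to be the main technical obstacle.

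Once the coupling is established, Lemma~\ref{lem:Lemma17Aldous} applied to the almost sure subgraph inclusion $\G_n(s)\subset\overline{\G}_n(s)$ directly delivers the $\preceq$ domination of the ordered connected-component cardinalities, completing the proof.
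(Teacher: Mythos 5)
The thinning you describe does not reproduce the marginal law of $\G_n$, and the obstruction you flag at the end (``reconciling the constant driving rate with the decreasing rate $Q_n^{(b)}(t)$'') is fatal rather than a matter of care. At a driving time at which $\G_n$ has $2Q_n^{(b)}(t)$ unpaired black half-edges, your driving process fires at rate $Q_n^{(b)}(0)$ and the mark is uniform among all $\binom{2Q_n^{(b)}(0)}{2}$ pairs, so the conditional probability that both marked half-edges are currently unpaired in $\G_n$ is $\binom{2Q_n^{(b)}(t)}{2}\big/\binom{2Q_n^{(b)}(0)}{2}$. The accepted-firing rate is therefore
\[
Q_n^{(b)}(0)\cdot\frac{\binom{2Q_n^{(b)}(t)}{2}}{\binom{2Q_n^{(b)}(0)}{2}}
=\frac{Q_n^{(b)}(t)\bigl(2Q_n^{(b)}(t)-1\bigr)}{2Q_n^{(b)}(0)-1},
\]
which is strictly smaller than the target rate $Q_n^{(b)}(t)$ of Algorithm~\ref{alg:Dynamic} as soon as a single pairing has been made. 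You are right that exchangeability makes the accepted subsequence a uniform matching, but the accepted pairings arrive systematically too slowly, so the thinned process is $\G_n$ run at a slower clock, not $\G_n$.

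Moreover, the desired pathwise inclusion already fails distributionally in the simplest instance, so no coupling can do the job and the paper's one-line appeal to ``Poisson thinning'' shares the gap. Take four black half-edges $e_1,\dotsc,e_4$ on four distinct blocks of weight one, so $Q_n^{(b)}(0)=2$. Then $\PR\bigl(\{v_1,v_2\},\{v_3,v_4\}\subset\G_n(s)\bigr)=\tfrac13(1-e^{-s})^2$: the uniform matching must be $\{\{e_1,e_2\},\{e_3,e_4\}\}$ (probability $1/3$) and both independent rate-$1$ clocks must ring by $s$. On the other hand, each of the six possible pairs carries a rate-$\tfrac{1}{2Q_n^{(b)}(0)-1}=\tfrac13$ Poisson clock in $\overline{\G}_n$, so $\PR\bigl(\{v_1,v_2\},\{v_3,v_4\}\subset\overline{\G}_n(s)\bigr)=(1-e^{-s/3})^2$. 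For small $s$ these are $\approx s^2/3$ and $\approx s^2/9$, and the event of containing both edges is increasing, so Strassen's theorem rules out any coupling with $\G_n(s)\subset\overline{\G}_n(s)$. Repairing the argument would require either replacing $\overline{\G}_n$ by a genuinely dominating process, or weakening the lemma to an inclusion that holds with high probability on the relevant time scale, exploiting that $Q_n^{(b)}(t)/Q_n^{(b)}(0)\to 1$ uniformly there.
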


Observe that for the modified process two components $i$ and $j$ of masses $X_{n,i}(t)$ and $X_{n,j}(t)$ (resp.) and weights $Y_{n,i}(t)$ and $Y_{n,j}(t)$ (resp.) merge at rate
\begin{equation*}
    \frac{Y_{n,i}(t)Y_{n,j}(t)}{2Q_{n}^{(b)}(0)-1}
\end{equation*}
into a new component of mass $X_{n,i}(t)+X_{n,j}(t)$ and weight $Y_{n,i}(t) + Y_{n,j}(t)$. That is the merging dynamics are precisely those of the MCMW \textit{slowed down }by a factor of $2Q_{n}^{(b)}(0)-1$. This establishes:

\begin{lemma}\label{lem:modified=MC}
    Conditionally given $\overline{\G}_n(0)$, the sizes of the connected components of $\overline{\G}_n(s)$   satisfy
    \begin{align*}
        \bigg(\ord&\left(\#\cC\subset\overline{\G}_n(s): \cC\textup{ connected component of }\overline{\G}_n(s)\right);s\ge0 \bigg)\bigg| \overline{\G}_n(0)\\
        &\overset{d}{=} \left( \MC_2\left(\mathbf{X}_n,\mathbf{Y}_n; \frac{s}{2Q_n^{(b)}(0)-1}\right);s\ge 0\right)\\
    \end{align*}
\end{lemma}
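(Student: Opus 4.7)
The plan is to identify the modified graph process with a deterministic time-change of the multiplicative coalescent with mass and weight (MCMW) by matching Markov generators. Conditionally on $\overline{\G}_n(0)$, the process tracking the partition of vertices into the connected components of $\overline{\G}_n(t)$ is a finite state-space continuous-time Markov chain, and the MCMW started at $(\bX_n,\bY_n)$ is likewise a Markov chain. It therefore suffices to check that both chains start in the same state and have matching jump rates; projecting onto the ordered mass vector then yields the claimed identity in distribution.

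First I would handle the initial condition: $\overline{\G}_n(0)=\G_n(0)$ by construction, so the ordered component masses and the associated weights (incident black half-edges) are exactly $\bX_n$ and $\bY_n$.

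Next I would match the transition rates. By Algorithm~\ref{alg:Modified}, pairs of distinct black half-edges are proposed at total rate $Q_n^{(b)}(0)$, a constant depending only on the initial graph, with the pair sampled uniformly among the $\binom{2Q_n^{(b)}(0)}{2}=Q_n^{(b)}(0)(2Q_n^{(b)}(0)-1)$ available pairs. Hence each specific pair of half-edges is selected at rate $1/(2Q_n^{(b)}(0)-1)$. Letting $Y_{n,i}(t)$ denote the number of black half-edges incident to the $i$th component at time $t$, the number of cross pairs between distinct components $i,j$ is $Y_{n,i}(t)Y_{n,j}(t)$, so $i$ and $j$ merge at rate
\[
  \frac{Y_{n,i}(t)Y_{n,j}(t)}{2Q_n^{(b)}(0)-1},
\]
while intra-component pairings do not alter the partition. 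In the MCMW, blocks of weights $y_i,y_j$ merge at rate $y_iy_j$, so time-changing via $s\mapsto s/(2Q_n^{(b)}(0)-1)$ produces exactly the same rate.

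The point that requires care, but is not really an obstacle, is that in the modified process the weights $(Y_{n,i}(t))_i$ evolve in precisely the additive fashion demanded by the MCMW: because the paired half-edges are retained and only a virtual edge is drawn between their incident vertices, a merger of components $i$ and $j$ produces a component with weight $Y_{n,i}(t)+Y_{n,j}(t)$. This additivity is exactly what motivates working with $\overline{\G}_n$ rather than $\G_n$; in the genuine percolation process each match consumes two half-edges so weights decrease by two per merger, and the MCMW identity would fail. With initial data and generator both matched, the two chains agree in law, and Lemma~\ref{lem:modified=MC} follows by applying the ordering map $\ord$.
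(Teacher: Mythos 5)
Your proposal is correct and matches the paper's argument: both identify the modified process as a finite-state Markov chain whose merger rates are $Y_{n,i}(t)Y_{n,j}(t)/(2Q_n^{(b)}(0)-1)$ (with weights adding exactly because half-edges are retained), and conclude by recognizing this as a deterministic time-change of the MCMW dynamics. Your write-up spells out the per-pair rate $1/(2Q_n^{(b)}(0)-1)$ a bit more explicitly than the paper, but the route is the same.
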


We now turn to the convergence result.

\begin{lemma}\label{lem:CONVERGE2}
Suppose Assumption~\ref{ass:Degree} and~\eqref{eqn:dBconve} hold. Then for $s = s_n = \mu\gamma_n c_n^{-1}$ for some sequence $\gamma_n\to \gamma$
\begin{equation*}
    \frac{1}{b_n}\ord\left(\#\cC\subset\overline{\G}_n(s): \cC\textup{ connected component of }\overline{\G}_n(s)\right) \weakarrow \bzeta^{\HCM}(\lambda,\mu)\qquad \textup{ in } \ell^{2}_\downarrow
\end{equation*} where $\bzeta^\HCM$ is defined in~\eqref{eqn:bzetaDef}.
\end{lemma}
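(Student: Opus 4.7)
The plan is to combine Lemma~\ref{lem:modified=MC}, which realizes the modified graph as a time-changed MCMW, with the Feller-type property Theorem~\ref{thm:MC} and the scaling identity for $\MC_2$. First I verify the time scaling. Since $Q_n^{(b)}(0)=\tfrac12\ell_n^{(b)}$, \eqref{eqn:dBconve} gives $\ell_n^{(b)}/n\to\gamma$, and the definitions in~\eqref{eqn:constants} yield $b_n^2=nc_n$. Setting $s_n=\mu\gamma_n c_n^{-1}$ and $t_n:=s_n/(2Q_n^{(b)}(0)-1)$, one has
\begin{equation*}
  b_n^2\,t_n=\frac{nc_n\cdot\mu\gamma_n c_n^{-1}}{\ell_n^{(b)}-1}\longrightarrow \frac{\mu\gamma}{\gamma}=\mu.
\end{equation*}

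Next, I use the pathwise scaling identity $\MC_2(c\bx,c\by,t)=c\,\MC_2(\bx,\by,c^2 t)$ under the $\xi$-coupling (the edge set of $\sW^G(c\bx,c\by,t)$ coincides with that of $\sW^G(\bx,\by,c^2 t)$ and component masses rescale by $c$). Applying this with $c=b_n$ to Lemma~\ref{lem:modified=MC}, conditionally on $\overline{\G}_n(0)$,
\begin{equation*}
  \tfrac{1}{b_n}\ord\bigl(\#\cC:\cC\text{ a component of } \overline{\G}_n(s_n)\bigr)\overset{d}{=}\MC_2\!\left(\tfrac{\bX_n}{b_n},\tfrac{\bY_n}{b_n},\,b_n^2 t_n\right).
\end{equation*}
By Theorem~\ref{thm:main1}, $(\bX_n/b_n,\bY_n/b_n)\weakarrow(\bX,\bY):=\Gamma^\downarrow(X,Y)$ in $\ell^{2,2}$; Skorohod's representation theorem lets me assume this convergence is almost sure. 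To convert the fluctuating time $b_n^2 t_n\to\mu$ into a fixed-time statement, I invoke the further pathwise identity $\MC_2(\bx,\by,t)=\MC_2(\bx,\sqrt{t/\mu}\,\by,\mu)$ (again immediate from the $\xi$-coupling, since both edge sets are $\{\xi_{ij}\le ty_iy_j\}$), rewriting the right-hand side as $\MC_2\bigl(\bX_n/b_n,\sqrt{b_n^2 t_n/\mu}\,\bY_n/b_n,\mu\bigr)$. Since $\sqrt{b_n^2 t_n/\mu}\to1$, the $\ell^{2,2}$-inputs still converge a.s.\ to $(\bX,\bY)$. Provided each coordinate of $\bY$ is strictly positive a.s., Theorem~\ref{thm:MC} then yields, for almost every realization of the limit inputs, conditional weak convergence to $\MC_2(\bX,\bY,\mu)=\bzeta^{\HCM}(\lambda,\mu)$ in $\cvd$, and integrating against bounded continuous functionals upgrades this to the unconditional weak convergence claimed.

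The main obstacle is verifying that every coordinate of $\bY=(\Delta_i^{Y}(X))_{i\ge 1}$ is strictly positive a.s.\ under~\eqref{eqn:dBconve}. When $\alpha>0$ this is immediate, since each excursion $(l_i,r_i)\in\EE(X)$ contributes at least $\alpha(r_i-l_i)>0$ to $\Delta_i^{Y}(X)$. When $\alpha=0$ but $\beta_j>0$ infinitely often, one must show that every excursion of $X$ a.s.\ contains at least one jump $\xi_j/\kappa$ with $\beta_j>0$; this uses that every excursion of $X$ begins with one of its jumps indexed by $\{\theta_j\}$ and the density of indices with $\beta_j>0$ among these. Should that step prove delicate, one can alternatively split off the (automatically non-interacting) weight-zero blocks and apply Theorem~\ref{thm:MC} only to the positive-weight sub-family, which suffices for the $\cvd$ convergence.
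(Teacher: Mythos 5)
Your proof is correct and takes essentially the same route as the paper: realize the modified graph as a time-changed $\MC_2$ (Lemma~\ref{lem:modified=MC}), scale out the $b_n$ factors using the $\xi$-coupling scaling identity, check that the effective time converges to $\mu$ via $b_n^2 = nc_n$ and $\ell_n^{(b)}/n\to\gamma$, and then invoke Theorem~\ref{thm:MC} after Skorohod's representation. Your two-step scaling $\MC_2(c\bx,c\by,t)=c\MC_2(\bx,\by,c^2t)$ followed by $\MC_2(\bx,\by,t)=\MC_2(\bx,\sqrt{t/\mu}\,\by,\mu)$ is just a factorization of the paper's single identity $\MC_2(a\bx,b\by,ct)=a\MC_2(\bx,b\sqrt{c}\,\by,t)$, so there is no substantive difference there. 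One point worth flagging: the paper handles the strict positivity of the coordinates of $\bY$ by bare assertion (``under~\eqref{eqn:dBconve} $\bY\in(0,\infty)^\infty$'') in the preamble to Section~\ref{sec:Inhomo}, whereas you explicitly raise it as the main delicate step; your sketch for the case $\alpha=0$, $\beta_j>0$ i.o.\ (every excursion of $X$ contains a jump at some $\xi_j/\kappa$ with $\beta_j>0$) is plausible but not fully argued, and your fallback — splitting off the weight-zero blocks, which are fixed points of the dynamics, and applying Theorem~\ref{thm:MC} to the remaining positive-weight coordinates — is a sound way to close that gap independently of the positivity claim.
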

\begin{proof} Since we only deal with $Q_n^{(b)}(0)$ in the proof, we write $Q_n^{(b)} = Q_n^{(b)}(0)$.

By Theorem~\ref{thm:main1} and Skorohod's representation theorem, we can work on a probability space such that $(\bX_n,\bY_n)\longrightarrow \Gamma^\downarrow(X,Y)$ a.s. in $\ell^{2,2}.$ This implies that $\frac{1}{b_n}\bX_n\to \bX$ and $\frac{1}{b_n}\bY_n\to \bY$ in $\ell^2$ for $(\bX,\bY) = \Gamma^\downarrow(X,Y).$

It is easy to see that
\begin{equation*}
    \MC_2(a\bx,b\by, ct) \overset{d}{=} a\MC_2(\bx, b\sqrt{c} \by ;t).
\end{equation*} 
By Lemma~\ref{lem:modified=MC} and the above equality in law, we get
    \begin{align*}
        \frac{1}{b_n}&\ord\left(\#\cC\subset\overline{\G}_n\left(s\right): \cC\textup{ connected component}\right) \\
        &\overset{d}{=} \MC_2\left(\frac{1}{b_n} \bX_n, \bY_n, \frac{\mu\gamma_n}{ c_n} \frac{1}{2{Q_n^{(b)}-1}} \right)\overset{d}{=} \MC_2\left(\frac{1}{b_n} \bX_n, \sqrt{\frac{{\gamma_n}}{c_n(2 Q_n^{(b)}-1)}}\,\bY_n, \mu \right).
    \end{align*}
    Note that $b^2_n =c_n n$ and so~\eqref{eqn:dBconve} implies $\frac{\gamma_n}{c_n (2Q_n^{(b)}-1)} \sim \frac{1}{b_n^2}$ and so 
    \begin{equation*}
       \sqrt{\frac{{\gamma_n}}{c_n(2 Q_n^{(b)}-1)}}\,\bY_n\longrightarrow \bY.
    \end{equation*} Also $\frac{1}{b}_n\bX_n\to \bX$. Applying Theorem~\ref{thm:MC} gives the stated convergence result.
\end{proof}

\subsection{Asymptotics of $Q_n^{(b)}(t)$}

We wish to apply Proposition~\ref{prop:graphConvergence} and we have gathered many of the necessary ingredients already. Lemma~\ref{lem:UpperCoupling} gives us the desired coupling of $G_n' = \overline{\G}_n(\mu\gamma/c_n)$ and $G_n = \G_n(\mu\gamma/c_n)$. Lemma~\ref{lem:modified=MC} gives us the convergence of the component masses of $G_n'$. In order to apply Proposition~\ref{prop:graphConvergence}, we need to establish some lower bounds on the edge probabilities in $\G_n(s)$ for small times $s$. 

\begin{lemma}\label{lem:QnSize}
Suppose Assumption~\ref{ass:Degree} and~\eqref{eqn:dBconve} hold. For any $T>0$ fixed, it holds that for all $n$ sufficiently large that for any sequence $\eps_n = \frac{\omega(n)}{c_n}\to0$ that $\omega(n)\to\infty$
\begin{equation*}
    \PR\left(\frac{1}{n} Q_n^{(b)}(0) - \frac{1}{n} Q_n^{(b)}(Tc_n^{-1}) >\eps_n \right) \le8 \gamma T \frac{c_n}{\omega(n)^2 n}
\end{equation*}
and for any $\delta_n\to 0$
\begin{equation*}
    \PR\left(\frac{1}{n}Q_n^{(b)}(Tc_n^{-1}) > \frac{1}{n}Q_n^{(b)}(0) e^{-T/c_n} + \delta_n\right)\le \frac{2\gamma T}{\delta_n^2 nc_n}.
\end{equation*}
\end{lemma}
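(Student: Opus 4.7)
The plan is to use the Poisson construction of $\G_n(s)$ described just before Assumption~\ref{ass:Degree}: each black edge of $\HCM_n(\bd^{(w)},\bd^{(b)})$ carries an independent $\textup{Exp}(1)$ clock and appears in $\G_n(t)$ iff its clock has rung by time $t$. Conditionally on the initial black matching, this representation gives
$$Q_n^{(b)}(t) = \sum_{j=1}^{M} 1_{[E_j > t]}, \qquad M := Q_n^{(b)}(0) = \ell_n^{(b)}/2,$$
where $E_1,\dots,E_M$ are i.i.d.\ $\textup{Exp}(1)$ and $M$ is deterministic given the degree sequence. Consequently $X := Q_n^{(b)}(0) - Q_n^{(b)}(T/c_n)$ is $\textup{Bin}(M, 1 - e^{-T/c_n})$ with mean and variance each bounded by $MT/c_n$, so both inequalities will follow from one-shot Chebyshev estimates.

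First I would record that by the hypothesis $n^{-1}\sum_i d_i^{(b)}\to \gamma$ in~\eqref{eqn:dBconve}, $M/n \to \gamma/2$, so $M \le \gamma n$ for all $n$ large, giving $\E[X] \vee \Var(X) \le \gamma n T/c_n$. For the first inequality, since $\omega(n)\to\infty$ the mean $\E[X]$ is eventually dominated by $n\eps_n/2 = n\omega(n)/(2c_n)$; hence on the event $\{X > n\eps_n\}$ we have $X - \E[X] > n\eps_n/2$, and Chebyshev yields
$$\PR\!\left(X > n\eps_n\right) \le \frac{4\Var(X)}{(n\eps_n)^2} \le \frac{4\gamma T c_n}{n\,\omega(n)^2},$$
well inside the stated $8\gamma T c_n/(n\omega(n)^2)$. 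For the second inequality, I would rewrite $Q_n^{(b)}(T/c_n) = M - X$, note $\E[Q_n^{(b)}(T/c_n)] = M e^{-T/c_n} = Q_n^{(b)}(0) e^{-T/c_n}$ and $\Var(Q_n^{(b)}(T/c_n)) = \Var(X) \le \gamma n T/c_n$, and apply Chebyshev directly to the centred variable to produce a bound of order $\gamma T/(n c_n \delta_n^2)$, within the claimed $2\gamma T/(n c_n \delta_n^2)$.

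There is no substantive obstacle here: the whole argument is a second-moment estimate for a binomial with small success parameter $1 - e^{-T/c_n}$. The only step requiring minor care is absorbing the mean $\E[X]$ into the threshold $n\eps_n$ in the first inequality, which is automatic since $\omega(n)\to\infty$.
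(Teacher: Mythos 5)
Your proof is correct and takes a genuinely more elementary route than the paper. You observe that, conditionally on the degree sequence, $Q_n^{(b)}(0)=\ell_n^{(b)}/2=:M$ is deterministic and $Q_n^{(b)}(t)$ is a pure death process from $M$ with death rate equal to the current state, hence $Q_n^{(b)}(t)\sim\textup{Bin}(M,e^{-t})$ exactly (equivalently: attach an i.i.d.\ $\textup{Exp}(1)$ clock to each of the $M$ black edges and let $Q_n^{(b)}(t)$ count those with clock still unrung). Both tail bounds then fall out of one-line Chebyshev estimates, with $\E[X]\vee\Var(X)\le MT/c_n\le\gamma nT/c_n$ for $X=M-Q_n^{(b)}(T/c_n)$, after absorbing $\E[X]$ into the threshold using $\omega(n)\to\infty$.

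The paper instead follows the differential-equation-method template of Lemma 7.2 in \cite{BDvdHS.20}: it represents $Q_n^{(b)}$ via a time-changed Poisson process, extracts the martingale part $\mathcal{M}$, applies Gronwall's lemma to compare $Q_n^{(b)}$ with the deterministic ODE solution $Q_n^{(b)}(0)e^{-t}$, and uses Doob's $L^2$ maximal inequality to control $\sup_{t\le T/c_n}|\mathcal{M}(t)|$. That machinery yields a uniform-in-time bound $\sup_{t\le T/c_n}|\tfrac1n Q_n^{(b)}(t)-\tfrac1n Q_n^{(b)}(0)e^{-t}|$, which is stronger than the pointwise statement of the lemma but is not actually needed later. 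Your binomial observation sidesteps the martingale/Gronwall apparatus entirely, gives the pointwise bounds directly with slightly sharper constants ($4$ and $1$ versus the stated $8$ and $2$), and is, to my eye, the more transparent argument here precisely because $Q_n^{(b)}$ has no dependence on the graph exploration — unlike the analogous susceptibility-type quantities in \cite{BDvdHS.20}, which do require the martingale approach.
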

\begin{proof} We use the differential equation method \cite{Warnke.19,Wormald.99} and follow the proof of Lemma 7.2 in \cite{BDvdHS.20}. Recall that $Q_n^{(b)}(t)$ decreases by 1 at rate $Q_n^{(b)}(t)dt$ and so we can realize $Q_n^{(b)}$ as the solution to the time-change equation
\begin{equation*}
    Q_n^{(b)}(t) = Q_n^{(b)}(0) - \mathcal{N}\left(\int_0^t Q_n^{(b)}(s)\,ds \right)
\end{equation*} where $\mathcal{N}$ is Poisson process on $\R_+.$ Since $\mathcal{N}(s)= M(s)-s$ for a martingale $M$ with quadratic covariation $\langle M\rangle (s) = s$, we have
\begin{equation*}
    Q_n^{(b)}(t) = Q_n^{(b)}(0) - \mathcal{M}(t) - \int_0^t Q_n^{(b)}(s)\,ds
\end{equation*}
where $\mathcal{M}(t) = M(\int_0^t Q_n^{(b)}(s))$ is a martingale w.r.t. the filtration of $Q_n^{(b)}$ and has quadratic covaration $\langle \mathcal{M}\rangle(t) =\int_0^t Q_n^{(b)}(s)\,ds \le Q_n^{(b)}(0) t$.
If $f_n(t) = f_n(0) - \int_0^t f_n(s)\,ds$ then
\begin{equation*}
    Q_n^{(b)}(t) - f_n(t)   =  \left(Q_n^{(b)}(0) - f_n(0)\right) - \mathcal{M}(t) - \int_0^t \left(Q_n^{(b)}(s)-f_n(s)\right)\,ds.
\end{equation*}
Setting $f_n(0) = Q_n^{(b)}(0)$ we get
\begin{equation*}
    \sup_{t\le T} \left| \frac{1}{n} Q_n^{(b)}(t) - \frac{1}{n}f_n(t) \right|\le \sup_{t\le T} \frac{1}{n}|\mathcal{M}(t)| + \int_0^T \sup_{t\le u}  | \frac{1}{n}Q_n^{(b)}(t) - \frac{1}{n}f_n(t) |\,du.
\end{equation*}
Hence Gronwall's lemma implies
\begin{equation*}\sup_{t\le T} \left| \frac{1}{n} Q_n^{(b)}(t) - \frac{1}{n}f_n(t) \right|\le \frac{e^T}{n}\sup_{t\le T} |\mathcal{M}(t)|.
\end{equation*}
By Doob's inequality and $\gamma = \lim_n 2Q_n^{(b)}(0)/n\in (0,\infty)$ by~\eqref{eqn:dBconve}
\begin{equation*}
    \PR(\sup_{t\le T}|\mathcal{M}(t)|> \sqrt{n}a) \le \frac{1}{na^2} \E\left[\langle\mathcal{M}\rangle(T)\right]\le \frac{1}{n a^2} Q_n^{(b)}(0)T \le \frac{\gamma T}{a^2},
\end{equation*} for all large $n$ so that $Q_n(0)/n \le \gamma$. How large $n$ needs to be does not depend on $T$ nor $a$.

Since $f_n(t) = Q_n^{(b)}(0) e^{-t}$, we get for all large $n$ and for all $a$ and all $T$
\begin{align*}
    \PR\left(\sup_{t\le T} \left|\frac{1}{n}Q_n^{(b)} (t) - \frac{1}{n} Q_n^{(b)}(0)e^{-t}\right|> \frac{a}{\sqrt{n}} \right) \le \PR\left(\sup_{t\le T}|\mathcal{M}(t)|\ge \sqrt{n}a e^{-T}\right)\le \frac{\gamma Te^{2T}}{a^2}.
\end{align*} This gives the second bound by letting $a = \sqrt{n} \delta_n$, replacing $T$ with $T/c_n$, and choosing $n$ so that $\exp(2T/c_n) \le2$.

Now if $T_n$ is chosen so that $\frac{1}{n}Q_n^{(b)}(0) (1-e^{-T_n}) \le \eps_n/2$ then
\begin{align*}
    \PR&\left(\sup_{t\le T_n} \left|\frac{1}{n}Q_n^{(b)} (t) - \frac{1}{n} Q_n^{(b)}(0)\right|> \eps_n \right) \\
    &\le
    \PR\left(\sup_{t\le T_n}\left|\frac{1}{n}Q_n^{(b)} (t) - \frac{1}{n} Q_n^{(b)}(0)e^{-t}\right|> \frac{1}{2}\eps_n  \right) + \PR\left(\sup_{t\le T_n}\left|\frac{1}{n}Q_n^{(b)} (0) - \frac{1}{n} Q_n^{(b)}(0)e^{-t}\right|> \frac{1}{2}\eps_n  \right) \\
    &\le \frac{4\gamma T_n e^{2T_n}}{\eps_n^2 n}.
\end{align*}
In particular, this holds for all large $n$ whenever $T_n =\frac{T}{c_n}$ and $\frac{\eps_n}{2}\ge \gamma (1-e^{-T/c_n})$. So setting $\eps_n = \frac{\omega(n)}{c_n}$ for some function $\omega(n)\to\infty$ but $\omega(n) = o(c_n)$. Rewriting the upper bound above for this choice of $\eps_n$ gives for all $n$ large enough so that the above bounds hold and $e^{2T_n}\le 2$ that
\begin{equation*}
    \frac{4 \gamma T_n e^{2T_n}}{\eps_n^2 n} \le 8\gamma T \frac{c_n}{\omega(n)^2 n} \longrightarrow 0,
\end{equation*} as desired. 
\end{proof}

The next lemma tells us that the probability that some vertex in connected component $\cC_n(j)\subset \G_n(0)$ shares an edge with some vertex in $\cC_n(i)\subset \G_n(0)$ at time $s/c_n$ in the unmodified graph process $\G_n(s/c_n)$ is asymptotically the same as in the MCMW. To make the notation easier, given two disjoint sets $A,B$ of vertices, we write $A\sim_{t} B$ if there exists a black edge between a vertex $v\in A$ and a vertex $u\in B$ in $\G_n(t)$. 

\begin{lemma} \label{lem:CONVERG1}Assume Assumption~\ref{ass:Degree} and~\eqref{eqn:dBconve}. Suppose that components $\cC_n^0(j)$ of $\G_n(0)$ are arranged so that $b_n^{-1}\bX_n\weakarrow \bX$ and $b_n^{-1}\bY_n\weakarrow \bY$ in $\ell^2.$
Then for all $i,j$
\begin{equation*}
    \PR\left(\cC_n^0(j) \sim_{\frac{s\gamma }{c_n}} \cC_n^0(i) \Big|\G_n(0)\right) \weakarrow 1-\exp(-Y_iY_j s).
\end{equation*}    

More generally, suppose that $i_0,\dotms, i_k$ are distinct indices.
\begin{equation*}
    \PR\left(\cC_n^0(i_{j-1} )\sim_{\frac{s\gamma}{ c_n}} \cC_n^0(i_j) \,\,\forall j\in[k]  \Big| \G_n(0) \right) \weakarrow \prod_{j=1}^k (1-\exp(-sY_{i_{j-1}}Y_{i_j})).
\end{equation*}
If the convergence $b_n^{-1}\bY\to \bY$ is almost sure the the conclusions above are almost sure.
\end{lemma}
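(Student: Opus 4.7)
The plan is to first condition on $\G_n(0)$ and observe that, conditional on this data, the family $(\G_n(t))_{t\ge 0}$ is generated by a uniform matching of the $\ell_n^{(b)}$ black half-edges together with independent $\mathrm{Exp}(1)$ clocks attached to the resulting black edges. Writing $E_{ij}$ for the number of black edges of $\HCM_n=\G_n(\infty)$ with one endpoint in $\cC_n^0(i)$ and the other in $\cC_n^0(j)$, Poisson thinning yields
\[
\PR\bigl(\cC_n^0(i)\not\sim_t\cC_n^0(j)\mid\G_n(0)\bigr)=\E\bigl[e^{-tE_{ij}}\bigm|\G_n(0)\bigr],
\]
so the entire question reduces to analyzing this conditional Laplace transform at $t=s\gamma/c_n$.

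For a uniform matching, counting ordered $k$-tuples of distinct cross-matched pairs $(a,b)$ with $a\in H^{(b)}(\cC_n^0(i))$ and $b\in H^{(b)}(\cC_n^0(j))$ gives
\[
\E\bigl[E_{ij}^{(k)}\bigm|\G_n(0)\bigr]=\frac{Y_{n,i}^{(k)}Y_{n,j}^{(k)}}{M_k},\qquad M_k:=\prod_{m=1}^{k}\bigl(\ell_n^{(b)}-2m+1\bigr),
\]
where $x^{(k)}:=x(x-1)\cdots(x-k+1)$. Combined with the exact identity $e^{-tX}=\sum_{k\ge 0}(e^{-t}-1)^k\binom{X}{k}$ for $X\in\{0,1,2,\dots\}$, this produces the series representation
\[
\E\bigl[e^{-tE_{ij}}\bigm|\G_n(0)\bigr]=\sum_{k\ge 0}\frac{(e^{-t}-1)^k}{k!}\cdot\frac{Y_{n,i}^{(k)}Y_{n,j}^{(k)}}{M_k}.
\]
Using $|e^{-t}-1|\le t$, $Y_{n,i}^{(k)}\le Y_{n,i}^{k}$, and $M_k\ge(\ell_n^{(b)}/2)^k$ for moderate $k$, together with $\ell_n^{(b)}\sim\gamma n$ from~\eqref{eqn:dBconve} and the identity $b_n^2=c_n n$, the $k$-th term converges to $(-sY_iY_j)^k/k!$ and is dominated by $(2sY_iY_j)^k/k!$ on any event where $b_n^{-1}Y_{n,i}$ and $b_n^{-1}Y_{n,j}$ remain bounded. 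Passing to a Skorohod coupling so that $b_n^{-1}\bY_n\to\bY$ almost surely, dominated convergence in the series then gives $\E[e^{-tE_{ij}}\mid\G_n(0)]\to e^{-sY_iY_j}$ a.s., yielding the first claim.

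The joint statement follows by inclusion--exclusion:
\[
\prod_{j=1}^{k}\mathbf{1}_{\cC_n^0(i_{j-1})\sim_t\cC_n^0(i_j)}=\sum_{S\subseteq[k]}(-1)^{|S|}\mathbf{1}_{\cC_n^0(i_{j-1})\not\sim_t\cC_n^0(i_j),\,\forall j\in S},
\]
and the conditional $S$-term equals $\E[\exp(-t\sum_{j\in S}E_{i_{j-1}i_j})\mid\G_n(0)]$. The joint falling-factorial moment on distinct components is
\[
\E\Bigl[\prod_{j=1}^{k}E_{i_{j-1}i_j}^{(l_j)}\Bigm|\G_n(0)\Bigr]=\frac{Y_{n,i_0}^{(l_1)}Y_{n,i_k}^{(l_k)}\prod_{j=1}^{k-1}Y_{n,i_j}^{(l_j+l_{j+1})}}{M_{\sum_j l_j}},
\]
and multiplication by $t^{\sum l_j}$ sends this to $\prod_j(sY_{i_{j-1}}Y_{i_j})^{l_j}$, the joint factorial moments of independent Poissons with parameters $sY_{i_{j-1}}Y_{i_j}$. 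A multivariate version of the series expansion from the previous paragraph, dominated by the product of the single-coordinate bounds, gives the $S$-term limit $\prod_{j\in S}e^{-sY_{i_{j-1}}Y_{i_j}}$, and summing over $S$ produces $\prod_{j=1}^{k}(1-e^{-sY_{i_{j-1}}Y_{i_j}})$. The almost-sure statement is automatic, since the whole argument expresses the conditional probability as a (locally) continuous function of the entries $b_n^{-1}Y_{n,i_0},\dots,b_n^{-1}Y_{n,i_k}$, so a.s.~convergence of these entries yields a.s.~convergence of the conditional probabilities. The main obstacle is the bookkeeping in the defining series: individually, $\E[E_{ij}^{(k)}\mid\G_n(0)]=\Theta(c_n^k)$ and $(e^{-t}-1)^k=\Theta(c_n^{-k})$ are divergent and vanishing in opposite directions, so the two factors must be paired in the explicit formulas above before any bounding can be done---this pairing is precisely what the uniform-matching factorial moment identity enables and what drives the Poisson limit.
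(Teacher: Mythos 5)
Your proof is correct, and it takes a genuinely different route from the paper's.

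The paper proceeds \emph{dynamically}: it couples $\G_n(s)$ with the modified process $\overline{\G}_n(s)$ (Lemma~\ref{lem:UpperCoupling}), introduces the stopping times $\tau_n^A$ to track how many black half-edges the component can lose, controls $Q_n^{(b)}(t)$ via the Gronwall/differential-equation argument of Lemma~\ref{lem:QnSize}, and extracts a Binomial-to-Poisson limit for the lower bound, with the matching upper bound coming from Lemma~\ref{lem:modified=MC}. Your proof instead works \emph{statically}: conditionally on $\G_n(0)$, the full matching is uniform, so the number $E_{ij}$ of black cross-edges has explicit falling-factorial moments $\E[E_{ij}^{(k)}\mid\G_n(0)]=Y_{n,i}^{(k)}Y_{n,j}^{(k)}/M_k$, and Poisson thinning of the exponential clocks gives the exact conditional Laplace transform $\E[e^{-tE_{ij}}\mid\G_n(0)]$ as a (finite) Newton series; the scaling $b_n^2=c_nn$ then makes each term converge while the $|e^{-t}-1|\le t$ bound and $M_k\ge(\ell_n^{(b)}/2)^k$ give a dominating summable sequence $(CsY_iY_j)^k/k!$. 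Your version of the multi-pair moment identity and the inclusion--exclusion reduction for the joint statement are both correct, and the final almost-sure remark is valid because the conditional probability is a deterministic function of $(b_n^{-1}Y_{n,i_0},\dots,b_n^{-1}Y_{n,i_k})$ alone. The trade-off: your computation is sharper and self-contained, giving an exact formula with explicit error control, whereas the paper's dynamic coupling machinery reuses Lemmas~\ref{lem:QnSize}--\ref{lem:modified=MC} (which the paper needs elsewhere anyway) and would adapt more easily if the underlying mechanism were not an exact uniform matching. One cosmetic point: the stated dominant $(2sY_iY_j)^k/k!$ should carry a slightly larger constant once the $(\ell_n^{(b)}/2)^k$ lower bound for $M_k$ and the factor coming from $b_n^{-1}Y_{n,i}\le 2Y_i$ are both accounted for, but since any fixed constant works for dominated convergence this does not affect the argument.
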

\begin{proof}
By Skorohod's representation theorem, we suppose that $b_n^{-1}\bX_n$ and $b_n^{-1}\bY_n$ converge a.s.  We will also assume that we have two sequences $\delta_n\to 0$ and $\eps_n\to 0$ at the appropriate rate that will be clearly stated later.

Let us now examine how many black-half edges are paired in a connected component for the unmodified graph process ${\G}_n(s/c_n)$. Using the coupling in Lemma~\ref{lem:UpperCoupling}, we can analyze how many edges are created in the modified process $\overline{\G}_n(s/c_n)$ which will be at least as many as in $\G_n(s/c_n)$. For any given collection of vertices $A$ an edge is connected to vertex in $A$ at rate
\begin{align*}
    Q_n^{(b)}(0) &\left( 2\frac{\#H^{(b)}(A)}{2Q_n^{(b)}(0)} \frac{2Q_n^{(b)}(0) - \#H^{(b)}(A)}{2Q_n^{(b)}(0) - 1} + \frac{\#H^{(b)}(A) (\#H^{(b)}(A)-1)}{2Q_n^{(b)}(0)(2Q_n^{(b)}(0)-1)}  \right)\\
    &\le\#H^{(b)}(A).
\end{align*}
The first term above corresponds to pairing a single vertex with a different connected component, and the second corresponds to pairing two vertices in $A$ with each other. Note that each edge added in the modified graph process $\overline{\G}_n(s)$ removes at most two black half-edges from the corresponding collection of vertices $A$ in the unmodified graph process $\G_n(s)$. Let us write $H^{(b)}_s(A)$ for the collection of black half-edges incident to the vertices of $A$ at time $s$ for the graph process $\G_n(s/c_n)$. The above analysis implies that for any collection of vertices $A$, we have
\begin{align}
   \label{eqn:HalfEdgesDeleted1}\PR&\left(\#H_{s/c_n}^{(b)}(A) < (1-\eps_n) \#H^{(b)}_0(A) \right) =\PR\left(\#H^{(b)}_0(A) - \#H^{(b)}_{s/c_n}(A) > \eps_n\#H_{0}^{(b)}(A)\right) \\&\le \PR\left(\textup{Poisson}\left(2\# H^{(b)}_0(A) \frac{s}{c_n}\right) >\eps_n \#H_{0}^{(b)}(A) \right) \le \nonumber \frac{2s}{\eps_nc_n},
\end{align} where the last inequality is Markov's inequality.

Given a collection of vertices $A$, define the stopping time 
\begin{equation*}
    \tau_n^A = \inf\{t: \#H_{t}^{(b)}(A) \le (1-\eps_n) \#H_0^{(b)}(A)\}.
\end{equation*} Given two disjoint collection of vertices let $\tau_n^{AB} = \tau_n^{A}\wedge \tau_n^B$. Observe that on $\{t< \tau_n^{AB}\} \cap \{Q_{n}^{(b)}(t)\le m\}$ we have added at least $Q_n^{(b)}(0)-m$ many black edges into the graph $\G_n(t)$, and an edge between $A$ and $B$ is added each time with probability bounded below by
\begin{equation*}
    2 \frac{ (1-\eps_n)^2H^{(b)}_0(A) H_0^{(b)}(B)}{4 Q_n^{(b)}(0)^2}.
\end{equation*} Moreover, each edge added is independent of the pairing of the white half-edges in the graph $\G_n(0)$. In particular, looking at $t = s/c_n$ and $m = Q_n^{(b)}(0)e^{-s/c_n} + n\delta_n$ 
\begin{align}
\nonumber
    \PR&\left(A\sim_{s/c_n} B\bigg| \G_n(0), \{\frac{s}{c_n}< \tau_n^{AB}\} \cap \{Q_{n}^{(b)}(\frac{s}{c_n})\le Q_n^{(b)}(0) e^{-s/c_n}+n\delta_n\}\right)\\
   \label{eqn:lowerboundForMC1}&\ge  \PR\left(\textup{Bin}\left(Q_n^{(b)}(0)(1-e^{-s/c_n}) - n\delta_n, 2 \frac{ (1-\eps_n)^2H^{(b)}_0(A) H_0^{(b)}(B)}{4 Q_n^{(b)}(0)^2} \right) \ge 1\bigg|\G_n(0)\right).
\end{align}
In order to apply the Binomial to Poisson convergence, we need to carefully choose $\delta_n$ so that $Q_n^{(b)}(0) (1-e^{-s/c_n}) - n\delta_n = \frac{n}{c_n}\frac{s\gamma}{2} (1+o(1))$. This can be done by setting $\delta_n = n^{-\varrho}$ where $\varrho \in (\frac{\tau-3}{\tau-1},\frac{1}{2})$ so that
\begin{equation}\label{eqn:choiceDelta}
\frac{b_n^2 \delta_n}{n}=     c_n \delta_n  = \frac{n^{\frac{\tau-3}{\tau-1}-\varrho}}{L(n)^2} \longrightarrow 0\qquad\textup{and}\qquad n\delta_n^2 = n^{1-2\rho}\to\infty.
\end{equation}In particular, using Lemma~\ref{lem:QnSize} and~\eqref{eqn:HalfEdgesDeleted1} for $\eps_n = \omega(n)/c_n\to 0$ with $\omega(n)\to\infty$ and $\delta_n\to 0$ but $\delta^2_n n\to\infty$ we can bound for all large $n$
\begin{align}
    \nonumber \PR&\left(\{\frac{s}{c_n}< \tau_n^{AB}\} \cap \{Q_{n}^{(b)}(\frac{s}{c_n})\le Q_n^{(b)}(0) e^{-s/c_n}+n\delta_n\}\right)\\
     \nonumber &\ge 1 - \PR\left(\tau_n^{A}\le s/c_n\right) - \PR\left(\tau_n^B \le s/c_n \right) -  \PR\left(Q_{n}^{(b)}(\frac{s}{c_n})> Q_n^{(b)}(0) e^{-s/c_n}+n\delta_n \right)\\
    \nonumber &\ge 1-\frac{ 2s}{\omega(n)} - \frac{2 s}{\omega(n)} - \frac{8 \gamma s }{\delta_n^2 n}  = 1-o(1).
\end{align}

In particular, using Lemma~\ref{lem:QnSize} and~\eqref{eqn:HalfEdgesDeleted1} for $\eps_n = \omega(n)/c_n\to 0$ with $\omega(n)\to\infty$ and $\delta_n\to 0$ but $\delta^2_n n\to\infty$ we can bound for all large $n$
\begin{align}
    \nonumber \PR&\left(\{\frac{s}{c_n}< \tau_n^{AB}\} \cap \{Q_{n}^{(b)}(\frac{s}{c_n})\le Q_n^{(b)}(0) e^{-s/c_n}+n\delta_n\}\right)\\
     \nonumber &\ge 1 - \PR\left(\tau_n^{A}\le s/c_n\right) - \PR\left(\tau_n^B \le s/c_n \right) -  \PR\left(Q_{n}^{(b)}(\frac{s}{c_n})> Q_n^{(b)}(0) e^{-s/c_n}+n\delta_n \right)\\
    \label{eqn:lowerboundMC2} &\ge 1-\frac{ 2s}{\omega(n)} - \frac{2 s}{\omega(n)} - \frac{8 \gamma s }{\delta_n^2 n}  \ge (1-o(1)).
\end{align}

Using~\eqref{eqn:lowerboundMC2} we can re-write the lower bound in~\eqref{eqn:lowerboundForMC1} as
\begin{align*}
    \PR&\left(A\sim_{s/c_n}B|\G_n(0)\right) \\
    &\ge \PR\left(A\sim_{s/c_n} B\bigg| \G_n(0), \{\frac{s}{c_n}< \tau_n^{AB}\} \cap \{Q_{n}^{(b)}(\frac{s}{c_n})\le Q_n^{(b)}(0) e^{-s/c_n}+n\delta_n\}\right)\\
    &\qquad\times \PR\left(\{\frac{s}{c_n}< \tau_n^{AB}\} \cap \{Q_{n}^{(b)}(\frac{s}{c_n})\le Q_n^{(b)}(0) e^{-s/c_n}+n\delta_n\}\right) \\
    &\ge\left(1-o(1)\right) \PR\left(\textup{Bin}\left(Q_n^{(b)}(0)(1-e^{-s/c_n})-n\delta_n, \frac{ (1-\eps_n)^2H^{(b)}_0(A) H_0^{(b)}(B)}{2 Q_n^{(b)}(0)^2} \right)\ge 1\bigg|\G_n(0)\right)
\end{align*}
Now examine the above term with $A = \cC_n^0(i)$ and $B = \cC_n^0(j)$, which are both $\G_n(0)$- measurable. Note that $H_0^{(b)}(A) = Y_{n,i} = b_n (Y_{i}+o(1))$ and $H_0^{(b)}(B) = b_n (Y_j+o(1))$. Also $Q_n^{(b)}(0) = \frac{1}{2}\sum_{i} d_i^{(b)} =  \frac{\gamma}{2}n (1+o(1))$, $1-e^{-s/c_n} = \frac{s}{c_n}(1+o(1))$ and since $b_n^2 = nc_n$, we have by Binomial to Poisson convergence
\begin{align*}
    \PR&\left(\cC_n^0(i)\sim_{s/c_n} \cC_n^0(j)|\G_n(0)\right)\\
&\ge \left(1-o(1)\right) \PR\left(\textup{Bin}\left(\frac{n}{c_n} \frac{\gamma s}{2}(1+o(1)) , \frac{b_n^2}{\frac{n^2}{2} \gamma^2}Y_{i}Y_j (1+o(1))\right)\ge 1 \bigg|\G_n(0)\right)\\
&\longrightarrow 1-\exp\left(-\frac{s}{\gamma} Y_iY_j\right),
\end{align*} whenever $\delta_n$ is chosen as in~\eqref{eqn:choiceDelta}. This establishes the lower bound. The upper bound follows from Lemma~\ref{lem:modified=MC} and the graph coupling in Lemma~\ref{lem:UpperCoupling}.

The more general case proceeds similarly. The main difference is that instead of using a binomial random variable in~\eqref{eqn:lowerboundForMC1}, one uses a multinomial random variable. We omit the details.
\end{proof}

\subsection{Putting Things Together}

We now have all the necessary tools to finish the proof of Theorem~\ref{thm:main2}. 
\begin{proof}
 We will couple $\G_n(s)$ with $\overline{\G}_n(s)$ as in Lemma~\ref{lem:UpperCoupling}. We write $\cC_n(j)$ for the connected components of $\G_n(\mu\gamma c_n^{-1})$ and we write $\cC_n^0(j)$ for the connected components of $\G_n(0)$. We also write $\overline{\cC}_n(j)$ for the connected components of $\overline{\G}_n(\mu\gamma c_n^{-1})$.

Let $\bX_n$ denote the component sizes $(\#\cC_n^0(j);j\ge 1)$ and let $\bY_n$ denote the number of incident black half-edges $(\#H^{(b)}(\cC_n^0(j));j\ge 1)$. After using the Skorohod representation theorem and Theorem~\ref{thm:main1}, we suppose that the components of $\G_n(0)$ are indexed in such a way that $b_n^{-1}\bX_n\to \bX$ and $b_n^{-1}\bY_n\to \bY$ a.s. in $\ell^2$ for this fixed arrangement. 
    
Observe that each connected component of $\G_n(\mu \gamma c_n^{-1})$ is the union of connected components of $\G_n(0)$. More precisely, we have the equivalence relation $\sim_{\mu \gamma/c_n}$ on the connected components $\cC_n^0(j)$ and each connected component $\cC_n(i)$ is the union of sets in an equivalence class. A similar statement holds for the connected components of the larger graph $\overline{\G}_n(\mu\gamma c_n^{-1})$.

We turn this into two graphs as follows. Let $\sG_n = \sG_n(\mu)$ be the graph on $\N$ with $i\sim j$ if and only if $\cC_n^0(i)\sim_{\mu\gamma c_n^{-1}} \cC_n^0(j)$. Similarly, let $\overline{\sG}_n = \overline{\sG}_n(\mu)$ be the graph on $\N$ such that $i\sim j$ if and only if there is an edge a vertex in $\cC_n^{0}(i)$ and a vertex in $\cC_n^{(0)}(j)$ in the modified graph process $\overline{\G}_n(\mu\gamma c_n^{-1})$. We assign mass $X_{n,i}$ to the vertex $i$ of $\sG_n$ and $\overline{\sG}_n$. With this simplification, the ordered vector of component sizes $\ord(\#\cC_n(j);j\ge 1)$ (resp. $\ord(\#\overline{\cC}_n(j);j\ge1)$) of $\G_n(\mu\gamma c_n^{-1})$ (resp. $\overline{\G}_n(\mu\gamma c_n^{-1})$) is equal to the ordered vector of component masses of $\sG_n$ (resp. $\overline{\sG}_n)$). 

Moreover, Lemma~\ref{lem:modified=MC} implies that 
\begin{align*}
    \PR(i\sim j\textup{ in }\overline{\sG}_n) = 1-\exp\left(- b_n^{-1} Y_{n,i} b_n^{-1}Y_{n,j} \mu (1+o(1))\right)
\end{align*} and we can use the $\xi$-coupling to couple the graphs $\overline{\sG}_n$ over the index $n$. With this coupling, we then have
\begin{equation*}
    \overline{\sG}_n\rightsquigarrow \sW^{G}(\bX,\bY,\mu).
\end{equation*} Appealing to Lemma~\ref{lem:CONVERGE2} we see that the ordered vector of component masses, $\overline{\bZ}_n$, of $\overline{\sG}_n$ satisfies
\begin{equation*}
    b_n^{-1} \overline{\bZ}_n\weakarrow \bZ \overset{d}{=} \MC_2(\bX,\bY,\mu).
\end{equation*} where $\bZ$ is the component masses of $\sW^G(\bX,\bY,\mu)$. Finally, $\sG_n\subset \overline{\sG}_n$ and 
\begin{equation*}
    \PR(i\sim j\textup{ in } {\sG}_n) \longrightarrow 1-\exp(-Y_iY_j \mu) = \PR(i\sim j\textup{ in } \sW^G(\bX,\bY,\mu)).
\end{equation*} Hence, Proposition~\ref{prop:graphConvergence} implies the desired result.
\end{proof}

\appendix
\section{Proof of~\eqref{eqn:martingaleArgument} in Lemma~\ref{lem:TailSumsConv}}

Recall $M_n(t) = \frac{1}{a_n}\sum_{i=m+1}^n d_i^{(w)}\left(1_{[\eta_i\le t]} -\frac{d_i^{(w)}t}{\ell_n^{(w)}-2Tb_n} \right)$ and by \cite[pg 1526]{DvdHvLS.20} that $M_n$ is an $\F_l$ super-martingale starting from $M_n(0) = 0$. Using~\eqref{eqn:etaIprobBounds}, we have for all $n$ large and all $t\le Tb_n$
\begin{align*}
    |\E&[M_n(t)]|= -\E[M_n(t) ] = \frac{1}{a_n} \sum_{i=m+1}^n d_i^{(w)}\left(\frac{d_i^{(w)} t}{\ell_n-2Tb_n} - \PR(\eta_i\le t)\right)\\
    &\le \frac{t}{a_n}\sum_{i=m+1}^n d_i^{(w)}\left(\frac{d_i^{(w)} }{\ell_n-2Tb_n} - \frac{d_i^{(w)}}{\ell_n} + \frac{(d_i^{(w)})^2Tb_n}{\ell_n^2} \right)\\
    &\le \frac{Tb_n}{a_n}\sum_{i=m+1}^n d_i^{(w)} \left(\frac{2Tb_n d_i^{(w)}}{\ell_n(\ell_n-2Tb_n)}+ \frac{(d_i^{(w)})^2Tb_n}{\ell_n^2} \right)\\
    &\le \frac{2(Tb_n)^2}{a_n} \sum_{i=m+1}^n \frac{(d_i^{(w)})^2}{\ell_n(\ell_n-2Tb_n)}+ \frac{(d_i^{(w)})^3}{\ell_n^2},
\end{align*}
 where in second to last line last line we used $t\le Tb_n$ and $\frac{1}{x}-\frac{1}{x-a} = \frac{a}{x(x-a)}$.
For all large $n$, $\ell_n - 2Tb_n > \frac{1}{2}\ell_n$, and so we can write
\begin{align*}
    |\E[M_n(t)]|&\le \frac{4(Tb_n)^2}{a_n \ell_n^2} \sum_{i=m+1}^n \left((d_i^{(w)})^2 + (d_i^{(w)})^3\right).
\end{align*}
Using $\ell_n = \Theta(n) = \Theta(a_nb_n)$ under Assumption~\ref{ass:Degree}\eqref{enum:AssDegree3}, we have get $\frac{4T^2 b_n^2}{a_n\ell_n^2} = \Theta(a_n^{-3})$ and so Assumption~\ref{ass:Degree}\eqref{enum:AssDegree3} implies
\begin{align}\label{eqn:EMno1}
    |\E[M_n(t)]|&= \Theta\left({a_n^{-3}}\right)  \sum_{i=m+1}^n (d_i^{(w)})^2+ (d_i^{(w)})^3 = o(1).
\end{align} Indeed, the $(d_i^{(w)})^2$ term is simply Assumption~\ref{ass:Degree}\eqref{enum:AssDegree3}. The $(d_i^{(w)})^2$ sum is also $o(1)$ as $n\to\infty$ because $\sum_{i=m+1}^n (d_i^{(w)})^2 = O(n)$ by Assumption~\ref{ass:Degree}\eqref{enum:AssDegree3} and $a_n^3 = L(n) n^{{3}/({\tau-1})} \gg n$ as $\tau\in(3,4)$. 

Recall~\eqref{eqn:boundingSupdiw}. Observe that by~\eqref{eqn:etaIprobBounds} and Assumption~\ref{ass:Degree}\eqref{enum:AssDegree2} that for all $n$ large and uniformly in $i\in[n]$ and $t\le Tb_n$
\begin{align*}
    \Var&(1_{[\eta_i\le t]})  = \PR(\eta_i\le t) \le \left( \frac{d_i^{(w)}}{\ell_n-2Tb_n} + \frac{(d^{(w)}_i)^2}{(\ell_n-2Tb_n)^2} \right) t\\
    &\le \frac{d_i^{(w)} t}{\ell_n-2Tb_n}  \left(1+\sup_{i} \frac{d_i^{(w)}}{\ell_n-2Tb_n}\right) = \Theta(1)\frac{d_i^{(w)}(Tb_n)}{n}(1+o(1))  \le \Theta(1)\frac{d_i^{(w)}}{a_n}, 
\end{align*} where we used $a_nb_n = n$. We also have, using negative correlation of the Bernoulli random variables $1_{[\eta_i\le b_nt]}$, that for all $t\le Tb_n$
\begin{align}\label{eqn:varMno1}
    \Var(M_n(t)) &\le \sum_{i=m+1}^n \frac{(d_i^{(w)})^2}{a_n^2} \Var(1_{[\eta\le t]})  = \Theta(1)\sum_{i = m+1}^n \frac{(d_i^{(w)})^3}{a_n^3}  =o(1),
\end{align} where the last equality is by Assumption~\ref{ass:Degree}\eqref{enum:AssDegree3}.

Note that for any random variable $X$ with finite variance we have
\begin{equation*}
    \E[|X|] \le \sqrt{\E[X^2]} = \sqrt{\Var(X)+ \E[X]^2} \le \sqrt{\Var(X)} + |\E[X]|.
\end{equation*} In particular, since we have shown $\Var(M_n(t)) = o(1)$ in~\eqref{eqn:varMno1} and $\E[M_n(t)] = o(1)$ in~\eqref{eqn:EMno1} we get
\begin{equation*}
    \E[|M_n(Tb_n)|] = o(1).
\end{equation*}
By Lemma 2.54.5 in \cite{RW.94} we can bound for any $\eps>0$
\begin{equation*}
    \PR \left(\sup_{s\le Tb_n} |M_n(s)|>\eps \right) \le \frac{9}{\eps} \E[|M_n(Tb_n)|]  \longrightarrow 0.
\end{equation*} This implies the stated result.

\bibliography{ref}
\bibliographystyle{amsplain}
\end{document}